\numberwithin{figure}{section}
\numberwithin{equation}{section}
\numberwithin{equation}{section}
\newtheorem{theorem}{Theorem}[section]
\newtheorem{proposition}[theorem]{Proposition}
\newtheorem{corollary}[theorem]{Corollary}
\newtheorem{lemma}[theorem]{Lemma}
\theoremstyle{definition}
\newtheorem{remark}[theorem]{Remark}
\newtheorem{example}[theorem]{Example}
\newtheorem{definition}[theorem]{Definition}
\newtheorem{convention}[theorem]{Convention}
\newcommand{\hooklongrightarrow}{\lhook\joinrel\longrightarrow}
\newcommand{\para}[1]{\medskip\noindent\textbf{#1.}}
\DeclareMathOperator{\Aut}{Aut}
\DeclareMathOperator{\kernel}{ker}
\definecolor{ASTRAL}{RGB}{46,116,181}
\author{Anderson Vera}
\address{Institut de Recherche Math\'ematique Avanc\'ee,  Universit\'e de Strasbourg 7 rue Ren\'e Descartes,  67084 Strasbourg, France}
\email{vera@math.unistra.fr}
\subjclass[2010]{57M27, 57M05, 57S05}
\keywords{$3$-manifolds, mapping class group, Torelli group,  Johnson homomorphisms, Lagrangian mapping class group,  LMO invariant, LMO functor, Kontsevich integral}
\title{Alternative versions of the Johnson homomorphisms and the LMO functor}
\begin{document}
\begin{abstract}
Let $\Sigma$ be a compact connected oriented surface with one boundary component and let $\mathcal{M}$ denote the mapping class group of $\Sigma$. By considering the action of $\mathcal{M}$ on the fundamental group of $\Sigma$ it is possible to define different filtrations of $\mathcal{M}$ together with some homomorphisms on each term of the filtration. The aim of this paper is twofold. Firstly we study a filtration of $\mathcal{M}$ introduced recently by Habiro and Massuyeau, whose definition involves a handlebody bounded by $\Sigma$. We shall call it the \emph{``alternative Johnson filtration"}, and the corresponding homomorphisms are referred to as \emph{``alternative Johnson homomorphisms"}. We provide a comparison between the alternative Johnson filtration and two previously known filtrations: the original Johnson filtration and the Johnson-Levine filtration. Secondly, we study the relationship between the alternative Johnson homomorphisms and the functorial extension of the Le-Murakami-Ohtsuki invariant of $3$-manifolds. We prove that these homomorphisms can be read in the tree reduction of the LMO functor. In particular, this provides a new reading grid for the tree reduction of the LMO functor.
\end{abstract}

\maketitle
\tableofcontents

\section{Introduction}\label{Section1}

Let $\Sigma$ be a compact connected oriented surface with one boundary component and let $\mathcal{M}$ denote the \emph{mapping class group} of $\Sigma$, that is, the group of isotopy classes of orientation-preserving self-homeomorphisms of $\Sigma$ fixing the boundary pointwise.  The group $\mathcal{M}$ is not only an important object in the study of the topology of surfaces but also plays an important role  in the study of $3$-manifolds, Teichm\"uller spaces, topological quantum field theories, among other branches of mathematics. 

A natural way to study $\mathcal{M}$ is to analyse the way  it acts on other objects. For instance, we can consider the action  on the first homology group $H:=H_1(\Sigma;\mathbb{Z})$ of $\Sigma$. This action gives rise to the so-called \emph{symplectic representation}
$$\sigma:\mathcal{M}\longrightarrow \text{Sp}(H,\omega),$$
where $\omega:H\otimes H \rightarrow \mathbb{Z}$ is the intersection form of $\Sigma$. The homomorphism $\sigma$ is surjective but it is far from being injective. Its kernel is known as the \emph{Torelli group} of $\Sigma$,  denoted by $\mathcal{I}$. Hence we have the short exact sequence
\begin{equation}\label{sesSp}
1\longrightarrow \mathcal{I}\xrightarrow{\ \subset \ } \mathcal{M} \xrightarrow{\ \sigma \ } \text{Sp}(H,\omega)\longrightarrow 1.
\end{equation}
We can see that, in order to understand the algebraic structure of $\mathcal{M}$, the Torelli group~$\mathcal{I}$ deserves significant attention  because, in a certain way, it is the part of $\mathcal{M}$ that is beyond linear algebra (at least with respect to the symplectic representation).

More interestingly, we can consider the action of $\mathcal{M}$ on the fundamental group $\pi:=\pi_1(\Sigma,*)$ for a fixed point $*\in\partial \Sigma$. This way we obtain an injective homomorphism 
$$\rho:\mathcal{M}\longrightarrow \text{Aut}(\pi),$$
which is known as the \emph{Dehn-Nielsen-Baer representation} and whose image is the subgroup of automorphisms of $\pi$ that fix the homotopy class of the boundary of $\Sigma$.

\para{Johnson-type filtrations} As stepwise approximations of $\rho$, we can consider the action of $\mathcal{M}$ on the nilpotent quotients of $\pi$
$$\rho_m:\mathcal{M}\longrightarrow\text{Aut}(\pi/\Gamma_{m+1}\pi),$$
where $\Gamma_{1}\pi:=\pi$ and $\Gamma_{m+1}\pi:=[\pi,\Gamma_{m}\pi]$ for $m\geq 1$, define the \emph{lower central series} of~$\pi$. This is the approach pursued by D. Johnson \cite{MR718141} and S. Morita \cite{MR1224104}. This approach allows to define the \emph{Johnson filtration} 
\begin{equation}
\mathcal{M}\supseteq \mathcal{I}=J_1\mathcal{M}\supseteq J_2\mathcal{M}\supseteq J_3\mathcal{M}\supseteq\cdots
\end{equation}
where $J_m\mathcal{M}:=\text{ker}(\rho_m)$.

Now, there is a deep interaction between the study of $3$-manifolds and that of the mapping
class group. For instance through \emph{Heegaard splittings}, that is, by gluing two handlebodies via an element of the mapping class group of their common boundary. Thus, if we are interested in this  interaction, it is natural to consider the surface $\Sigma$ as the boundary of a handlebody $V$. Let $\iota:\Sigma\hookrightarrow V$ denote the induced inclusion and let $B := H_1(V;\mathbb{Z})$ and $\pi':=\pi_1(V,\iota(*))$. Let $A$ and $\mathbb{A}$ be the subgroups $\text{ker}(H\stackrel{\iota_*}{\longrightarrow} B)$ and $\text{ker}(\pi\stackrel{\iota_{\#}}{\longrightarrow}\pi')$, where $\iota_*$ and $\iota_{\#}$ are the induced maps by $\iota$ in homology and homotopy, respectively. The \emph{Lagrangian mapping class group} of $\Sigma$ is the group 
$$\mathcal{L}=\{f\in\mathcal{M}\ |\ f_*(A)\subseteq A\}.$$

By considering a descending series $(K_m)_{m\geq 1}$ of normal subgroups of $\pi$ (different from the lower central series) K. Habiro and G. Massuyeau introduced in \cite{MR3828784} a filtration of the Lagrangian mapping class group $\mathcal{L}$:
\begin{equation}
\mathcal{L}\supseteq \mathcal{I}^{\mathfrak{a}}=J_1^{\mathfrak{a}}\mathcal{M}\supseteq J_2^{\mathfrak{a}}\mathcal{M}\supseteq J_3^{\mathfrak{a}}\mathcal{M}\supseteq\cdots 
\end{equation}
that we  call the \emph{alternative Johnson filtration}.  We call the first term $\mathcal{I}^{\mathfrak{a}}:=J_1^{\mathfrak{a}}\mathcal{M}$ of this filtration   the \emph{alternative Torelli group}. Notice that $\mathcal{I}^{\mathfrak{a}}$ is a normal subgroup of $\mathcal{L}$ but it is not normal in $\mathcal{M}$. Roughly speaking, the group $K_m$ consists of commutators of $\pi$ of \emph{weight} $m$, where the elements of $\mathbb{A}$ are considered to have weight $2$, for instance $K_1=\pi$, $K_2=\mathbb{A}\cdot \Gamma_2\pi$, $K_3= [\mathbb{A},\pi]\cdot \Gamma_3\pi$ and so on. The alternative Johnson filtration will be our main object of study in Section \ref{JTH1}.

Besides, in \cite{MR1823501,MR2265877} J. Levine defined a different filtration of $\mathcal{L}$ by considering the lower central series of $\pi'$, and whose first term is the \emph{Lagrangian Torelli group}  $\mathcal{I}^L=\{f\in\mathcal{L}\ |\ f_*|_A=\text{Id}_A\}$:
\begin{equation}
\mathcal{L}\supseteq \mathcal{I}^L=J_1^L\mathcal{M}\supseteq J_2^L\mathcal{M}\supseteq J_3^L\mathcal{M}\supseteq \cdots
\end{equation}
 we call this filtration the \emph{Johnson-Levine filtration}. The group $\mathcal{I}^L$ is normal in $\mathcal{L}$ but not in $\mathcal{M}$.
 
We refer to the Johnson filtration, the alternative Johnson filtration and the Johnson-Levine filtration as \emph{Johnson-type filtrations}. Notice that unlike the Johnson filtration the alternative Johnson filtration takes into account a handlebody. Besides, the intersection of all terms in the alternative Johnson filtration is the identity of $\mathcal{M}$ as in the case of the Johnson filtration. But this is not the case for the Johnson-Levine filtration. One of the main purposes of this paper is the study of the alternative Johnson filtration and its relation with the other two filtrations. Proposition \ref{prop1altJF} and Proposition \ref{JTHproposition5} give the following result.

\para{Theorem A} \emph{The alternative Johnson filtration satisfies the following properties.
 \begin{enumerate}
\item [\emph{(i)}]  $\bigcap_{m\geq 1}J^{\mathfrak{a}}_m\mathcal{M}=\{\text{\emph{Id}}_{\Sigma}\}$.
\item [\emph{(ii)}] For all $k\geq 1$ the group $J^{\mathfrak{a}}_k\mathcal{M}$ is residually nilpotent, that is, $\bigcap_m\Gamma_mJ_k^{\mathfrak{a}}\mathcal{M}=\{\text{\emph{Id}}_{\Sigma}\}$.
\end{enumerate}
Besides, for every $m\geq 1$, we have
\begin{center}
\begin{tabular}{ccccccc}
\emph{(iii)}\ \  $J_{2m}^{\mathfrak{a}}\mathcal{M}\subseteq J_m\mathcal{M}$. &\ \  & &\emph{(iv)}\ \  $J_m\mathcal{M}\subseteq J_{m-1}^{\mathfrak{a}}\mathcal{M}$.&\ \   & & \emph{(v)}\ \  $J_{m}^{\mathfrak{a}}\mathcal{M}\subseteq J_{m+1}^L\mathcal{M}$.
\end{tabular}
\end{center}
In particular, the Johnson filtration and the alternative Johnson filtration are cofinal.}

\para{Johnson-type homomorphisms} Each term of the Johnson-type filtrations comes  with a homomorphism whose kernel is the next subgroup in the filtration. We refer to these homomorphisms as \emph{Johnson-type homomorphisms}. The \emph{Johnson homomorphisms} are  important tools to understand the structure of the Torelli group and the topology of homology $3$-spheres \cite{MR793179,MR1014464,MR1133875,MR1617632}. Let us review the target spaces of these homomorphisms. For an abelian group $G$, we denote by  $\mathfrak{Lie}(G)=\bigoplus_{m\geq 1}\mathfrak{Lie}_m(G)$  the graded  Lie algebra freely generated by $G$ in degree $1$. 

The \emph{$m$-th Johnson homomorphism} $\tau_m$ is defined on $J_m\mathcal{M}$ and it takes values in the group $\text{Der}_m(\mathfrak{Lie}(H))$ of  degree $m$ derivations of $\mathfrak{Lie}(H)$. Consider the element $\Omega\in\mathfrak{Lie}_2(H)$ determined by the intersection form $\omega:H\otimes H\rightarrow \mathbb{Z}$. A \emph{symplectic} derivation $d$ of $\mathfrak{Lie}(H)$ is a derivation satisfying $d(\Omega)=0$. S. Morita shows in \cite{MR1224104} that for $h\in J_m\mathcal{M}$, the morphism $\tau_m(h)$ defines a symplectic derivation of $\mathfrak{Lie}(H)$. The group of symplectic degree $m$ derivations of $\mathfrak{Lie}(H)$ can be canonically identified with the kernel $D_m(H)$ of the Lie bracket $[\ ,\ ]:H\otimes\mathfrak{Lie}_{m+1}(H)\rightarrow \mathfrak{Lie}_{m+2}(H)$. This way, for $m\geq 1$ we have  homomorphisms 
$$\tau_m: J_m\mathcal{M}\longrightarrow D_m(H).$$

The \emph{$m$-th Johnson-Levine homomorphism} $\tau^L_m: J_m^L\mathcal{M}\rightarrow D_m(B)$ is defined on $J_m^L\mathcal{M}$ and it takes values in the kernel $D_m(B)$ of the Lie bracket $[\ ,\ ]:B\otimes\mathfrak{Lie}_{m+1}(B)\rightarrow \mathfrak{Lie}_{m+2}(B)$. 

For the \emph{alternative Johnson homomorphisms} \cite{MR3828784}, consider the graded  Lie algebra   $\mathfrak{Lie}(B;A)$ freely generated by $B$ in degree $1$ and $A$ in degree $2$. The \emph{$m$-th alternative Johnson homomorphism} $\tau^{\mathfrak{a}}_m: J^{\mathfrak{a}}_m\mathcal{M}\rightarrow \text{Der}_m(\mathfrak{Lie}(B;A))$ is defined on $J_m^{\mathfrak{a}}\mathcal{M}$ and it takes values in the group $\text{Der}_m(\mathfrak{Lie}(B;A))$ of degree $m$ derivations of $\mathfrak{Lie}(B;A)$. Similarly to the case of $\mathfrak{Lie}(H)$, we define a notion of \emph{symplectic derivation} of $\mathfrak{Lie}(B;A)$ by considering the element $\Omega'\in\mathfrak{Lie}_3(B;A)$ defined by the intersection form of the handlebody~$V$. Theorem \ref{JTH2thm1} and Proposition \ref{JTH2prop11} give the following result.

\para{Theorem B} \emph{Let $m\geq 1$ and $h\in J^{\mathfrak{a}}_m\mathcal{M}$. Then
\begin{enumerate}
\item [\emph{(i)}] The morphism $\tau^{\mathfrak{a}}_m(h)$ defines a  degree $m$ symplectic  derivation of $\mathfrak{Lie}(B;A)$.
\item [\emph{(ii)}] The morphism $\tau^{L}_{m+1}(h)$ is determined by the morphism $\tau^{\mathfrak{a}}_m(h)$.
\end{enumerate}}

Property (ii) in Theorem B can be expressed more precisely by the commutativity of the diagram
\begin{equation*}
\xymatrix{  J^{\mathfrak{a}}_m\mathcal{M}\ar[r]^{\subset}\ar[d]_{\tau^{\mathfrak{a}}_m} & J_{m+1}^L\mathcal{M} \ar[d]^{\tau_{m+1}^L} \\
						D_m(B;A)\ar[r]^{\iota_*} & D_{m+1}(B),}
\end{equation*}
for $m\geq 1$, where the inclusion $J^{\mathfrak{a}}_m\mathcal{M}\subseteq J^L_{m+1}\mathcal{M}$ is assured by Theorem A (v). The homomorphism $\iota_*:D_m(B;A)\rightarrow D_{m+1}(B)$ is induced by the map $\iota_*:H\rightarrow B$. Property~(i) in Theorem~B  allows  to define a \emph{diagrammatic version} of the alternative Johnson homomorphisms so  that we are able to study their relation to the \emph{LMO functor}. This is the second main purpose of this paper. Before we proceed with a description of our results in this setting, let us state another result in the context of the alternative Johnson homomorphisms. In \cite{MR3828784}, K. Habiro and G. Massuyeau  consider a group homomorphism $\tau_0^{\mathfrak{a}}:\mathcal{L}\rightarrow \text{Aut}(\mathfrak{Lie}(B;A))$, which we call the \emph{$0$-th alternative Johnson homomorphism}, and whose kernel is the alternative Torelli group $\mathcal{I}^{\mathfrak{a}}$. In subsection \ref{subsection5.3} we prove the following.

\para{Theorem C} \emph{The homomorphism $\tau_0^{\mathfrak{a}}:\mathcal{L}\rightarrow \text{\emph{Aut}}(\mathfrak{Lie}(B;A))$ can be equivalently described as a group homomorphism $\tau^{\mathfrak{a}}_0: \mathcal{L}\longrightarrow \text{\emph{Aut}}(B)\ltimes \text{\emph{Hom}}(A, \Lambda^2 B)$ for a certain action of $\text{\emph{Aut}}(B)$  on $\text{\emph{Hom}}(A, \Lambda^2 B)$. The kernel of $\tau_0^{\mathfrak{a}}$ is the second term $J^L_2\mathcal{M}$ of the Johnson-Levine filtration. In particular we have $\mathcal{I}^{\mathfrak{a}}=J^{\mathfrak{a}}_1\mathcal{M}=J^L_2\mathcal{M}$.}

\medskip

Moreover, we explicitly describe  the image $\mathcal{G}:=\tau_0^{\mathfrak{a}}(\mathcal{L})$ and then we obtain the short exact sequence 
\begin{equation}\label{sesG}
1\longrightarrow \mathcal{I}^{\mathfrak{a}}\xrightarrow{\ \subset\ } \mathcal{L}\xrightarrow{ \ \tau_0^{\mathfrak{a}}\  } \mathcal{G}\longrightarrow 1.
\end{equation}

This short exact sequence  has a similar role, in the context of the alternative Johnson homomorphisms, to that of the short exact sequence (\ref{sesSp}) in the context of the Johnson homomorphisms. This is because in \cite{MR3828784} the authors prove that the alternative Johnson homomorphisms satisfy an equivariant property with respect to the homomorphism $\tau_0^{\mathfrak{a}}$, which  is the analogue of the \text{Sp}-equivariant property of the Johnson homomorphisms. Hence the short exact sequence  (\ref{sesG}) can be  important  for a further development of the study of the alternative Johnson filtration.

\para{Relation with the LMO functor} After the discovery of the Jones polynomial and the advent of many new invariants, the so-called \emph{quantum invariants},  of links and $3$-manifolds, it became necessary to ``organize" these invariants. The theory  of \emph{finite-type (Vassiliev-Goussarov) invariants}  in the case of links and the theory of \emph{finite-type (Goussarov-Habiro) invariants} in the case of $3$-manifolds, provide an efficient way to do this task. An important success was achieved with the introduction of the \emph{Kontsevich integral} for links \cite{MR1237836,MR1318886} and the \emph{Le-Murakami-Othsuki invariant} for $3$-manifolds \cite{MR1604883}, because they are \emph{universal} among rational finite-type invariants. Roughly speaking, this property says that every $\mathbb{Q}$-valued finite-type invariant  is determined by the Kontsevich integral in the case of links or by the LMO invariant in the case of homology $3$-spheres. 

The LMO invariant was extended to a TQFT (Topological quantum field theory) in \cite{MR1473309,MR2304469,MR2403806}. We follow the work of D. Cheptea, K. Habiro and G. Massuyeau in~\cite{MR2403806}, where they extend the LMO invariant  to a functor $\widetilde{Z}:\mathcal{LC}ob_q\rightarrow {}^{ts}\!\!\mathcal{A}$, called the  \emph{LMO functor}, from the category of \emph{Lagrangian cobordisms} (cobordisms satisfying a homological condition)  between bordered surfaces to a category of \emph{Jacobi diagrams} (uni-trivalent colored graphs up to some relations). See Figure \ref{figuraintro1artalt} for some examples of Jacobi diagrams. There is  still   a lack of understanding of the topological information encoded by the LMO functor. One reason for this is that the  construction of the LMO functor takes several steps and  also uses several combinatorial operations on the space of Jacobi diagrams. This motivates the search of topological interpretations of some reductions of the LMO functor through  known invariants, some results in this direction were obtained in \cite{MR2403806,MR2903772,vera1}. The second main purpose of this paper is to give a topological interpretation of the tree reduction of the LMO functor through the alternative Johnson homomorphisms.

 A \emph{homology cobordism} of $\Sigma$ is a homeomorphism class of  pairs $(M,m)$ where $M$ is a compact oriented $3$-manifold and $m:\partial(\Sigma\times[-1,1])\rightarrow \partial M$ is an orientation-preserving homeomorphism such that the \emph{top} and \emph{bottom} restrictions $m_{\pm}|_{\Sigma\times\{\pm 1\}}:\Sigma\times\{\pm1\}\rightarrow M$ of $m$ induce isomorphisms in homology. Denote by $\mathcal{C}$ the monoid of homology cobordisms of $\Sigma$ (or $\mathcal{C}_{g,1}$ where $g$ is the genus of $\Sigma$). In particular, if $h\in\mathcal{M}$, we can consider the homology cobordism $c(h):=(\Sigma\times[-1,1], m^h)$ where $m^h$ is  such that $m^h_+=h$ and $m^h_-=\text{Id}_{\Sigma}$. Moreover,  $h\in \mathcal{L}$ if and only if the cobordism $c(h)$ is a Lagrangian cobordism. Thus $c(h)$ belongs to the source category of the LMO functor and therefore we can compute $\widetilde{Z}(c(h))$.
 
 The alternative Johnson homomorphisms motivate the definition of the \emph{alternative degree}, denoted  $\mathfrak{a}$-deg, for connected tree-like Jacobi diagrams. If $T$ is a tree-like Jacobi diagram colored by $B\oplus A$, then
$$\mathfrak{a}\text{-deg}(T)=2|T_A|+ |T_B| -3,$$
where $|T_A|$ (respectively $|T_B|$) denotes the number of univalent vertices of $T$ colored by $A$ (respectively by $B$). See Figure \ref{figuraintro1artalt} $(a)$ and $(b)$ for some examples.
\begin{figure}[ht!]
										\centering
                        \includegraphics[scale=0.775]{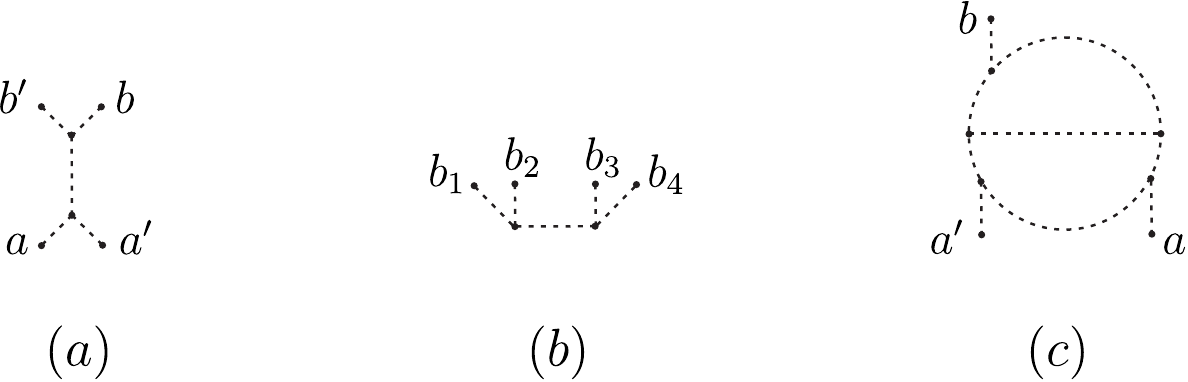}
												\caption{Tree-like Jacobi diagrams of $\mathfrak{a}$-deg $=3$ in $(a)$ and of $\mathfrak{a}$-deg $=1$ in $(b)$. $(c)$ Looped Jacobi diagram. Here $a,a'\in A$ and $b,b',b_1,\ldots,b_4\in B$.}
\label{figuraintro1artalt} 							
\end{figure}

 Denote by $\mathcal{T}^{Y,\mathfrak{a}}_m(B\oplus A)$ the space generated by  tree-like Jacobi diagrams colored by~$B\oplus A$   with at least one trivalent vertex and with $\mathfrak{a}$-deg $=m$. For a Lagrangian cobordism~$M$ let $\widetilde{Z}^t(M)$ denote the reduction of $\widetilde{Z}(M)$ modulo \emph{looped} diagrams, that is, diagrams with a non-contractible connected component. See Figure \ref{figuraintro1artalt} $(c)$ for an example of a looped diagram. This way, $\widetilde{Z}^t(M)$ consists only of tree-like Jacobi diagrams. Finally, let $\widetilde{Z}^{Y,t}(M)$ denote the terms in $\widetilde{Z}^t(M)$ which have at least one trivalent vertex. The first step to relate the alternative Johnson homomorphisms with the LMO functor   is given in  Theorem \ref{JTHLMOprop1} where we prove the following.

\para{Theorem D} \emph{The alternative degree induces a filtration $\{\mathcal{F}^{\mathfrak{a}}_m\mathcal{C}\}_{m\geq1}$  of  $\mathcal{C}$ by submonoids. Consider the map
$$\widetilde{Z}^{Y,\mathfrak{a}}_m : \mathcal{F}^{\mathfrak{a}}_m\mathcal{C}\longrightarrow \mathcal{T}^{Y,\mathfrak{a}}_m(B\oplus A),$$
where, for  $M\in \mathcal{F}^{\mathfrak{a}}_m\mathcal{C}$, the element $\widetilde{Z}^{Y,\mathfrak{a}}_m(M)\in \mathcal{T}^{Y,\mathfrak{a}}_m(B\oplus A) $ is defined as the sum of the terms of~$\mathfrak{a}$-\emph{deg}~$= m$ in~$\widetilde{Z}^{Y,t}(M)$.  Then $\widetilde{Z}^{Y,\mathfrak{a}}_m$ is a monoid homomorphism.}

\medskip

In Theorem \ref{JTHLMOthm6.15} and Theorem \ref{JTHLMOthmvera2} we prove the following.

\para{Theorem E} \emph{Let $m\geq 1$ and $f\in J^{\mathfrak{a}}_m\mathcal{M}$. Then the $m$-th alternative Johnson homomorphism can be read in the tree-reduction of the LMO functor.}

\medskip

More precisely, we prove that for $h\in J_m^{\mathfrak{a}}\mathcal{M}$ with $m\geq 2$,  the value $\widetilde{Z}_m^{Y,\mathfrak{a}}(c(h))$ coincides (up to a sign) with  the diagrammatic version of $\tau_m^{\mathfrak{a}}(h)$. For $h\in J_1^{\mathfrak{a}}\mathcal{M}$, we show that $\tau_1^{\mathfrak{a}}(h)$ is given by $\widetilde{Z}_1^{Y,\mathfrak{a}}(c(h))$ together with the diagrams without trivalent vertices in $\widetilde{Z}(c(h))$ of $\mathfrak{a}$-deg$=1$. The techniques for the proof  of Theorem E in the case $m=1$ (Theorem \ref{JTHLMOthm6.15}) and $m\geq 2$ (Theorem \ref{JTHLMOthmvera2}) are different. For $m=1$ we need to do some explicit computations of the LMO functor and a comparison between the first alternative Johnson homomorphism and the first Johnson homomorphism. For $m\geq 2$, the key point is the fact that the LMO functor defines an \emph{alternative symplectic expansion} of $\pi$. To show this, we use  a result of Massuyeau \cite{MR2903772} where he proves that the LMO functor defines a symplectic expansion of $\pi$. 

Theorem D and Theorem E provide a new reading grid of the tree reduction of the LMO functor by the alternative degree. Theorem E follows the same spirit of a result of D. Cheptea, K. Habiro and G. Massuyeau in \cite{MR2403806} and of the author in \cite {vera1} where they prove that the Johnson homomorphisms and  the Johnson-Levine homormophisms, respectively, can be read in the tree-reduction of the LMO functor.

 Notice that Theorem D holds in the context of homology cobordisms, as do the results that we use to prove Theorem E. This suggests that the alternative Johnson homomorphisms and  Theorem E could be generalized to the setting of homology cobordisms, but we have not explored this issue so far.

The organization of the paper is as follows. In Section \ref{section2} we review the definition of several spaces of  Jacobi diagrams and some operations on them as well as some explicit computations.    Section \ref{section3} deals with the Kontsevich integral and the LMO functor, in particular we do some explicit computations that are needed in the following sections. Section \ref{JTH1} and  Section \ref{JTH2} provide a detailed exposition of the alternative Johnson filtration and the alternative Johnson homomorphisms, in particular we prove Theorem A, B and~C. Finally, Section \ref{section6} is devoted to the topological interpretation of the LMO functor  through the alternative Johnson homomorphisms, in particular we prove Theorem D and Theorem E.

\para{Reading guide} The reader more interested in the mapping class group could skip Section~\ref{section2} and Section~\ref{section3} and go directly to Section~\ref{JTH1} and Section~\ref{JTH2} (skipping subsection~\ref{subsection5.4}) referring to the previous sections only when needed. The reader familiar with  the LMO functor and more interested in the topological interpretation of its  tree reduction  through the alternative Johnson homomorphisms  can go directly to Section~\ref{section3}. Then go to subsection~\ref{subsection4.3} and  subsection~\ref{sub5.2} to the necessary definitions  to read Section~\ref{section6}.

\para{Notations and conventions} All subscripts appearing in this work are non-negative integers. When we write $m\geq 0$ or $m\geq 1$ we always mean that $m$ is an integer.  We use the blackboard framing convention on all drawings of knotted objects. We usually abbreviate  simple closed curve as scc. By a Dehn twist we mean a left-handed Dehn twist.

\para{Acknowledgements} I am deeply grateful to my advisor Gw\'ena\"el Massuyeau for his encouragement, helpful advice and careful reading. I thank sincerely  Takuya Sakasai for helpful and stimulating discussions, in particular for explaining to me Remark \ref{rksak}. I would like to thank Jean-Baptiste Meilhan and Jun Murakami for their comments on a previous version of this paper. Thanks are also due to the referee for their careful reading and comments.

\section{Spaces of Jacobi diagrams and their operations}\label{section2}

In this section we review several spaces of diagrams which are the target spaces of the Kontsevich integral, LMO functor and Jonhson-type homomorphisms. We refer to \cite{MR1318886,MR1881401} for a detailed discussion on the subject.  Throughout this section let $X$ denote a compact oriented $1$-manifold (possibly empty) whose connected components are ordered and let $C$ denote a finite  set (possibly empty).

\subsection{Generalities} A \emph{vertex-oriented unitrivalent graph} is a finite graph whose vertices are univalent (\emph{legs}) or trivalent, and such that for each trivalent vertex the set of half-edges incident to it is cyclically ordered.

 A \emph{Jacobi diagram} on $(X,C)$ is a vertex-oriented unitrivalent graph whose legs are either embedded in the interior of $X$ or are colored by the $\mathbb{Q}$-vector space generated by $C$. Two Jacobi diagrams are considered to be the same if there is an orientation-preserving homeomorphism between them respecting the order of the connected components, the vertex orientation of the trivalent vertices and the colorings of the legs. For drawings of Jacobi diagrams we use solid lines to represent $X$, dashed lines to represent the unitrivalent graph and we assume that the orientation of trivalent vertices is counterclockwise. See Figure \ref{figuraJD1artalt} for some examples.

\begin{figure}[ht!]
										\centering
                        \includegraphics[scale=0.8]{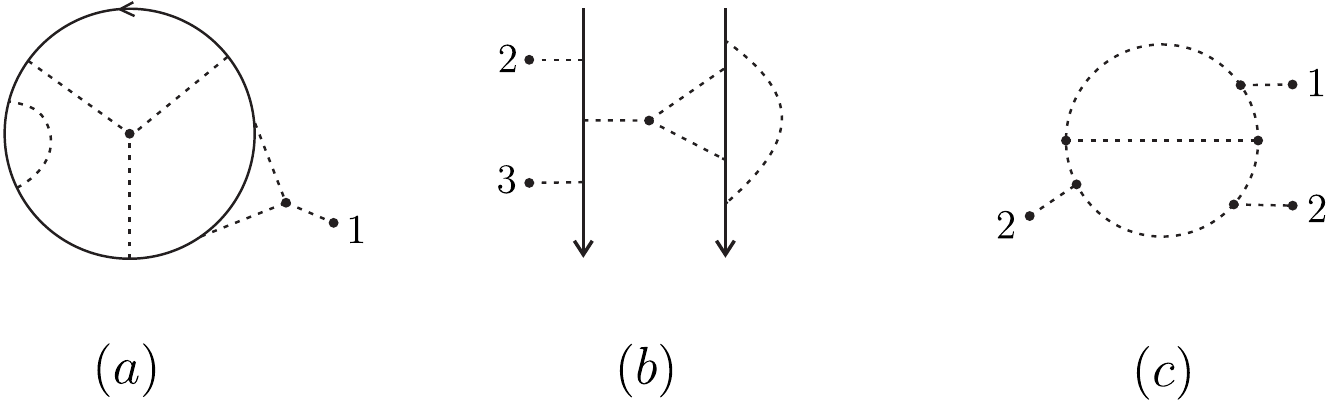}
												\caption{Jacobi diagrams with $X=\ \circlearrowleft$ in $(a)$, $X=\ \downarrow\ \downarrow$ in $(b)$ and $X=\varnothing$ in $(c)$. Here $C=\{1,2,3\}$.}
\label{figuraJD1artalt} 										
\end{figure}

The \emph{space of Jacobi diagrams on} $(X,C)$ is the $\mathbb{Q}$-vector space:

\[\mathcal{A}(X,C)=\frac{\text{Vect}_{\mathbb{Q}}\{\text{Jacobi diagrams on } (X,C)\}}{\text{STU, AS, IHX, $\mathbb{Q}$-multilinearity}},\]

\medskip

\noindent where the relations STU, AS, IHX  are local and the multilinearity relation applies to the colored legs. See Figure \ref{figuraJD2}. 

\begin{figure}[ht!]
										\centering
                        \includegraphics[scale=0.8]{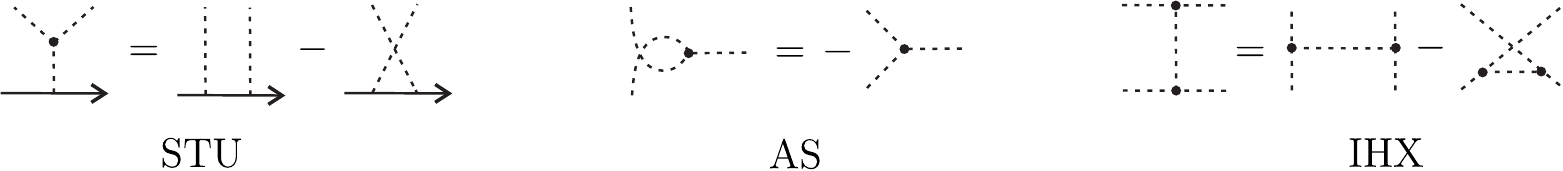}
												
\caption{Relations on Jacobi diagrams.}
\label{figuraJD2} 												
\end{figure}

If $X$ is not empty, it is well known that, for diagrams $D\in \mathcal{A}(X,C)$ such that every connected component of $D$ has at least one leg attached to $X$,  the STU relation implies the AS and IHX relations, see \cite[Theorem 6]{MR1318886}. We  can also define the space $\mathcal{A}(X,G)$ for any finitely generated  free abelian group $G$ as $\mathcal{A}(X,G)=\mathcal{A}(X,C)$, where $C$ is any finite set of free generators of $G$. If $X$ or $C$ is empty we drop it  from the notation. For  $D\in\mathcal{A}(X,C)$ we define the \emph{internal degree}, the \emph{external degree} and \emph{total degree}; denoted $\text{i-deg}(D)$, $\text{e-deg}(D)$ and $\text{deg}(D)$ respectively, as
\begin{align*}
\text{i-deg}(D):=& \text{ number of trivalent vertices of }D,\\
\text{e-deg}(D):=&\text{ number of legs of } D,\\
\text{deg}(D):= &\  \frac{1}{2}(\text{i-deg}(D)  + \text{e-deg}(D)).
\end{align*}
\noindent This way, the space  $\mathcal{A}(X,C)$ is graded with the total degree.  We still denote by $\mathcal{A}(X,C)$ its degree completion.

\begin{example}\label{ejemplo0JD} A connected Jacobi diagram in $\mathcal{A}(C)$ without trivalent vertices is called a \emph{strut}. See Figure \ref{figuraJD3artalt} $(a)$. For a matrix $\Lambda=(l_{ij})$ with entries indexed by a finite set~$C$, we define the element $\left[\Lambda\right]$ in $\mathcal{A}(C)$ by
\medskip

\centerline{\includegraphics[scale=0.85]{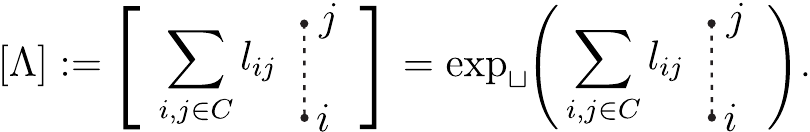}}
\end{example}

\begin{example}\label{ejemplo1JD} For a positive integer $n$, denote by $\left\lfloor n \right\rceil^*$ the set $\{1^*,\ldots, ,n^* \}$, where $*$ is one of the symbols  $+$, $-$ or $*$ itself. For instance the morphisms in the  target category of the LMO functor  are subspaces of the spaces $\mathcal{A}(\left\lfloor g\right\rceil^+\sqcup \left\lfloor f\right\rceil^-)$ for $g$ and $f$ positive integers. See Figure \ref{figuraJD3artalt} $(b)$.
\end{example}

\begin{example}\label{ejemplo2JD}  A Jacobi diagram in $\mathcal{A}(C)$ is \emph{looped} if it has a non-contractible component, see Figure \ref{figuraJD3artalt} $(b)$. The space of \emph{tree-like Jacobi diagrams} colored by $C$, denoted by $\mathcal{A}^t(C)$, is the quotient of  $\mathcal{A}(C)$ by the subspace generated by looped diagrams. The space of \emph{connected tree-like Jacobi diagrams} colored by $C$, denoted by $\mathcal{A}^{t,c}(C)$, is the subspace of $\mathcal{A}^t(C)$ spanned by connected Jacobi diagrams in $\mathcal{A}^t(C)$. For instance the spaces $\mathcal{A}^{t,c}(G)$, for $G$ some particular abelian groups, are the target of the diagrammatic versions of the Johnson-type homomorphisms. See Figure \ref{figuraJD3artalt} $(c)$ for an example of a  connected tree-like Jacobi digram.
\end{example}

\begin{figure}[ht!]
										\centering
                        \includegraphics[scale=0.8]{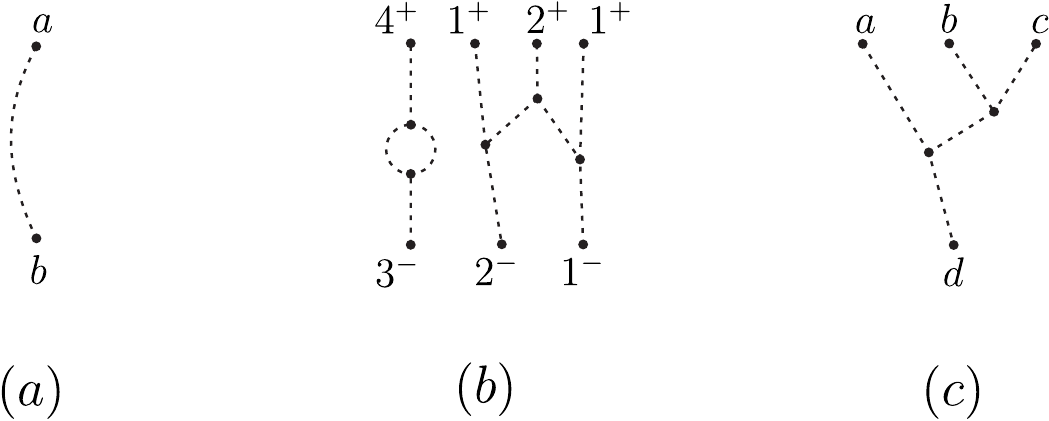}
												
\caption{$(a)$ Strut, $(b)$ Jacobi diagram in $\mathcal{A}(\left\lfloor 4\right\rceil^+\sqcup \left\lfloor 3\right\rceil^-)$,  $(c)$ Tree-like Jacobi diagram. Here $a,b,c,d\in C$ where $C$ is any finite set.}
\label{figuraJD3artalt} 										
\end{figure}

\subsection{Operations on Jacobi diagrams}\label{operationsonJD}
Let us recall some  operations on the spaces of Jacobi diagrams.

\para{Hopf algebra structure} There is a product in $\mathcal{A}(C)$ given by disjoint union, with unit the empty diagram, and a coproduct defined by $\Delta(D)=\sum D'\otimes D''$ where the sum ranges over pairs of  subdiagrams $D',D''$ of $D$ such that $D'\sqcup D''=D$. For instance:

\bigskip

\centerline{\includegraphics[scale=0.8]{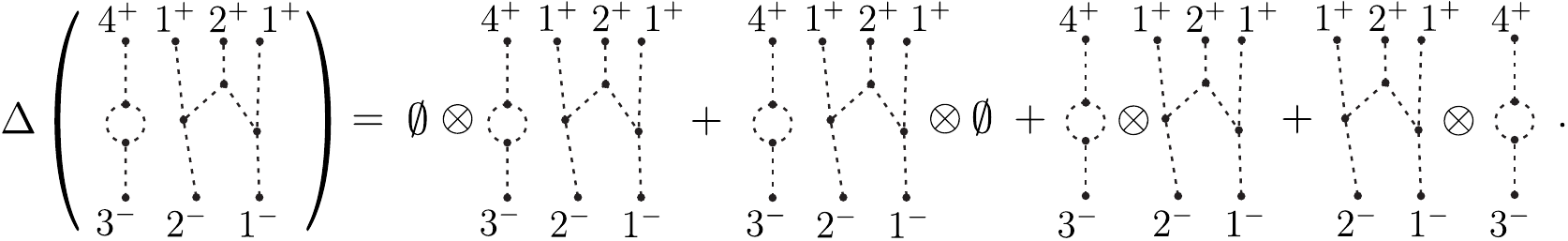}}

\medskip

\noindent With these structures $\mathcal{A}(C)$ is a  co-commutative Hopf algebra with counit the linear map $\epsilon:\mathcal{A}(C)\rightarrow \mathbb{Q}$ defined by $\epsilon(\varnothing)=1$ and $\epsilon(D)=0$ for $D\in\mathcal{A}(C)\setminus\{\varnothing\}$ and with antipode the linear map $S:\mathcal{A}(C)\rightarrow\mathcal{A}(C)$ defined by $S(D)=(-1)^{k_D} D$ where $k_D$ denotes the number of connected components of $D\in\mathcal{A}(C)$. It follows from the definition of the coproduct that the primitive part of $\mathcal{A}(C)$ is the subspace $\mathcal{A}^c(C)$ spanned by connected Jacobi diagrams.

\para{Doubling and orientation-reversal operations} Suppose  that we can decompose the $1$-manifold~$X$ as $X=\downarrow\sqcup X'=\downarrow X'$, here $X'$ can be empty. Then given a Jacobi diagram $D$ on $\downarrow X'$ it is possible to obtain new Jacobi diagrams $\Delta(D)$  on $\downarrow\sqcup X = \downarrow \downarrow X'$ and $S(D)$ on $\uparrow X'$. Let us represent the Jacobi diagram $D$ as 

\centerline{\includegraphics[scale=0.8]{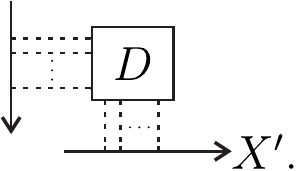}}

\noindent Then $\Delta(D)$ is defined in Figure \ref{figuraJD6artalt}, where we use the \emph{box notation} to denote the sum over all the possible ways of gluing the legs of $D$ attached to the grey box to the two  intervals involved in the grey box, in particular if there are $k$ legs attached to the grey box, there will be $2^k$ terms in the sum.
\begin{figure}[ht!]
										\centering
                        \includegraphics[scale=0.8]{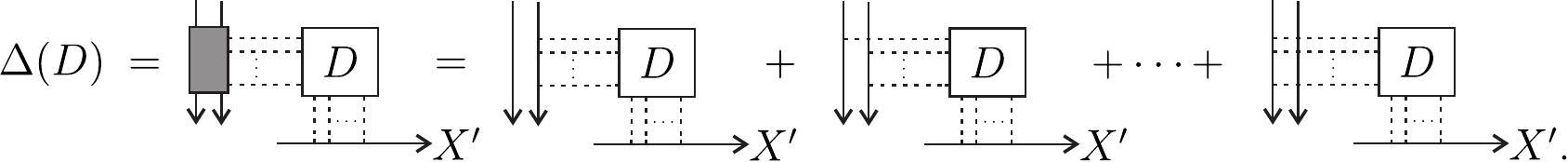}
												
\caption{Definition of the doubling map and box notation.}
\label{figuraJD6artalt} 										
\end{figure}

Besides, the Jacobi digram $S(D)$ is given in Figure \ref{figuraJD7artalt}. 
\begin{figure}[ht!]
										\centering
                        \includegraphics[scale=0.8]{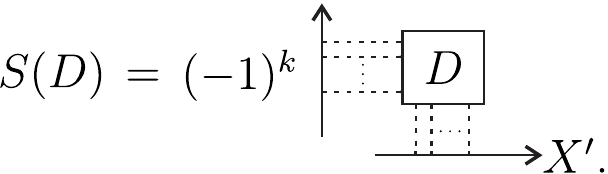}
												
\caption{Definition of orientation-reversal map. Here we suppose that there are $k$ legs attached to the chosen interval.}
\label{figuraJD7artalt} 										
\end{figure}

To sum up, we have maps
\begin{equation}\Delta:\mathcal{A}(\downarrow X')\longrightarrow \mathcal{A}(\downarrow\downarrow X')\text{\ \ \ \  and \ \ \ \ } S:\mathcal{A}(\downarrow X')\longrightarrow \mathcal{A}(\uparrow  X'), 
\end{equation}   
called \emph{doubling map} and \emph{orientation reversal map}, respectively. Observe that even if we use the same notation for the doubling map and the coproduct, the respective meaning can be deduced from the context.

\para{Symmetrization map} Let us recall the diagrammatic version of the Poincar\'e-Birkhoff-Witt isomorphism. We follow \cite{MR1318886,MR2962302} in our exposition. Let $D$ be a Jacobi diagram on $(X,C\sqcup \{s\})$. We could glue all the $s$-colored legs of $D$ to an interval $\uparrow_s$ (labelled by $s$) in order to obtain a Jacobi diagram on $(X\uparrow_s,C)$, \emph{i.e.}  there would not be any $s$-colored leg left. But there are many ways of doing this gluing, so we consider the arithmetic mean of all the possible ways of gluing the $s$-colored legs of $D$ to the interval $\uparrow_s$. This way we obtain a well defined vector space isomorphism 
\begin{equation}\label{symmap}
\chi_s:\mathcal{A}(X,C\sqcup\{s\})\longrightarrow \mathcal{A}(X\uparrow_s, C),
\end{equation}
called \emph{symmetrization map}. It is not difficult to show that the map (\ref{symmap}) is well defined, but it is more laborious to show that it is bijective, see \cite[Theorem 8]{MR1318886}. If $S=\{s_1,\ldots,s_l\}$, it is possible to define, in a similar way, a vector space isomorphism
\begin{equation*}
\chi_S:\mathcal{A}(X,C\sqcup S)\longrightarrow \mathcal{A}(X\uparrow_S, C),
\end{equation*}
where $\uparrow_S=\uparrow_{s_1}\cdots\uparrow_{s_l}$. More precisely, $\chi_S = \chi_{s_l}\circ\cdots\circ \chi_{s_1}$. 

\begin{example}\label{ejemploJDhomotopy} Fix $r\in S$. Denote by $\mathcal{H}(r)$ the subspace of $\mathcal{A}(S)$ generated by Jacobi diagrams with at least one component that is looped or that possesses at least two $r$-colored legs. Similarly, denote 
by $\mathcal{H}(\uparrow_r)$ the subspace of $\mathcal{A}(\uparrow_S)$ generated by Jacobi diagrams with at least one dashed component that is looped or that possesses at least two legs attached to $\uparrow_r$. Bar-Natan shows in \cite[Theorem 1]{MR1321289} that $\chi(\mathcal{H}(r))=\mathcal{H}(\uparrow_r)$.
\end{example}

The inverse of the symmetrization map is constructed recursively. Since we will use this inverse, let us review the definition. Let $D$ be a Jacobi diagram on $(X\uparrow_s,C)$ with~$n$ legs attached to~$\uparrow_s$. Label these legs from $1$ to $n$  following the orientation of $\uparrow_s$. For a permutation $\varsigma\in S_n$, there is a way of obtaining a Jacobi digram $\varsigma D$ on $(X\uparrow_s,C)$ by acting on the legs. For instance if  $\varsigma=(123)$ we have:

\bigskip

\centerline{\includegraphics[scale=0.8]{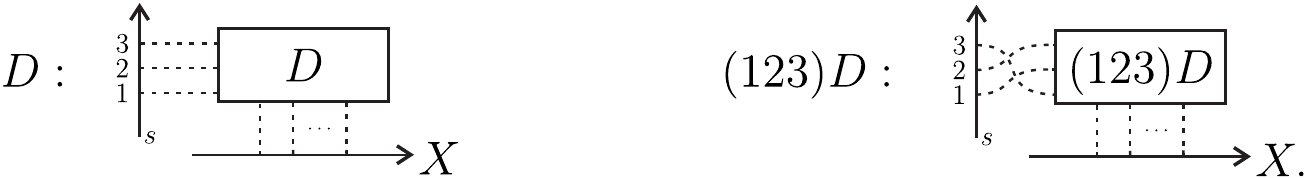}}

\medskip

\begin{theorem}\cite[Theorem 8]{MR1318886}  Let $n\geq 1$ and let $D$ be a Jacobi diagram on $(X\uparrow_s,C)$ with at most $l\leq n$ legs attached to $\uparrow_s$. Denote by $\tilde{D}$ the Jacobi diagram on $(X,C\sqcup\{s\})$ obtained from $D$ by erasing $\uparrow_s$ and coloring with $s$ all the legs that were attached to $\uparrow_s$. Set $\sigma_1(D)=\tilde{D}$ and for $n> 1$
\begin{equation*}
\sigma_n(D)=
\begin{cases}
\tilde{D}+\frac{1}{n!}\sum_{\varsigma\in S_n}\sigma_{n-1}(D-\varsigma D), \text{\ \ if\ \ }l=n, \\
\ \\
\sigma_{n-1}(D), \text{\ \ if\ \ } l<n.
\end{cases}
\end{equation*} 
Then the map 
\[\sigma:\mathcal{A}(X\uparrow_s,C)\longrightarrow \mathcal{A}(X,C\sqcup\{s\}),\]
defined by $\sigma(D)=\sigma_n(D)$ is well-defined and it is the inverse of the symmetrization map.
\end{theorem}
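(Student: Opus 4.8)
The plan is to establish two things: that the stated recursion produces a well-defined linear map $\sigma$ on $\mathcal{A}(X\uparrow_s,C)$ which does not depend on the auxiliary integer $n$, and that this $\sigma$ is a two-sided inverse of the symmetrization map $\chi=\chi_s$ (whose well-definedness has already been granted). Granting well-definedness of $\sigma$, the identity $\sigma=\chi^{-1}$ will reduce to two short telescoping computations; the real work lies in the well-definedness.

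The cornerstone of the well-definedness is a \emph{sliding lemma}: if $D$ is a Jacobi diagram on $(X\uparrow_s,C)$ with exactly $k$ legs attached to $\uparrow_s$ and $\varsigma\in S_k$, then $D-\varsigma D$ equals, in $\mathcal{A}(X\uparrow_s,C)$, a linear combination of diagrams each having at most $k-1$ legs attached to $\uparrow_s$. One proves this by factoring $\varsigma$ into adjacent transpositions and telescoping, together with the observation that swapping two adjacent legs on $\uparrow_s$ changes a diagram by a single STU relation, whose effect is to fuse the two swapped legs into one trivalent vertex and thereby lower the number of $\uparrow_s$-legs by one. This lemma is exactly what gives the terms $\sigma_{n-1}(D-\varsigma D)$ in the recursion a meaning. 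With it in hand, one shows by induction on $n$ that $\sigma_n$ descends to a well-defined linear map on the subspace spanned by diagrams with at most $n$ legs on $\uparrow_s$, and consistently so as $n$ increases; independence of $n$ is then immediate, since for a diagram $D$ with $l$ legs on $\uparrow_s$ and any $n>l$ the $l<n$ branch of the definition gives $\sigma_n(D)=\sigma_{n-1}(D)=\dots=\sigma_l(D)$.

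Checking that $\sigma_n$ respects the defining relations of $\mathcal{A}(X\uparrow_s,C)$ splits into an easy part and the crux. The relations AS, IHX and $\mathbb{Q}$-multilinearity are local and involve no leg on $\uparrow_s$ (the $C$-colored legs carrying multilinearity keep their colors), so they are visibly preserved by $D\mapsto\tilde D$ and commute with the $S_n$-action appearing in the correction terms; the same goes for an STU relation whose trivalent vertex is attached to $X$ rather than to $\uparrow_s$. The only substantial case is an STU relation $T=S_0-U_0$ located at $\uparrow_s$, with all three diagrams lying in the span of diagrams with at most $n$ legs on $\uparrow_s$ (so $T$ has at most $n-1$): one must verify $\sigma_n(T)=\sigma_n(S_0)-\sigma_n(U_0)$. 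Expanding the right-hand side through the recursion, using that $\widetilde{S_0}=\widetilde{U_0}$ because $S_0$ and $U_0$ differ only by a swap of two adjacent $\uparrow_s$-legs (invisible after recoloring by $s$), and feeding in the sliding lemma and the inductive hypothesis on $\sigma_{n-1}$, the identity follows. I expect this inductive verification of STU-compatibility to be the main obstacle; everything else is either formal or routine combinatorial bookkeeping.

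Once $\sigma$ is known to be a well-defined linear map, it remains to check $\chi\circ\sigma=\mathrm{id}$ and $\sigma\circ\chi=\mathrm{id}$ on diagrams. For $\chi\circ\sigma=\mathrm{id}$ I argue by induction on the number $n$ of legs of $D$ attached to $\uparrow_s$: the cases $n=0,1$ are immediate, and for $n\ge 2$, using $\chi(\tilde D)=\tfrac{1}{n!}\sum_{\varsigma\in S_n}\varsigma D$, the sliding lemma (so that $D-\varsigma D$ has at most $n-1$ legs on $\uparrow_s$) and the inductive hypothesis $\chi(\sigma_{n-1}(D-\varsigma D))=D-\varsigma D$, one gets
\[
\chi(\sigma_n(D))=\chi(\tilde D)+\tfrac{1}{n!}\sum_{\varsigma\in S_n}\bigl(D-\varsigma D\bigr)=\tfrac{1}{n!}\sum_{\varsigma}\varsigma D+D-\tfrac{1}{n!}\sum_{\varsigma}\varsigma D=D.
\]
For $\sigma\circ\chi=\mathrm{id}$, take a diagram $E$ on $(X,C\sqcup\{s\})$ with $n$ $s$-colored legs and write $\chi(E)=\tfrac{1}{n!}\sum_{\varsigma\in S_n}D_\varsigma$, where $D_\varsigma$ denotes $E$ with its $s$-legs glued to a fresh $\uparrow_s$ in the order $\varsigma$; since $\widetilde{D_\varsigma}=E$ for every $\varsigma$ and since $\sum_{\varsigma,\varsigma'\in S_n}\bigl(D_\varsigma-\varsigma' D_\varsigma\bigr)=0$ already at the level of the free vector space on diagrams, expanding $\sigma_n(\chi(E))$ through the recursion collapses to $E$. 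Hence $\sigma$ and $\chi$ are mutually inverse linear isomorphisms, which incidentally re-establishes that $\chi$ is bijective.
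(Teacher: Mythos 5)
The paper itself gives no proof of this statement—it is quoted directly from Bar-Natan (Theorem 8 of the cited reference)—and your outline reconstructs essentially that standard argument: the STU ``sliding'' lemma that makes $\sigma_{n-1}(D-\varsigma D)$ meaningful, the inductive well-definedness of $\sigma_n$ on the span of diagrams with at most $n$ legs on $\uparrow_s$, and the two telescoping computations giving $\chi\circ\sigma=\mathrm{id}$ and $\sigma\circ\chi=\mathrm{id}$, all of which are correct as written. The one place you assert rather than verify—compatibility with an STU relation located at $\uparrow_s$, via $\widetilde{S_0}=\widetilde{U_0}$, the sliding lemma and induction—is indeed the technical heart of the cited proof, but your reduction of it is the right one.
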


\begin{example}\label{exampleJD2.6}
\begin{align*}
\includegraphics[scale=1]{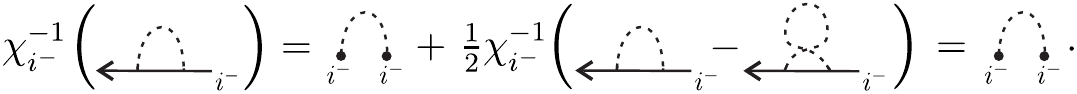}
\end{align*}
\end{example}

\begin{example}\label{exampleJD2.7}
\begin{align*}
\includegraphics[scale=1]{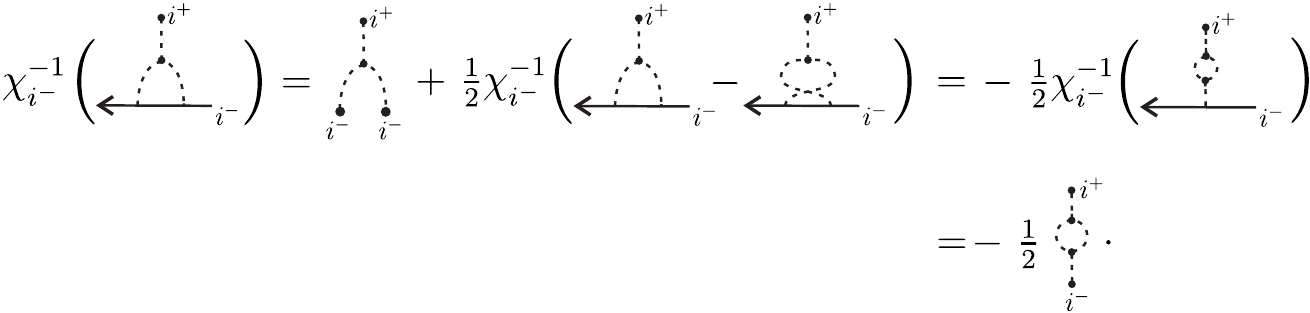}
\end{align*}
\end{example}

\begin{example}\label{exampleJD2.8}
\begin{align*}
\includegraphics[scale=1]{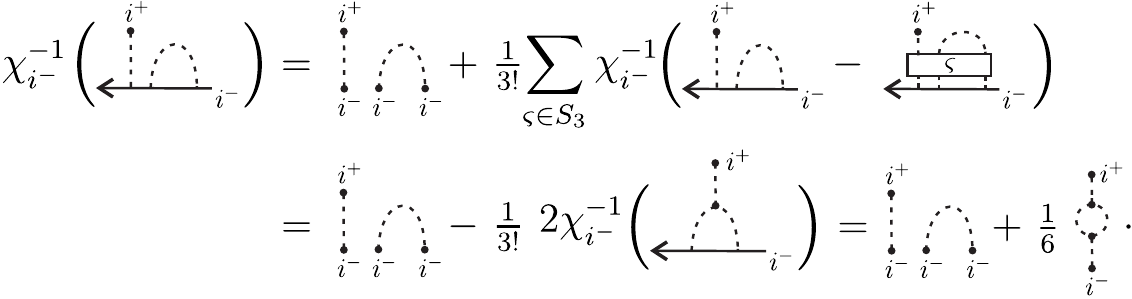}
\end{align*}
In the last equality we used Example \ref{exampleJD2.7}.
\end{example}

\begin{example}\label{exampleJD2.9} We are usually interested in the reduction modulo looped diagrams. We use the symbol $\equiv$ to indicate an equality modulo looped diagrams. Using the previous examples, it is possible to show 
\begin{align*}
\includegraphics[scale=1]{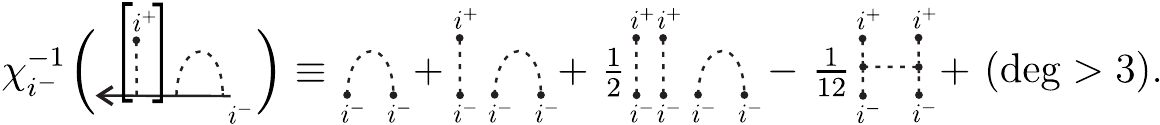}
\end{align*}
Here the square brackets stand for an exponential, more precisely
\begin{align*}
\includegraphics[scale=1]{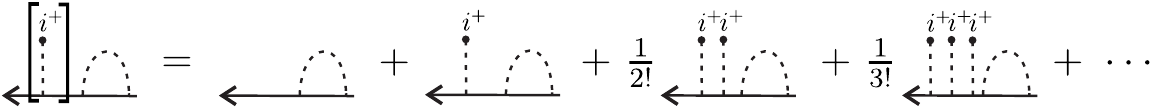}
\end{align*}
\end{example}

\section{The Kontsevich integral and the LMO functor}\label{section3}
In this section we review the combinatorial definition of the Kontsevich integral from~\cite{MR1881401,lenotes}. We also recall the construction of the LMO functor following \cite{MR2403806}. We focus on particular examples, which will play an important role in the next sections,  rather that in a detailed exposition on the subject. 

\subsection{Kontsevich Integral}\label{KI}
Let us start by recalling some basic notions. Consider the cube $[-1,1]^3\subseteq\mathbb{R}^3$ with coordinates $(x,y,z)$. A \emph{framed tangle} in $[-1,1]^3$ is a compact oriented framed $1$-manifold $T$ properly embedded in  $[-1,1]^3$ such that the boundary~$\partial T$ (the \emph{endpoints} of $T$) is uniformly distributed along $\{0\}\times [-1,1]\times\{\pm 1\}$ and the framing on the endpoints of $T$ is the vector $(0,1,0)$. We draw diagrams of framed tangles using the blackboard framing convention.
Let $T$  be a framed tangle. Denote by~$\partial_t T$ the endpoints of $T$ lying in $\{0\}\times [-1,1]\times\{+1\}$, we call $\partial_t T$ the \emph{top boundary} of $T$. Similarly, $\partial_bT$ of $T$ denotes  the \emph{bottom boundary}. 

We can associate words $w_t(T)$ and $w_b(T)$ on $\{+,-\}$ to $\partial_tT$ and $\partial_bT$ as follows. To an endpoint of $T$ we associate  $+$  if the orientation of $T$ goes downwards at that endpoint,  and $-$ if the orientation of $T$ goes upwards at that endpoint. The words $w_t(T)$ and $w_b(T)$ are obtained by reading the corresponding signs in the positive direction of the~$y$ coordinate. See Figure (\ref{figuraKI2artalt}) $(a)$ for an example of a tangle with its corresponding words.

 We consider \emph{non-associative words} on $\{+,-\}$, that is, words on $\{+,-\}$ together with a parenthesization (formally an element of the \emph{free magma} generated by $\{+,-\}$). For instance $((+-)+)$ and $(+(-+))$ are the two possible non-associative words obtained from the word $+-+$.  From now on we omit the outer parentheses. A $q$\emph{-tangle} is a framed tangle whose top and bottom words are endowed with a parenthesization. See Figure (\ref{figuraKI2artalt}) $(b)$ and $(c)$ for two different parenthesizations of the same  framed tangle.

\begin{figure}[ht!]
										\centering
                        \includegraphics[scale=0.9]{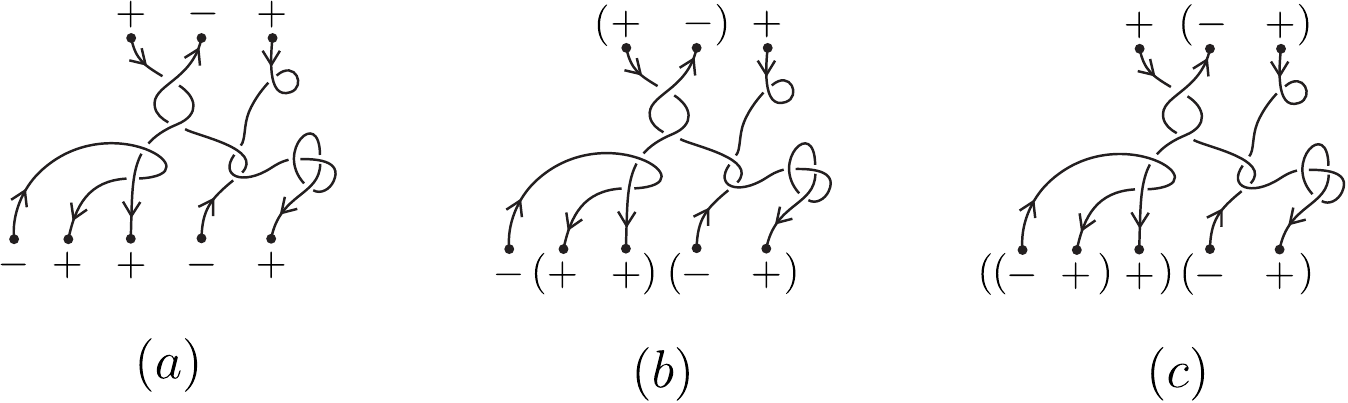}
												
\caption{A framed tangle and two different $q$-tangles obtained from it.}
\label{figuraKI2artalt} 								
\end{figure}

To define the Kontsevich integral it is  necessary to fix a particular element $\Phi\in\mathcal{A}(\downarrow\downarrow\downarrow)$ called an \emph{associator}. The element $\Phi$ is an exponential series of Jacobi diagrams satisfying several conditions, among these, one ``pentagon" and two ``hexagon" equations; see \cite[(6.11)--(6.13)]{MR1881401}.  From now on we fix an \emph{even Drinfeld associator} $\Phi$, see \cite[Corollary 4.2]{MR1669949} for the definition and existence. In low degree we have:
\begin{align*}
\includegraphics[scale=1]{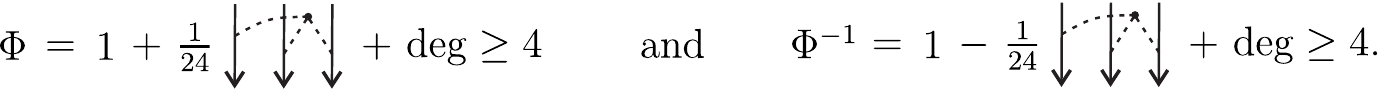}
\end{align*}

\noindent Here $1$ means $\downarrow\downarrow\downarrow$. The Kontsevich integral is defined so that:
\begin{equation}
\begin{aligned}\label{functorialityZ}
Z(T_1\circ T_2)&= Z(T_1)\circ Z(T_2), \\
Z(T_1\otimes T_2)&=Z(T_1)\otimes Z(T_2);
\end{aligned}
\end{equation}
where the composites $T_1\circ T_2$ and $Z(T_1)\circ Z(T_2)$ and the tensor products $T_1\otimes T_2$ and  $Z(T_1)\otimes Z(T_2)$ are defined by vertical and horizontal juxtaposition of $q$-tangles and Jacobi diagrams, respectively. Now every $q$-tangle can be expressed as the composition of tensor products of some \emph{elementary $q$-tangles}, so it is enough to define the Kontsevich integral on these $q$-tangles. Set
\begin{align*}
\includegraphics[scale=1]{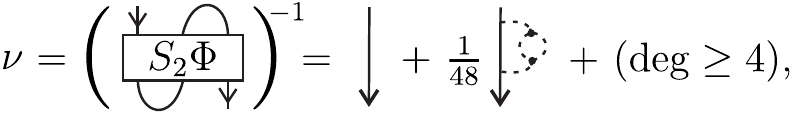}
\end{align*}
where $S_2$ is the orientation-reversal map applied to the second interval. The Kontsevich integral is defined on the elementary $q$-tangles as follows:
\begin{align*}
\includegraphics[scale=1]{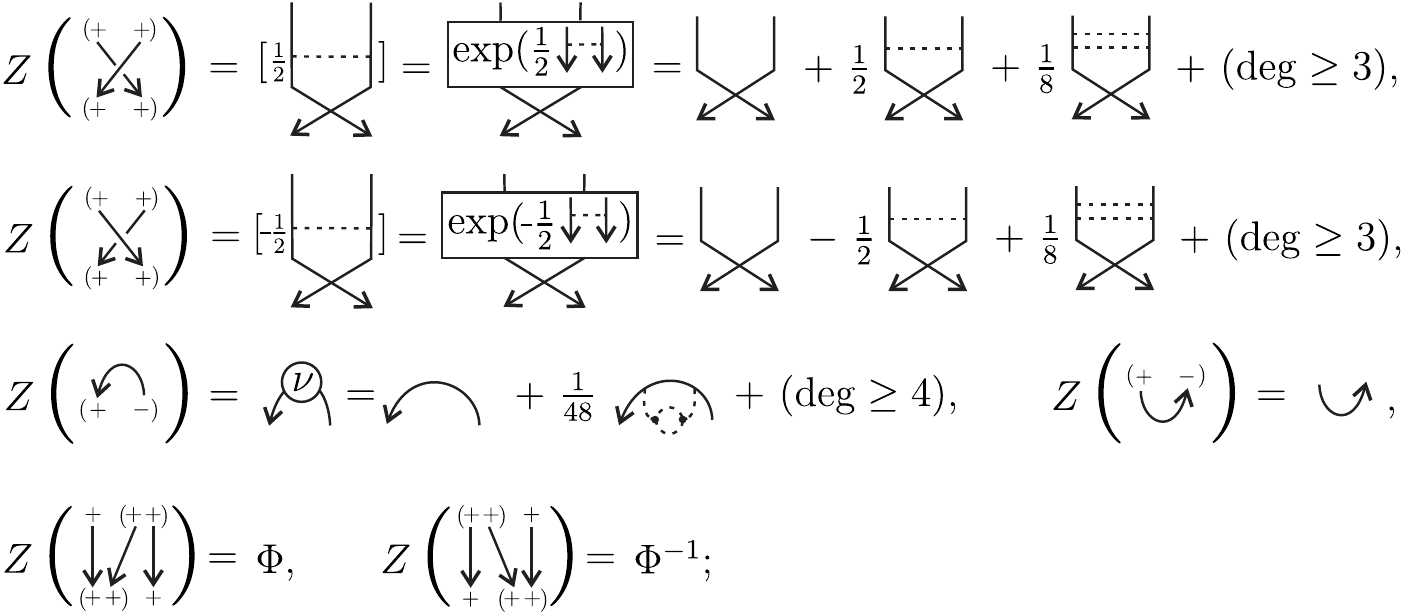}
\end{align*}
and for elementary $q$-tangles of the form
\begin{align*}
\includegraphics[scale=1]{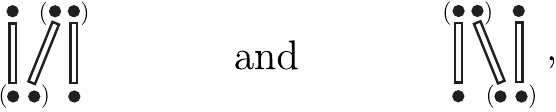}
\end{align*}
\noindent where the thick lines represent a trivial tangle and the black dots some non-associative words on $\{+,-\}$, the Kontsevich integral is defined by using the doubling and orientation reversal maps, see subsection \ref{operationsonJD}, for instance 
\begin{align*}
\includegraphics[scale=1]{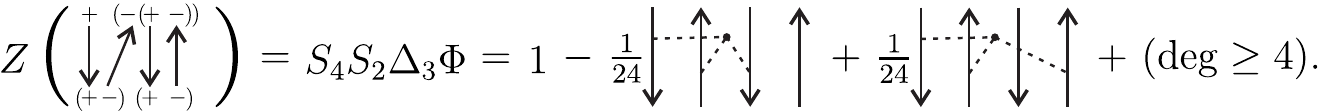}
\end{align*}
Here the subscripts indicate the interval to which the operation is applied. It is known that $Z$ is well defined and is an isotopy invariant of $q$-tangles, see \cite{MR1328252,MR1394520}. For a $q$-tangle~$T$, we denote by $Z^t(T)$ the reduction of $Z(T)$ modulo looped diagrams, see Example \ref{ejemplo2JD}.

\begin{example}\label{ejemplo1KI} 
\begin{align*}
\includegraphics[scale=1]{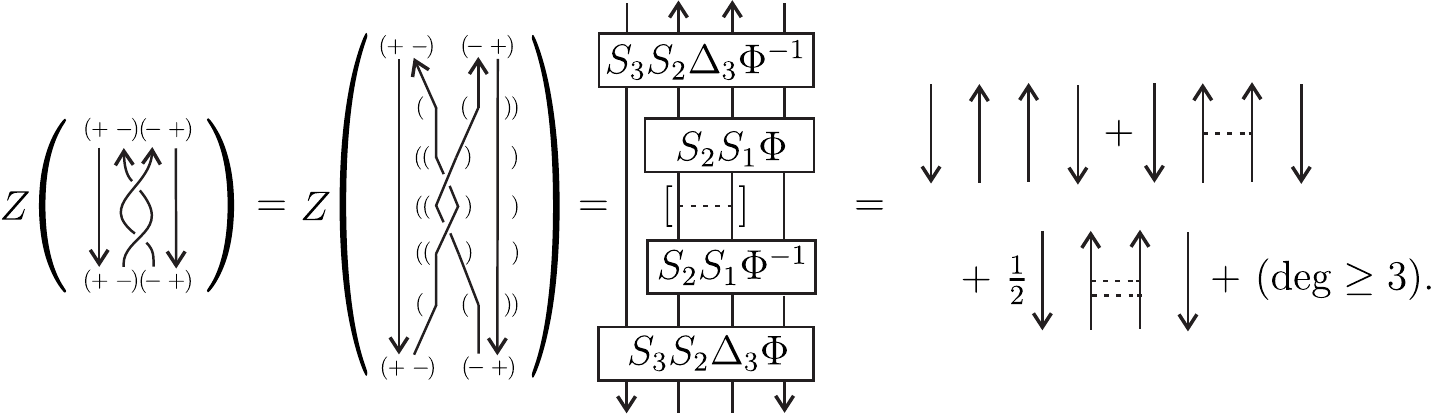}
\end{align*}
\end{example}

\begin{example}\label{ejemplo2KI} 
\begin{align*}
\includegraphics[scale=1]{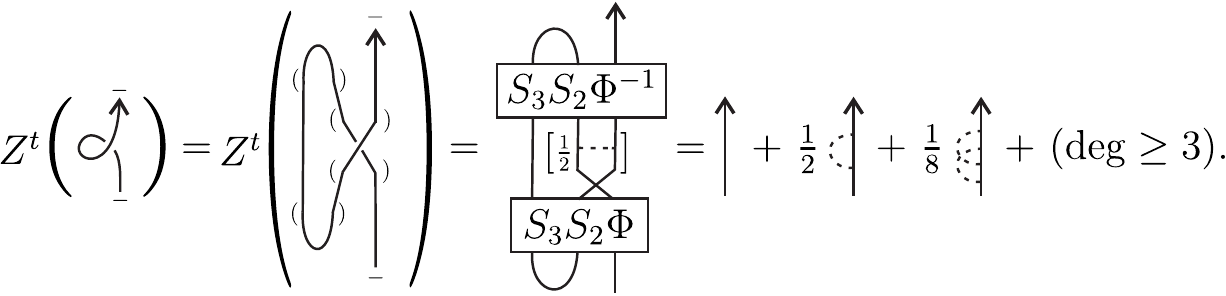}
\end{align*}
\end{example}

\begin{example}\label{ejemplo3KI} Using Examples \ref{ejemplo1KI}, \ref{ejemplo2KI} and Equations (\ref{functorialityZ}) we have
\begin{align*}
\includegraphics[scale=1]{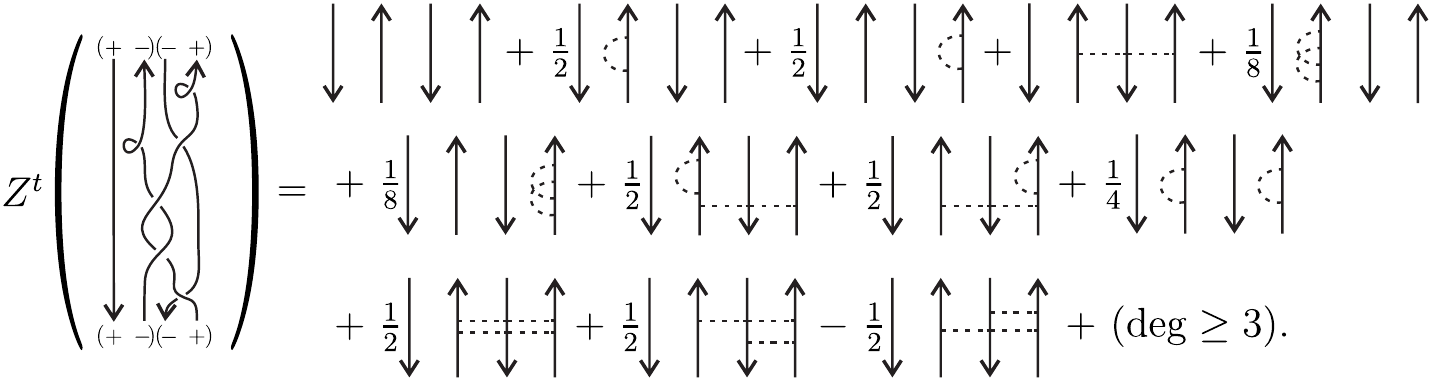}
\end{align*}
\end{example}

\begin{example}\label{ejemplo4KI} Recall the space $\mathcal{H}(\uparrow_r)$ defined in Example \ref{ejemploJDhomotopy}. We have
\begin{align*}
\includegraphics[scale=1]{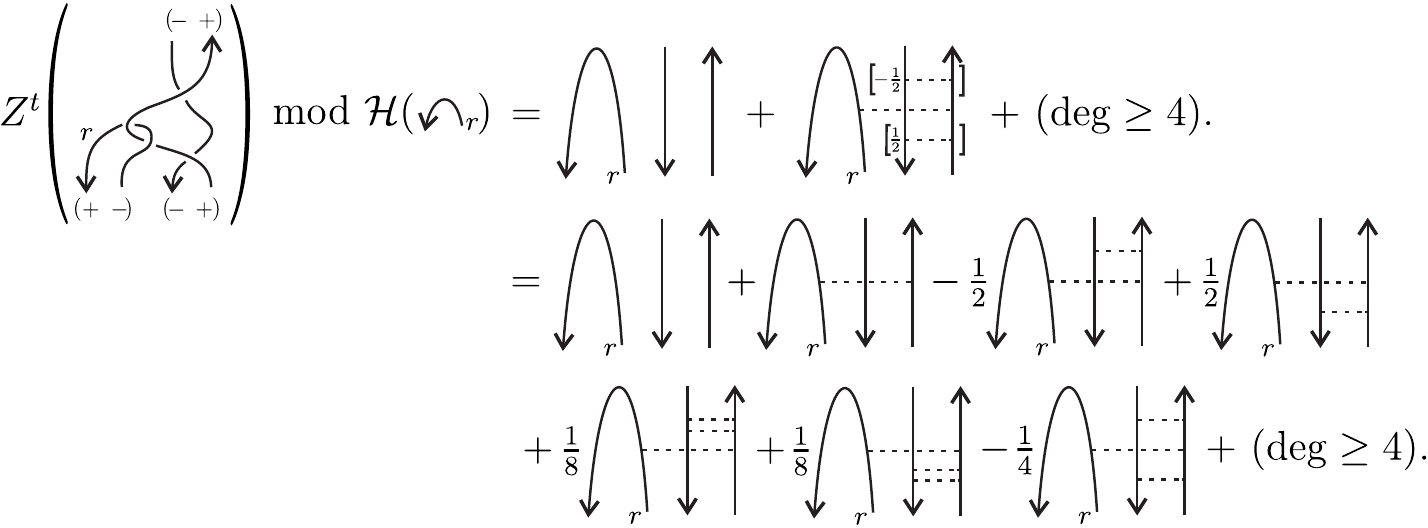}
\end{align*}
\end{example}

\medskip

\subsection{The LMO functor}\label{subsectionLMOfunctor}
This subsection is devoted to a brief description of the LMO functor $\widetilde{Z}:\mathcal{LC}ob_q\rightarrow {}^{ts}\!\!\mathcal{A}$ and principally to explicit computations which will be useful in the following sections. We refer to \cite{MR2403806} for more details. Throughout this subsection we denote by  $\Sigma_{g,1}$  a compact connected oriented surface of genus $g$ with one boundary component for each non-negative integer $g$, see Figure \ref{figuraLMO2artalt}. 

\para{Homology cobordisms and their bottom-top tangle presentation} Let us start with some preliminaries.
A \emph{homology cobordism} of $\Sigma_{g,1}$ is the equivalence class of a pair $M=(M,m)$, where $M$ is a compact connected oriented $3$-manifold and $m:\partial(\Sigma_{g,1}\times[-1,1])\rightarrow\partial M$ is an orientation-preserving homeomorphism, such that the \emph{bottom} and \emph{top} inclusions
$m_{\pm}(\cdot):=m(\cdot,\pm1):\Sigma_{g,1}\rightarrow M$ induce isomorphisms in homology. Two pairs $(M,m)$ and $(M',m')$ are \emph{equivalent} if there exists an orientation-preserving homeomorphism $\varphi:M\rightarrow M'$ such that $\varphi\circ m=m'$. The \emph{composition} $(M,m)\circ (M',m')$ of two homology cobordisms $(M,m)$ and $(M',m')$ of $\Sigma_{g,1}$ is the equivalence class of the pair $(\widetilde{M},m_-\cup m'_+)$, where $\widetilde{M}$ is obtained by gluing the two $3$-manifolds $M$ and $M'$ by using the map $m_+\circ(m'_-)^{-1}$. This composition is associative and has as identity element the equivalence class of the trivial cobordism $( \Sigma_{g,1}\times[-1,1],  \text{Id})$.  Denote by $\mathcal{C}_{g,1}$  the \emph{monoid of homology cobordisms} of $\Sigma_{g,1}$. This notion plays an important role in the theory of finite-type invariants as shown independently by M. Goussarov in~\cite{MR1715131} and K. Habiro in \cite{MR1735632}.

\begin{example}\label{ejemploLMO1} Denote by $\mathcal{M}_{g,1}$ the \emph{mapping class group} of  $\Sigma_{g,1}$, \emph{i.e.} the group of isotopy classes of orientation-preserving homeomorphisms of $\Sigma_{g,1}$ that fix the boundary $\partial\Sigma_{g,1}$ pointwise. This group can be embedded into $\mathcal{C}_{g,1}$ by associating to any $h\in\mathcal{M}_{g,1}$ the homology cobordism, called \emph{mapping cylinder}, $c(h)=(\Sigma_{g,1}\times[-1,1], m^h)$, where $m^h:\partial(\Sigma_{g,1}\times[-1,1])\rightarrow\partial (\Sigma_{g,1}\times[-1,1])$ is the orientation-preserving homeomorphism defined by $m^h(x,1)=(h(x),1)$ and  $m^h(x,t)=(x,t)$ for $t\not=1$. This way we have an injective map $c:\mathcal{M}_{g,1}\rightarrow \mathcal{C}_{g,1}$. The submonoid $c(\mathcal{M}_{g,1})$ is precisely the group of  invertible elements of $\mathcal{C}_{g,1}$, see \cite[Proposition 2.4]{MR2952770}. 
\end{example}

There is a more general notion of cobordism. For $g,f\geq 0$ let $C^g_f$ denote the compact oriented $3$-manifold obtained from $[-1,1]^3$ by adding $g$ (respectively $f$) $1$-handles  along $[-1,1]\times [-1,1]\times \{+1\}$ (respectively along $[-1,1]\times [-1,1]\times \{-1\}$), uniformly in the~$y$ direction. A \emph{cobordism} from $\Sigma_{g,1}$ to $\Sigma_{f,1}$ is the homeomorphism class relative to the boundary of a pair $(M,m)$, where $M$ is a compact connected oriented $3$-manifold and $m:\partial C^g_f\rightarrow \partial M$  is an orientation-preserving homeomorphism.

Given  a homology cobordism $(M,m)$ of $\Sigma_{g,1}$; or more generally a cobordism from $\Sigma_{g,1}$ to $\Sigma_{f,1}$. We can associate a particular kind of tangle whose components split in $f$ bottom components and $g$ top components (they are called \emph{bottom-top tangles} in~\cite{MR2403806}). The association is defined as follows. First fix a system of meridians and parallels $\{\alpha_i,\beta_i\}$ on $\Sigma_{g,1}$ for each non-negative integer $g$  as shown in Figure \ref{figuraLMO2artalt}. 

\begin{figure}[ht!] 
										\centering
                        \includegraphics[scale=0.7]{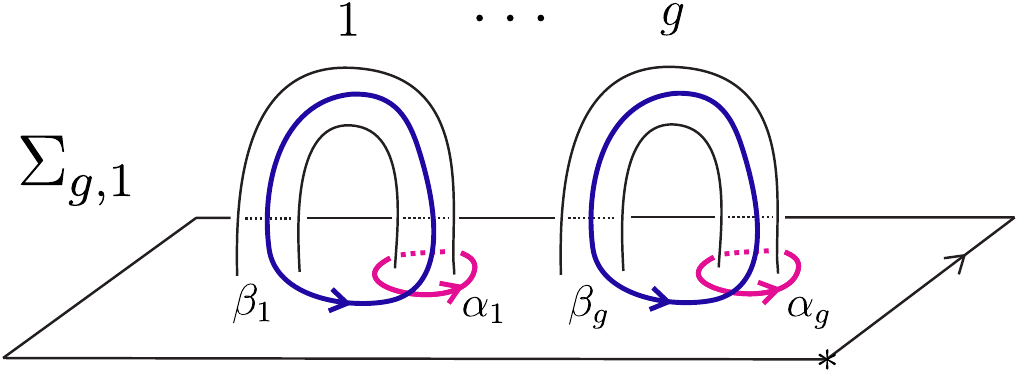}
												\caption{System of meridians and parallels $\{\alpha_i,\beta_i\}$ on $\Sigma_{g,1}$.}								\label{figuraLMO2artalt}
\end{figure}

\noindent Then  attach $g$  $2$-handles (or $f$ in the case of a cobordism from $\Sigma_{g,1}$ to $\Sigma_{f,1}$) on the bottom surface of $M$ by sending the cores of the $2$-handles to the curves $m_-(\alpha_i)$. In the same way,  attach $g$ $2$-handles on the top surface of $M$ by sending the cores to the curves $m_+(\beta_i)$. This way we obtain a compact connected oriented $3$-manifold $B$ and an orientation-preserving homeomorphism $b:\partial([-1,1]^3)\rightarrow\partial B$. The pair $B=(B,b)$ together with the cocores of the $2$-handles, determine a tangle $\gamma$ in $B$. We call the homeomorphism class relative to the boundary of the pair $(B,\gamma)$, still denoted in the same way,  the \emph{bottom-top tangle presentation} of $(M,m)$. Following the positive direction of the $y$ coordinate, we label the bottom components of $\gamma$ with $1^-$, $\ldots$, $f^-$ and the top components with $1^+$, $\ldots$, $g^+$, respectively. This procedure is sketched in Example \ref{ejemploLMO0}.
 
\begin{example}\label{ejemploLMO0} In Figure\ref{figuraLMO3artalt} we illustrate the procedure to obtain  the bottom-top tangle presentation of the trivial cobordism $\Sigma_{g,1}\times[-1,1]$.
\begin{figure}[ht!] 
										\centering
                        \includegraphics[scale=0.7]{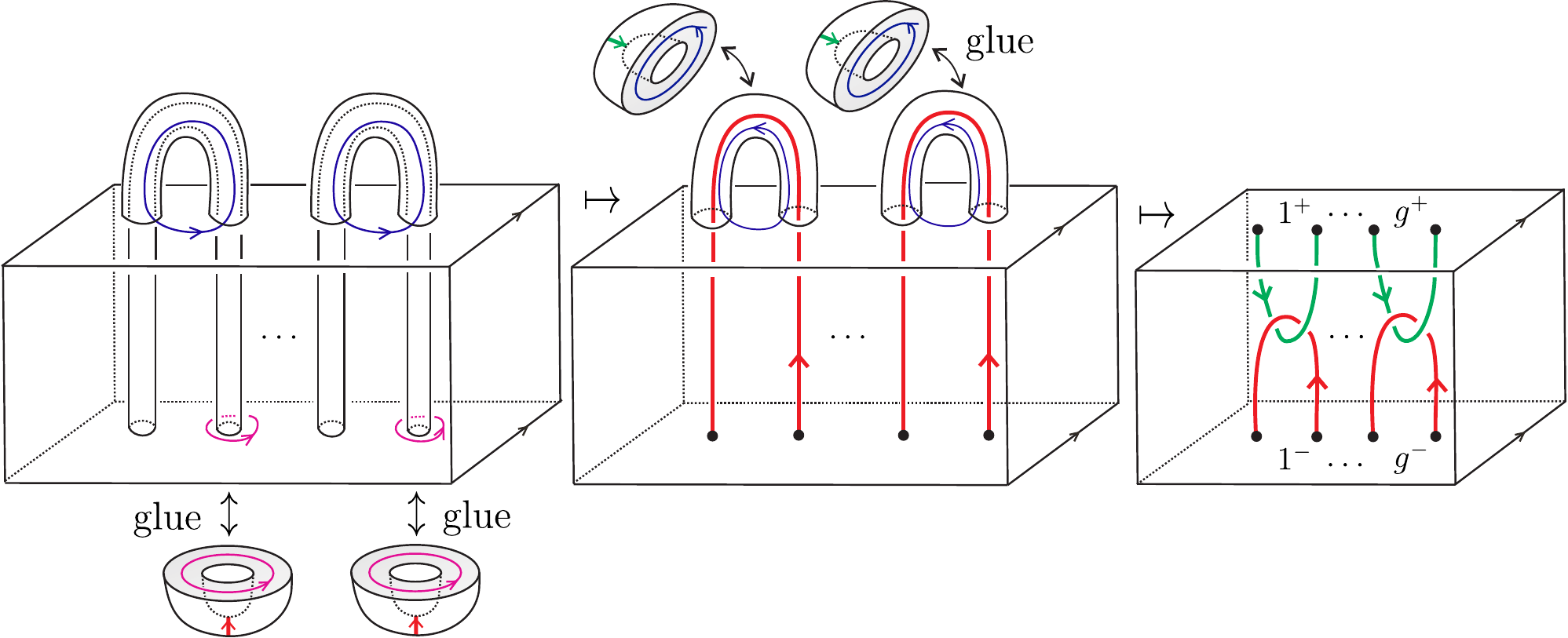}
												\caption{Obtaining the bottom-top tangle presentation of the trivial cobordism $\Sigma_{g,1}\times[-1,1]$.}
\label{figuraLMO3artalt}
\end{figure}
\end{example}

\para{Lagrangian Cobordisms} Let us now roughly describe the source category $\mathcal{LC}ob$ of the LMO functor.  For each non-negative integer $g$, let $H_g=H_1(\Sigma_{g,1};\mathbb{Z})$ be the first homology group of $\Sigma_{g,1}$ with integer coefficients, and $\omega:H_g\otimes H_g\rightarrow \mathbb{Z}$ the intersection form.  Denote by $A_g$ the subgroup of $H_g$ generated by the homology classes of the meridians $\{\alpha_i\}$. This is a Lagrangian subgroup of $H_g$ with respect to the intersection form. Let $V_g$ be a handlebody of genus $g$ obtained from $\Sigma_{g,1}$ by attaching $g$ $2$-handles by sending the cores of the $2$-handles  to the meridians $\alpha_i$'s, in particular the curves $\alpha_i$ bound pairwise disjoint disks in $V_g$. We also see $V_g$ as a cobordism from $\Sigma_{g,1}$ to $\Sigma_{0,1}$,  see Figure \ref{figuraLMO4artalt}. Thus we can also see $A_g$ as  $A_g=\text{ker}(H_g\rightarrow H_1(V_g;\mathbb{Z}))$.

\begin{figure}[ht!] 
\centering
\includegraphics[scale=0.75]{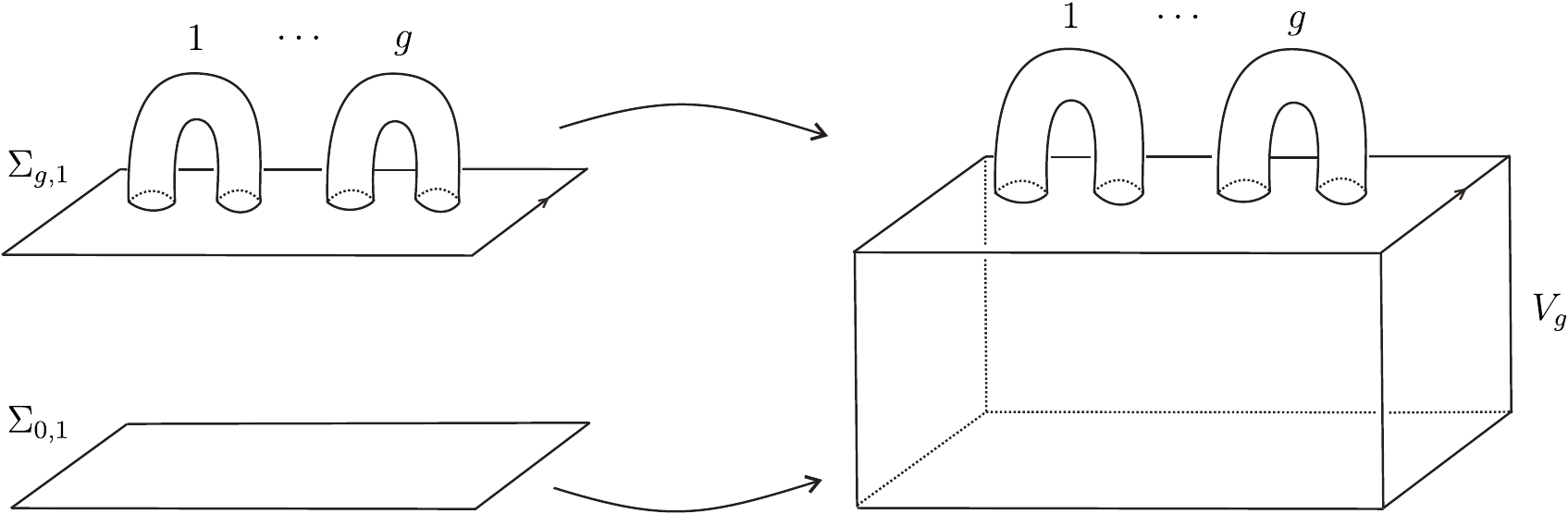}
\caption{Handlebody  $V_g$ as a cobordism from $\Sigma_{g,1}$ to $\Sigma_{0,1}$.}											\label{figuraLMO4artalt}
\end{figure}

\begin{definition}\cite[Definitions 2.4 and 2.6]{MR2403806}
A cobordism $(M,m)$ from $\Sigma_{g,1}$ to $\Sigma_{f,1}$ is said to be \emph{Lagrangian} if it satisfies:
\begin{itemize}
\item $H_1(M;\mathbb{Z})=m_{-,*}(A_f)+m_{+,*}(H_g)$,
\item $m_{+,*}(A_g)\subseteq m_{-,*}(A_f)$ in $H_1(M;\mathbb{Z})$.
\end{itemize}
Moreover, $(M,m)$ is said to be \emph{special Lagrangian} if it additionally satisfies $V_f\circ M=V_g$ as cobordisms.
\end{definition}

Let $M$ be a Lagrangian cobordism and $(B,\gamma)$ its bottom-top tangle presentation. It follows, from a Mayer-Vietoris argument, that $B$ is a \emph{homology cube}, \emph{i.e.} $B$ has the same homology groups as the standard cube $[-1,1]^3$, see \cite[Lemma 2.12]{MR2403806}. Notice that the definition of $q$-tangle in $[-1,1]^3$ given in subsection \ref{KI} extends naturally to $q$-tangles in  homology cubes.

Let us now define the category $\mathcal{LC}ob$. The objects of  $\mathcal{LC}ob$ are the non-negative integers and the set of morphisms $\mathcal{LC}ob(g,f)$ from $g$ to $f$ are Lagrangian cobordisms from $\Sigma_{g,1}$ to $\Sigma_{f,1}$. Denote by ${}^{s}\!\mathcal{LC}ob(g,f)$ the morphisms from $g$ to $f$ which are special Lagrangian.

\begin{example}\label{ejemploLMO2} Let $h\in\mathcal{M}_{g,1}$. Then the mapping cylinder $c(h)$ is Lagrangian if and only if $h(A_g)\subseteq A_g$. Moreover, $c(h)$ is special Lagrangian if and only if $h$ can be extended to a self-homeomorphism of the handlebody $V_g$.
\end{example}

 Let us consider some particular cases of the mapping cylinders described in Example~\ref{ejemploLMO2}. Let $\gamma$ be a simple closed curve on $\Sigma_{g,1}$ and denote by $t_{\gamma}$ the (left) Dehn twist along~$\gamma$. Recall that the mapping cylinder $c(t_{\gamma})$ can be obtained from the trivial cobordism $\Sigma_{g,1}\times[-1,1]$  by performing a surgery along a $(-1)$-framed knot in a neighbourhood of a push-off of the curve $\gamma$ in $\Sigma_{g,1}\times[-1,1]$, see for instance \cite[Lemma 8.5]{MR1881401}. In particular we can obtain the bottom-top tangle presentation of $c(t_{\gamma})$ from that of $\Sigma_{g,1}\times[-1,1]$, see Examples \ref{ejemploLMO3}, \ref{ejemploLMO4} and \ref{ejemploLMO4.5}.

\begin{example}\label{ejemploLMO3} Let $t_{\alpha_i}$ be the  Dehn twist along a meridian curve $\alpha_i$. Then $c(t_{\alpha_i})\in{}^s\!\mathcal{LC}ob(g,g)$.  Figure \ref{figuraLMO5artalt} $(a)$ shows the bottom-top tangle presentation of the trivial cobordism $\Sigma_{g,1}\times[-1,1]$ (in thin line) together with a $(-1)$-framed knot (in thick line)  such that the surgery along this knot gives the bottom-top tangle presentation of $c(t_{\alpha_i})$ showed in Figure \ref{figuraLMO5artalt} $(b)$. Notice that going from Figure \ref{figuraLMO5artalt} $(a)$   to  Figure \ref{figuraLMO5artalt} $(b)$ is exactly a Fenn-Rourke move.

\begin{figure}[ht!]
										\centering
                        \includegraphics[scale=0.9]{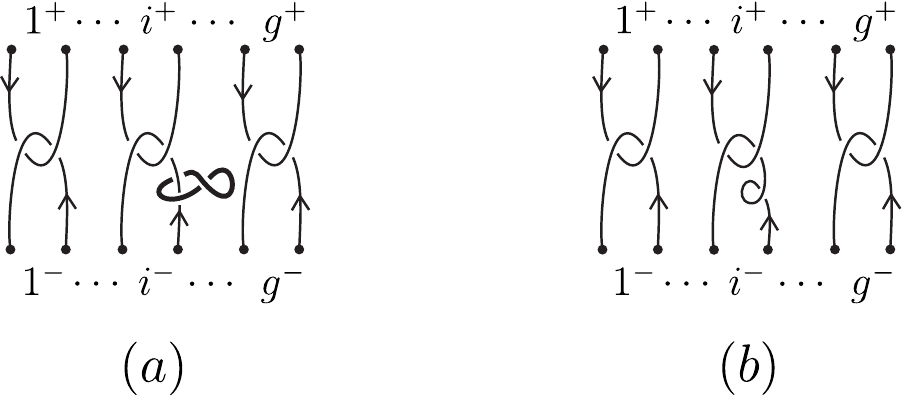}									\caption{Bottom-top tangle presentation of $c(t_{\alpha_i})$.}
\label{figuraLMO5artalt} 										
\end{figure}
\end{example}

\begin{example}\label{ejemploLMO4} Let $\alpha_{12}$ be the curve shown in Figure  \ref{figuraLMO6artalt} $(a)$ and let $t_{\alpha_{12}}$ be the  Dehn twist along $\alpha_{12}$. We have $c(t_{\alpha_{12}})\in{}^s\!\mathcal{LC}ob(g,g)$.   As in Example \ref{ejemploLMO3},  Figure \ref{figuraLMO6artalt} $(c)$  shows the bottom-top tangle presentation of $c(t_{\alpha_{12}})$ obtained by surgery along the thick component in Figure \ref{figuraLMO6artalt} $(b)$. 

\begin{figure}[ht!]
										\centering
                        \includegraphics[scale=0.9]{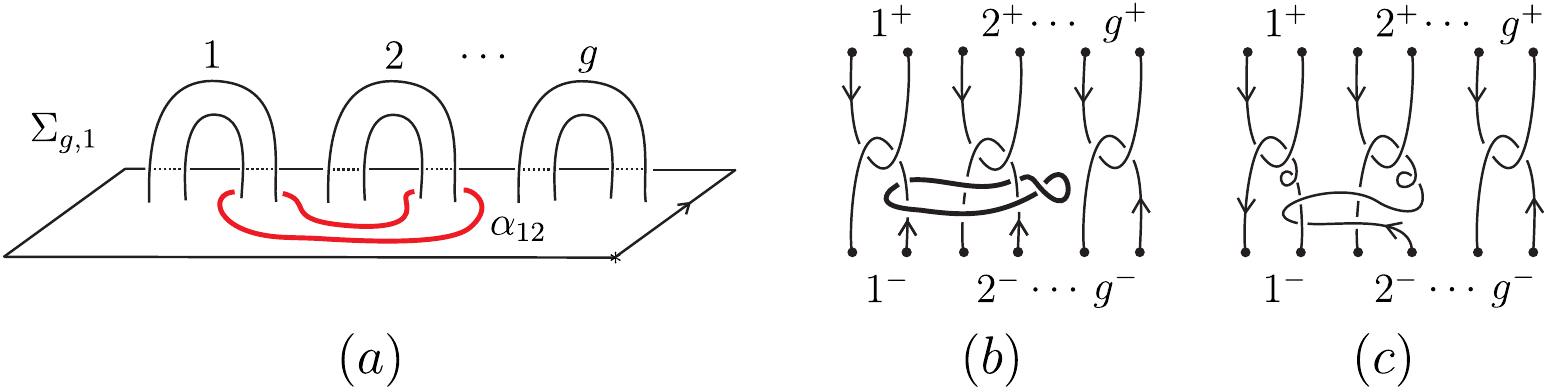}
												
\caption{$(a)$ Curve $\alpha_{12}$ and $(c)$ bottom-top tangle presentation of $c(t_{\alpha_{12}})$.}
\label{figuraLMO6artalt} 								
\end{figure}
\end{example}

\begin{example}\label{ejemploLMO4.5} Example \ref{ejemploLMO4} can be generalized. Consider two  integers $k$ and $l$ with   $1\leq k < l \leq g$. Let  $\alpha_{kl}$ be the simple closed curve which turns around the $k$-th handle and the $l$-th handle as shown in Figure \ref{figuraJTHLMO8artalt} $(a)$. Consider the Dehn twist  $t_{\alpha_{kl}}$ along $\alpha_{kl}$. We have $c(t_{\alpha_{kl}})\in{}^s\!\mathcal{LC}ob(g,g)$.   Figure \ref{figuraJTHLMO8artalt} $(b)$  shows the bottom-top tangle presentation of $c(t_{\alpha_{kl}})$.

\begin{figure}[ht!]
										\centering
                        \includegraphics[scale=0.9]{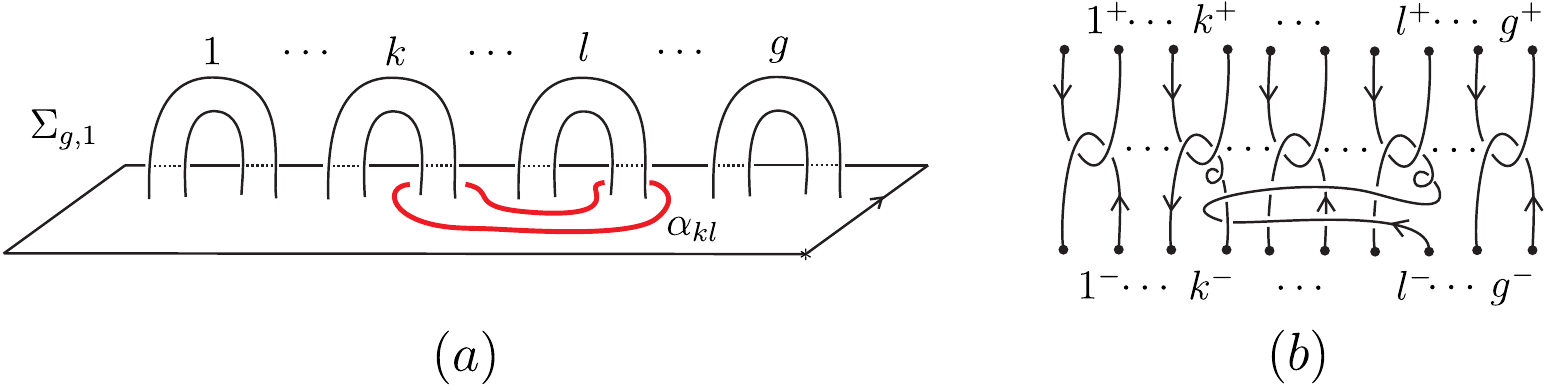}
												
\caption{$(a)$ Curve $\alpha_{kl}$ and $(b)$ bottom-top tangle presentation of $c(t_{\alpha_{kl}})$.}
\label{figuraJTHLMO8artalt} 						
\end{figure}
\end{example}

\begin{example}\label{ejemploLMO5} Let $N_i$ be  the cobordism from $\Sigma_{g,1}$ to $\Sigma_{g+1,1}$ with the bottom-top tangle presentation shown in Figure \ref{figuraLMO7artalt}. Then $N_i$ is a special Lagrangian cobordism. The label  $r$ on the first (from left to right) bottom component stands for \emph{root}. This is because from these cobordisms we will obtain, via the LMO functor, rooted trees with root $r$ that we will interpret as Lie commutators. See subsection \ref{section6.3}.
\begin{figure}[ht!]
										\centering
                        \includegraphics[scale=0.9]{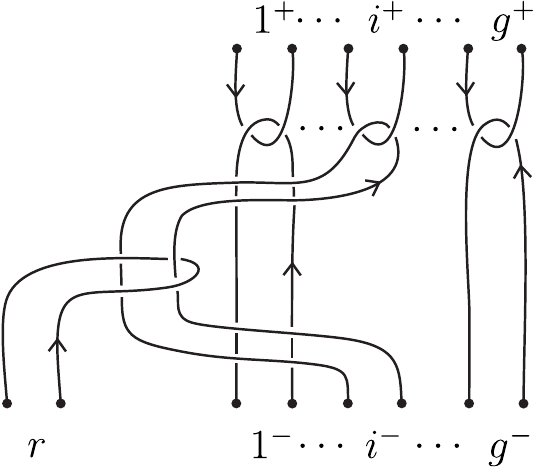}
												
\caption{Bottom-top tangle presentation of $N_i\in{}^s\!\mathcal{LC}ob(g,g+1)$.}
\label{figuraLMO7artalt} 										
\end{figure}
\end{example}

\para{Top-substantial Jacobi diagrams} Let us now describe the target category  ${}^{ts}\!\!\mathcal{A}$ of the LMO functor. The objects of the category ${}^{ts}\!\!\mathcal{A}$ are the non-negative integers. The set of morphisms from $g$ to $f$ is the subspace ${}^{ts}\!\!\mathcal{A}(g,f)$ of diagrams in $\mathcal{A}(\left\lfloor g\right\rceil^+\sqcup \left\lfloor f\right\rceil^-)$ (see Example \ref{ejemplo1JD}) without struts whose both ends are colored by elements of $\left\lfloor g\right\rceil^+$. These kind of Jacobi diagrams are called \emph{top-substantial}. If $D\in {}^{ts}\!\!\mathcal{A}(g,f)$ and $E\in {}^{ts}\!\!\mathcal{A}(h,g)$ the composition
 $$D\circ E =\left\langle D_{|j^+\mapsto j^*},\ E_{|j^-\mapsto j^*}\right\rangle_{\left\lfloor g\right\rceil^*}$$
is the element in ${}^{ts}\!\!\mathcal{A}(h,f)$ given by the sum  of Jacobi diagrams obtained by considering all the possible ways of gluing the $\left\lfloor g\right\rceil^+$-colored legs of $D$ with the $\left\lfloor g\right\rceil^-$-colored legs of $E$. A schematic description is shown in Figure \ref{figuraLMO1artalt} $(a)$. The identity morphism in ${}^{ts}\!\!\mathcal{A}(g,g)$ is shown in Figure \ref{figuraLMO1artalt} $(b)$.
\begin{figure}[ht!]
										\centering
                        \includegraphics[scale=0.85]{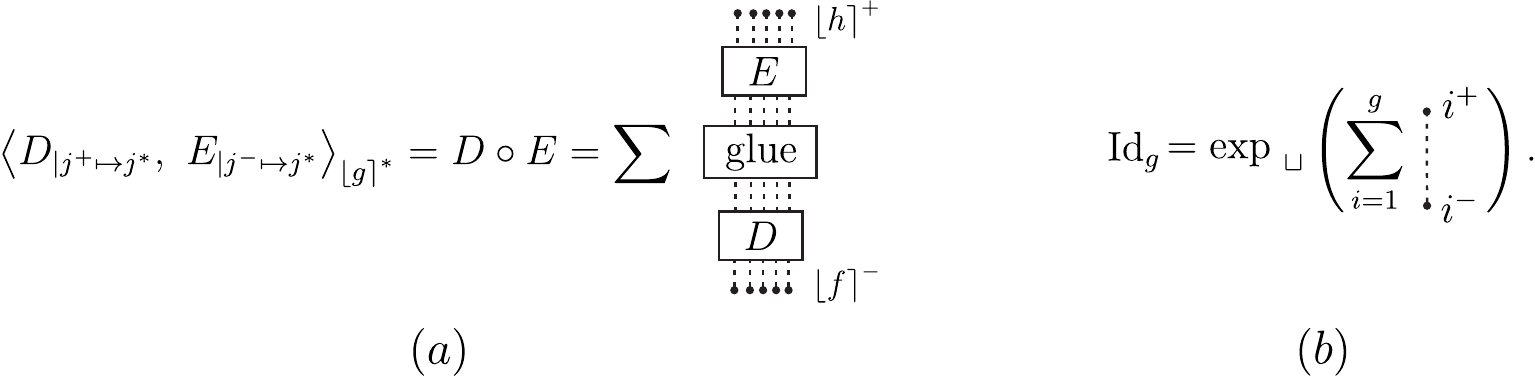}												
\caption{$(a)$ Composition in ${}^{ts}\!\!\mathcal{A}$ and  $(b)$ identity morphism in ${}^{ts}\!\!\mathcal{A}(g,g)$.}
\label{figuraLMO1artalt} 										
\end{figure}

\para{Sketch of the construction of the LMO functor}
The definition of the LMO functor uses the Kontsevich integral which is defined for $q$-tangles. Because of this, it is necessary to modify the objects of $\mathcal{LC}ob$ to obtain the category $\mathcal{LC}ob_q$: instead of non-negative integers, the objects of  $\mathcal{LC}ob_q$ are non-associative words in the single letter~$\bullet$. If $u$ and $v$ are non-associative words in $\bullet$ of length $g$ and $f$ respectively, a morphism from $u$ to $v$ is a Lagrangian cobordism from $\Sigma_{g,1}$ to $\Sigma_{f,1}$.

Roughly speaking, the LMO functor $\widetilde{Z}:\mathcal{LC}ob_q\rightarrow {}^{ts}\!\!\mathcal{A}$ is defined as follows. Let $M\in\mathcal{LC}ob_q(u,v)$, where $u$ and $v$ are two non-associative words in $\bullet$. Let  $(B,\gamma')$ be the bottom-top tangle presentation of $M$.  By performing the change $\bullet\mapsto (+-)$ in~$u$ and~$v$ we obtain words $w_t(\gamma')$ and $w_b(\gamma')$ on $\{+,-\}$ together with some parenthesizations. Hence $\gamma'$ is a $q$-tangle in the homology cube $B$. Next, take a \emph{surgery presentation} of $(B,\gamma')$, that is, a framed link $L\subseteq \text{int}([-1,1]^3)$ and a  tangle $\gamma$ in $[-1,1]^3\setminus L$ such that surgery along $L$ carries $([-1,1]^3,\gamma)$ to $(B,\gamma')$. Set $w_t(\gamma)=w_t(\gamma')$ and $w_b(\gamma)=w_b(\gamma')$. Hence $L\cup\gamma$ is a $q$-tangle in $[-1,1]^3$. Now, consider the Kontsevich integral of $L\cup\gamma$, which gives a series of a kind of Jacobi diagrams. To get rid of the ambiguity in the surgery presentation, it is necessary to use some combinatorial operations on the space of diagrams. Among these operations there is the so-called \emph{Aarhus integral} (see \cite{MR1931167,MR1931168}), which is a kind of formal Gaussian integration on the space of diagrams. We then arrive  to ${}^{ts}\!\!\mathcal{A}$. Finally, to obtain the functoriality, it is necessary to do a normalization. 

Recall that the definition of the Kontsevich integral requires the choice of a \emph{Drinfeld associator}, and the bottom-top tangle presentation requires the choice of a system of meridians and parallels.  Thus, the LMO functor also depends on these choices.

We are especially interested in the LMO functor for  special Lagrangian cobordisms. For these kind of cobordisms the LMO functor can be computed from the Kontsevich integral and the symmetrization map as is assured by a result of Cheptea, Habiro and Massuyeau. We state the result for our particular case.

\begin{convention}\label{convention1} From now on, we endow Lagrangian cobordisms  with the right-handed non-associative word  $(\bullet\cdots(\bullet(\bullet \bullet))\cdots)$ in the letter $\bullet$ unless we say otherwise. This way we will always be in the context of the category $\mathcal{LC}ob_q$.
\end{convention}
\begin{lemma}\cite[Lemma 5.5]{MR2403806}\label{lemmaslc} Let $M\in\mathcal{LC}ob_q(u,v)$, where $u$ and $v$ are non-associative words in the letter $\bullet$ of length $g$ and $f$, respectively. Suppose that the bottom-top tangle presentation of $M$ is as in Figure \ref{figuraLMO8artalt}, 
\begin{figure}[ht!]
										\centering
                        \includegraphics[scale=0.9]{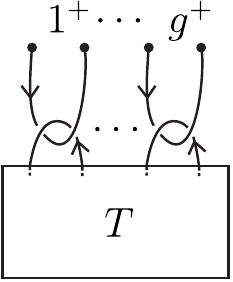}
												
\caption{Bottom-top tangle presentation of $M$.}
\label{figuraLMO8artalt} 										
\end{figure}
where $T$ is a tangle in $[-1,1]^3$. Endow $T$ with the non-associative words $w_t(T)=u_{/\bullet\mapsto (+-)}$  and $w_b(T)=v_{/\bullet\mapsto (+-)}$. Then the value of the LMO functor $\widetilde{Z}(M)$ can be computed from the value of the Kontsevich integral $Z(T)$ as  shown in Figure \ref{figuraLMO9artalt}.
\begin{figure}[ht!]
										\centering
                        \includegraphics[scale=0.9]{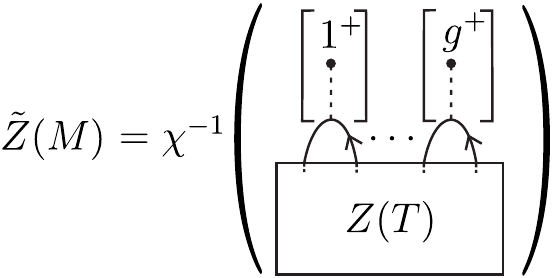}
												
\caption{Value of $\widetilde{Z}(M)$ in terms of $Z(T)$.}
\label{figuraLMO9artalt} 										
\end{figure}
\end{lemma}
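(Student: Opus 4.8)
The plan is to unwind the definition of the LMO functor in the special case at hand, in which the surgery step of the construction degenerates. The crucial observation is that, because $M$ is \emph{special} Lagrangian, i.e.\ $V_f\circ M=V_g$, the $3$-manifold $B$ in the bottom-top tangle presentation $(B,\gamma')$ of $M$ is in fact the standard cube $[-1,1]^3$. Indeed, attaching the $f$ bottom $2$-handles to $M$ along the curves $m_-(\alpha_i)$ amounts, up to a collar, to composing with $V_f$ from below, so the resulting cobordism is $V_f\circ M=V_g$; attaching the $g$ top $2$-handles along the curves $m_+(\beta_i)$ then cancels, one by one, the $g$ one-handles of the handlebody $V_g$, since each $\beta_i$ meets the cocore of the $i$-th one-handle transversely in one point. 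Hence $B\cong[-1,1]^3$, so one may take the empty framed link as a surgery presentation of $(B,\gamma')$: in the notation of the construction of $\widetilde{Z}$ one has $L=\varnothing$ and $\gamma=\gamma'$, and $L\cup\gamma$ is simply the $q$-tangle $\gamma'$ sitting in an honest cube.

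With $L=\varnothing$ the formal Gaussian integration (Aarhus integral) used to kill the surgery link acts as the identity, the empty linking matrix contributes no signature correction, and the colour pairings along link components disappear. Unravelling the operations that remain in the definition of $\widetilde{Z}$, the value $\widetilde{Z}(M)$ is therefore obtained from $Z(\gamma')$ by only two things: the symmetrization maps $\chi^{-1}$, one for each of the $g$ top and $f$ bottom components of $\gamma'$, which turn the legs attached to a component into legs coloured by the corresponding colour in $\lfloor g\rceil^{+}$ or $\lfloor f\rceil^{-}$; and the normalization by suitable powers of the Kontsevich integral of the unknot and of the framing corrections, which is built into $\widetilde{Z}$ to make it functorial.

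It then remains to read $Z(\gamma')$ off from $Z(T)$. By the shape of the bottom-top tangle in Figure~\ref{figuraLMO8artalt}, $\gamma'$ is obtained from $T$ by gluing fixed ``cap'' and ``cup'' closures at the top and bottom; using functoriality of the Kontsevich integral, \eqref{functorialityZ}, together with its explicit values on elementary cups, caps and associators (computed as in Examples~\ref{ejemplo1KI}--\ref{ejemplo4KI}), one expresses $Z(\gamma')$ as $Z(T)$ pre- and post-composed with explicit diagrams coming from the closure. Feeding this into the previous paragraph --- applying the symmetrization maps componentwise and collecting the normalization constants --- produces exactly the diagram of Figure~\ref{figuraLMO9artalt}.

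I expect the main obstacle to be the bookkeeping of normalizations: one must check that the powers of the Kontsevich integral of the unknot and the framing/cap corrections introduced by $Z$ on the closure tangles combine with the normalization hard-wired into $\widetilde{Z}$ to leave precisely the factor appearing in Figure~\ref{figuraLMO9artalt}, and that the several symmetrization maps, acting on distinct families of strands, are applied in a consistent order. The identification $B\cong[-1,1]^3$ is conceptually the key input but is a routine handle-calculus argument; once it is in place, the rest is a careful but mechanical computation with the Kontsevich integral and the operations on Jacobi diagrams recalled in Sections~\ref{section2} and~\ref{section3}.
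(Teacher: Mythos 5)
First, a point of reference: the paper does not prove Lemma \ref{lemmaslc} at all — it is quoted from Cheptea--Habiro--Massuyeau \cite{MR2403806}, so the only proof to compare with is the one given there, which does proceed by the kind of unwinding you sketch (empty surgery link, hence trivial Aarhus integration and no signature corrections, so $\widetilde{Z}$ reduces to an inverse-symmetrization of a Kontsevich integral). However, your ``crucial observation'' misreads the hypotheses: the lemma does not assume that $M$ is special Lagrangian. Its hypothesis is that the bottom-top tangle presentation of $M$ is the one of Figure \ref{figuraLMO8artalt}, which by definition already lives in the standard cube $[-1,1]^3$; so the handle-cancellation argument deriving $B\cong[-1,1]^3$ from $V_f\circ M=V_g$ is neither licensed by the statement nor needed for it. What that argument really addresses is (one direction of) the separate fact that special Lagrangian cobordisms are exactly those admitting presentations of this shape; it contributes nothing to the identity being proved here.

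The genuine gap is that the actual content of the lemma — the specific formula of Figure \ref{figuraLMO9artalt} — is precisely the ``bookkeeping'' you defer. Once $L=\varnothing$, it is essentially formal that $\widetilde{Z}(M)$ is \emph{some} explicit function of $Z(T)$; the lemma asserts \emph{which} function, i.e.\ where the $\nu$-type normalization factors built into the definition of $\widetilde{Z}$ end up, how the doubling and orientation-reversal operations coming from the fixed cap/cup closure of $T$ enter, and to which components the maps $\chi^{-1}$ are applied. Your proposal asserts that these ``produce exactly the diagram of Figure \ref{figuraLMO9artalt}'' without computing any of it, and even identifies this verification as the expected main obstacle; as written, that step assumes the conclusion. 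To complete the argument you would have to recall the precise definition of $\widetilde{Z}$ from \cite{MR2403806} (including its functoriality normalization), specialize it to the case of an empty surgery link, express $Z(\gamma')$ in terms of $Z(T)$ via (\ref{functorialityZ}) and the explicit values on the closure pieces, and match the result term by term with Figure \ref{figuraLMO9artalt} — which is exactly what the cited proof carries out.
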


Let $(M,m)$ be a homology cobordism and $(B,\gamma)$  its bottom-top tangle presentation. Define the \emph{linking matrix} of $(M,m)$, denoted $\text{Lk}(M)$,  as the linking matrix of the link~$\hat{\gamma}$ in $B$ obtained from $\gamma$ by identifying the two endpoints on each of the top and bottom components of $\gamma$.

For any Lagrangian cobordism $M$, denote by $\widetilde{Z}^s(M)$ the strut part of $\widetilde{Z}(M)$, that is, the reduction of $\widetilde{Z}(M)$  modulo diagrams with at least one trivalent vertex. Denote by $\widetilde{Z}^Y(M)$ the reduction of  $\widetilde{Z}(M)$ modulo struts. Denote by      $\widetilde{Z}^t(M)$ the reduction of $\widetilde{Z}(M)$ modulo looped diagrams. Finally denote by $\widetilde{Z}^{Y,t}(M)$ the reduction of   $\widetilde{Z}^t(M)$ modulo struts.

\begin{lemma}\cite[Lemma 4.12]{MR2403806}
\label{lemmasplitstruty} Let $M\in\mathcal{LC}ob_q(u,v)$ where $u$ and $v$ are non-associative words in the letter $\bullet$. Then $\widetilde{Z}(M)$ is group-like. Moreover $\widetilde{Z}(M)= \widetilde{Z}^s(M)\sqcup \widetilde{Z}^Y(M)$ and 
\begin{equation}\label{struteqn}
 \widetilde{Z}^s(M)=\left[\frac{\text{\emph{Lk}}(M)}{2}\right]. 
\end{equation}
\end{lemma}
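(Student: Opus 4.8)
The plan is to reduce everything to properties of the Kontsevich integral of $q$-tangles and then track these properties through the steps in the construction of $\widetilde{Z}$ sketched above (surgery presentation and Kontsevich integral, formal Gaussian integration via the Aarhus integral, symmetrization, normalization).

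For group-likeness I would start from the fact that for any $q$-tangle $L\cup\gamma$ in a homology cube the Kontsevich integral $Z(L\cup\gamma)$ is group-like: this holds for each elementary $q$-tangle (the associator $\Phi$ is group-like and the cup/cap/crossing values are $\sqcup$-exponentials of primitive diagrams), and group-likeness is preserved by the operations $\circ$, $\otimes$, the doubling map $\Delta$ and the orientation-reversal map $S$, since in the Hopf algebra $\mathcal{A}(X,C)$ all of these are compatible with the coproduct. Next, the formal Gaussian integration underlying the Aarhus integral sends a group-like element with nondegenerate quadratic part to a group-like element, one of the basic facts established in the Aarhus papers; the symmetrization maps $\chi_S$ are coalgebra isomorphisms, hence preserve group-likeness; and the normalization factors used to make $\widetilde{Z}$ functorial are themselves group-like. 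Composing these observations along the definition of $\widetilde{Z}(M)$ gives that $\widetilde{Z}(M)$ is group-like.

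For the splitting $\widetilde{Z}(M)=\widetilde{Z}^s(M)\sqcup \widetilde{Z}^Y(M)$: since $\widetilde{Z}(M)$ is group-like in $\mathcal{A}(\lfloor g\rceil^+\sqcup\lfloor f\rceil^-)$, whose product $\sqcup$ is commutative, it is the $\sqcup$-exponential of its primitive part, and the primitive part is spanned by connected top-substantial diagrams. Such a connected diagram is either a strut (no trivalent vertex) or has at least one trivalent vertex, so the primitive part decomposes as a sum $P^s+P^Y$ of two disjoint families, and by commutativity of $\sqcup$ one gets $\widetilde{Z}(M)=\exp_\sqcup(P^s)\sqcup\exp_\sqcup(P^Y)$. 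Every nonempty term of $\exp_\sqcup(P^Y)$ has a trivalent vertex, so it vanishes modulo diagrams with at least one trivalent vertex, whence $\widetilde{Z}^s(M)=\exp_\sqcup(P^s)$; dually every nonempty term of $\exp_\sqcup(P^s)$ is a disjoint union of struts, so it vanishes modulo struts, whence $\widetilde{Z}^Y(M)=\exp_\sqcup(P^Y)$. Combining these gives the stated factorization. For the strut formula I would then compute the strut level along the construction: if $(L,\gamma)$ is a surgery presentation of the bottom-top tangle presentation $(B,\gamma')$ of $M$, the strut part of $Z(L\cup\gamma)$ is the exponential of $\frac12$ times the linking matrix of the framed link $\widehat{L\cup\gamma}$ (off-diagonal entries the linking numbers, diagonal the framings), by the degree-one computation of the Kontsevich integral. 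Formal Gaussian integration absorbs the $L$-colored legs, and on struts this is exactly the linear-algebra operation (completion of the square, using that the $L$-block is invertible over $\mathbb{Q}$ since $L$ is a homology-cube surgery link) that replaces the linking matrix of $L\cup\gamma$ by the linking matrix of the tangle obtained after surgery along $L$, i.e.\ by $\mathrm{Lk}(M)$. The symmetrization and doubling/orientation-reversal steps only rename leg colors according to the bottom-top labelling, and the normalization contributes no struts, so $\widetilde{Z}^s(M)=[\mathrm{Lk}(M)/2]$ in the notation of Example \ref{ejemplo0JD}.

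The main obstacle is this last point: one must check carefully that formal Gaussian integration, restricted to the strut part, reproduces precisely the linking matrix of the surgered pair — equivalently, that the completion-of-the-square step in the Aarhus integral matches the surgery formula for linking matrices — and that group-likeness is genuinely preserved by this integration rather than only degree by degree. Both are established in \cite{MR2403806} and in the Aarhus integral papers; the remaining bookkeeping of colorings through the symmetrization and normalization steps is routine but must be carried out consistently with Convention \ref{convention1}.
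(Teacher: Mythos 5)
This lemma is not proved in the paper at all: it is imported verbatim from Cheptea--Habiro--Massuyeau \cite{MR2403806} (Lemma 4.12 there), and your sketch follows essentially the argument of that reference — group-likeness of the Kontsevich integral propagated through Aarhus integration, symmetrization and normalization, plus the identification of the strut level with the Schur-complement/surgery formula for linking matrices. Your outline is correct as far as it goes, with the two delicate points you flag (group-likeness under formal Gaussian integration, and the strut-level completion of the square reproducing $\mathrm{Lk}(M)$) being exactly the content you defer to \cite{MR2403806} and the Aarhus papers, so it is a faithful summary of the cited proof rather than an independent one.
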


The colors $1^+,\ldots,g^+$ and $1^-,\ldots,f^-$ in the series of Jacobi diagrams $\widetilde{Z}(M)$ refer to the curves $m_+(\beta_1)$,$\ldots$, $m_+(\beta_g)$ and $m_-(\alpha_1),\ldots, m_-(\alpha_f)$ on the top and bottom surfaces of $M$ respectively.

\begin{example}\label{ejemploLMO6} Let us consider the special Lagrangian cobordism $c(t_{\alpha_i})$, from Example~\ref{ejemploLMO3}, equipped with non-associative words as in Convention \ref{convention1}. By Lemma \ref{lemmaslc} and the functoriality of $Z$  (see Equation (\ref{functorialityZ})), to compute $\widetilde{Z}^t(c(t_{\alpha_i}))$ in low degrees  we need to first compute 
\begin{align*}
\includegraphics[scale=1]{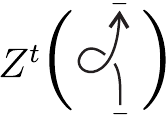}
\end{align*}
in low degrees, which we already computed in Example \ref{ejemplo2KI}. Therefore
\begin{align*}
\includegraphics[scale=0.9]{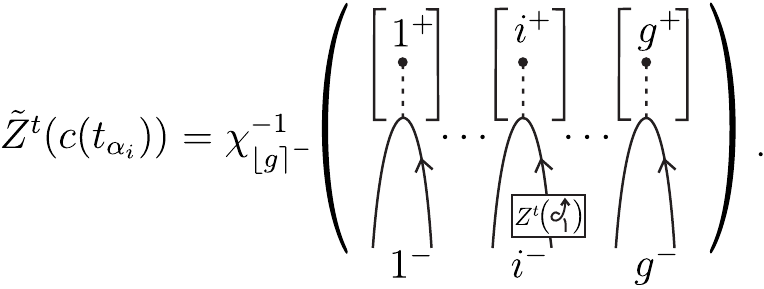}
\end{align*}											From Example \ref{exampleJD2.9}, we conclude 	
\begin{center}
\includegraphics[scale=0.74]{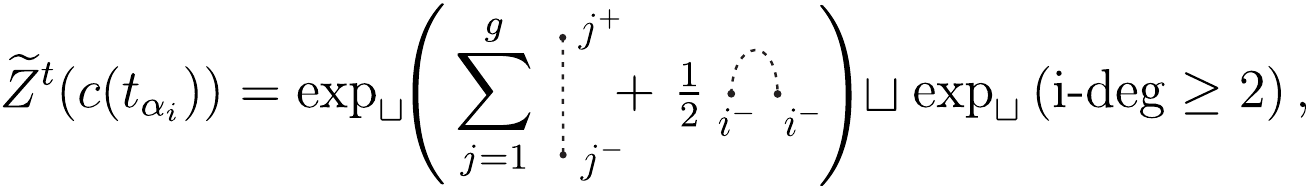}	
\end{center}
which shows that there are no terms of i-deg $=1$ in $\widetilde{Z}^{Y,t}(c(t_{\alpha_i}))$.
\end{example}

\begin{example}\label{ejemploLMO7} Consider the special Lagrangian cobordism $c(t_{\alpha_{12}})$ from Example \ref{ejemploLMO4}, equipped with non-associative words as in Convention \ref{convention1}. By Lemma \ref{lemmaslc},  to compute $\widetilde{Z}^t(c(t_{\alpha_{12}}))$ in low degrees,  we need to first compute   the tree-like part in the Kontsevich integral of the $q$-tangle 
\begin{align*}
\includegraphics[scale=1]{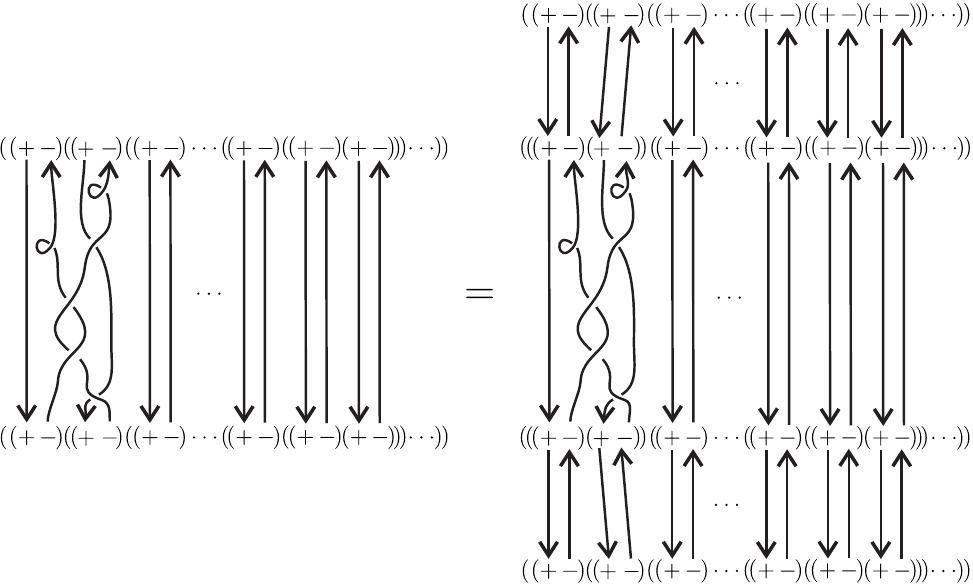}
\end{align*}
by the functoriality of $Z$, we have to compute the low degree terms of 
\begin{align*}
\includegraphics[scale=1]{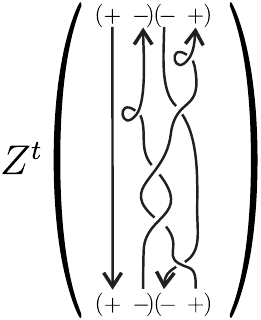}
\end{align*}
which was computed in Example \ref{ejemplo3KI}. Now, by a straightforward but long computation we obtain
\begin{center}
\includegraphics[scale=0.74]{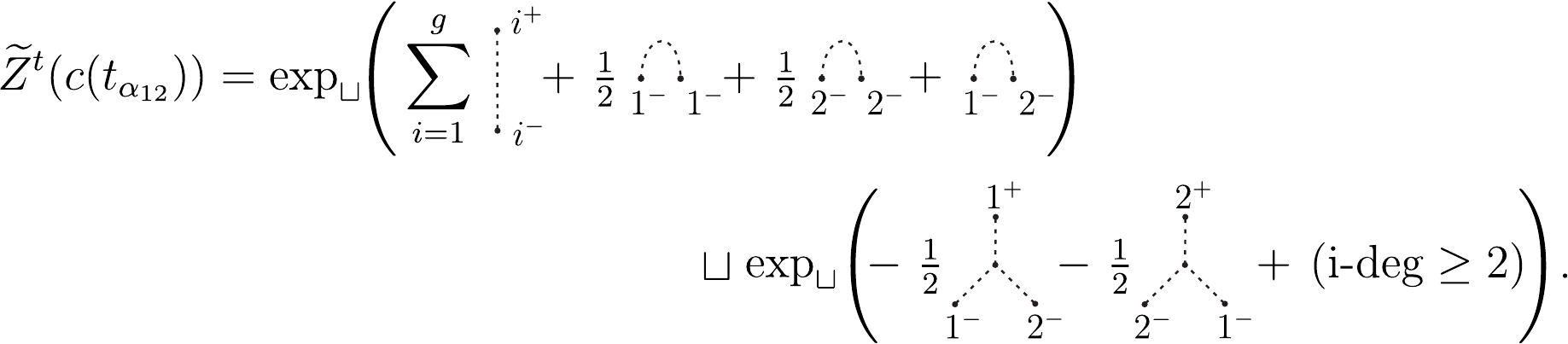}	
\end{center}
\end{example}

\begin{example}\label{ejemploLMO8} Example \ref{ejemploLMO7} can be generalized to the cobordism $c(t_{\alpha_{kl}})$ from Example~\ref{ejemploLMO4.5}. In this case we obtain
\begin{center}
\includegraphics[scale=0.74]{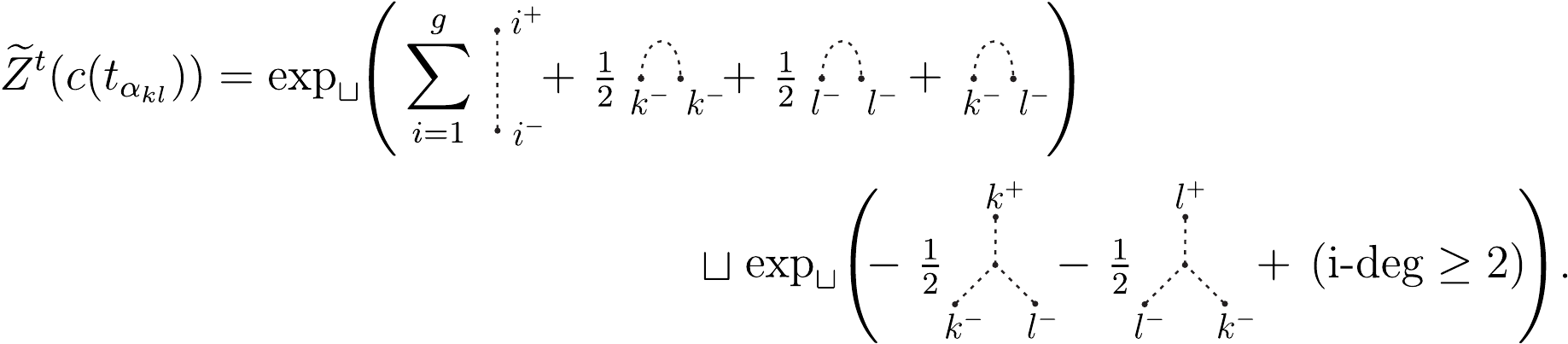}	
\end{center}
\end{example}

\begin{example}\label{ejemploLMO9} Consider the special Lagrangian cobordism $N_1$ from Example \ref{ejemploLMO5}, equipped with non-associative words as in Convention \ref{convention1}. Denote by $w$ the right-handed non-associative word in $\bullet$ of length $g-1$. Denote by $P_{\bullet,\bullet,w}$ the $q$-cobordism $((\bullet\bullet)w)\rightarrow (\bullet(\bullet w))$ whose underlying cobordism is the identity $\mathcal{LC}ob(g+1,g+1)$. Thus we can decompose $N_1$ as $N_1=P_{\bullet,\bullet,w}\circ (T\otimes \text{Id}_w)$, where $\text{Id}_w$  is the identity cobordism  equipped with $w$ on the top and bottom, and  $T$ is the special Lagrangian cobordism whose bottom-top tangle presentation is shown in Figure \ref{figuraLMO18artalt}.
\begin{figure}[ht!]
										\centering
                        \includegraphics[scale=0.9]{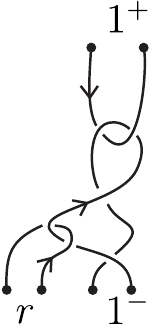}
												
\caption{Bottom top-tangle presentation of $T$.}
\label{figuraLMO18artalt} 								
\end{figure}

\noindent Hence, $\widetilde{Z}^t(N_1) = \widetilde{Z}^t(P_{\bullet,\bullet,w})\circ (\widetilde{Z}^t(T)\otimes \text{Id}_{g-1})$. Now, by the functoriality of $\widetilde{Z}$ we have
$$\left(\widetilde{Z}(P_{\bullet,\bullet,w})_{|r\mapsto 0}\right) = \varnothing\otimes\text{Id}_{g} \ \ \ \ \ \ \text{and}\ \ \ \ \ \ \left(\widetilde{Z}(P_{\bullet,\bullet,w})_{|1^-\mapsto 0}\right) = \text{Id}_1\otimes \varnothing\otimes\text{Id}_{g-1},$$
therefore 
$$\left(\widetilde{Z}^Y(P_{\bullet,\bullet,w})_{|r\mapsto 0}\right) = \varnothing \ \ \ \ \ \ \text{and}\ \ \ \ \ \ \left(\widetilde{Z}^Y(P_{\bullet,\bullet,w})_{|1^-\mapsto 0}\right) = \varnothing.$$
This way, each one of the connected diagrams appearing in $\widetilde{Z}^Y(P_{\bullet,\bullet,w})$ has at least one  $r$-colored leg and at least one  $1^-$-colored leg. Hence, each one of the connected diagrams in $\widetilde{Z}^t(N_1)$ coming  from  $\widetilde{Z}^Y(P_{\bullet,\bullet,w})$  has at least one $r$-colored leg and at least one  $1^-$-colored leg. 

We are interested in the low degree terms of $\widetilde{Z}^t(N_1) \text{ mod } \mathcal{H}(r)$. By Lemma \ref{lemmaslc}, we need to compute the low degree terms of
\begin{align*}
\includegraphics[scale=1]{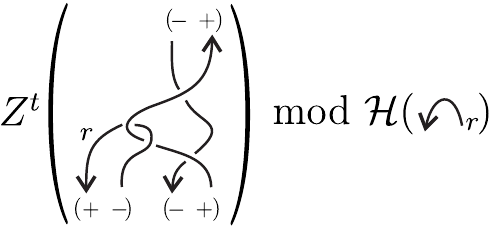}
\end{align*}
which we already computed in Example \ref{ejemplo4KI}. Whence we obtain
\begin{center}
\includegraphics[scale=0.74]{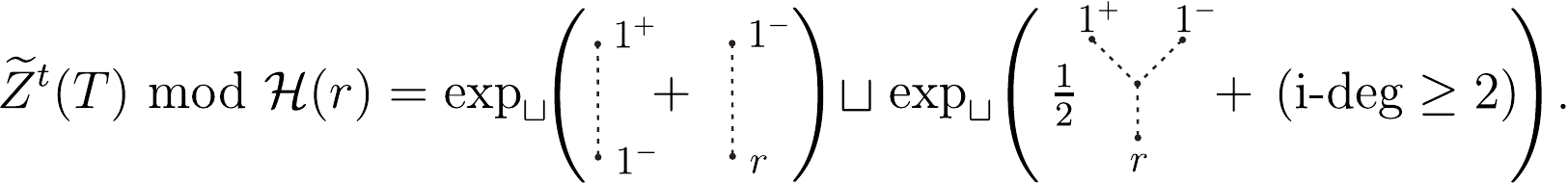}	
\end{center}
We conclude that each of the terms with i-deg $=1$ in $\widetilde{Z}(N_1) \text{ mod } \mathcal{H}(r)$ has one $r$-colored  and one $1^-$-colored leg. In a similar way, it can be shown for $1\leq i\leq g$ that each of the terms with i-deg $=1$ in $\widetilde{Z}(N_i) \text{ mod } \mathcal{H}(r)$ has one $r$-colored leg and one $i^-$-colored leg.
\end{example}

\section{Johnson-type filtrations}\label{JTH1}

 As in subsection \ref{subsectionLMOfunctor}, we denote by $\Sigma_{g,1}$ a compact connected oriented surface of genus~$g$ with one  boundary component. Let $\mathcal{M}_{g,1}$ denote  the mapping class group of $\Sigma_{g,1}$. We will often omit the subscripts $g$ and $1$ of our notation unless there is ambiguity, then  we will usually write $\Sigma$ and $\mathcal{M}$ instead of $\Sigma_{g,1}$ and $\mathcal{M}_{g,1}$.

\subsection{Preliminaries}
 
Let us fix a base point $*\in\partial\Sigma$ and set $\pi=\pi_1(\Sigma,*)$ and $H=H_1(\Sigma;\mathbb{Z})$, finally denote by $\text{ab}:\pi\rightarrow H$ the abelianization map. Notice that the intersection form $\omega:H\otimes H \rightarrow \mathbb{Z}$ is a symplectic form on $H$. The elements of $\mathcal{M}$ preserve $\partial\Sigma$, in particular they preserve $*$, therefore we have a well defined group homomorphism:
\begin{equation}\label{JTHequ1}
\rho:\mathcal{M}\longrightarrow \text{Aut}(\pi),
\end{equation} 
which sends $h\in\mathcal{M}$ to the induced map $h_{\#}$ on $\pi$. It is well known that the map $\rho$  is  injective and it is called the \emph{Dehn-Nielsen-Baer representation} of $\mathcal{M}$. On the other hand, since the elements of $\mathcal{M}$ are orientation-preserving, their induced maps on $H$ preserve the intersection form. This way we have a well defined surjective group homomorphism:
\begin{equation}\label{JTHequ2}
\sigma:\mathcal{M}\longrightarrow\text{Sp}(H)=\{f\in\text{Aut}(H)\ |\ \forall x,y\in H, \ \omega(f(x),f(y))=\omega(x,y)\},
\end{equation}
that sends $h\in\mathcal{M}$ to the induced map $h_*$ on $H$. The map $\sigma$ is called the \emph{symplectic representation} of $\mathcal{M}$ and it is far from being injective, its kernel is known as the \emph{Torelli group} of $\Sigma$,  which is denoted by $\mathcal{I}$ (or $\mathcal{I}_{g,1}$), so
\begin{equation}\label{JTHequ3}
\mathcal{I}=\mathcal{I}_{g,1}=\text{ker}(\sigma)=\{h\in\mathcal{M}\ | \ h_*=\text{Id}_H\}.
\end{equation}

\subsection{Alternative Torelli group}

Let $V$ (or $V_g$) be a handlebody of genus $g$. Consider a disk $D$ on $\partial V$ such that $\partial V=\Sigma\cup D$, where $D$ and $\Sigma$ are glued along their boundaries. Let  $\iota:\Sigma \hookrightarrow V$ be the inclusion of $\Sigma$ into $\partial V\subseteq V$, see Figure \ref{figuraJTH1}.
\begin{figure}[ht!]
										\centering
                        \includegraphics[scale=0.75]{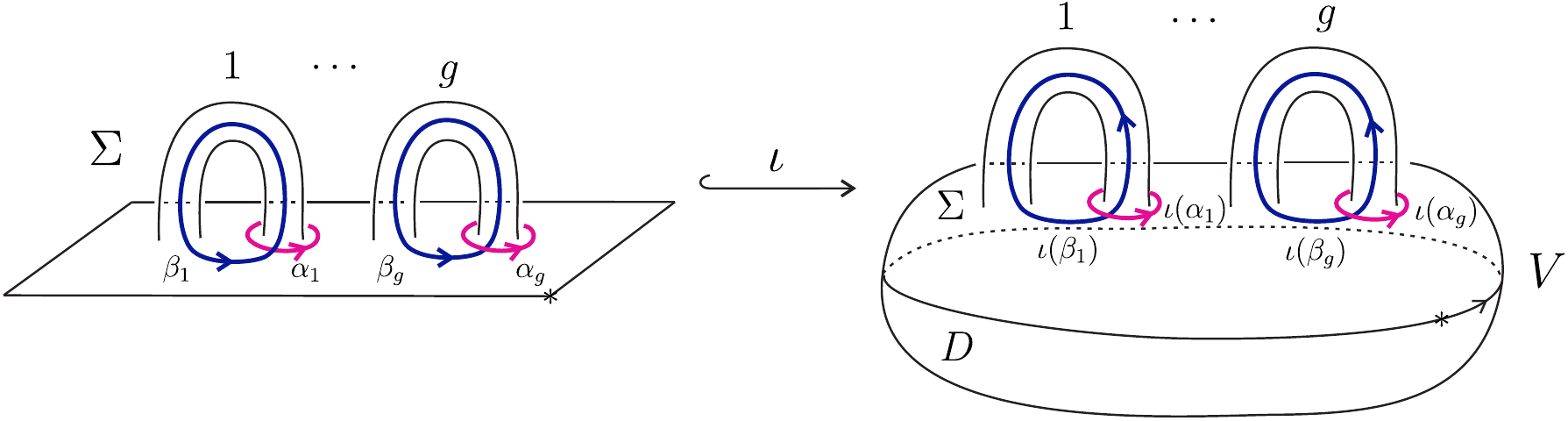}
												\caption{The inclusion $\Sigma\stackrel{\iota}{\hooklongrightarrow}V$.}											\label{figuraJTH1}
\end{figure}

Figure \ref{figuraJTH1}  also shows the fixed system of meridians and parallels of $\Sigma$ used in subsection \ref{subsectionLMOfunctor}. Moreover we suppose that the images $\iota(\alpha_i)$ of the meridians $\alpha_i$, under the embedding $\iota$, bound pairwise disjoint disks in $V$. Set $H'=H_1(V;\mathbb{Z})$ and $\pi'=\pi_1(V,\iota(*))$ and denote by $\text{ab}':\pi'\rightarrow H'$ the abelianization map. Consider the following subgroups of $\pi$ and $H$ that arise when looking at the induced maps by $\iota$ in homotopy and in homology:
\begin{equation}\label{JTHequ4}
A=\kernel(\iota_*:H\rightarrow H') \text{\ \ \ \ \ \ and\ \ \ \ \ \ } \mathbb{A}=\kernel(\iota_{\#}:\pi\rightarrow\pi').
\end{equation}

\noindent We also consider the following subgroup of $\pi$:
\begin{equation}\label{JTHequ5}
K_2=\text{ker}(\pi\stackrel{\iota_{\#}}{\longrightarrow}\pi'\stackrel{\text{ab'}}{\longrightarrow}H')=\mathbb{A}\cdot\Gamma_2\pi.
\end{equation}

\noindent The subgroup $A\leq H$ is  a Lagrangian subgroup of $H$ with respect to the intersection form on $H$ and it is the group that appears in the definition of Lagrangian cobordisms in the previous section. We may think of $K_2$ as the subgroup of $\pi$ generated by commutators of \emph{weight} 2, where the elements of $\pi$, not belonging to $\mathbb{A}$, are considered to have weight $1$ while the elements in $\mathbb{A}$ are considered to have weight $2$. The subgroups $A$, $\mathbb{A}$ and $K_2$ allow us to define some important subgroups of the mapping class group~$\mathcal{M}$.

\begin{definition} The \emph{Lagrangian mapping class group} of $\Sigma$, denoted by $\mathcal{L}$ (or $\mathcal{L}_{g,1}$) is defined as follows:
\begin{equation}\label{JTHequ6}
\mathcal{L}=\mathcal{L}_{g,1}=\{f\in\mathcal{M}_{g,1}\ \ |\ \ f_{*}(A)\subseteq A\}.  
\end{equation}
\end{definition}

We are mainly interested in three particular subgroups of $\mathcal{L}$, one of these is the Torelli group, see Equality (\ref{JTHequ3}).

\begin{definition} The \emph{Lagrangian Torelli group} of $\Sigma$, denoted by $\mathcal{I}^L$ (or $\mathcal{I}_{g,1}^L$), is defined as follows:
\begin{equation}\label{JTHequ7}
\mathcal{I}^L=\mathcal{I}_{g,1}^L=\{h\in\mathcal{L}\ \ | \ \ h_*|_A=\text{Id}_A\}.
\end{equation}
\end{definition} 

The groups $\mathcal{L}$ and $\mathcal{I}^L$ appear in the works \cite{MR1823501,MR2265877} of J. Levine  in connection with the theory of finite-type invariants of homology $3$-spheres. From an algebraic point of view these groups were studied by S. Hirose in \cite{MR2261393}, where he found a generating system for $\mathcal{L}$ and by T. Sakasai in \cite{MR2916276}, where he computed $H_1(\mathcal{L};\mathbb{Z})$ and $H_1(\mathcal{I}^L;\mathbb{Z})$.

\begin{definition}\label{defalttorelli} The \emph{alternative Torelli group} of $\Sigma$, denoted by $\mathcal{I}^{\mathfrak{a}}$ (or $\mathcal{I}_{g,1}^{\mathfrak{a}}$), is defined as follows:
\begin{equation}\label{JTHequ8}
\mathcal{I}^{\mathfrak{a}}=\mathcal{I}_{g,1}^{\mathfrak{a}}=\left\{\begin{array}{l|l}
         &\text{for } x\in\pi: \ \ \ h_{\#}(x)x^{-1}\in K_2  \\
        h\in\mathcal{L}&\ \ \ \ \ \  \text{and for } y\in K_2:\\
        & h_{\#}(y)y^{-1}\in \Gamma_3\pi\cdot [\pi,\mathbb{A}]=:K_3 
        \end{array}\right\}.
\end{equation}
\end{definition} 

Notice that  the definition of $\mathcal{I}^{\mathfrak{a}}$ involves the group $K_3=\Gamma_3\pi\cdot[\pi,\mathbb{A}]=[[\pi,\pi],\pi]\cdot[\pi,\mathbb{A}]$, which we see as the subgroup of $\pi$ generated by commutators of weight $3$. Like the Lagrangian Torelli group, the group $\mathcal{I}^{\mathfrak{a}}$ appears in  \cite{MR1823501,MR2265877,MR1601612} in connection with the theory of finite-type invariants but with  a  different definition: the second term of the Johnson-Levine filtration. Definition \ref{defalttorelli} comes from \cite{MR3828784}. In Proposition~\ref{JTH2prop14} we show the equivalence of the two definitions. J. Levine shows in \cite[Proposition 4.1]{MR1823501} that~$\mathcal{I}^{\mathfrak{a}}$ is generated by Dehn twists along simple closed curves (scc) whose homology class belongs to $A$. Equivalently, $\mathcal{I}^{\mathfrak{a}}$ is generated by Dehn twists along scc's which bound a surface in the handlebody $V$. This is the definition of $\mathcal{I}^{\mathfrak{a}}$ given in \cite{MR2265877,MR1601612}.

From the above definitions it follows that $\mathcal{I}\subseteq \mathcal{I}^L\subseteq\mathcal{L}$ and $\mathcal{I}^{\mathfrak{a}}\subseteq \mathcal{I}^L\subseteq\mathcal{L}$. But $\mathcal{I}^{\mathfrak{a}}\not\subseteq \mathcal{I}$ and $\mathcal{I}\not\subseteq \mathcal{I}^{\mathfrak{a}}$.  We shall call here the groups $\mathcal{I}$, $\mathcal{I}^L$ and $\mathcal{I}^{\mathfrak{a}}$ \emph{Torelli-type groups}.  In contrast with  $\mathcal{I}$, the groups $\mathcal{I}^L$ and $\mathcal{I}^{\mathfrak{a}}$ are not normal in $\mathcal{M}$, but  they are normal in $\mathcal{L}$.

\begin{example}\label{ejemploJTH1} The Dehn twists $t_{\alpha_i}$ and $t_{\alpha_{kl}}$ from Examples \ref{ejemploLMO3} and \ref{ejemploLMO4.5} are elements of the alternative Torelli group which do not belong to the Torelli group.
\end{example}

\begin{example}\label{ejemploJTH2} Consider the parallel $\beta_1$ and the curve $\gamma$ as shown in Figure \ref{figuraJTH2artalt}. These curves form a \emph{bounding pair}. Consider the Dehn twists $t_{\beta_1}$ and $t_{\gamma}$ along these curves. It can be shown that the homeomorphism $t_{\gamma}t^{-1}_{\beta_1}$ belongs to $\mathcal{I}\cap\mathcal{I}^{\mathfrak{a}}$.
\begin{figure}[ht!]
										\centering
                        \includegraphics[scale=0.9]{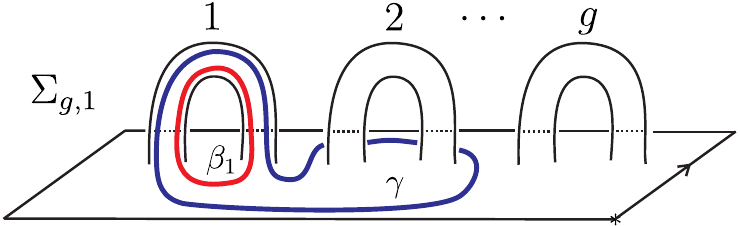}
												
\caption{Curves $\beta_{1}$ and $\gamma$.}
\label{figuraJTH2artalt} 							
\end{figure}
\end{example}

More generally we have the following lattice of subgroups:
$$\xymatrix{
  & {\ \mathcal{I}^{\mathfrak{a}}\ \ \ } \ar@{^{(}->}[rd] &   &  \\
 {\mathcal{I}\cap\mathcal{I}^{\mathfrak{a}}\ \ \ \ \ } \ar@{^{(}->}[ru] \ar@{^{(}->}[rd]&   & {\ \mathcal{I}^{L}\ \ } \ar@{^{(}->}[r] & \mathcal{L}\\
  & {\ \mathcal{I}\ \ \ } \ar@{^{(}->}[ru] &   & }
$$

\noindent  where all the inclusions are proper. Besides, J. Levine  proved in \cite[Theorem 2]{MR1705580} that 
\begin{equation}\label{JTHequ9}
\mathcal{I}\cap\mathcal{I}^{\mathfrak{a}}=\mathcal{K}\cdot [\mathcal{I},\mathcal{I}^{\mathfrak{a}}],
\end{equation}
where $\mathcal{K}$ is the \emph{Johnson kernel}. D. Johnson proved in \cite{MR793178} that $\mathcal{K}$ is generated by BSCC  maps (bounding scc's), that is, Dehn twists along scc's which are null-homologous in $\Sigma$.

\subsection{Alternative Johnson filtration}\label{subsection4.3}
This subsection is devoted to the study of a filtration of the alternative Torelli group introduced in \cite{MR3828784} which we shall call here the \emph{alternative Johnson filtration}. We compare this filtration with the \emph{Johnson filtration} and the \emph{Johnson-Levine filtration}. Let us start by recalling some terminology.

An \emph{$N$-series} $(G_m)_{m\geq 1}$ of a group $G$ is a decreasing sequence
$$G=G_1\geq G_2\geq\cdots\geq G_m\geq G_{m+1}\geq\cdots$$
of subgroups of $G$ such that $[G_i,G_j]\subseteq G_{i+j}$ for $i,j\geq 1$. We are interested in $N$-series of the group $\pi=\pi(\Sigma,*)$. A first example of an $N$-series of $\pi$ is the lower central series $(\Gamma_k \pi)_{k\geq1}$. We consider an $N$-series of $\pi$ in which the subgroup $\mathbb{A}$ plays a special role.

Set $K_1=\pi$ and $K_2=\mathbb{A}\cdot\Gamma_2\pi$  as defined in  (\ref{JTHequ5}). Let $(K_m)_{m\geq1}$ be the smallest $N$-series of $\pi$ starting with these $K_1$ and $K_2$, that is, if $(G_i)_{m\geq1}$ is any $N$-series of $\pi$ with $G_1=K_1$ and $G_2=K_2$ then $K_m\subseteq G_m$ for every $m\geq1$.  More precisely, for every $m\geq 3$ we have 
\begin{equation}\label{JTHequ11}
K_m=[K_{m-1}, K_1]\cdot[K_{m-2},K_2].
\end{equation}
In particular $K_3=\Gamma_3\pi\cdot [\pi,\mathbb{A}]$ is the group that we used in the definition of the alternative Torelli group, see (\ref{JTHequ8}). We can think of $K_m$ as the subgroup of $\pi$ generated by commutators of weight $m$, where the elements of $\pi\setminus \mathbb{A}$ have weight $1$ and the elements of $\mathbb{A}$ have weight $2$. By induction on $m\geq 1$ we have
\begin{equation}\label{JTHequ12}
\Gamma_m\pi\subseteq K_m\subseteq \Gamma_{\left\lceil m/2\right\rceil}\pi,
\end{equation}
where $\left\lceil m/2\right\rceil$ denotes the least integer greater than or equal to $m/2$. 

Restricting the Dehn-Nielsen-Baer representation (\ref{JTHequ1}) to the Lagrangian mapping class group we get an action of $\mathcal{L}$ on $K_1=\pi$. We denote the action of $h\in\mathcal{L}$ on $x\in\pi$ by $^hx$. Hence $^hx=\rho(h)(x)=h_{\#}(x)$.

\begin{lemma}\label{JTHlemma2}
For every $h\in\mathcal{L}$ we have $^h(K_2)=K_2$.
\end{lemma}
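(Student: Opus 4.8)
The strategy is to reinterpret $K_2$ as the preimage under abelianization of the Lagrangian $A\leq H$, and then to observe that an element of $\mathcal{L}$ acts on $\pi$ by an automorphism which is compatible, via the abelianization, with a symplectic automorphism of $H$ preserving $A$.

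First I would establish the identity $K_2=\text{ab}^{-1}(A)$. By (\ref{JTHequ5}) we have $K_2=\ker(\text{ab}'\circ\iota_{\#})$, and naturality of abelianization gives the commuting square $\text{ab}'\circ\iota_{\#}=\iota_*\circ\text{ab}$, where $\iota_*:H\to H'$ has kernel $A$ by (\ref{JTHequ4}). Hence $x\in K_2$ iff $\iota_*(\text{ab}(x))=0$, i.e. iff $\text{ab}(x)\in A$; thus $K_2=\text{ab}^{-1}(A)$.

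Next I would fix $h\in\mathcal{L}$, writing $h_*\in\text{Sp}(H)$ for the induced symplectic automorphism and $h_{\#}=\rho(h)\in\text{Aut}(\pi)$ for the action on $\pi$, so that $\text{ab}\circ h_{\#}=h_*\circ\text{ab}$. The point is that $h_*(A)=A$: since $A$ is a Lagrangian of the unimodular symplectic lattice $H$ we have $A=A^{\perp}$, and since $h_*$ preserves $\omega$ and is bijective, $h_*(A)^{\perp}=h_*(A^{\perp})=h_*(A)$; from $h_*(A)\subseteq A$ we then get $A=A^{\perp}\subseteq h_*(A)^{\perp}=h_*(A)$, whence $h_*(A)=A$. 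With this in hand, for $x\in K_2=\text{ab}^{-1}(A)$ one has
\[\text{ab}(h_{\#}(x))=h_*(\text{ab}(x))\in h_*(A)=A,\]
so $h_{\#}(x)\in\text{ab}^{-1}(A)=K_2$; that is, ${}^h(K_2)\subseteq K_2$. Applying the same argument to $h^{-1}\in\mathcal{L}$ yields ${}^{h^{-1}}(K_2)\subseteq K_2$, i.e. $K_2\subseteq{}^h(K_2)$, and the two inclusions give ${}^h(K_2)=K_2$.

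The only genuinely non-formal ingredients are the identification $K_2=\text{ab}^{-1}(A)$ and the fact that $h_*$ preserves $A$ exactly rather than merely up to inclusion; once these are settled the statement reduces to the formal identity $h_{\#}(\text{ab}^{-1}(A))=\text{ab}^{-1}(h_*(A))$, valid for any automorphism of $\pi$ covering an automorphism of $H$. As an alternative to invoking $h_*(A)=A$, one could instead note that $\pi/K_2\cong H'$ is Hopfian, so the inclusion ${}^h(K_2)\subseteq K_2$ — which induces, through the automorphism $h_{\#}$, a surjective endomorphism of $\pi/K_2$ — is automatically an equality.
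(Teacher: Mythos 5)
Your proof is correct and follows essentially the same route as the paper: both arguments reduce to the inclusion ${}^h(K_2)\subseteq K_2$, obtained from $\mathrm{ab}'\circ\iota_{\#}=\iota_*\circ\mathrm{ab}$, $\ker(\iota_*)=A$ and $h_*(A)\subseteq A$. The only difference is that you spell out why the inclusion upgrades to an equality (via $h_*(A)=A$ so that $h^{-1}\in\mathcal{L}$, or the Hopfian alternative), a point the paper leaves implicit in the phrase ``it is enough to show ${}^h(K_2)\subseteq K_2$ for every $h\in\mathcal{L}$.''
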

\begin{proof}
It is enough to show $^h(K_2)\subseteq K_2$ for every $h\in\mathcal{L}$.  Let $h\in\mathcal{L}$ and $x\in K_2=\text{ker}(\text{ab}'\iota_{\#})$. Hence $0=\text{ab}'\iota_{\#}(x)=\iota_*(\text{ab}(x))$, so $\text{ab}(x)\in A$ and then $h_*(\text{ab}(x))\in A$. Therefore
$$\text{ab}'\iota_{\#}h_{\#}(x)=\iota_*(\text{ab}(h_{\#}(x)))=\iota_*(h_*(\text{ab}(x)))=0,$$
that is, $h_{\#}(x)\in K_2$.
\end{proof}

It follows from Equality (\ref{JTHequ11}) and Lemma \ref{JTHlemma2}, by induction, that $^h(K_m)=K_m$ for every $m\geq1$ and $h\in\mathcal{L}$. From the general setting in \cite[Section 3.4 and Section 10.2]{MR3828784}  we have a decreasing sequence
\begin{equation}\label{JTHequ13}
\mathcal{L}=J^{\mathfrak{a}}_0\mathcal{M}\supseteq J^{\mathfrak{a}}_1\mathcal{M}\supseteq J^{\mathfrak{a}}_2\mathcal{M}\supseteq\cdots\supseteq J^{\mathfrak{a}}_m\mathcal{M}\supseteq J^{\mathfrak{a}}_{m+1}\mathcal{M}\supseteq\cdots
\end{equation}
of subgroups of $\mathcal{M}$ satisfying:
\begin{equation}\label{JTHequ14}
[J^{\mathfrak{a}}_l\mathcal{M},J^{\mathfrak{a}}_m\mathcal{M}]\subseteq J^{\mathfrak{a}}_{l+m}\mathcal{M}\ \ \ \ \text{for all } l,m\geq 0.
\end{equation}
\noindent In our case, the $m$-th term in this decreasing sequence is given by
 
\begin{equation}\label{JTHequ15}
J^{\mathfrak{a}}_m\mathcal{M}=J^{\mathfrak{a}}_m\mathcal{M}_{g,1}=\left\{\begin{array}{l|l}
         &\text{for } x\in\pi: \ \ \ h_{\#}(x)x^{-1}\in K_{1+m}  \\
        h\in\mathcal{L}&\ \ \ \ \ \  \text{and for } y\in K_2:\\
        & \ \ \ h_{\#}(y)y^{-1}\in K_{2+m}
        \end{array}\right\}.
\end{equation}
 
\medskip
 
\begin{definition} The \emph{alternative Johnson filtration} of $\mathcal{M}$ is the descending chain of subgroups  $\{J^{\mathfrak{a}}_m\mathcal{M}\}_{m\geq 0}$.
\end{definition}

\begin{example}\label{JTHexample3} 
Consider the curves  $\delta,\epsilon$ and the meridian $\alpha_g$ as show in Figure \ref{figuraJTH3artalt}. It can be show that $t_{\delta}$ and $t_{\epsilon}t^{-1}_{\alpha_g}$ belong to $J_2^{\mathfrak{a}}\mathcal{M}$. We will show this explicitly in Examples~\ref{JTH2example3} and~\ref{JTH2ejemplo4}. In particular  $t_{\delta}$ and $t_{\epsilon}t^{-1}_{\alpha_g}$ belong to $\mathcal{I}\cap\mathcal{I}^{\mathfrak{a}}$.
\begin{figure}[ht!]
\centering   	
	\includegraphics[scale=0.7]{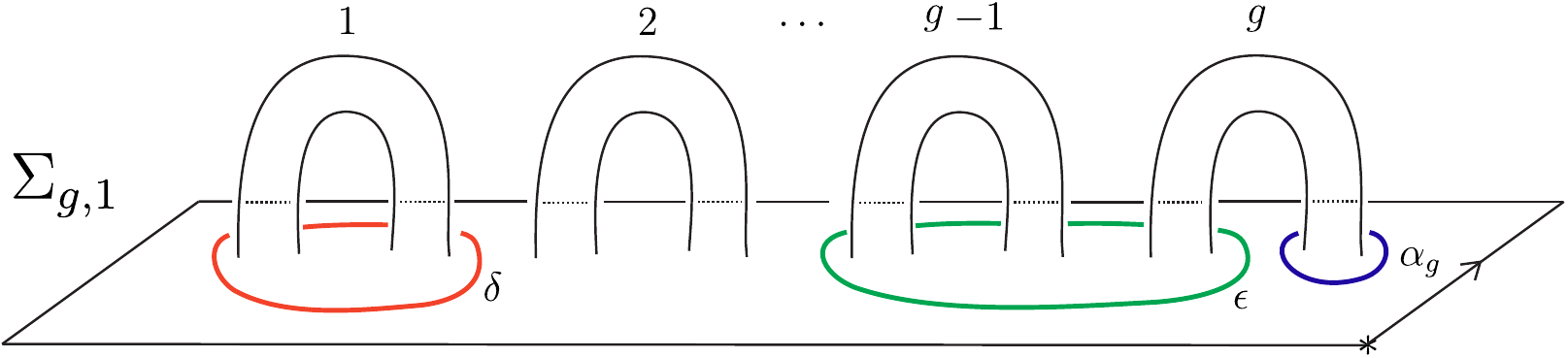}
	\caption{Curves $\delta, \epsilon$ and $\alpha_g$.}
\label{figuraJTH3artalt}
\end{figure}
\end{example}

\begin{proposition}\label{prop1altJF} The alternative Johnson filtration satisfies the following properties.
 \begin{enumerate}
\item [\emph{(i)}]  $\bigcap_{m\geq 0}J^{\mathfrak{a}}_m\mathcal{M}=\{\text{\emph{Id}}_{\Sigma}\}$.
\item [\emph{(ii)}] For all $k\geq 1$ the group $J^{\mathfrak{a}}_k\mathcal{M}$ is residually nilpotent, that is, $\bigcap_m\Gamma_mJ_k^{\mathfrak{a}}\mathcal{M}=\{\text{\emph{Id}}_{\Sigma}\}$.
\end{enumerate}
\end{proposition}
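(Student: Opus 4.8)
For part (i), the plan is to compare the alternative Johnson filtration with the ordinary Johnson filtration and invoke the classical fact that $\bigcap_{m\geq 1}J_m\mathcal{M}=\{\mathrm{Id}_\Sigma\}$, which in turn follows from residual nilpotence of $\pi$ (Magnus). The key inclusion is an analogue of $\Gamma_m\pi\subseteq K_m$ from \eqref{JTHequ12}: since $K_{1+m}\subseteq\Gamma_{\lceil(1+m)/2\rceil}\pi$, an element $h$ lying in all $J_m^{\mathfrak{a}}\mathcal{M}$ satisfies $h_\#(x)x^{-1}\in\bigcap_m\Gamma_{\lceil(1+m)/2\rceil}\pi=\bigcap_k\Gamma_k\pi=\{1\}$ for every $x\in\pi$, hence $h_\#=\mathrm{Id}_\pi$, and injectivity of the Dehn–Nielsen–Baer representation \eqref{JTHequ1} forces $h=\mathrm{Id}_\Sigma$. (Alternatively, this is subsumed by Theorem A(iii)–(iv) once those are proved: cofinality of the two filtrations together with $\bigcap_m J_m\mathcal{M}=\{\mathrm{Id}_\Sigma\}$ gives (i) immediately; but the direct argument via \eqref{JTHequ12} is self-contained and short.)

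For part (ii), fix $k\geq 1$ and set $G=J_k^{\mathfrak{a}}\mathcal{M}$; we must show $\bigcap_{m\geq 1}\Gamma_m G=\{\mathrm{Id}_\Sigma\}$. The strategy is to exhibit a filtration of $G$ that is separating and ``refines'' the lower central series in the sense needed, namely to show $\Gamma_m G\subseteq J_{k+\text{(something growing in }m)}^{\mathfrak{a}}\mathcal{M}$, and then use part (i). Concretely, from the fundamental commutator estimate \eqref{JTHequ14}, $[J_l^{\mathfrak{a}}\mathcal{M},J_m^{\mathfrak{a}}\mathcal{M}]\subseteq J_{l+m}^{\mathfrak{a}}\mathcal{M}$, one gets by induction on $m$ that $\Gamma_m G=\Gamma_m(J_k^{\mathfrak{a}}\mathcal{M})\subseteq J_{mk}^{\mathfrak{a}}\mathcal{M}$ (using $\Gamma_1 G=J_k^{\mathfrak{a}}\mathcal{M}$ and $\Gamma_{m+1}G=[\Gamma_m G,G]\subseteq[J_{mk}^{\mathfrak{a}}\mathcal{M},J_k^{\mathfrak{a}}\mathcal{M}]\subseteq J_{(m+1)k}^{\mathfrak{a}}\mathcal{M}$). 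Therefore
\[
\bigcap_{m\geq 1}\Gamma_m G\ \subseteq\ \bigcap_{m\geq 1}J_{mk}^{\mathfrak{a}}\mathcal{M}\ =\ \bigcap_{n\geq 0}J_n^{\mathfrak{a}}\mathcal{M}\ =\ \{\mathrm{Id}_\Sigma\},
\]
the last equality being part (i). This proves (ii).

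The genuinely substantive input is part (i); part (ii) is then a formal consequence of the commutator inequality \eqref{JTHequ14} (already available from the general framework of \cite{MR3828784}) together with (i). So the main obstacle is establishing that an element acting trivially on every successive ``weighted'' nilpotent quotient is trivial — and that reduces cleanly to residual nilpotence of $\pi$ via the sandwich $\Gamma_m\pi\subseteq K_m\subseteq\Gamma_{\lceil m/2\rceil}\pi$ of \eqref{JTHequ12} and injectivity of $\rho$. I do not anticipate technical difficulties beyond bookkeeping with the indices; the one point to state carefully is that the condition ``$h_\#(x)x^{-1}\in K_{1+m}$ for all $x\in\pi$ and all $m$'' already pins down $h_\#$ on all of $\pi$ (the second defining condition in \eqref{JTHequ15}, concerning $y\in K_2$, is then automatic and need not be separately analysed for (i)).
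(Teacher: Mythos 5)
Your proposal is correct and follows essentially the same route as the paper: part (i) via the sandwich $K_{m+1}\subseteq\Gamma_{\lceil(m+1)/2\rceil}\pi$, residual nilpotence of $\pi$, and injectivity of the Dehn--Nielsen--Baer representation, and part (ii) by an induction on the commutator estimate (\ref{JTHequ14}) followed by an appeal to (i). The only cosmetic difference is that you bound $\Gamma_m J_k^{\mathfrak{a}}\mathcal{M}$ by $J_{mk}^{\mathfrak{a}}\mathcal{M}$ where the paper uses the weaker but equally sufficient $J_m^{\mathfrak{a}}\mathcal{M}$.
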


\begin{proof}
In order to prove (i), recall that 
$K_m\subseteq \Gamma_{\left\lceil m/2\right\rceil}\pi$ for $m\geq 1$. Consider  $h\in\mathcal{L}$ such that $h\in J^{\mathfrak{a}}_m\mathcal{M}$ for all $m\geq 0$. Let $x\in \pi$, thus
$$\forall m\geq 1,\ h_{\#}(x)x^{-1}\in K_{m+1}\subseteq \Gamma_{\left\lceil (m+1)/2\right\rceil}\pi.$$
Therefore $h_{\#}(x)x^{-1}\in \Gamma_k\pi$ for all $k\geq 1$. Since $\pi$ is residually nilpotent, we have that $h_{\#}=\rho(h)=\text{Id}_{\pi}$. In view of  the injectivity of the Dehn-Nielsen-Baer representation $\rho$ we conclude that $h=\text{Id}_{\Sigma}$, so we have (i). Now, let us see (ii). Fix $k\geq 1$, from (\ref{JTHequ14}) it follows, by induction on $m$, that $\Gamma_mJ^{\mathfrak{a}}_k\mathcal{M}\subseteq J^{\mathfrak{a}}_m\mathcal{M}$ for all $m\geq 1$. Therefore by (i) we obtain $\bigcap_{m\geq 1}\Gamma_m J^{\mathfrak{a}}_k\mathcal{M}=\{\text{Id}_{\Sigma}\}$.
\end{proof}

The Johnson filtration satisfies similar properties  to those stated in the above proposition. Let us briefly recall  the Johnson filtration and the Johnson-Levine filtration in order to compare them with each other.

\para{Johnson filtration} The lower central series of $\pi$ is preserved by the Dehn-Nielsen-Baer representation $\rho$, so for every $k\geq 1$ there is a group homomorphism
\begin{equation}\label{JTHequ16}
\rho_k:\mathcal{M}\longrightarrow \Aut(\pi/\Gamma_{k+1}\pi),
\end{equation}
defined as the composition
$$\mathcal{M}\stackrel{\rho}{\longrightarrow}\text{Aut}(\pi)\longrightarrow \text{Aut}(\pi/\Gamma_{k+1}\pi).$$
Notice that $\kernel(\rho_1)$ is the Torelli group $\mathcal{I}$. The \emph{Johnson filtration} of $\mathcal{M}$ is the descending chain of subgroups 
\begin{equation}\label{JTHequ17}
\mathcal{M}\supseteq\mathcal{I}=J_1\mathcal{M} \supseteq J_2\mathcal{M} \supseteq J_3\mathcal{M} \supseteq \cdots
\end{equation}
defined by $J_k\mathcal{M}:=\kernel(\rho_k)$ for $k\geq 1$. Equivalently for $k\geq 1$, 
\begin{equation}\label{JTHequ18}
J_k\mathcal{M}=\{h\in\mathcal{M}\ | \ \text{for all } x\in\pi:\  h_{\#}(x)x^{-1}\in\Gamma_{k+1}\pi \}.
\end{equation}

\begin{proposition}\cite[Corollary 3.3]{MR1133875} The Johnson filtration satisfies the following properties.
 \begin{enumerate}
 \item [\emph{(i)}] $[J_k\mathcal{M},J_m\mathcal{M}]\subseteq J_{k+m}\mathcal{M}$ for all $k,m\geq 1$.
\item [\emph{(ii)}]  $\bigcap_{k\geq 1}J_k\mathcal{M}=\{\text{\emph{Id}}_{\Sigma}\}$.
\item [\emph{(iii)}] For all $k\geq 1$ the group $J_k\mathcal{M}$ is residually nilpotent.
\end{enumerate}
\end{proposition}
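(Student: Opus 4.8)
The plan is to deduce all three properties from the commutator estimate (i), in exactly the way Proposition \ref{prop1altJF} was deduced from (\ref{JTHequ14}). For (i) itself, the most economical route is to observe that the Johnson filtration is the instance of the general construction of Habiro and Massuyeau recalled just before (\ref{JTHequ13}), obtained by feeding in the $N$-series $(\Gamma_m\pi)_{m\geq 1}$ instead of $(K_m)_{m\geq 1}$; the only hypothesis one must check, that $\rho(\mathcal{M})$ preserves every $\Gamma_m\pi$, is immediate since the lower central series is characteristic. With this identification, (i) is precisely the analogue of (\ref{JTHequ14}) for that $N$-series.

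For a self-contained proof of (i) I would instead argue directly with the difference map. For $h\in\mathcal{M}$ set $\partial_h\colon\pi\to\pi$, $\partial_h(x)=h_{\#}(x)x^{-1}$, so that by (\ref{JTHequ18}) one has $h\in J_k\mathcal{M}$ if and only if $\partial_h(\pi)\subseteq\Gamma_{k+1}\pi$. These maps obey the cocycle-type identities $\partial_{hg}(x)=h_{\#}(\partial_g(x))\,\partial_h(x)$ and $\partial_{h^{-1}}(x)=\bigl(\partial_h(h_{\#}^{-1}(x))\bigr)^{-1}$. Given $h\in J_k\mathcal{M}$ and $g\in J_m\mathcal{M}$, one expands $\partial_{[h,g]}$ using these identities; combining the fact that every $f_{\#}$ preserves each $\Gamma_j\pi$, the stronger vanishing $\partial_h(\Gamma_j\pi)\subseteq\Gamma_{j+k}\pi$ for $h\in J_k\mathcal{M}$, and the inclusions $[\Gamma_i\pi,\Gamma_j\pi]\subseteq\Gamma_{i+j}\pi$, a bookkeeping computation gives $\partial_{[h,g]}(\pi)\subseteq\Gamma_{k+m+1}\pi$, that is, $[h,g]\in J_{k+m}\mathcal{M}$. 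This is the Andreadakis estimate for the filtration of $\Aut(\pi)$ by the subgroups $\{\phi:\phi(x)x^{-1}\in\Gamma_{k+1}\pi\}$, pulled back along $\rho$.

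Property (ii) then follows word for word as in the proof of Proposition \ref{prop1altJF}(i): if $h$ lies in every $J_k\mathcal{M}$, then $h_{\#}(x)x^{-1}\in\bigcap_{k\geq 1}\Gamma_{k+1}\pi=\{1\}$ for all $x\in\pi$ because the free group $\pi$ is residually nilpotent; hence $h_{\#}=\Id_\pi$, and $h=\Id_\Sigma$ by injectivity of the Dehn-Nielsen-Baer representation. For (iii), fix $k\geq 1$; since $J_k\mathcal{M}\subseteq J_1\mathcal{M}$ and $[J_i\mathcal{M},J_j\mathcal{M}]\subseteq J_{i+j}\mathcal{M}$ by (i), an easy induction on $m$ yields $\Gamma_m J_k\mathcal{M}\subseteq J_m\mathcal{M}$, so $\bigcap_m\Gamma_m J_k\mathcal{M}\subseteq\bigcap_m J_m\mathcal{M}=\{\Id_\Sigma\}$ by (ii); that is, $J_k\mathcal{M}$ is residually nilpotent.

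The genuine content lies entirely in (i); parts (ii) and (iii) are formal consequences of (i) together with residual nilpotence of a free group and the injectivity of $\rho$. The mild difficulty in (i) is only organizational — keeping track of which term $\Gamma_\bullet\pi$ each intermediate commutator falls into — and it is classical, going back to Andreadakis and to Johnson and Morita. Accordingly, in the final text I would simply invoke \cite[Corollary 3.3]{MR1133875} for (i) and note that (ii) and (iii) are obtained exactly as in Proposition \ref{prop1altJF}.
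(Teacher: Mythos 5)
Your proposal is correct and matches the paper's treatment: the paper states this result only by citing \cite[Corollary 3.3]{MR1133875}, and your derivation of (ii) and (iii) from (i) — residual nilpotence of the free group $\pi$ plus injectivity of $\rho$ for (ii), then $\Gamma_m J_k\mathcal{M}\subseteq J_m\mathcal{M}$ by induction for (iii) — is word for word the argument the paper uses for Proposition \ref{prop1altJF}. Your direct sketch of (i) via the difference map $\partial_h(x)=h_{\#}(x)x^{-1}$ is the classical Andreadakis--Johnson--Morita estimate and is sound, though the inner lemma $\partial_h(\Gamma_j\pi)\subseteq\Gamma_{j+k}\pi$ and the final commutator bookkeeping are asserted rather than carried out; this is harmless since, as you say, the finished text would simply invoke the citation for (i).
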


\para{Johnson-Levine filtration} J. Levine introduced in \cite{MR1823501,MR2265877} a different filtration of the mapping class group by means of the embedding $\iota:\Sigma\hookrightarrow V$, see Figure \ref{figuraJTH1}, and the lower central series of $\pi'=\pi_1(V,\iota(*))$.

The \emph{Johnson-Levine filtration} of $\mathcal{M}$ is the descending chain of subgroups 
\begin{equation}\label{JTHequ19}
\mathcal{I}^L=J^L_1\mathcal{M} \supseteq J^L_2\mathcal{M} \supseteq J^L_3\mathcal{M} \supseteq \cdots
\end{equation}
defined by 
\begin{equation}\label{JTHequ20}
J^L_k\mathcal{M}:=\{h\in\mathcal{I}^L\ | \  \iota_{\#}h_{\#}(\mathbb{A})\subseteq\Gamma_{k+1}\pi' \}
\end{equation}
for $k\geq 1$. 

Let $\mathcal{H}$  be the subgroup of $\mathcal{M}$ consisting of the elements that can be extended to the handlebody $V$. In Example \ref{ejemploLMO2} we used these kind of homeomorphisms to give examples of special Lagrangian cobordisms.  It is well known that 
\begin{equation}\label{handlebodygroup}
\mathcal{H}=\{h\in\mathcal{M}\ |\ h_{\#}(\mathbb{A})\subseteq \mathbb{A}\},
\end{equation}
see \cite[Theorem 10.1]{MR0159313}. The group $\mathcal{H}$ is called the \emph{handlebody group} because it is isomorphic to the mapping class group of $V$.

\begin{proposition}{(Levine \cite{MR1823501,MR2265877})}\label{JTHprop3} The Johnson-Levine filtration satisfies the following properties.
\begin{enumerate}
\item [\emph{(i)}] For $k\geq1$,  $J^L_k\mathcal{M}$ is a subgroup of $\mathcal{M}$. 
\item [\emph{(ii)}] $\bigcap_{k\geq1} J_k^L\mathcal{M}=\mathcal{H}\cap\mathcal{I}^L$.
\item [\emph{(iii)}] $J_k\mathcal{M}\subseteq J_k^L\mathcal{M}$ for every $k\geq 1$.
\item [\emph{(iv)}] $J_2^L\mathcal{M}$ is generated by simple closed curves which bound in $V$, equivalently by scc's whose homology class belongs to $A$.
\item [\emph{(v)}] $\mathcal{I}^L=\mathcal{I}\cdot(\mathcal{H}\cap\mathcal{I}^L)$ and $J_2^L\mathcal{M}=J_2\mathcal{M}\cdot(\mathcal{H}\cap\mathcal{I}^L)$.
\end{enumerate}
\end{proposition}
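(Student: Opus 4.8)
The plan is to prove the five items essentially in the order Levine did, leaning on the work already recalled in the excerpt. For (i), I would argue that $J^L_k\mathcal{M}$ is a subgroup of $\mathcal{I}^L\subseteq\mathcal{L}$ by checking closure under the group operations directly from the defining condition $\iota_\#h_\#(\mathbb{A})\subseteq\Gamma_{k+1}\pi'$. Since $h\in\mathcal{I}^L$ acts as the identity on $A=\mathbb{A}/(\mathbb{A}\cap\Gamma_2\pi)$ — more precisely, $h_\#$ preserves $\mathbb{A}$ up to $K_2$ in a way compatible with the abelianization — one shows $\iota_\#h_\#(\mathbb{A})\subseteq\iota_\#(\mathbb{A})\cdot\Gamma_2\pi'=\Gamma_2\pi'$ (wait: $\iota_\#(\mathbb{A})=1$ by definition of $\mathbb{A}$); the point is that $\iota_\# h_\#$ factors through the quotient and the condition becomes a condition on a homomorphism $\mathbb{A}\to\pi'/\Gamma_{k+1}\pi'$, which is visibly closed under composition. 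Concretely: if $h_1,h_2\in J^L_k\mathcal{M}$, then for $a\in\mathbb{A}$, write $h_2\#(a)=a\cdot k$ with $\iota_\#(k)\in\Gamma_{k+1}\pi'$ (using $h_2\in\mathcal{I}^L$ to get $a$ back modulo $\mathbb{A}\cdot\Gamma_2\pi=K_2$, then being careful — this is where one needs the precise interplay of $\mathcal{I}^L$ with the filtration). Then $\iota_\# h_1\# h_2\#(a)=\iota_\# h_1\#(a)\cdot\iota_\# h_1\#(k)$; the first factor lies in $\Gamma_{k+1}\pi'$ since $h_1\in J^L_k\mathcal{M}$, and the second lies in $\Gamma_{k+1}\pi'$ since $h_1\#$ preserves $\Gamma_{k+1}\pi'$ and $\iota_\#$ is a homomorphism, provided $\iota_\# h_1\#(k)$ — and here one uses that $\iota_\#$ is surjective and $h_1$ induces an automorphism of $\pi'$ preserving the lower central series, so $\iota_\# h_1\#(k)\in\Gamma_{k+1}\pi'$ follows from $\iota_\#(k)\in\Gamma_{k+1}\pi'$ together with $h\in\mathcal{H}$?—no, $h_1$ need not be in $\mathcal{H}$. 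The correct bookkeeping: $h_1\in\mathcal{L}$ means $h_1\#(K_2)=K_2$ (Lemma \ref{JTHlemma2}) and more generally $h_1\#$ preserves the $K_m$; since $K_{m}$ maps onto $\Gamma_{\lceil m/2\rceil}\pi'$-type subgroups under $\iota_\#$, one deduces the needed stability. Inverses are handled the same way. This is the technical heart of (i) and I expect it to be the main obstacle, precisely because $h\in\mathcal{I}^L\cap\mathcal{L}$ does not preserve $\mathbb{A}$ or $\pi'$ on the nose, only up to controlled correction terms, so the "obvious" computation needs the filtration-compatibility from \cite{MR3828784} recalled around \eqref{JTHequ11}--\eqref{JTHequ12}.

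For (ii), I would compute $\bigcap_{k\geq1}J^L_k\mathcal{M}$: an element $h$ lies in all $J^L_k\mathcal{M}$ iff $\iota_\# h_\#(\mathbb{A})\subseteq\bigcap_k\Gamma_{k+1}\pi'=\{1\}$ (since $\pi'$, being free, is residually nilpotent), i.e. iff $\iota_\# h_\#(\mathbb{A})=1$, i.e. $h_\#(\mathbb{A})\subseteq\ker(\iota_\#)=\mathbb{A}$. Combined with $h\in\mathcal{I}^L$ this says exactly $h\in\mathcal{H}\cap\mathcal{I}^L$ by the characterization \eqref{handlebodygroup} of the handlebody group. Conversely any $h\in\mathcal{H}\cap\mathcal{I}^L$ clearly satisfies $\iota_\# h_\#(\mathbb{A})=\iota_\#(\mathbb{A})=1\subseteq\Gamma_{k+1}\pi'$ for all $k$, so lies in the intersection.

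For (iii), I would show $J_k\mathcal{M}\subseteq J^L_k\mathcal{M}$ by a direct comparison: if $h\in J_k\mathcal{M}$ then $h_\#(x)x^{-1}\in\Gamma_{k+1}\pi$ for all $x\in\pi$ by \eqref{JTHequ18}; in particular $h\in\mathcal{I}\subseteq\mathcal{I}^L$ and for $a\in\mathbb{A}$ we get $h_\#(a)=a\cdot(h_\#(a)a^{-1})$ with $h_\#(a)a^{-1}\in\Gamma_{k+1}\pi$, hence $\iota_\# h_\#(a)=\iota_\#(a)\cdot\iota_\#(h_\#(a)a^{-1})=\iota_\#(h_\#(a)a^{-1})\in\iota_\#(\Gamma_{k+1}\pi)\subseteq\Gamma_{k+1}\pi'$, since $\iota_\#$ sends $\Gamma_{k+1}\pi$ into $\Gamma_{k+1}\pi'$. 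This gives the inclusion. Finally for (iv) and (v) I would appeal to Levine's structural description: the handlebody-bounding simple closed curves generate $J_2^L\mathcal{M}$ (this is \cite[Proposition 4.1]{MR1823501}, essentially a consequence of the fact that $J^L_2\mathcal{M}/J^L_3\mathcal{M}$ is detected by the first Johnson–Levine homomorphism $\tau_2^L$ and one exhibits enough twists realizing all values), and then $\mathcal{I}^L=\mathcal{I}\cdot(\mathcal{H}\cap\mathcal{I}^L)$ and $J_2^L\mathcal{M}=J_2\mathcal{M}\cdot(\mathcal{H}\cap\mathcal{I}^L)$ follow by a dévissage: given $h\in\mathcal{I}^L$, one first corrects it by an element of $\mathcal{H}\cap\mathcal{I}^L$ so that the result acts trivially on $\mathbb{A}$ modulo $\Gamma_2\pi$ (possible since $\mathcal{H}$ surjects onto the relevant automorphism quotient), landing in $\mathcal{I}$; the refinement to $J_2^L\mathcal{M}=J_2\mathcal{M}\cdot(\mathcal{H}\cap\mathcal{I}^L)$ uses the same argument one filtration-step deeper, comparing $\tau_2^L$ with the second Johnson homomorphism $\tau_2$ and with the contribution of $\mathcal{H}\cap\mathcal{I}^L$. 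Since these last two items are quoted from Levine and used later in the paper (notably in the proof of Theorem A(v) and Theorem C), I would state them with a reference to \cite{MR1823501,MR2265877} rather than reprove them in full.
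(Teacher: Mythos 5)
The paper itself does not prove this proposition: it is quoted from Levine with references, so deferring (iv) and (v) to \cite{MR1823501,MR2265877} is exactly what the paper does, and your arguments for (ii) and (iii) are correct and essentially the standard ones (residual nilpotency of the free group $\pi'$ plus the characterization (\ref{handlebodygroup}) of $\mathcal{H}$ for (ii); $\iota_{\#}(\Gamma_{k+1}\pi)\subseteq\Gamma_{k+1}\pi'$ for (iii)).

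The genuine gap is in (i). Your closure-under-composition argument starts from the decomposition $h_{2\#}(a)=a\cdot k$ with $\iota_{\#}(k)\in\Gamma_{k+1}\pi'$, which is neither justified nor what you need, and the subsequent bookkeeping (including the appeal to ``$K_m$ maps onto $\Gamma_{\lceil m/2\rceil}\pi'$-type subgroups'') does not close the argument; moreover ``inverses are handled the same way'' is not true as stated. The clean argument for products is: since $\iota_{\#}$ is surjective, $\iota_{\#}(\Gamma_{k+1}\pi)=\Gamma_{k+1}\pi'$, so for $a\in\mathbb{A}$ and $h_2\in J^L_k\mathcal{M}$ you may write $h_{2\#}(a)=\gamma\alpha$ with $\gamma\in\Gamma_{k+1}\pi$ and $\alpha=\gamma^{-1}h_{2\#}(a)\in\ker(\iota_{\#})=\mathbb{A}$; then $\iota_{\#}h_{1\#}h_{2\#}(a)=\iota_{\#}h_{1\#}(\gamma)\cdot\iota_{\#}h_{1\#}(\alpha)$, where the first factor lies in $\Gamma_{k+1}\pi'$ because $h_{1\#}(\Gamma_{k+1}\pi)\subseteq\Gamma_{k+1}\pi$, and the second because $h_1\in J^L_k\mathcal{M}$. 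Inverses require an extra ingredient you never supply: for $h\in\mathcal{I}^L$ one has $\iota_*h_*=\iota_*$ on $H$ (the induced map on $H/A\cong B$ is the identity), hence for every $j$ and every $\gamma\in\Gamma_j\pi$ one gets $\iota_{\#}h_{\#}(\gamma)\equiv\iota_{\#}(\gamma)\ \mathrm{mod}\ \Gamma_{j+1}\pi'$, since the action on $\Gamma_j\pi/\Gamma_{j+1}\pi\cong\mathfrak{Lie}_j(H)$ is induced by $h_*$. With this, setting $b=h^{-1}_{\#}(a)$ for $a\in\mathbb{A}$, one shows $\iota_{\#}(b)\in\Gamma_{j+1}\pi'$ by induction on $j\leq k$: write $b=\gamma\alpha$ with $\gamma\in\Gamma_j\pi$, $\alpha\in\mathbb{A}$, apply $\iota_{\#}h_{\#}$ to $b$ and use $h_{\#}(b)=a\in\mathbb{A}$ together with $h\in J^L_k\mathcal{M}$ to get $\iota_{\#}h_{\#}(\gamma)\in\Gamma_{k+1}\pi'$, and then the displayed congruence to push $\iota_{\#}(\gamma)=\iota_{\#}(b)$ one step deeper. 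Without some argument of this kind (or a direct citation of Levine for (i) as well), your proof of (i) is incomplete.
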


We refer to the alternative Johnson filtration, the Johnson filtration and the Johnson-Levine filtration as \emph{Johnson-type filtrations}. 

\para{Comparison between Johnson-type filtrations}  Proposition \ref{JTHprop3} gives a first comparison between the three filtrations. Let us give a more general comparison.

\begin{lemma}\label{JTHlemma3}
For every $m\geq 1$ there exists a normal subgroup $N_m$ of $\mathbb{A}$ such that  $K_m=\Gamma_m\pi\cdot N_m$.
\end{lemma}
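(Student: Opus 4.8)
The plan is to prove the statement by induction on $m$, exhibiting $N_m$ explicitly in terms of the lower central series of $\mathbb{A}$ and iterated commutators of $\mathbb{A}$ with $\pi$. For $m=1$ we take $N_1 = \mathbb{A}$ (a normal subgroup of $\mathbb{A}$, trivially), and $K_1 = \pi = \Gamma_1\pi\cdot\mathbb{A}$ since $\Gamma_1\pi = \pi$. For $m=2$ we take $N_2 = \mathbb{A}$ as well, and the identity $K_2 = \mathbb{A}\cdot\Gamma_2\pi$ is exactly (\ref{JTHequ5}); note $\mathbb{A}$ is visibly normal in itself. The substance is in the inductive step, where I would use the recursion $K_m = [K_{m-1},K_1]\cdot[K_{m-2},K_2]$ from (\ref{JTHequ11}) together with the inductive hypotheses $K_{m-1} = \Gamma_{m-1}\pi\cdot N_{m-1}$ and $K_{m-2} = \Gamma_{m-2}\pi\cdot N_{m-2}$.

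The key step is to expand $[K_{m-1},\pi]$ and $[K_{m-2},K_2]$ using these decompositions. Since $K_{m-1}$ is generated by $\Gamma_{m-1}\pi$ and $N_{m-1}$, standard commutator calculus (the identity $[ab,c] \equiv [a,c][b,c]$ modulo higher commutators, used inside the group-with-filtration formalism, or more robustly the fact that for a normal subgroup-generated product $[\langle S\cup T\rangle, U] = [\langle S\rangle,U]\cdot[\langle T\rangle,U]$ when everything is normal enough) gives
\begin{equation*}
[K_{m-1},\pi] = [\Gamma_{m-1}\pi,\pi]\cdot[N_{m-1},\pi] = \Gamma_m\pi\cdot[N_{m-1},\pi].
\end{equation*}
Similarly, writing $K_2 = \Gamma_2\pi\cdot\mathbb{A}$,
\begin{equation*}
[K_{m-2},K_2] = [\Gamma_{m-2}\pi\cdot N_{m-2},\ \Gamma_2\pi\cdot\mathbb{A}] = \Gamma_m\pi\cdot[\Gamma_{m-2}\pi,\mathbb{A}]\cdot[N_{m-2},\mathbb{A}],
\end{equation*}
using that $[\Gamma_{m-2}\pi,\Gamma_2\pi]\subseteq\Gamma_m\pi$ and $[N_{m-2},\Gamma_2\pi]\subseteq[\pi,\Gamma_2\pi]\cap(\text{things involving }\mathbb{A})$, the latter being absorbed appropriately. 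Collecting, one finds
\begin{equation*}
K_m = \Gamma_m\pi\cdot N_m,\qquad N_m := [N_{m-1},\pi]\cdot[\Gamma_{m-2}\pi,\mathbb{A}]\cdot[N_{m-2},\mathbb{A}],
\end{equation*}
and I would then check that $N_m$, so defined, is contained in $\mathbb{A}$ (each generator is a commutator with at least one entry in $\mathbb{A}$, and $\mathbb{A}$ is normal in $\pi$ — indeed $\mathbb{A}=\ker(\iota_\#)$ is normal in $\pi$, so $[\pi,\mathbb{A}]\subseteq\mathbb{A}$) and that it is normal in $\mathbb{A}$ (in fact normal in $\pi$, since it is a product of terms of the form $[\text{normal},\text{normal}]$, hence characteristic-ish; at minimum, conjugation by $\mathbb{A}$ preserves each factor up to elements that are reabsorbed).

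The main obstacle I anticipate is bookkeeping the commutator identities cleanly: the equality $[ST,U] = [S,U]\cdot[T,U]$ for products of subgroups is only literally true modulo deeper commutators, so I would need to argue either (a) inductively, feeding the "error terms" $[[S,U],T]$ etc. into $\Gamma_m\pi$ using that they live in high enough lower-central-series degree, or (b) abstractly, by checking that the right-hand side $\Gamma_m\pi\cdot N_m$ is a subgroup containing the generators of $K_m$ and is contained in $K_m$, thereby sidestepping exact commutator expansions. Approach (b) is cleaner: one shows $\Gamma_m\pi\cdot N_m \subseteq K_m$ by noting each generator of $N_m$ is an iterated commutator that qualifies as "weight $m$" in the $K$-grading (entries in $\mathbb{A}$ counting double), and $\supseteq$ by showing $\Gamma_m\pi\cdot N_m$ is normalized by $\pi$ and absorbs $[K_{m-1},\pi]$ and $[K_{m-2},K_2]$ via the inductive descriptions — then minimality of the $N$-series $(K_m)$, or direct comparison with (\ref{JTHequ11}), forces equality. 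I would also record the side fact $\Gamma_m\pi\subseteq K_m$ (already in (\ref{JTHequ12})), which makes the decomposition $K_m = \Gamma_m\pi\cdot N_m$ an honest product of subgroups of $\pi$ with $N_m\leq\mathbb{A}$.
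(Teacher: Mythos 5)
Your overall strategy is the paper's: strong induction using the recursion $K_m=[K_{m-1},K_1]\cdot[K_{m-2},K_2]$, expanding commutators of products of normal subgroups, and absorbing $[\Gamma_{m-1}\pi,\pi]\cdot[\Gamma_{m-2}\pi,\Gamma_2\pi]$ into $\Gamma_m\pi$. (Your base case $N_1=\mathbb{A}$ instead of $N_1=\{1\}$ is harmless since $\Gamma_1\pi=\pi$.) Also, your worry that $[ST,U]=[S,U]\cdot[T,U]$ only holds ``modulo deeper commutators'' is unfounded here: all subgroups in play are normal in $\pi$ (inductively, each $N_k$ is a product of commutators of normal subgroups of $\pi$, hence normal in $\pi$, not merely in $\mathbb{A}$), and for normal subgroups the identity is exact because $[st,u]=[s,u]^t[t,u]$ and $[S,U]$ is normal. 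This is precisely why the paper's computation is a chain of honest equalities, so neither of your workarounds (a) or (b) is needed for that point.

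The genuine gap is in your treatment of $[K_{m-2},K_2]$. The exact expansion is $[\Gamma_{m-2}\pi,\Gamma_2\pi]\cdot[\Gamma_{m-2}\pi,\mathbb{A}]\cdot[N_{m-2},\Gamma_2\pi]\cdot[N_{m-2},\mathbb{A}]$, and you discard the factor $[N_{m-2},\Gamma_2\pi]$ with the phrase ``absorbed appropriately.'' It is not absorbed in any evident way: $N_{m-2}$ is not contained in $\Gamma_{m-2}\pi$, so this factor need not lie in $\Gamma_m\pi$, nor is it obviously inside $[\Gamma_{m-2}\pi,\mathbb{A}]$, $[N_{m-2},\mathbb{A}]$ or $[N_{m-1},\pi]$ (note $N_{m-1}\subseteq N_{m-2}$ goes the wrong way); making it disappear would require an extra argument such as the three subgroups lemma, and your fallback plan (b) merely restates the same missing containment when you ask $\Gamma_m\pi\cdot N_m$ to ``absorb'' $[K_{m-2},K_2]$. (Your displayed ``equality'' for $[K_{m-2},K_2]$ is also not an equality, since the left side need not contain $\Gamma_m\pi$; only the combined product $K_m$ does.) The repair is immediate and is what the paper does: do not split $[N_{m-2},K_2]$ at all, and set $N_m=[N_{m-1},\pi]\cdot[\Gamma_{m-2}\pi,\mathbb{A}]\cdot[N_{m-2},K_2]$; each factor is contained in $\mathbb{A}$ (as $\mathbb{A}$ is normal in $\pi$) and normal in $\pi$, so the conclusion $K_m=\Gamma_m\pi\cdot N_m$ with $N_m\trianglelefteq\mathbb{A}$ follows with no leftover term.
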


\begin{proof} The argument is by strong induction on $m$. Taking $N_1=\{1\}$ and $N_2=\mathbb{A}$, clearly we have $K_1=\Gamma_1\pi\cdot N_1$ and $K_2=\Gamma_2\pi\cdot N_2$. Suppose $m\geq 3$ and let $N_{m-2},N_{m-1}$ be normal subgroups of $\mathbb{A}$ such that $K_{m-1}=\Gamma_{m-1}\pi\cdot N_{m-1}$ and $K_{m-2}=\Gamma_{m-2}\pi\cdot N_{m-2}$. Thus

\begin{equation*}
\begin{split}
K_m& = [K_{m-1}, K_1]\cdot [K_{m-2}, K_2]\\					   & = [\Gamma_{m-1}\pi\cdot N_{m-1}, \pi]  \cdot[\Gamma_{m-2}\pi\cdot N_{m-2}, K_2]\\							 & = [\Gamma_{m-1}\pi, \pi]\cdot[N_{m-1}, \pi]\cdot[\Gamma_{m-2}\pi, \Gamma_2\pi\cdot\mathbb{A}]\cdot[N_{m-2}, K_2]\\
& = \Gamma_m\pi\cdot [N_{m-1}, \pi]\cdot[\Gamma_{m-2}\pi, \Gamma_2\pi]\cdot[\Gamma_{m-2}\pi, \mathbb{A}]\cdot[N_{m-2}, K_2]\\
& = \Gamma_m\pi\cdot N_m,
\end{split}
\end{equation*}
where $N_{m}=[N_{m-1}, \pi]\cdot[\Gamma_{m-2}\pi, \mathbb{A}]\cdot[N_{m-2}, K_2]$ is a normal subgroup of $\mathbb{A}$.
\end{proof}

\begin{proposition}\label{JTHproposition5} For every $m\geq 1$, we have
\begin{enumerate}
\item [\emph{(i)}] $J_{2m}^{\mathfrak{a}}\mathcal{M}\subseteq J_m\mathcal{M}$.
\item [\emph{(ii)}] $J_m\mathcal{M}\subseteq J_{m-1}^{\mathfrak{a}}\mathcal{M}$.
\item [\emph{(iii)}] $J_{m}^{\mathfrak{a}}\mathcal{M}\subseteq J_{m+1}^L\mathcal{M}$.
\end{enumerate}
In particular the Johnson filtration and the alternative Johnson filtration are cofinal.
\end{proposition}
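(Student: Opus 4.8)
The plan is to prove the three inclusions by unwinding the definitions in terms of the $N$-series $(K_m)_{m\geq 1}$ and comparing them with the lower central series $(\Gamma_m\pi)_{m\geq 1}$ of $\pi$ and $(\Gamma_m\pi')_{m\geq 1}$ of $\pi'$. The two facts I expect to use repeatedly are the sandwich inequality $\Gamma_m\pi\subseteq K_m\subseteq \Gamma_{\lceil m/2\rceil}\pi$ from (\ref{JTHequ12}) and the splitting $K_m=\Gamma_m\pi\cdot N_m$ with $N_m\trianglelefteq\mathbb{A}$ from Lemma \ref{JTHlemma3}.

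For (i): let $h\in J^{\mathfrak{a}}_{2m}\mathcal{M}$. By (\ref{JTHequ15}), for every $x\in\pi$ we have $h_{\#}(x)x^{-1}\in K_{1+2m}$. Since $K_{1+2m}\subseteq K_{2m}\subseteq \Gamma_m\pi$ by (\ref{JTHequ12}) (using $\lceil 2m/2\rceil = m$), this gives $h_{\#}(x)x^{-1}\in\Gamma_{m+1}\pi$? Here I must be careful with the index: $K_{1+2m}\subseteq\Gamma_{\lceil(2m+1)/2\rceil}\pi=\Gamma_{m+1}\pi$, which is exactly what (\ref{JTHequ18}) requires for membership in $J_m\mathcal{M}$; one also needs $h\in\mathcal{M}$, which is automatic since $h\in\mathcal{L}\subseteq\mathcal{M}$. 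So $h\in J_m\mathcal{M}$. For (ii): let $h\in J_m\mathcal{M}$. First I would observe $J_m\mathcal{M}\subseteq\mathcal{I}\subseteq\mathcal{L}$, so the Lagrangian condition holds. By (\ref{JTHequ18}), $h_{\#}(x)x^{-1}\in\Gamma_{m+1}\pi\subseteq K_{m+1}=K_{1+(m-1)}$ for all $x\in\pi$, using $\Gamma_{m+1}\pi\subseteq K_{m+1}$ from (\ref{JTHequ12}); this is the first condition in (\ref{JTHequ15}) for $J^{\mathfrak{a}}_{m-1}\mathcal{M}$. For the second condition, take $y\in K_2=\mathbb{A}\cdot\Gamma_2\pi$; I need $h_{\#}(y)y^{-1}\in K_{m+1}=K_{2+(m-1)}$. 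Writing $y=ab$ with $a\in\mathbb{A}$, $b\in\Gamma_2\pi$ and using that $h$ acts trivially modulo $\Gamma_{m+1}\pi$, a short commutator computation shows $h_{\#}(y)y^{-1}\in\Gamma_{m+1}\pi\subseteq K_{m+1}$, so $h\in J^{\mathfrak{a}}_{m-1}\mathcal{M}$.

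For (iii): let $h\in J^{\mathfrak{a}}_m\mathcal{M}$. First I must check $h\in\mathcal{I}^L$, i.e.\ $h_*|_A=\mathrm{Id}_A$: for $x\in\mathbb{A}$ we have $h_{\#}(x)x^{-1}\in K_{1+m}\subseteq K_2$, hence $\mathrm{ab}(h_{\#}(x)x^{-1})\in A$ and in fact lies in $\iota_*^{-1}(0)$; projecting to $H/A\cong B$ it vanishes, and a direct check shows $h_*(\mathrm{ab}(x))=\mathrm{ab}(x)$ in $A$. Then for the Johnson-Levine condition (\ref{JTHequ20}) I need $\iota_{\#}h_{\#}(\mathbb{A})\subseteq\Gamma_{m+2}\pi'$. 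Take $a\in\mathbb{A}$; then $\iota_{\#}(a)=1$, so $\iota_{\#}h_{\#}(a)=\iota_{\#}(h_{\#}(a)a^{-1})$, and $h_{\#}(a)a^{-1}\in K_{1+m}\subseteq K_{m+1}$ since $a\in\pi$. The key point is that $\iota_{\#}(K_{m+1})\subseteq\Gamma_{m+2}\pi'$; using Lemma \ref{JTHlemma3}, $K_{m+1}=\Gamma_{m+1}\pi\cdot N_{m+1}$ with $N_{m+1}\trianglelefteq\mathbb{A}=\ker\iota_{\#}$, so $\iota_{\#}(K_{m+1})=\iota_{\#}(\Gamma_{m+1}\pi)=\Gamma_{m+1}\pi'$ --- but I actually want $\Gamma_{m+2}\pi'$. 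This is where the weight bookkeeping matters: I would instead use that any element of $K_{m+1}$, written via (\ref{JTHequ11}) as a product of iterated commutators in which each $\mathbb{A}$-entry contributes weight $2$, maps under $\iota_{\#}$ (which kills $\mathbb{A}$) into a commutator in $\pi'$ of length at least... the correct estimate is that $\iota_{\#}(K_{n})\subseteq\Gamma_{n}\pi'$ in general but $\iota_{\#}(K_{1+m}\cap(\text{elements }h_{\#}(a)a^{-1}\text{ for }a\in\mathbb{A}))$ lands one level deeper because $a\in\mathbb{A}$ already carries weight $2$. Concretely, since $h\in J^{\mathfrak{a}}_m\mathcal{M}$ also satisfies $h_{\#}(y)y^{-1}\in K_{2+m}$ for $y\in K_2$, and $a\in\mathbb{A}\subseteq K_2$, we get $h_{\#}(a)a^{-1}\in K_{m+2}$; then $\iota_{\#}(h_{\#}(a)a^{-1})\in\iota_{\#}(K_{m+2})=\Gamma_{m+2}\pi'$ by Lemma \ref{JTHlemma3}, giving $h\in J^L_{m+1}\mathcal{M}$.

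The final cofinality claim is then immediate: by (i), $J^{\mathfrak{a}}_{2m}\mathcal{M}\subseteq J_m\mathcal{M}$, and by (ii), $J_{m+1}\mathcal{M}\subseteq J^{\mathfrak{a}}_m\mathcal{M}$, so each filtration is eventually trapped between shifted terms of the other. I expect the main obstacle to be part (iii), specifically the index bookkeeping showing that $\iota_{\#}$ applied to the relevant difference lands in $\Gamma_{m+2}\pi'$ rather than merely $\Gamma_{m+1}\pi'$: the gain of one degree must be extracted from the fact that $\mathbb{A}$-elements count with weight $2$ in the $K$-series while $\iota_{\#}$ annihilates $\mathbb{A}$, so one has to track how a weight-$(m+2)$ commutator word degenerates under $\iota_{\#}$. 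The cleanest way to handle this is probably to prove, as a preliminary lemma, that $\iota_{\#}(K_n)\subseteq\Gamma_n\pi'$ for all $n$ (immediate from Lemma \ref{JTHlemma3}) and then apply it with $n=m+2$ using the second defining condition of $J^{\mathfrak{a}}_m\mathcal{M}$ on the element $a\in\mathbb{A}\subseteq K_2$.
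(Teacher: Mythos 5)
Your proposal is correct and follows essentially the same route as the paper: (i) and (ii) via the sandwich $\Gamma_m\pi\subseteq K_m\subseteq\Gamma_{\lceil m/2\rceil}\pi$ (for (ii) note that the defining condition of $J_m\mathcal{M}$ applies verbatim to any $y\in K_2\subseteq\pi$, so no commutator computation is needed, and your displayed $K_{m+1}=K_{1+(m-1)}$ should read $K_{m+1}\subseteq K_m=K_{1+(m-1)}$), and (iii) by applying the second defining condition of $J^{\mathfrak{a}}_m\mathcal{M}$ to $\alpha\in\mathbb{A}\subseteq K_2$ and using Lemma \ref{JTHlemma3} to write $K_{m+2}=\Gamma_{m+2}\pi\cdot N$ with $N\leq\mathbb{A}=\ker\iota_{\#}$, exactly as in the paper. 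The one weak spot is your verification that $h\in\mathcal{I}^L$ in (iii): the first defining condition only gives $h_*\equiv\mathrm{Id}$ modulo $A$, and ``projecting to $H/A$'' says nothing about $h_*|_A$, so ``a direct check shows $h_*(\mathrm{ab}(x))=\mathrm{ab}(x)$'' is unsupported as written; it is, however, immediate from what you use next, since for $\alpha\in\mathbb{A}$ one has $h_{\#}(\alpha)\alpha^{-1}\in K_{m+2}\subseteq\Gamma_2\pi$, which dies under abelianization, or one can simply invoke the inclusion $J^{\mathfrak{a}}_m\mathcal{M}\subseteq\mathcal{I}^{\mathfrak{a}}\subseteq\mathcal{I}^L$ already recorded in the paper.
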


\begin{proof}
Let $m\geq 1$. Let $h\in J^{\mathfrak{a}}_{2m}\mathcal{M}$, then for every $x\in\pi$ we have
$$h_{\#}(x)x^{-1}\in K_{2m+1}\subseteq\Gamma_{\left\lceil (2m+1)/2\right\rceil}\pi=\Gamma_{m+1}\pi,$$
that is, $h\in J_m\mathcal{M}$ so  (i) holds.  Let $h\in J_m\mathcal{M}$, then for every $x\in\pi$ we have
$$h_{\#}(x)x^{-1}\in \Gamma_{m+1}\pi\subseteq K_{m+1}.$$
In particular, $h_{\#}(x)x^{-1}\in K_m$ for every $x\in\pi$ and $h_{\#}(y)y^{-1}\in K_{m+1}$ for every $y\in K_2$. That is, $h\in J^{\mathfrak{a}}_{m-1}\mathcal{M}$, hence (ii). Finally for (iii) we use Lemma \ref{JTHlemma3} to write $K_{m+2}=\Gamma_{m+2}\pi\cdot N$ with $N$ a normal subgroup of $\mathbb{A}$. Let $h\in J^{\mathfrak{a}}_{m}\mathcal{M}$. It follows that for every $\alpha\in\mathbb{A}\subseteq K_2$, $h_{\#}(\alpha)\alpha^{-1}\in K_{m+2}$. Write $h_{\#}(\alpha)\alpha^{-1}=xn$ with $x\in\Gamma_{m+2}\pi$ and $n\in N$. Therefore
$$\iota_{\#}(h_{\#}(\alpha))=\iota_{\#}(h_{\#}(\alpha)\alpha^{-1})=\iota_{\#}(x)\iota_{\#}(n)=\iota_{\#}(x)\in\Gamma_{m+2}\pi',$$
whence $\iota_{\#}h_{\#}(\mathbb{A})\subseteq\Gamma_{m+2}\pi'$. Hence $h\in J^L_{m+1}\mathcal{M}$.
\end{proof}

\begin{remark} We expect that the subscripts of the relations on Proposition \ref{JTHproposition5} are the best possibles. 
\end{remark}

\begin{remark}\label{rksak}
D. Johnson proved in \cite{MR727699} that the Torelli group $\mathcal{I}_{g,1}$ is finitely generated for $g\geq 3$. This result together with the short exact sequence 
$$1\longrightarrow \mathcal{I}\longrightarrow \mathcal{I}^L \xrightarrow{\ \sigma\ }\sigma(\mathcal{I}^L)\longrightarrow 0,$$
where $\sigma$ is the symplectic representation, imply that the Lagrangian Torelli group $\mathcal{I}^L_{g,1}$ is finitely generated for $g\geq 3$. Notice that 
$$\sigma(\mathcal{I}^L)=\sigma(\mathcal{I}^{\mathfrak{a}}) = \left\{\left( \begin{smallmatrix} \text{Id}_g &\Delta\\ 0& \text{Id}_g\end{smallmatrix} \right) \  \bigg\rvert \ \ \Delta \text{\ is symmetric}  \right\}\subseteq\text{Sp}(2g,\mathbb{Z}),$$
see Lemma \ref{lemmaJTHLMO2} and Equation (\ref{JTHLMOequ10}). Hence $\sigma(\mathcal{I}^L)$ and $\sigma(\mathcal{I}^{\mathfrak{a}})$ are finitely generated. Recently T. Church, M. Ershov and A. Putnam proved in \cite{ChurchPutman} several results concerning the finite generation of the Johnson filtration. In particular they proved \cite[Theorem A]{ChurchPutman} that the Johnson kernel $\mathcal{K}_{g,1}$ is finitely generated for $g\geq 4$. This result together with the short exact sequences
$$1\longrightarrow \mathcal{K}\longrightarrow \mathcal{I}\cap\mathcal{I}^{\mathfrak{a}} \xrightarrow{\ \tau_1\ }\tau_1(\mathcal{I}\cap\mathcal{I}^{\mathfrak{a}})\longrightarrow 0    \ \ \ \ \text{and} \ \ \ \ 1\longrightarrow \mathcal{I}\cap\mathcal{I}^{\mathfrak{a}}\longrightarrow \mathcal{I}^{\mathfrak{a}} \xrightarrow{\ \sigma\ }\sigma(\mathcal{I}^{\mathfrak{a}})\longrightarrow 0,$$
where $\tau_1$ is the \emph{first Johnson homomorphism}, imply that the alternative Torelli group $\mathcal{I}^{\mathfrak{a}}_{g,1}$ is finitely generated for $g\geq 4$. Besides, it follows from the general result \cite[Theorem~B]{ChurchPutman} that   $J^{\mathfrak{a}}_m\mathcal{M}_{g,1}$ is finitely generated for $m\geq 2$ and $g\geq 2m+1$. We also have that $\mathcal{I}_{1,1}$ and $\mathcal{I}_{1,1}^{\mathfrak{a}}$ are (abelian) finitely generated groups. It would be interesting to study the group $\mathcal{I}_{2,1}^{\mathfrak{a}}$.
\end{remark}

\section{Johnson-type homomorphisms}\label{JTH2}
Throughout this section we use the same notations and conventions from Section \ref{JTH1}. The aim of this section is the study  of a sequence of group homomorphisms $\{\tau^{\mathfrak{a}}_m\}_{m\geq 0}$ introduced in \cite{MR3828784}, which are defined on each term of the alternative Johnson filtration and taking values in some abelian groups. These abelian groups  can be described by means of the first homology group $H$ of the surface $\Sigma$, the first homology group $B:=H'$ of the handlebody $V$ and the subgroup $A=\text{ker}(\iota_*)$. 

\subsection{Preliminaries}\label{sec5.1prel} Since $[J^{\mathfrak{a}}_l\mathcal{M},J^{\mathfrak{a}}_m\mathcal{M}]\subseteq J^{\mathfrak{a}}_{l+m}\mathcal{M}$ for all $l,m\geq 0$, the quotient group $J^{\mathfrak{a}}_m\mathcal{M}/J^{\mathfrak{a}}_{m+1}\mathcal{M}$ is an abelian group for $m\geq 1$ and we can endow
\begin{equation}\label{JTH2equ1}
\text{Gr}(J^{\mathfrak{a}}_{\bullet}\mathcal{M})=\bigoplus_{m\geq 1}\frac{J^{\mathfrak{a}}_m\mathcal{M}}{J^{\mathfrak{a}}_{m+1}\mathcal{M}}
\end{equation}
with a structure of graded Lie algebra with Lie bracket induced by the commutator operation. K. Habiro and G. Massuyeau show in \cite{MR3828784} that the Lie algebra (\ref{JTH2equ1}) embeds into a Lie algebra of derivations. To achieve this, they define  group homomorphisms on each term of the alternative Johnson filtration, even on the $0$-th term $J^{\mathfrak{a}}_0\mathcal{M}$ which is the Lagrangian mapping class group $\mathcal{L}$. We shall call these homomorphisms the \emph{alternative Johnson homomorphisms}.  In order to define them, let us start with some preliminaries.

\para{Free Lie algebra associated to the $N$-series $(K_m)_{m\geq1}$} In the definition of the alternative Johnson filtration we use the $N$-series $(K_m)_{m\geq1}$ defined in Equality (\ref{JTHequ11}). The graded Lie algebra associated to this $N$-series is given by
\begin{equation}\label{JTH2equ2}
\text{Gr}(K_{\bullet})=\bigoplus_{m\geq 1} \frac{K_m}{K_{m+1}}=\frac{K_1}{K_2}\oplus \frac{K_2}{K_3}\oplus\cdots
\end{equation}
It follows from \cite[Proposition 1]{MR0251111} that this graded Lie algebra is freely generated in degree $1$ and $2$, see also \cite[Lemma 10.9]{MR3828784}.  More precisely, by  Hopf's formula we have $(\Gamma_2\pi\cap\mathbb{A})/[\pi,\mathbb{A}]\cong H_2(\pi/\mathbb{A})\cong H_2(\pi')$ and $H_2(\pi')=0$ because $\pi'$ is a free group. Hence~$[\pi,\mathbb{A}]=\Gamma_2\pi\cap\mathbb{A}$. Consider the injective homomorphism $j:A\rightarrow K_2/K_3$ given by the composition 
\begin{equation}\label{JTH2equ3}
A \mathrel{\mathop{\longleftarrow}^{\mathrm{\cong}}_{\text{ab}}} (\mathbb{A}\cdot\Gamma_2\pi)/\Gamma_2\pi\cong \mathbb{A}/(\Gamma_2\pi\cap\mathbb{A})=\mathbb{A}/[\pi,\mathbb{A}]\lhook\joinrel\xrightarrow{\ \ }\frac{K_2}{K_3}.
\end{equation}
Identify $B=H'$ with $K_1/K_2$. Denote by $\mathfrak{Lie}(B;A)$ the graded free Lie algebra (over~$\mathbb{Z}$) generated by $B$ in degree $1$ and $A$ in degree $2$:
\begin{equation}\label{JTH2equ4}
\mathfrak{Lie}(B;A)=\bigoplus_{m\geq1}\mathfrak{Lie}_m(B;A)=B\oplus(\Lambda^2B\oplus A)\oplus\cdots
\end{equation}
Therefore we have 
\begin{equation}\label{JTH2equ5}
\text{Gr}(K_{\bullet})\cong\mathfrak{Lie}(B;A).
\end{equation}

\para{Positive symplectic derivations of $\mathfrak{Lie}(B;A)$} Recall that a derivation of $\mathfrak{Lie}(B;A)$  is a linear map
$d:\mathfrak{Lie}(B;A)\rightarrow \mathfrak{Lie}(B;A)$ such that $d([x,y])=[d(x),y]+[x,d(y)]$ for every $x,y\in\mathfrak{Lie}(B;A)$. The set $\text{Der}(\mathfrak{Lie}(B,A))$ of derivations of $\mathfrak{Lie}(B;A)$ is a Lie algebra with Lie bracket $[d,d']=dd'-d'd$.

From the long exact sequence associated to the pair $(V,\partial V)$ we obtain the short exact sequence
\begin{equation}\label{wthbasis1}
0\longrightarrow H_2(V, \partial V; \mathbb{Z})\xrightarrow{\ \delta_*\ } H \xrightarrow{\ \iota_*\ } B\longrightarrow 0,
\end{equation}
whence $H_2(V, \partial V; \mathbb{Z})\cong A$. Besides, by Poincar\'e-Lefschetz duality there is a canonical isomorphism $H_2(V, \partial V; \mathbb{Z})\cong H^1(V;\mathbb{Z})$ which allows to define the intersection form of the handlebody
\begin{equation}\label{wthbasis2}
\omega':B\times H_2(V, \partial V; \mathbb{Z})\longrightarrow \mathbb{Z}.
\end{equation}
Consider the identifications $B\cong A^*$  given by the isomorphism $H/A\xrightarrow{\ \iota_*\ } B$ and by sending $x+A\in H/A$ to $\omega(x,\cdot)\in A^*$; and $A\cong B^*$ given by the isomorphism $H_2(V,\partial V;\mathbb{Z})\cong A$ and by sending  $a\in A$ to  $\omega'(\cdot , a)\in B^*$. This way, the intersection form $\omega'$ determines an element $\Omega'\in\mathfrak{Lie}_3(B;A)$. The intersection form $\omega:H\otimes H\rightarrow\mathbb{Z}$ determines an element $\Omega\in\mathfrak{Lie}_2(H)\subseteq\mathfrak{Lie}(H)$, where $\mathfrak{Lie}(H)$ is the graded  Lie algebra freely generated by $H$ in degree $1$. The relation between the intersection form $\omega$ of $\Sigma$ and $\omega'$ of $V$ is given by the commutativity of the diagram:
\begin{equation*}
\xymatrix{  \ \ \ \ \ \ \ \ B\times H_2(V,\partial V)\ar[r]^{\ \ \ \ \ \ \ \ \ \ \ \ \omega'}\ar@<3.7ex>[d]_{\cong}^{\delta_*} & \mathbb{Z}.  \\
H/A\times A \ar@<3ex>[u]^{\iota_*}_{\cong}\ar@/_1pc/[ru]_{\ \ \ \omega} & }
\end{equation*}

\begin{definition}
Let $d$ be a derivation of $\mathfrak{Lie}(B;A)$.
\begin{enumerate}
\item [(i)] We say $d$ is a \emph{positive} derivation if $d(B) \subseteq \mathfrak{Lie}_{\geq 2}(B;A)$ and $d(A) \subseteq \mathfrak{Lie}_{\geq 3}(B;A).$
\item [(ii)] Let $m\geq 1$. We say that $d$ is a derivation of \emph{degree} $m$ if $d(B) \subseteq \mathfrak{Lie}_{m+1}(B;A)$ and $d(A) \subseteq \mathfrak{Lie}_{m+2}(B;A).$
\item [(iii)] We say that $d$ is a \emph{symplectic} derivation if $d(\Omega')=0$.
\end{enumerate}
\end{definition}

 Denote by $\text{Der}^{+,\omega}(\mathfrak{Lie}(B;A))$ the set of positive symplectic derivations of $\mathfrak{Lie}(B;A)$. This  set is a Lie subalgebra of $\text{Der}(\mathfrak{Lie}(B;A))$. Let $m\geq 1$, denote by  $\text{Der}_m(\mathfrak{Lie}(B;A))$ the subgroup  of derivations of  $\mathfrak{Lie}(B;A)$ of degree $m$. Notice that a derivation $d$ of $\mathfrak{Lie}(B;A)$ of degree $m$ is a family $d=(d_i)_{i\geq1}$ of group homomorphisms 
$$d_i:\mathfrak{Lie}_i(B;A)\longrightarrow \mathfrak{Lie}_{i+m}(B;A),$$
satisfying $d_{i+j}[x,y]=[d_i(x),y] + [x, d_j(y)]$ for $x\in \mathfrak{Lie}_i(B;A)$ and $y\in \mathfrak{Lie}_j(B;A)$.  Set
\begin{equation}\label{JTH2equ7}
D_m(\mathfrak{Lie}(B;A))=\text{Hom}_{\mathbb{Z}}(B,\mathfrak{Lie}_{m+1}(B;A))\oplus \text{Hom}_{\mathbb{Z}}(A,\mathfrak{Lie}_{m+2}(B;A)).
\end{equation}
The following is a classical result, see for instance  \cite[Lemma 0.7]{MR1231799}.
\begin{proposition}\label{JTH2prop1}
For every $m\geq 1$, there is a bijection
$$\text{\emph{Der}}_m(\mathfrak{Lie}(B;A))\xrightarrow{\ \Psi \ } D_m(\mathfrak{Lie}(B;A)),$$
defined by $\Psi(d)= d_{|B}+d_{|A}$ for $d\in\text{\emph{Der}}_m(\mathfrak{Lie}(B;A))$.
\end{proposition}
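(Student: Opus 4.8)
The plan is to exhibit $\Psi$ as a restriction map and construct its inverse by freely extending a pair of homomorphisms to a derivation, using the universal property of the free Lie algebra $\mathfrak{Lie}(B;A)$. First I would make precise the target: an element of $D_m(\mathfrak{Lie}(B;A))$ is, by \eqref{JTH2equ7}, a pair $(f,g)$ with $f\in\Hom_{\mathbb{Z}}(B,\mathfrak{Lie}_{m+1}(B;A))$ and $g\in\Hom_{\mathbb{Z}}(A,\mathfrak{Lie}_{m+2}(B;A))$; and $\Psi(d)=d_{|B}+d_{|A}$ is well-defined precisely because a degree $m$ derivation sends $B=\mathfrak{Lie}_1$ into $\mathfrak{Lie}_{m+1}$ and $A\subseteq\mathfrak{Lie}_2$ into $\mathfrak{Lie}_{m+2}$ (the degree shift on the $A$-generators being $m$ as well, since $A$ sits in degree $2$). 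So $\Psi$ is certainly a group homomorphism; injectivity is immediate, since a derivation of a Lie algebra is determined by its values on a generating set, and $B\oplus A$ generates $\mathfrak{Lie}(B;A)$.

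For surjectivity I would argue as follows. Since $\mathfrak{Lie}(B;A)$ is the free graded Lie algebra on $B$ (degree $1$) and $A$ (degree $2$), it carries a universal property: any Lie algebra map out of it is determined by the images of the generators, and any $\mathbb{Z}$-linear assignment of the generators into a Lie algebra extends uniquely to a Lie algebra morphism. To build a derivation extending $(f,g)$, I would use the standard device of replacing the target by the semidirect-product Lie algebra $\mathfrak{Lie}(B;A)\ltimes \mathfrak{Lie}(B;A)$ (the second copy being an abelian ideal acted on by the adjoint action), or equivalently by considering the square-zero extension $\mathfrak{Lie}(B;A)[\varepsilon]/(\varepsilon^2)$; a derivation of degree $m$ corresponds exactly to a graded Lie algebra endomorphism of this extension that is the identity on the quotient and shifts degree by $m$ on the $\varepsilon$-part. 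Given $(f,g)$, define the generator assignments $b\mapsto (b, f(b))$ for $b\in B$ and $a\mapsto (a, g(a))$ for $a\in A$; by freeness this extends uniquely to a morphism, whose $\varepsilon$-component is the desired derivation $d$ with $d_{|B}=f$, $d_{|A}=g$. The degree bookkeeping is automatic: $d$ raises degree by $m$ on generators, hence on all brackets, so $d\in\Der_m(\mathfrak{Lie}(B;A))$, and $\Psi(d)=(f,g)$. One could also give a hands-on inductive proof: define $d$ on $\mathfrak{Lie}_k(B;A)$ by induction on $k$ via $d[x,y]=[d(x),y]+[x,d(y)]$, the only point needing care being well-definedness with respect to the defining relations (antisymmetry and Jacobi) of the free Lie algebra — but this is exactly what the universal-property argument packages cleanly.

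I do not expect a genuine obstacle here; this is a classical statement (cited as \cite[Lemma 0.7]{MR1231799}), and the only mild subtlety is the presence of two generating degrees, which forces the derivation to shift the $A$-part by $m$ (landing in $\mathfrak{Lie}_{m+2}$) rather than by $m+1$ as a naive count might suggest. The cleanest exposition simply invokes the universal property of the free (graded) Lie algebra on $B\oplus A$ twice: once to see that $\Psi$ is injective (generators determine a derivation), once to produce the preimage of a given pair $(f,g)$ via the square-zero extension trick. I would keep the written proof to a few lines, stating that $\Psi$ is visibly a well-defined injective homomorphism and that surjectivity follows by extending any pair $(f,g)\in D_m(\mathfrak{Lie}(B;A))$ to a derivation by the universal property, with the degree condition inherited from the generators.
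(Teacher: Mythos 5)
Your argument is correct, and it is worth noting that the paper itself does not prove this proposition at all: it simply records it as a classical fact with a citation to \cite[Lemma 0.7]{MR1231799}, so your proposal fills in a proof rather than reproducing one. What you write is the standard argument and it works here without modification. Well-definedness and injectivity of $\Psi$ are exactly as you say: a degree $m$ derivation sends $B=\mathfrak{Lie}_1(B;A)$ into $\mathfrak{Lie}_{m+1}(B;A)$ and $A\subseteq\mathfrak{Lie}_2(B;A)$ into $\mathfrak{Lie}_{m+2}(B;A)$, and since $B\oplus A$ generates $\mathfrak{Lie}(B;A)$, a derivation is determined by its restrictions to the generators (induction on degree via the Leibniz rule). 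For surjectivity, the square-zero extension device is the right one: a pair $(f,g)\in D_m(\mathfrak{Lie}(B;A))$ yields the generator assignment $b\mapsto(b,f(b))$, $a\mapsto(a,g(a))$ into the semidirect product of $\mathfrak{Lie}(B;A)$ with itself viewed as an abelian ideal under the adjoint action, and freeness of $\mathfrak{Lie}(B;A)$ (as a free Lie ring on the free abelian groups $B$ and $A$, so the universal property holds over $\mathbb{Z}$) extends this uniquely to a Lie morphism whose second component is a derivation $d$ with $\Psi(d)=(f,g)$; homogeneity of the generator assignment forces $d\in\mathrm{Der}_m(\mathfrak{Lie}(B;A))$. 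You also correctly flag the one grading subtlety specific to this paper, namely that the generators $A$ sit in degree $2$, so the shift on $A$ is by $m$ into $\mathfrak{Lie}_{m+2}(B;A)$, matching the definition of $D_m(\mathfrak{Lie}(B;A))$ in (\ref{JTH2equ7}). No gap.
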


By using the identifications $B\cong A^*$ and $A\cong B^*$, we have 
\begin{equation}\label{JTH2equ8}
\begin{split}
D_m(\mathfrak{Lie}(B;A)) & \cong (B^*\otimes\mathfrak{Lie}_{m+1}(B;A))\oplus(A^*\otimes\mathfrak{Lie}_{m+2}(B;A))\\
																 & \cong (A\otimes\mathfrak{Lie}_{m+1}(B;A))\oplus(B\otimes\mathfrak{Lie}_{m+2}(B;A)).\\
\end{split}
\end{equation}
Hence we can see the map $\Psi$ from Proposition \ref{JTH2prop1} as taking values in the space on the left-hand side of Equation (\ref{JTH2equ8}). For $m\geq 1$, consider the Lie bracket map
\begin{equation}\label{JTH2equ9}
\Xi_m:(A\otimes\mathfrak{Lie}_{m+1}(B;A))\oplus(B\otimes\mathfrak{Lie}_{m+2}(B;A))\longrightarrow \mathfrak{Lie}_{m+3}(B;A).
\end{equation}

Set $D_m(B;A):=\text{ker}(\Xi_m)$. Let us denote by $\text{Der}^{+}_m(\mathfrak{Lie}(B;A))$ (respectively by $\text{Der}^{+,\omega}_m(\mathfrak{Lie}(B;A))$) the subgroup of positive (respectively positive symplectic) derivations of $\mathfrak{Lie}(B;A)$ of degree $m$.
\begin{proposition}\label{propwithbs}
Let $d\in \text{\emph{Der}}^{+}_m(\mathfrak{Lie}(B;A))$. Then  $\Xi_m\Psi(d)=0$ if and only if $d(\Omega')=0$. That is
$$\text{\emph{Der}}^{+,\omega}_m(\mathfrak{Lie}(B;A))\cong D_m(B;A).$$
\end{proposition}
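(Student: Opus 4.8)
The plan is to reduce the statement to a linear-algebra computation in the degree-3 part of $\mathfrak{Lie}(B;A)$, where $\Omega'$ lives. First I would fix dual bases: let $(b_i)$ be a basis of $B$ and $(a_i)$ the dual basis of $A$ under the pairing $A\cong B^*$ coming from $\omega'$ (equivalently $B\cong A^*$ under $\omega$). With these conventions one writes $\Omega'\in\mathfrak{Lie}_3(B;A)$ explicitly as a sum $\Omega'=\sum_i [b_i,a_i']$ (up to the identifications and signs dictated by the commutative diagram relating $\omega$ and $\omega'$), where $a_i'\in\mathfrak{Lie}_2(B;A)$ is the image of $a_i\in A$ under the canonical inclusion $A\hookrightarrow \mathfrak{Lie}_2(B;A)$; I would pin down the precise formula from the displayed diagram before proceeding, since the whole proof hinges on it.

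Next, given $d\in\text{Der}^{+}_m(\mathfrak{Lie}(B;A))$, I would compute $d(\Omega')$ using the derivation property: $d(\Omega')=\sum_i\big([d(b_i),a_i'] + [b_i,d(a_i')]\big)$, and then note that $d$ restricted to $\mathfrak{Lie}_2(B;A)=\Lambda^2 B\oplus A$ is itself determined by $d|_B$ (on the $\Lambda^2B$ summand, via Leibniz) and by $d|_A$ (on the $A$ summand). So $d(\Omega')$ becomes an explicit expression in $d|_B\in\text{Hom}(B,\mathfrak{Lie}_{m+1})$ and $d|_A\in\text{Hom}(A,\mathfrak{Lie}_{m+2})$ landing in $\mathfrak{Lie}_{m+3}(B;A)$. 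The key identification to set up is that, under the isomorphism $D_m(\mathfrak{Lie}(B;A))\cong (A\otimes\mathfrak{Lie}_{m+1})\oplus(B\otimes\mathfrak{Lie}_{m+2})$ of Equation (\ref{JTH2equ8}), the pair $\Psi(d)=d|_B+d|_A$ corresponds to $\sum_i a_i\otimes d(b_i) + \sum_i b_i\otimes d(a_i)$. Applying $\Xi_m$ (the total bracket map) to this element yields exactly $\sum_i[d(b_i),a_i'] + \sum_i[b_i,d(a_i)]$ — and I would check that this agrees with $d(\Omega')$ after accounting for the contribution of $d$ on the $\Lambda^2B$-part of $\mathfrak{Lie}_2$, which must vanish or cancel. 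Concretely: $d(a_i')$ for $a_i'$ viewed inside $\mathfrak{Lie}_2$ equals $d|_A(a_i)$ plus a $\Lambda^2B$-correction, and the sum $\sum_i[b_i, (\Lambda^2B\text{-correction})]$ should vanish by the Jacobi/antisymmetry identities together with $\sum_i b_i\otimes b_i^*$-type symmetry — this is the one genuinely computational point.

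The main obstacle I expect is precisely this bookkeeping: showing that $\Xi_m\Psi(d)$ and $d(\Omega')$ coincide (not merely that one vanishing implies the other) requires carefully tracking how $d$ acts on $\mathfrak{Lie}_2(B;A)$ through both its $B$- and $A$-components, and verifying the cross-terms cancel using antisymmetry of the bracket and the defining relation $[\pi,\mathbb{A}]=\Gamma_2\pi\cap\mathbb{A}$ (which is what makes $A\hookrightarrow K_2/K_3$ and hence $A\hookrightarrow\mathfrak{Lie}_2(B;A)$ behave well). Once the identity $\Xi_m\Psi(d)=\pm d(\Omega')$ is established, the equivalence ``$\Xi_m\Psi(d)=0 \iff d(\Omega')=0$'' is immediate, and combined with Proposition \ref{JTH2prop1} and the definition $D_m(B;A)=\ker(\Xi_m)$ this gives the claimed isomorphism $\text{Der}^{+,\omega}_m(\mathfrak{Lie}(B;A))\cong D_m(B;A)$, with the map being the restriction of $\Psi$. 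This mirrors the classical fact (in the $\mathfrak{Lie}(H)$ setting, due to Morita) that symplectic derivations are identified with the kernel of the bracket $H\otimes\mathfrak{Lie}_{m+1}(H)\to\mathfrak{Lie}_{m+2}(H)$; here the only new feature is the two-step grading, which forces the bracket map $\Xi_m$ to have two inputs.
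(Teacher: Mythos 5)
Your proposal is correct and follows essentially the same route as the paper: choose the symplectic basis $\{a_i,b_i\}$, write $\Omega'=\sum_i[a_i,b_i]$ and $\Psi(d)=\sum_i a_i\otimes d(b_i)-\sum_i b_i\otimes d(a_i)$ under the identification of Equation (\ref{JTH2equ8}), and observe via the Leibniz rule that $\Xi_m\Psi(d)=d(\Omega')$, which is exactly the paper's two-line computation. The one point you single out as genuinely delicate is in fact vacuous: since $\mathfrak{Lie}(B;A)$ is free on $B$ in degree $1$ and $A$ in degree $2$, each $a_i$ occurring in $\Omega'$ is a generator and $d(a_i)=d_{|A}(a_i)$ exactly, with no $\Lambda^2 B$-correction to cancel, so the only bookkeeping left is the sign in $\Psi(d)$ coming from the identifications $B\cong A^*$ and $A\cong B^*$, which you already flag as needing to be pinned down.
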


\begin{proof}
Consider the symplectic basis $\{a_i,b_i\}$ induced by the systems of meridians and parallels $\{\alpha_i,\beta_i\}$ on $\Sigma$ shown in Figure \ref{figuraLMO2artalt} and identify $\iota_*(b_i)\in B$ with $b_i\in H$. Then, in this basis, the element $\Omega'$ is given by
\begin{equation}\label{JTH2equ6}
\Omega'=\sum_{i=1}^g[a_i,b_i]\in\mathfrak{Lie}_3(B;A).
\end{equation}
Using the identification (\ref{JTH2equ8}) we obtain
\begin{equation}\label{JTH2equ10}
\Psi(d)=\sum_{i=1}^g b^*_i\otimes d(b_i) + \sum_{i=1}^g a^*_i\otimes d(a_i) = \sum_{i=1}^g a_i\otimes d(b_i)-\sum_{i=1}^g b_i\otimes d(a_i).
\end{equation}
Hence
\begin{equation}\label{JTH2equ11}
\Xi_m\Psi(d)=\sum_{i=1}^g [a_i, d(b_i)]-\sum_{i=1}^g [b_i, d(a_i)]=d\left(\sum_{i=1}^g [a_i, b_i]\right)=d(\Omega')=0.
\end{equation}
\end{proof}

\subsection{Alternative Johnson homomorphisms}\label{sub5.2} Let $m$ be a positive integer and consider the space $D_m(\mathfrak{Lie}(B;A))$ defined in (\ref{JTH2equ7}).
\begin{definition}
The $m$-th \emph{alternative Johnson homomorphism} is the group homomorphism
\begin{equation}\label{JTH2equ12}
\tau^{\mathfrak{a}}_m:J^{\mathfrak{a}}_m\mathcal{M}\longrightarrow D_m(\mathfrak{Lie}(B;A)),
\end{equation}
that maps $h\in J^{\mathfrak{a}}_m\mathcal{M}$ to $\tau^{\mathfrak{a}}_m(h)=\left(\tau^{\mathfrak{a}}_m(h)_1,\ \tau^{\mathfrak{a}}_m(h)_2\right)$, where
$$\tau^{\mathfrak{a}}_m(h)_1(xK_2)=h_{\#}(x)x^{-1}K_{m+2}  \text{\ \ \  and \ \ \  } \tau^{\mathfrak{a}}_m(h)_2(a)=h_{\#}(y)y^{-1}K_{m+3}$$
for all $x\in\pi,a\in A$, here  $y\in \mathbb{A}\subseteq K_2$ is any lift of $a$, see (\ref{JTH2equ3}).
\end{definition}

We refer to \cite[Proposition 6.2]{MR3828784} for a proof of the homomorphism property. From the definition of the alternative Johnson homomorphisms it follows that for $m\geq 1$
\begin{equation}\label{JTH2equ13}
\text{ker}(\tau^{\mathfrak{a}}_m)=J^{\mathfrak{a}}_{m+1}\mathcal{M}.
\end{equation}

Consider the bases as in Proposition \ref{propwithbs} of $H$ and $B$ and keep the notation $\{\alpha_i,\beta_i\}$ for a free basis of $\pi$. Using the identification (\ref{JTH2equ8}) we have that the $m$-th alternative Johnson homomorphism of $h\in J^{\mathfrak{a}}_m\mathcal{M}$ is given by
\begin{equation}\label{JTH2equ14}
\begin{split}
\tau^{\mathfrak{a}}_m(h)& =\sum_{i=1}^g a_i\otimes \left(\tau^{\mathfrak{a}}_m(h)_1(\beta_iK_2)\right) - \sum_{i=1}^g b_i\otimes \left(\tau^{\mathfrak{a}}_m(h)_2(a_i)\right)\\
  & = \sum_{i=1}^g a_i\otimes \left(h_{\#}(\beta_i)\beta^{-1}_iK_{m+2}\right) - \sum_{i=1}^g b_i\otimes \left(h_{\#}(\alpha_i)\alpha^{-1}_iK_{m+3}\right).
\end{split}
\end{equation}

\begin{example}\label{JTH2example1} Consider the Dehn twist $h=t_{\alpha_i}$ from Example \ref{ejemploJTH1}, we know that $h\in J^{\mathfrak{a}}_1\mathcal{M}=\mathcal{I}^{\mathfrak{a}}$. Let us compute its first alternative Johnson homomorphism. We have $h_{\#}(\alpha_j)=\alpha_j$ for $1\leq j\leq g$, $h_{\#}(\beta_j)=\beta_j$ for $1\leq j\leq g$ with $j\not=i$ and $h_{\#}(\beta_i)=\alpha^{-1}_i\beta_i$. Hence
$$\tau^{\mathfrak{a}}_1(t_{\alpha_i}) = - a_i\otimes a_i.$$
\end{example}

\begin{example}\label{JTH2example2}
Consider the Dehn twist $h=t_{\alpha_{12}}$ from Examples \ref{ejemploJTH1} and \ref{ejemploLMO4}, which is an element of $\mathcal{I}^{\mathfrak{a}}$. The homotopy class of the curve $\alpha_{12}$ is represented by $\lambda=\alpha_2\alpha_1\in K_2$. We have $h_{\#}(\alpha_j)=\alpha_j$ and $h_{\#}(\beta_j)=\beta_j$ for $3\leq j\leq g$ and $h_{\#}(\alpha_1)=\lambda^{-1}\alpha_1\lambda$, $h_{\#}(\alpha_2)=\lambda^{-1}\alpha_2\lambda$, $h_{\#}(\beta_1)=\lambda^{-1}\beta_1$ and $h_{\#}(\beta_2)=\lambda^{-1}\beta_2$. Hence
$$\tau^{\mathfrak{a}}_1(t_{\alpha_{12}})= -(a_1\otimes a_1) - (a_2\otimes a_2) - (a_1\otimes a_2) - (a_2\otimes a_1).$$
Similarly for the Dehn twist $t_{\alpha_{kl}}$ from Example \ref{ejemploLMO4.5} we have
$$\tau^{\mathfrak{a}}_1(t_{\alpha_{kl}})= - (a_k\otimes a_k) - (a_l\otimes a_l) - (a_k\otimes a_l) - (a_l\otimes a_k).$$
\end{example}

\begin{example}\label{JTH2example3}
Consider the Dehn twist $h=t_{\delta}$, from Example \ref{JTHexample3}. We have that $t_{\delta}\in\mathcal{I}^{\mathfrak{a}}$. The homotopy class of the curve $\delta$ is represented by the commutator $\lambda=[\alpha_1,\beta^{-1}_1]\in K_3$. We have $h_{\#}(\alpha_j)=\alpha_j$ and $h_{\#}(\beta_j)=\beta_j$ for $2\leq j\leq g$ and $h_{\#}(\alpha_1)=\lambda^{-1}\alpha_1\lambda$ and  $h_{\#}(\beta_1)=\lambda^{-1}\beta_1\lambda$. Hence
$$\tau^{\mathfrak{a}}_1(t_{\delta})=0.$$
In particular $t_{\delta}\in J^{\mathfrak{a}}_2\mathcal{M}$.
\end{example}

\begin{example}\label{JTH2ejemplo4} Let $h=t_{\epsilon}t_{\alpha_g}^{-1}$ from Example \ref{JTHexample3}. The homotopy class of the curve $\epsilon$ is represented by $\lambda=\beta_g^{-1}\alpha_g^{-1}\beta_g[\alpha_{g-1},\beta_{g-1}]$. We have $h_{\#}(\alpha_i)=\alpha_i$ and $h_{\#}(\beta_i)=\beta_i$ for $1\leq i\leq g-2$, and $h_{\#}(\alpha_g)=\alpha_g$, $h_{\#}(\alpha_{g-1})=\lambda^{-1}\alpha_{g-1}\lambda$, $h_{\#}(\beta_{g-1})=\lambda^{-1}\beta_{g-1}\lambda$ and $h_{\#}(\beta_g)=\alpha_g\beta_g\lambda$. By a direct calculation we obtain
$$\tau^{\mathfrak{a}}_1(t_{\epsilon}t_{\alpha_g}^{-1})=0.$$
In particular $t_{\epsilon}t_{\alpha_g}^{-1}\in J^{\mathfrak{a}}_2\mathcal{M}$.
\end{example}

Notice that in Examples \ref{JTH2example1} and \ref{JTH2example2}, we have $\Xi_1\tau^{\mathfrak{a}}_1(t_{\alpha_i})=0$ and $\Xi_1\tau^{\mathfrak{a}}_1(t_{\alpha_{12}})=0$. This is a more general fact.

\begin{theorem}\label{JTH2thm1}
Let $m\geq 1$. For $h\in J^{\mathfrak{a}}_m\mathcal{M}$ we have $\Xi_m\tau^{\mathfrak{a}}_m(h)=0$, that is $$\tau^{\mathfrak{a}}_m(h)\in D_m(B;A)\cong \text{\emph{Der}}^{+,\omega}_m(\mathfrak{Lie}(B;A)).$$
In other words, $\tau^{\mathfrak{a}}_m(h)$ is a positive symplectic derivation of $\mathfrak{Lie}(B;A)$.
\end{theorem}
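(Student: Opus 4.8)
The plan is to reduce the statement to a computation involving the image of the boundary curve $\partial\Sigma$ under $h_{\#}$. The element $\Omega'\in\mathfrak{Lie}_3(B;A)$ is, by Proposition~\ref{propwithbs} (Equation~(\ref{JTH2equ6})), represented by $\sum_i[a_i,b_i]$, and this is exactly the class determined by the boundary word $\zeta=\prod_i[\alpha_i,\beta_i]\in\pi$ when we pass to the associated graded $\mathrm{Gr}(K_\bullet)\cong\mathfrak{Lie}(B;A)$: indeed $\zeta\in\Gamma_2\pi\subseteq K_2$ and, more precisely, one checks by a weight count that $\zeta\in K_3$ while $\zeta\notin K_4$ in general, its class in $K_3/K_4$ being $\Omega'$. (Here the point is that each bracket $[\alpha_i,\beta_i]$ has one letter $\alpha_i$ of weight $1$ and one letter $\beta_i$ of weight $1$, but modulo the $N$-series relations the commutator $[\alpha_i,\beta_i]$ sits in $K_3$, not $K_2$, because $[\alpha_i,\beta_i]\equiv [a_i,b_i]$ where $a_i\in A$ contributes weight $2$; this needs to be pinned down carefully, see below.) Since any $h\in\mathcal{M}$ fixes $\partial\Sigma$ pointwise, $h_{\#}(\zeta)=\zeta$, so $h_{\#}(\zeta)\zeta^{-1}=1$.

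**Key steps.** First I would record the precise form of the alternative Johnson homomorphism on $\zeta$: writing $\tau^{\mathfrak{a}}_m(h)$ as the derivation $d$ of $\mathfrak{Lie}(B;A)$ with $d(b_i)=h_{\#}(\beta_i)\beta_i^{-1}K_{m+2}$ and $d(a_i)=h_{\#}(\alpha_i)\alpha_i^{-1}K_{m+3}$, I want to show that the image of $h_{\#}(\zeta)\zeta^{-1}$ in $K_{m+3}/K_{m+4}$ equals $d(\Omega')$ — that is, the ``Leibniz rule at the level of the surface group'' computes $d$ applied to the boundary class. Concretely: using the identity $h_{\#}(uv)(uv)^{-1}=\big(h_{\#}(u)u^{-1}\big)\cdot {}^{u}\!\big(h_{\#}(v)v^{-1}\big)$ and its iterate, expand $h_{\#}(\zeta)\zeta^{-1}$ as a product over the commutators $[\alpha_i,\beta_i]$; since $h\in J^{\mathfrak{a}}_m\mathcal{M}$, each factor $h_{\#}(\alpha_i)\alpha_i^{-1}$ lies in $K_{m+1}$ and each $h_{\#}(\beta_i)\beta_i^{-1}$ lies in $K_{m+2}$, and a standard commutator-calculus bookkeeping (as in Johnson's and Morita's proofs for $\mathfrak{Lie}(H)$) shows that modulo $K_{m+4}$ the total contribution is exactly $\sum_i\big([d(a_i),b_i]+[a_i,d(b_i)]\big)=d\big(\sum_i[a_i,b_i]\big)=d(\Omega')$. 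Here I must use that $\Omega'$ itself lives in degree $3$, so $d(\Omega')$ lives in degree $m+3$, matching $K_{m+3}/K_{m+4}$. Since the left side is $1$, we get $d(\Omega')=0$, i.e. $\Xi_m\tau^{\mathfrak{a}}_m(h)=\Xi_m\Psi(d)=d(\Omega')=0$ by the computation~(\ref{JTH2equ11}) in Proposition~\ref{propwithbs}. By Theorem~B(i)'s companion Proposition~\ref{propwithbs}, this is equivalent to $\tau^{\mathfrak{a}}_m(h)\in D_m(B;A)\cong\mathrm{Der}^{+,\omega}_m(\mathfrak{Lie}(B;A))$, as claimed.

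**Main obstacle.** The delicate point is the weight bookkeeping: I need that $\zeta=\prod_i[\alpha_i,\beta_i]$ represents $\Omega'$ in $K_3/K_4$ and, crucially, that when expanding $h_{\#}(\zeta)\zeta^{-1}$ the ``cross terms'' and higher commutators all die modulo $K_{m+4}$ — this requires the $N$-series inequalities $[K_p,K_q]\subseteq K_{p+q}$ applied with the correct weights ($\alpha_i$-type displacements in $K_{m+1}$, $\beta_i$-type displacements in $K_{m+2}$, and the base letters $\alpha_i$ of weight $1$, $\beta_i$ of weight $1$ but with $[\alpha_i,\beta_i]$ effectively of weight $3$ since $\alpha_i\beta_i\alpha_i^{-1}\beta_i^{-1}$, after abelianization within $V$, is the class $a_i\in A$ of weight $2$ bracketed with $b_i$). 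Getting the indexing right so that the surviving term is precisely $d(\Omega')$ and not $d(\Omega')$ up to a sign or a shift is where the care is needed; a clean way to organize it is to work in the Malcev/graded Lie setting, i.e. to observe that $h\mapsto\tau^{\mathfrak{a}}_m(h)$ is, by construction, the degree-$m$ part of the automorphism $\mathrm{Gr}(h_{\#})$ of $\mathrm{Gr}(K_\bullet)\cong\mathfrak{Lie}(B;A)$ minus the identity, that $\mathrm{Gr}(h_{\#})$ is a filtered Lie algebra automorphism fixing the class of $\zeta$ (which is $\Omega'$), and that a degree-$m$ derivation $d$ with $(\mathrm{id}+d+\cdots)(\Omega')=\Omega'$ must satisfy $d(\Omega')=0$ by comparing degrees. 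This last formulation essentially reduces the theorem to Proposition~\ref{propwithbs} plus the observation that $\mathcal{M}$ fixes the boundary, and I expect it to be the cleanest route to write up.
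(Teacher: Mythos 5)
Your proposal is correct and follows essentially the same route as the paper: since $h$ fixes the boundary word, one expands $h_{\#}(\zeta)\zeta^{-1}$ by commutator calculus, keeping track that the $\alpha_i$-displacements lie in $K_{m+2}$ and the $\beta_i$-displacements in $K_{m+1}$, so that modulo $K_{m+4}$ only the Leibniz-type terms survive and give $\Xi_m\tau^{\mathfrak{a}}_m(h)=d(\Omega')=0$, with the identification $\Xi_m\Psi(d)=d(\Omega')$ supplied by Proposition~\ref{propwithbs}. The only differences are cosmetic (the paper uses the boundary relation $\prod_i[\beta_i^{-1},\alpha_i]$ rather than $\prod_i[\alpha_i,\beta_i]$, and carries out the bookkeeping you label ``standard'' explicitly).
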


\begin{proof}
The proof is similar to the proof of \cite[Corollary 3.2]{MR1224104}. Consider the free basis $\{\alpha_i,\beta_i\}$ of $\pi$ induced by the system of meridians and parallels in  Figure \ref{figuraLMO2artalt}. Let $h\in J^{\mathfrak{a}}_m\mathcal{M}$. Since $h$ preserves the boundary $\partial \Sigma$ of $\Sigma$, then $h_{\#}$ fixes the inverse of the homotopy class $[\partial\Sigma]$ of $\Sigma$. So $h_{\#}([\partial\Sigma]^{-1})=[\partial\Sigma]^{-1}$, that is,
\begin{equation}\label{JTH2equ15}
h_{\#}\left(\prod_{i=1}^g [\beta^{-1}_i, \alpha_i]\right) = \prod_{i=1}^g [\beta^{-1}_i, \alpha_i].
\end{equation} 
For $1\leq i\leq g$ we have
$$\beta^{-1}_ih_{\#}(\beta_i)=\delta_i\in K_{1+m}\ \ \ \ \ \text{ and }\ \ \ \ \ h_{\#}(\alpha_i)\alpha^{-1}_i=\gamma_i\in K_{2+m}.$$

\medskip

\noindent Whence $h_{\#}(\beta^{-1}_i)=\delta^{-1}_i\beta^{-1}_{i}$. Hence

\begin{equation*}
\begin{split}
[h_{\#}(\beta^{-1}_i),h_{\#}(\alpha_i)] & = [\delta^{-1}_i\beta^{-1}_i, \gamma_i\alpha_i]\\
 & = \delta^{-1}_i\beta^{-1}_i\gamma_i\alpha_i\beta_i\delta_i\alpha^{-1}_i\gamma^{-1}_i\\
 & = \left(\delta^{-1}_i[\beta^{-1}_i, \gamma_i]\delta_i\right)\left(\delta^{-1}_i\gamma_i[\beta^{-1}_i, \alpha_i]\gamma^{-1}_i\delta_i\right)[\delta^{-1}_i,\gamma_i]\left(\gamma_i[\delta^{-1}_i,\alpha_i]\gamma^{-1}_i\right).
\end{split}
\end{equation*}

\medskip

\noindent It follows from Equality (\ref{JTH2equ15}) that
\begin{align}\label{JTH2equ161}
\prod_{i=1}^g [\beta^{-1}_i, \alpha_i] 
= \prod_{i=1}^g \left(\delta^{-1}_i[\beta^{-1}_i, \gamma_i]\delta_i\right)\left(\delta^{-1}_i\gamma_i[\beta^{-1}_i, \alpha_i]\gamma^{-1}_i\delta_i\right)[\delta^{-1}_i,\gamma_i]\left(\gamma_i[\delta^{-1}_i,\alpha_i]\gamma^{-1}_i\right).
\end{align}

\medskip

\noindent Now $[\beta^{-1}_i, \gamma_i]\in K_{3+m}$, $[\delta^{-1}_i,\gamma_i]\in K_{3+2m}\subseteq K_{m+4}$ and $[\delta^{-1}_i,\alpha_i]\in K_{3+m}$. Therefore, by considering Equation (\ref{JTH2equ161}) modulo $K_{m+4}$ we obtain
\begin{align*}
\prod_{i=1}^g [\beta^{-1}_i, \alpha_i]& \equiv \prod_{i=1}^g \left(\delta^{-1}_i[\beta^{-1}_i, \gamma_i]\delta_i\right)\left(\delta^{-1}_i\gamma_i[\beta^{-1}_i, \alpha_i]\gamma^{-1}_i\delta_i\right)[\delta^{-1}_i,\gamma_i]\left(\gamma_i[\delta^{-1}_i,\alpha_i]\gamma^{-1}_i\right)\\
 & \equiv \left(\prod_{i=1}^g [\beta^{-1}_i, \alpha_i]\right)\left(\prod_{i=1}^g [\beta_i^{-1},\gamma_i][\delta^{-1}_i,\alpha_i]\right).
\end{align*}
Thus
\begin{equation}\label{JTH2equ16}
\prod_{i=1}^g [\beta_i^{-1},\gamma_i][\delta^{-1}_i,\alpha_i]\in K_{m+4}.
\end{equation}

\noindent From (\ref{JTH2equ16}), identification (\ref{JTH2equ8}) and (\ref{JTH2equ14}) we have
\begin{equation}\label{JTH2equ181}
\begin{split}
0 & = \Xi_m\left(\sum_{i=1}^{g}(-b_i)\otimes (\gamma_iK_{m+3}) + \sum^{g}_{i=1}a_i\otimes (\delta_iK_{m+2})\right)\\
 & = \sum_{i=1}^{g}[a_i, h_{\#}(\beta_i)\beta^{-1}_i K_{m+2}]-\sum_{i=1}^g [b_i,h_{\#}(\alpha_i)\alpha^{-1}_i K_{m+3}]\\
 & = \Xi_m\tau^{\mathfrak{a}}_m(h).
\end{split}
\end{equation}
In the second equality of (\ref{JTH2equ181}) we use
$\delta_iK_{m+2}=\beta_i\delta_i\beta^{-1}_iK_{m+2}= h_{\#}(\beta_i)\beta^{-1}_iK_{m+2}$.
\end{proof}

Let us  briefly recall the Johnson homomorphisms and the Johnson-Levine homomorphisms.

\para{Johnson homomorphisms}  To define the Johnson filtration we use the lower central series $(\Gamma_m\pi)_{m\geq 1}$ of $\pi$. The associated graded Lie algebra of this filtration is
\begin{equation}
\text{Gr}(\Gamma_{\bullet}\pi) = \bigoplus_{m\geq 1}\frac{\Gamma_m\pi}{\Gamma_{m+1}\pi}\cong \bigoplus_{m\geq 1}\mathfrak{Lie}_m(H)=\mathfrak{Lie}(H),
\end{equation}
where $\mathfrak{Lie}(H)$ is the graded free Lie algebra on $H$.  The $m$-th \emph{Johnson homomorphism}
\begin{equation}\label{JTH2equ19}
\tau_m: J_m\mathcal{M}\longrightarrow \text{Hom}(H,\Gamma_{m+1}\pi/\Gamma_{m+2}\pi)\cong H^*\otimes \Gamma_{m+1}\pi/\Gamma_{m+2}\pi \cong H\otimes \mathfrak{Lie}_{m+1}(H),
\end{equation}
\noindent sends the isotopy class  $h\in J_m\mathcal{M}$ to the map
$x\mapsto h_{\#}(\tilde{x}){\tilde{x}}^{-1}\Gamma_{m+2}\pi$ for all $x\in H$, where $\tilde{x}\in\pi$ is any lift of $x$. The second isomorphism in  (\ref{JTH2equ19}) is given by  the identification $H\stackrel{\sim}{\longrightarrow} H^*$ that maps $x$ to $\omega(x, \cdot)$.
These homomorphisms were introduced by D.~Johnson in~\cite{MR579103,MR718141} and extensively studied by S.~Morita in~\cite{MR1133875,MR1224104}. In particular S.~Morita proved in~\cite[Corollary 3.2]{MR1224104} that the $m$-th Johnson homomorphism takes values in the kernel $D_m(H)$ of the Lie bracket  $\left[\ ,\ \right]: H\otimes\mathfrak{Lie}_{m+1}(H)\rightarrow\mathfrak{Lie}_{m+2}(H)$. Compare this with Theorem \ref{JTH2thm1}. From the definition it follows that $\ker(\tau_m)=J_{m+1}\mathcal{M}$.

\para{Johnson-Levine homomorphisms} J. Levine  defined and studied in \cite{MR1823501,MR2265877} a version of the Johnson homomorphisms for the Johnson-Levine filtration.  Identify $H/A$ with $A^*$ by sending $x+A\in H/A$ to  $\omega(x,\cdot)\in A^*$ and  $H/A$ with $H'$ via  the isomorphism $\iota_*$. The $m$-th \emph{Johnson-Levine homomorphism}
$$
\tau_m^L:J_m^L\mathcal{M}\rightarrow\text{Hom}(A,\Gamma_{m+1}\pi'/\Gamma_{m+2}\pi')\cong A^*\otimes \Gamma_{m+1}\pi'/\Gamma_{m+2}\pi'\cong H'\otimes \mathfrak{Lie}_{m+1}(H'),$$

\noindent is the group homomorphism  that sends $h\in J_m^L\mathcal{M}$ to the map $a\in A\mapsto \iota_{\#}h_{\#}(\alpha)\Gamma_{m+2}\pi'$, where $\alpha\in \mathbb{A}$ is  any lift of $\alpha$. Notice that here we consider the graded free Lie algebra $\mathfrak{Lie}(H')$ generated by $H'$.  J. Levine showed in \cite[Proposition 4.3]{MR1823501} that $\tau_m^L$ takes values in the kernel $D_m(H')$ of the Lie bracket $[\ ,\ ]:H'\otimes\mathfrak{Lie}_{m+1}(H')\rightarrow\mathfrak{Lie}_{m+2}(H')$.  Compare this with Theorem \ref{JTH2thm1}. From the definition it follows that $\ker(\tau^L_m)=J^L_{m+1}\mathcal{M}$.

We refer to the alternative Johnson homomorphisms, the Johnson-Levine homomorphisms and the Johnson homomorphisms as \emph{Johnson-type homomorphisms}.

\para{Alternative Johnson homomorphisms and Johnson-levine homomorphisms} In view of Proposition \ref{JTHproposition5}, for $m\geq 1$ we have $J^{\mathfrak{a}}_m\mathcal{M}\subseteq J^{L}_{m+1}\mathcal{M}$. We show that for $J^{\mathfrak{a}}_m\mathcal{M}$ the $m$-th alternative Johnson homomorphism determines the $(m+1)$-st Johnson-Levine homomorphism. Recall that $B=H'$.

\begin{lemma} For $m\geq 1$, there is a well defined homomomorphism $$\iota_*: D_m(B;A)\longrightarrow D_{m+1}(H').$$
\end{lemma}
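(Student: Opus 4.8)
The map $\iota_*$ will be induced by a graded Lie algebra morphism $\overline{\iota}\colon\mathfrak{Lie}(B;A)\to\mathfrak{Lie}(H')$ refining $\iota_*\colon H\to B$, so the first step is to produce it. Since $\mathfrak{Lie}(B;A)$ is freely generated by $B$ in degree $1$ and $A$ in degree $2$ (cf.\ (\ref{JTH2equ5})), there is a unique Lie algebra morphism $\overline{\iota}$ that restricts to the identity on $B\subseteq\mathfrak{Lie}_1(B;A)=H'$ and vanishes on $A\subseteq\mathfrak{Lie}_2(B;A)$. It is automatically grading-preserving: an iterated bracket of weight $n$ using at least one letter from $A$ is sent to $0$, while one using only letters from $B$ (necessarily $n$ of them) is sent to the corresponding bracket of $\mathfrak{Lie}_n(H')$; hence $\overline{\iota}(\mathfrak{Lie}_n(B;A))\subseteq\mathfrak{Lie}_n(H')$. (Equivalently, $\overline{\iota}$ is the morphism $\text{Gr}(K_\bullet)\to\text{Gr}(\Gamma_\bullet\pi')$ induced by $\iota_\#\colon\pi\to\pi'$, using $\iota_\#(K_m)=\Gamma_m\pi'$ and $\ker\iota_\#=\mathbb{A}$; only the abstract description is needed below.)

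Next I would define the map. By Proposition \ref{propwithbs}, $D_m(B;A)=\text{Der}^{+,\omega}_m(\mathfrak{Lie}(B;A))$, so an element of $D_m(B;A)$ is a degree-$m$ derivation $d$ of $\mathfrak{Lie}(B;A)$ with $d(\Omega')=0$; under the identifications of subsection \ref{sec5.1prel} it is a pair $(d_{|B},d_{|A})\in\text{Hom}(B,\mathfrak{Lie}_{m+1}(B;A))\oplus\text{Hom}(A,\mathfrak{Lie}_{m+2}(B;A))$. On the other side, $D_{m+1}(H')$ is the kernel of the bracket $[\,\cdot\,,\,\cdot\,]\colon H'\otimes\mathfrak{Lie}_{m+2}(H')\to\mathfrak{Lie}_{m+3}(H')$, which via $\text{Hom}(A,-)\cong A^*\otimes-\cong H'\otimes-$ we regard as a subspace of $\text{Hom}(A,\mathfrak{Lie}_{m+2}(H'))$. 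I then set
$$\iota_*(d):=\overline{\iota}\circ d_{|A}\ \in\ \text{Hom}(A,\mathfrak{Lie}_{m+2}(H')).$$
This is manifestly additive in $d$, so the one point to verify is well-definedness: $\iota_*(d)$ must lie in $D_{m+1}(H')$, i.e.\ $\sum_{i=1}^g[b_i,(\overline{\iota}\circ d_{|A})(a_i)]=0$ in $\mathfrak{Lie}_{m+3}(H')$, where $\{a_i,b_i\}$ is the symplectic basis of Proposition \ref{propwithbs}.

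For this I would use that $\overline{\iota}$ is a Lie algebra morphism fixing each $b_i$ and killing each $a_i$, together with the identity $d(\Omega')=\sum_i[a_i,d(b_i)]-\sum_i[b_i,d(a_i)]$ established in the proof of Proposition \ref{propwithbs} (see (\ref{JTH2equ11})). Indeed
$$\sum_{i=1}^g\big[b_i,(\overline{\iota}\circ d_{|A})(a_i)\big]=\overline{\iota}\Big(\sum_{i=1}^g[b_i,d(a_i)]\Big)=\overline{\iota}\Big(\sum_{i=1}^g[a_i,d(b_i)]\Big)=\sum_{i=1}^g\big[\overline{\iota}(a_i),\overline{\iota}(d(b_i))\big]=0,$$
where the middle equality is $\sum_i[b_i,d(a_i)]=\sum_i[a_i,d(b_i)]$, a consequence of $d(\Omega')=0$ and (\ref{JTH2equ11}), and the last equality uses $\overline{\iota}(a_i)=0$. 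Hence $\iota_*\colon D_m(B;A)\to D_{m+1}(H')$ is a well-defined group homomorphism.

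I expect no serious obstacle here: the argument is purely formal once $\overline{\iota}$ is in place, and the only thing requiring care is keeping the chain of identifications $B\cong A^*$, $A\cong B^*$ and $H/A\cong A^*\cong H'$ coherent (the signs coming from $\omega$ are irrelevant for the kernel condition) and invoking the bracket relation of Proposition \ref{propwithbs} with the correct degree shift. As a sanity check, this $\iota_*$ is precisely the one making the diagram of Theorem B commute, since for $h\in J^{\mathfrak{a}}_m\mathcal{M}$ and a lift $y\in\mathbb{A}$ of $a\in A$ one has $\tau^L_{m+1}(h)(a)=\iota_\#(h_\#(y)y^{-1})\bmod\Gamma_{m+3}\pi'=\overline{\iota}\big(\tau^{\mathfrak{a}}_m(h)_{|A}(a)\big)$.
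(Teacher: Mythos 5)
Your proof is correct, and it arrives at exactly the map the paper uses (kill the $\operatorname{Hom}(B,\cdot)$ component, push the $\operatorname{Hom}(A,\cdot)$ component forward along the degree-preserving Lie morphism that is the identity on $B$ and zero on $A$), but you get there by a slightly different route. The paper constructs the graded morphism topologically: by Lemma \ref{JTHlemma3} one has $K_n=\Gamma_n\pi\cdot N_n$ with $N_n\subseteq\mathbb{A}=\ker\iota_\#$, so $\iota_\#$ induces $K_{n}/K_{n+1}\to\Gamma_{n}\pi'/\Gamma_{n+1}\pi'$, i.e.\ $\mathfrak{Lie}_n(B;A)\to\mathfrak{Lie}_n(H')$; well-definedness of $\iota_*$ on $D_m(B;A)$ then follows basis-freely from the commutativity of the square relating the two bracket maps, since a Lie morphism sends the kernel of one bracket into the kernel of the other (the $A\otimes\mathfrak{Lie}_{m+1}(B;A)$ summand dying because $\iota_*|_A=0$). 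You instead produce $\overline{\iota}$ purely algebraically from the universal property of the free Lie algebra (so you do not need Lemma \ref{JTHlemma3} at all, only the observation that $\overline{\iota}$ preserves the grading), and you verify the kernel condition by unwinding it in the symplectic basis via Proposition \ref{propwithbs}: the relation $\sum_i[a_i,d(b_i)]=\sum_i[b_i,d(a_i)]$ coming from $d(\Omega')=0$, pushed through $\overline{\iota}$ with $\overline{\iota}(a_i)=0$. This is the paper's commutative square written out in coordinates, so the content is the same; what your version buys is independence from the topological input (and, as you note, it makes the compatibility with $\tau^L_{m+1}$ in Proposition \ref{JTH2prop11} transparent), while the paper's version is coordinate-free and does not need the symplectic characterization $D_m(B;A)\cong\text{Der}^{+,\omega}_m(\mathfrak{Lie}(B;A))$ — membership in $\ker\Xi_m$ alone suffices. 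One small economy available to you: you never really need the derivation interpretation, since by (\ref{JTH2equ10}) the kernel condition on $\Psi(d)$ is literally the identity $\sum_i[a_i,d(b_i)]-\sum_i[b_i,d(a_i)]=0$ that you use.
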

\begin{proof}
It follows from Lemma \ref{JTHlemma3} that for $m\geq 1$ the map $\iota_{\#}:\pi\rightarrow \pi'$ induces a well-defined homomorphism
$$\iota_{*}:\mathfrak{Lie}_{m+2}(B;A)\cong\frac{K_{m+2}}{K_{m+3}}\longrightarrow \frac{\Gamma_{m+2}\pi'}{\Gamma_{m+3}\pi'}\cong \mathfrak{Lie}_{m+2}(H'),$$

\noindent which sends $xK_{m+3}$ to $\iota_{\#}(x)\Gamma_{m+3}\pi'$ for all $x\in K_{m+2}$. This map is compatible with the Lie bracket, in particular, the following diagram is commutative
\begin{equation*}
\xymatrix{\left(A\otimes\mathfrak{Lie}_{m+1}(B;A)\right)\oplus\left(B\otimes\mathfrak{Lie}_{m+2}(B;A)\right)\ar[r]^{\ \ \ \ \ \ \ \ \ \ \ \ \ \ \ \ \ \ \left[\ ,\ \right]}\ar[d]_{\iota_*\otimes\iota_*} & \mathfrak{Lie}_{m+3}(B;A) \ar[d]^{\iota_*} \\
						H'\otimes\mathfrak{Lie}_{m+2}(H')\ar[r]^{\ \ \left[\ ,\ \right]} & \mathfrak{Lie}_{m+3}(H').}
\end{equation*}
Whence, we have a well-defined homomorphism $\iota_*: D_m(B;A)\rightarrow D_{m+1}(H').$
\end{proof}

\begin{proposition}\label{JTH2prop11}
 For $m\geq 1$, the diagram 
\begin{equation*}
\xymatrix{  J^{\mathfrak{a}}_m\mathcal{M}\ar[r]^{\subset}\ar[d]_{\tau^{\mathfrak{a}}_m} & J_{m+1}^L\mathcal{M} \ar[d]^{\tau_{m+1}^L} \\
						D_m(B;A)\ar[r]^{\iota_*} & D_{m+1}(H')}
\end{equation*}
is commutative. In other words, for $J^{\mathfrak{a}}_m\mathcal{M}$,  the homomorphism $\tau^L_{m+1}$ is determined by the homomorphism $\tau^{\mathfrak{a}}_m$.
\end{proposition}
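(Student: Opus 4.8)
The plan is to unwind both vertical maps on a generator and check they agree. Let $h\in J^{\mathfrak{a}}_m\mathcal{M}$; by Proposition \ref{JTHproposition5}(iii) we indeed have $h\in J^L_{m+1}\mathcal{M}$, so the diagram makes sense. Both composites are homomorphisms $J^{\mathfrak{a}}_m\mathcal{M}\to D_{m+1}(H')$, and since $D_{m+1}(H')\subseteq H'\otimes\mathfrak{Lie}_{m+1}(H')$ is determined by the induced map $A\to\mathfrak{Lie}_{m+2}(H')$ (via the identification $H'\cong H/A\cong A^*$ recalled just before the statement of the Johnson--Levine homomorphism), it suffices to evaluate both composites on an arbitrary $a\in A$ and compare the resulting elements of $\mathfrak{Lie}_{m+2}(H')\cong\Gamma_{m+2}\pi'/\Gamma_{m+3}\pi'$.

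First I would trace the top-then-right route. By definition $\tau^L_{m+1}(h)$ sends $a\in A$ to $\iota_{\#}h_{\#}(\alpha)\,\Gamma_{m+3}\pi'$, where $\alpha\in\mathbb{A}$ is any lift of $a$. Since $h\in\mathcal{L}$ fixes $\alpha$ up to an element of $\mathbb{A}$ (Lemma \ref{JTHlemma2}) and $\iota_{\#}(\alpha)=1$, one rewrites this as $\iota_{\#}\big(h_{\#}(\alpha)\alpha^{-1}\big)\,\Gamma_{m+3}\pi'$. Next I would trace the left-then-bottom route. By the formula \eqref{JTH2equ14}, or more directly from the definition of $\tau^{\mathfrak a}_m$, the $A$-component $\tau^{\mathfrak a}_m(h)_2$ sends $a\in A$ to $h_{\#}(y)y^{-1}\,K_{m+3}\in K_{m+2}/K_{m+3}\cong\mathfrak{Lie}_{m+2}(B;A)$, where $y\in\mathbb{A}\subseteq K_2$ is any lift of $a$; one may take $y=\alpha$. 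Applying the map $\iota_*:\mathfrak{Lie}_{m+2}(B;A)\cong K_{m+2}/K_{m+3}\to \Gamma_{m+2}\pi'/\Gamma_{m+3}\pi'\cong\mathfrak{Lie}_{m+2}(H')$ from the preceding lemma, which by construction sends $xK_{m+3}$ to $\iota_{\#}(x)\Gamma_{m+3}\pi'$, gives exactly $\iota_{\#}\big(h_{\#}(\alpha)\alpha^{-1}\big)\,\Gamma_{m+3}\pi'$. So the two routes agree on $A$.

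The one point requiring care—and the main (mild) obstacle—is bookkeeping of the identifications: I must check that the identification $H'\cong A^*$ used to define $\tau^L_{m+1}$ and the identification $B\cong A^*$ built into $D_m(B;A)$ via \eqref{JTH2equ8} are compatible with the map $\iota_*$ of the lemma on the degree-one slot, so that "the $A$-component of $\tau^{\mathfrak a}_m$" is genuinely the piece of $\iota_*\tau^{\mathfrak a}_m(h)$ that reads off $\tau^L_{m+1}(h)$. Concretely one observes that under $\iota_*:H\to B$ restricted to a symplectic basis $\{a_i,b_i\}$, the dual basis $\{b_i^*\}$ of $B\cong A^*$ matches the dual basis used for $H'\cong A^*$ on the nose, because both are defined through the same intersection form $\omega$; this is the content of the commutative triangle relating $\omega$ and $\omega'$ drawn in subsection \ref{sec5.1prel}. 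Once this compatibility is recorded, the verification above is complete, and since it holds for every $a\in A$ the diagram commutes. Everything else is the routine fact that well-definedness of each map (independence of the chosen lift $\alpha$) has already been established in the cited references and in the lemma immediately preceding the proposition.
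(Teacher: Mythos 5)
Your proof is correct and follows essentially the same route as the paper: both arguments unwind the definitions of $\tau^{\mathfrak{a}}_m$ and $\tau^L_{m+1}$, use $\iota_{\#}(\mathbb{A})=1$ to identify $\iota_{\#}h_{\#}(\alpha)$ with $\iota_{\#}(h_{\#}(\alpha)\alpha^{-1})$, and check that $\iota_*$ kills the $A\otimes\mathfrak{Lie}_{m+1}(B;A)$ part while acting by post-composition on the $\operatorname{Hom}(A,-)$ part, the only difference being that the paper phrases this via the symplectic-basis formula (\ref{JTH2equ14}) while you evaluate on a general $a\in A$ and verify the dual-basis identifications. The only blemish is a harmless index slip ($D_{m+1}(H')$ sits in $H'\otimes\mathfrak{Lie}_{m+2}(H')$, not $H'\otimes\mathfrak{Lie}_{m+1}(H')$), which does not affect the argument.
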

\begin{proof} Let $h\in J^{\mathfrak{a}}_m\mathcal{M}$. By considering the free basis $\{\alpha_i, \beta_i\}$ of $\pi$ and the induced symplectic basis $\{a_i, b_i\}$ of $H$, the $(m+1)$-st Johnson-Levine homomorphism on $h$ is given by
\begin{equation}\label{JTH2equ201}
\tau_{m+1}^L(h)=-\sum_{i=1}^g \iota_*(b_i)\otimes\left(\iota_{\#}h_{\#}(\alpha_i)\Gamma_{m+3}\pi'\right). 
\end{equation}
\noindent Applying $\iota_*:D_m(B;A)\rightarrow D_{m+1}(H')$ to Equation (\ref{JTH2equ14}) we obtain exactly the left-hand side of (\ref{JTH2equ201}), that is,  $\iota_*\tau^{\mathfrak{a}}_m(h)=\tau_{m+1}^L(h)$.
\end{proof}

\begin{remark} In general it is not easy to compare the alternative Johnson homomorphisms and the Johnson homomorphisms. In Lemma \ref{JTHLMOlemma6.10} we carry out the comparison between $\tau^{\mathfrak{a}}_1(\psi)$ and  $\tau_1(\psi)$ for $\psi\in\mathcal{I}\cap\mathcal{I}^{\mathfrak{a}}$.
\end{remark}

\subsection{Alternative Johnson homomorphism on \texorpdfstring{$\mathcal{L}$}{L}}\label{subsection5.3} In \cite{MR3828784}, K. Habiro and G. Massuyeau defined, in a general context, a group homomorphism on $\mathcal{L}$. In this subsection we study in detail this homomorphism, which we shall call here the \emph{$0$-th alternative Johnson homomorphism}. 

An \emph{automorphism} $\phi$ of  $\mathfrak{Lie}(B;A)$ is a family $\phi=(\phi_i)_{i\geq 1}$ of group isomorphisms
$\phi_i:\mathfrak{Lie}_i(B;A)\rightarrow \mathfrak{Lie}_i(B;A),$ such that $\phi_{i+j}([x,y])=[\phi_i(x), \phi_j(y)]$ for $x\in\mathfrak{Lie}_i(B;A)$ and $y\in \mathfrak{Lie}_j(B;A)$. Let $\text{Aut}\left(\mathfrak{Lie}(B;A)\right)$ denote the group of automorphisms of $\mathfrak{Lie}(B;A)$.

Recall that for the $N$-series $(K_i)_{i\geq1}$ defined in (\ref{JTHequ11}), we have from Lemma \ref{JTHlemma2} that for $h\in\mathcal{L}$, $^h(K_i)\subseteq K_i$ for $i\geq 1$. Here $^hx=h_{\#}(x)$ for $x\in K_i$. 

\begin{definition}\label{definitiontau00} The \emph{$0$-th alternative Johnson homomorphism} is the group homomorphism
\begin{equation}\label{JTH2equ20}
\tau^{\mathfrak{a}}_0:\mathcal{L}\longrightarrow \text{Aut}\left(\mathfrak{Lie}(B;A)\right)
\end{equation} 
which sends $h\in\mathcal{L}$ to the family $\tau^{\mathfrak{a}}_0(h)=(\tau^{\mathfrak{a}}_0(h)_i)_{i\geq1}$ where 
$$\tau^{\mathfrak{a}}_0(h)_i:\mathfrak{Lie}_{i}(B;A)\cong \frac{K_i}{K_{i+1}}\longrightarrow \frac{K_i}{K_{i+1}}\cong \mathfrak{Lie}_i(B;A)$$
is defined by $\tau^{\mathfrak{a}}_0(h)_i(xK_{i+1})=h_{\#}(x)K_{i+1}$ for $x\in K_i$.
\end{definition}

From the definition it follows that $\text{ker}(\tau^{\mathfrak{a}}_0)=J^{\mathfrak{a}}_1\mathcal{M}=\mathcal{I}^{\mathfrak{a}}$. We refer to \cite[Proposition~6.1]{MR3828784} for a proof of the homomorphism property with the above definition. We will see an equivalent definition of $\tau^{\mathfrak{a}}_0$ in (\ref{JTH2equ28}) and we prove the homomorphism property with the equivalent definition in Proposition \ref{JTH2prop14}. 

 Let us see how $\tau^{\mathfrak{a}}_0$ is related to the other alternative Johnson homomorphisms. First, for $m\geq 1$ there is an action of $\mathcal{L}$ on $J^{\mathfrak{a}}_m\mathcal{M}$ by conjugation, that is, for $h\in \mathcal{L}$, and $f\in J^{\mathfrak{a}}_m\mathcal{M}$, we set $^hf=hfh^{-1}\in J^{\mathfrak{a}}_m\mathcal{M}$. On the other hand, there is an action of $\text{Aut}(\mathfrak{Lie}(B;A))$ on the group $\text{Der}_m(\mathfrak{Lie}(B;A))$ of derivations of degree $m$ of $\mathfrak{Lie}(B;A)$. Let $\phi\in \text{Aut}(\mathfrak{Lie}(B;A))$ and $d\in \text{Der}_m(\mathfrak{Lie}(B;A))$, set
\begin{equation} 
^{\phi}d={\phi}d{\phi}^{-1}.
\end{equation}
More precisely, $(^{\phi}d)_i(x)={\phi}_{m+i}d_i{\phi}^{-1}_i(x)$ for $x\in\mathfrak{Lie}_i(B;A)$ and $i\geq 1$. The following is an instance of a part of \cite[Theorem 6.4]{MR3828784}.

\begin{proposition} Let $m\geq 1$ and $h\in\mathcal{L}$. The $m$-th alternative Johnson homomorphism $\tau^{\mathfrak{a}}_m: J^{\mathfrak{a}}_m\mathcal{M}\rightarrow \text{\emph{Der}}_m(B;A)$ satisfies the following invariance property: for every $f\in J^{\mathfrak{a}}_m\mathcal{M}$ we have
$$\tau^{\mathfrak{a}}_m({}{^h}f)=\  ^{\tau^{\mathfrak{a}}_0(h)}\tau^{\mathfrak{a}}_m(f).$$
\end{proposition}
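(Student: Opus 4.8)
The plan is to deduce the identity from the single elementary fact $({}^{h}f)_{\#}=h_{\#}\,f_{\#}\,h_{\#}^{-1}$ in $\Aut(\pi)$, once both $\tau^{\mathfrak{a}}_m$ and $\tau^{\mathfrak{a}}_0$ have been put on the same footing: expressed, in \emph{every} degree $l\geq 1$, through the action of $f_{\#}$ (respectively $h_{\#}$) on the $N$-series $(K_l)_{l\geq 1}$.

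First I would record a uniform, degree-by-degree description of the derivation $\tau^{\mathfrak{a}}_m(f)$ for $f\in J^{\mathfrak{a}}_m\mathcal{M}$. The claim is that $f_{\#}(z)z^{-1}\in K_{l+m}$ for every $l\geq 1$ and every $z\in K_l$, so that
\[
\hat d_l\colon \frac{K_l}{K_{l+1}}\longrightarrow \frac{K_{l+m}}{K_{l+m+1}},\qquad \hat d_l(zK_{l+1})=f_{\#}(z)\,z^{-1}\,K_{l+m+1},
\]
is a well-defined homomorphism, and that the family $\hat d=(\hat d_l)_{l\geq 1}$ is a degree-$m$ derivation of $\text{Gr}(K_\bullet)\cong\mathfrak{Lie}(B;A)$, see (\ref{JTH2equ5}). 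The cases $l=1,2$ are exactly the two defining conditions (\ref{JTHequ15}) of $J^{\mathfrak{a}}_m\mathcal{M}$; the general case, and the Leibniz rule, follow by induction on $l$ from the presentation (\ref{JTHequ11}) of $(K_l)_{l\geq 1}$ together with standard commutator identities — this is the same bookkeeping carried out, in the degrees that matter, inside the proof of Theorem \ref{JTH2thm1}, and it is modeled on Morita's proof of \cite[Corollary 3.2]{MR1224104}. Comparing $\hat d$ with the definition of $\tau^{\mathfrak{a}}_m$, one sees that $\hat d$ agrees with $\tau^{\mathfrak{a}}_m(f)$ on the generating subspaces $B=\mathfrak{Lie}_1(B;A)$ (there $\hat d_1(xK_2)=f_{\#}(x)x^{-1}K_{m+2}=\tau^{\mathfrak{a}}_m(f)_1(xK_2)$) and $A\hookrightarrow\mathfrak{Lie}_2(B;A)$ (there, for a lift $y\in\mathbb{A}$ of $a\in A$ as in (\ref{JTH2equ3}), $\hat d_2(j(a))=f_{\#}(y)y^{-1}K_{m+3}=\tau^{\mathfrak{a}}_m(f)_2(a)$); since $\mathfrak{Lie}(B;A)$ is freely generated by $B$ and $A$, two degree-$m$ derivations agreeing there coincide, so $\hat d=\tau^{\mathfrak{a}}_m(f)$. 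The analogous statement for the $0$-th homomorphism is built into Definition \ref{definitiontau00}: $\tau^{\mathfrak{a}}_0(h)_l(zK_{l+1})=h_{\#}(z)K_{l+1}$ for all $l$, and since $\tau^{\mathfrak{a}}_0$ is a group homomorphism, $\tau^{\mathfrak{a}}_0(h)^{-1}=\tau^{\mathfrak{a}}_0(h^{-1})$, whence $\tau^{\mathfrak{a}}_0(h)_l^{-1}(zK_{l+1})=h_{\#}^{-1}(z)K_{l+1}$.

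With this in hand the proof becomes a one-line computation. Note first that ${}^{h}f=hfh^{-1}\in J^{\mathfrak{a}}_m\mathcal{M}$ by (\ref{JTHequ14}), so the preceding paragraph applies to ${}^{h}f$ as well. For $l\geq 1$ and $z\in K_l$,
\begin{align*}
\bigl({}^{\tau^{\mathfrak{a}}_0(h)}\tau^{\mathfrak{a}}_m(f)\bigr)_l(zK_{l+1})
&=\tau^{\mathfrak{a}}_0(h)_{l+m}\Bigl(\hat d_l\bigl(h_{\#}^{-1}(z)K_{l+1}\bigr)\Bigr)\\
&=\tau^{\mathfrak{a}}_0(h)_{l+m}\Bigl(f_{\#}\bigl(h_{\#}^{-1}(z)\bigr)\,\bigl(h_{\#}^{-1}(z)\bigr)^{-1}K_{l+m+1}\Bigr)\\
&=h_{\#}f_{\#}h_{\#}^{-1}(z)\cdot\bigl(h_{\#}h_{\#}^{-1}(z)\bigr)^{-1}K_{l+m+1}\\
&=({}^{h}f)_{\#}(z)\,z^{-1}K_{l+m+1}\;=\;\tau^{\mathfrak{a}}_m({}^{h}f)_l(zK_{l+1}).
\end{align*}
Since $l$ and $z$ are arbitrary, the two derivations $\tau^{\mathfrak{a}}_m({}^{h}f)$ and ${}^{\tau^{\mathfrak{a}}_0(h)}\tau^{\mathfrak{a}}_m(f)$ of $\mathfrak{Lie}(B;A)$ coincide.

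The only nontrivial ingredient is the uniform, degree-by-degree description of $\tau^{\mathfrak{a}}_m(f)$ in the first step — the commutator calculus for $(K_l)_{l\geq 1}$, identical to the one in the proof of Theorem \ref{JTH2thm1} — while everything after it is formal. In fact this proposition is a special case of \cite[Theorem 6.4]{MR3828784}, so one could simply cite it; the argument above isolates why it holds, namely that both $\tau^{\mathfrak{a}}_m$ and $\tau^{\mathfrak{a}}_0$ are read off the action on $(K_l)_{l\geq 1}$ and hence transport correctly under conjugation.
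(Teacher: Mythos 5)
Your argument is correct, but it takes a different route from the paper: the paper does not prove this proposition at all, it simply states it as "an instance of a part of" Habiro--Massuyeau's general equivariance theorem \cite[Theorem 6.4]{MR3828784}, whereas you reconstruct the mechanism behind that theorem in this particular case. Your two steps are sound: (i) the uniform description of $\tau^{\mathfrak{a}}_m(f)$ as the degree-$m$ derivation of $\mathrm{Gr}(K_\bullet)\cong\mathfrak{Lie}(B;A)$ induced by $z\mapsto f_{\#}(z)z^{-1}$ in every degree $l$ (well-definedness, the extension of the two defining conditions of $J^{\mathfrak{a}}_m\mathcal{M}$ to all $K_l$ via (\ref{JTHequ11}), and the Leibniz rule are routine commutator calculus, and agreement with $\tau^{\mathfrak{a}}_m(f)$ on the generators $B$ and $A$ pins it down by Proposition \ref{JTH2prop1}); and (ii) the conjugation computation, which is formal once (i) is in place, using $\tau^{\mathfrak{a}}_0(h)_l(zK_{l+1})=h_{\#}(z)K_{l+1}$, $h^{-1}\in\mathcal{L}$ preserving each $K_l$, and ${}^hf\in J^{\mathfrak{a}}_m\mathcal{M}$ from (\ref{JTHequ14}). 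What your approach buys is a self-contained explanation of \emph{why} the equivariance holds --- both $\tau^{\mathfrak{a}}_m$ and $\tau^{\mathfrak{a}}_0$ are read off the action on the $N$-series, so they transport correctly under conjugation --- at the cost of the unproved (though standard) bookkeeping in step (i). One small attribution quibble: that bookkeeping is not really "the same as in the proof of Theorem \ref{JTH2thm1}", whose computation concerns the symplectic condition coming from the boundary relation; the correct reference for the degree-by-degree extension is the general $N$-series machinery of \cite[Sections 3.4 and 6]{MR3828784} (or Morita's original argument for the lower central series), which is also where the paper's citation points.
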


\para{Understanding the image of $\tau^{\mathfrak{a}}_0$} Recall that $\mathfrak{Lie}(B;A)$ is the free Lie algebra generated by $B$ in degree $1$ and $A$ in degree $2$. For $h\in\mathcal{L}$ the automorphism
$\tau^{\mathfrak{a}}_0(h):\mathfrak{Lie}(B;A)\rightarrow  \mathfrak{Lie}(B;A)$ is completely determined by its parts of degree $1$ and degree $2$:
\begin{equation}
\tau^{\mathfrak{a}}_0(h)_1: \mathfrak{Lie}_1(B;A) \rightarrow  \mathfrak{Lie}_1(B;A) \ \ \ \ \ \text{ and } \ \ \ \ \ \tau^{\mathfrak{a}}_0(h)_2: \mathfrak{Lie}_2(B;A) \rightarrow \mathfrak{Lie}_2(B;A),
\end{equation}

\noindent but $\mathfrak{Lie}_1(B;A)=B$ and $\mathfrak{Lie}_2(B;A)=\Lambda^2B \oplus A $. Consider the subgroup $\mathcal{P}$ of $\text{Aut}(B)\times \text{Aut}(\mathfrak{Lie}_2(B;A))$ defined by
\begin{equation*}
\mathcal{P}=\left\{(u,v)\in \text{Aut}(B)\times \text{Aut}(\mathfrak{Lie}_2(B;A))\ | \  v\left([x,y]\right)=[u(x),u(y)]\ \ \forall x,y\in B\right\}.
\end{equation*}
Besides, consider the set
\begin{equation*}
\mathcal{D} = \Big\{(u,v)\in \text{Aut}(B)\times\text{Hom}(A, \mathfrak{Lie}_2(B;A))
 \left|\begin{smallmatrix} \text{ the map }  [x,y]\   + \ a\  \longmapsto \ [u(x), u(y)]\ +\  v(a) \\ 
\text{is an automorphism of }  [B, B]\oplus A\  =\ \mathfrak{Lie_2}(B;A)  \end{smallmatrix}\right\}.
\end{equation*}

\noindent For $(u,v)\in\mathcal{D}$, define $\tilde{v}:\mathfrak{Lie}_2(B,A)\rightarrow \mathfrak{Lie}_2(B,A)$  on the summands by 
$$\tilde{v}\left([x,y]\right) = [u(x), u(y)]\ \ \  \text{ and } \ \ \    \tilde{v}(a)=v(a),$$
for $x,y\in B$ and $a\in A$. 

\begin{lemma} There is a bijective correspondence 
$\Phi:\mathcal{P}\rightarrow \mathcal{D}$,
which sends $(u,v)\in\mathcal{P}$ to $(u,v_{|A})$. This way, $\mathcal{D}$ inherits a group structure from $\mathcal{P}$, so that the  the product in $\mathcal{D}$ is given by   
\begin{equation}\label{JTH2equ25}
(u_1,v_1)(u_2,v_2)=\Phi\left((u_1,\tilde{v}_1)(u_2,\tilde{v}_2)\right) = \Phi(u_1u_2,\tilde{v}_1\tilde{v}_2) = \left(u_1u_2,(\tilde{v}_1\tilde{v}_2)_{|A}\right)
\end{equation}
for $(u_1,v_1)$, $(u_2,v_2)\in\mathcal{D}$.
\end{lemma}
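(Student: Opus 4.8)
The plan is to construct the map $\Phi$ and its inverse explicitly, verify they are mutually inverse, and then transport the group structure.

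First I would observe that there is an obvious forgetful map $\mathcal{P}\to \mathcal{D}$ sending $(u,v)$ to $(u,v_{|A})$: indeed if $(u,v)\in\mathcal{P}$ then $v$ is an automorphism of $\mathfrak{Lie}_2(B;A)=[B,B]\oplus A$ whose restriction to $[B,B]$ is $[x,y]\mapsto[u(x),u(y)]$ (by the defining relation of $\mathcal{P}$), so the pair $(u,v_{|A})$ satisfies exactly the condition defining $\mathcal{D}$, namely that the induced map $[x,y]+a\mapsto[u(x),u(y)]+v_{|A}(a)$ (which is precisely $v$) is an automorphism of $\mathfrak{Lie}_2(B;A)$. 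Hence $\Phi$ is well defined. For injectivity: if $(u,v_{|A})=(u',v'_{|A})$ then $u=u'$ and $v,v'$ agree on $A$; since both restrict to $[B,B]$ as $[x,y]\mapsto[u(x),u(y)]$, they agree on all of $\mathfrak{Lie}_2(B;A)=[B,B]\oplus A$, so $v=v'$. For surjectivity: given $(u,v)\in\mathcal{D}$, form $\tilde v$ as in the statement (defined on the two summands by $\tilde v([x,y])=[u(x),u(y)]$ and $\tilde v(a)=v(a)$); by the very definition of $\mathcal{D}$ this $\tilde v$ is an automorphism of $\mathfrak{Lie}_2(B;A)$, and the pair $(u,\tilde v)$ lies in $\mathcal{P}$ because $\tilde v([x,y])=[u(x),u(y)]$ by construction. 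Clearly $\Phi(u,\tilde v)=(u,\tilde v_{|A})=(u,v)$. One small point worth checking here is that $[x,y]\mapsto[u(x),u(y)]$ is a well-defined $\mathbb{Z}$-linear map on $[B,B]=\Lambda^2 B$ — this is immediate since $u$ is linear and $\Lambda^2$ is functorial — and that $\tilde v$ being injective/surjective on the direct sum follows from the stated condition; these are the routine verifications I would not belabor.

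Next I would check that $\mathcal{P}$ is a subgroup of $\text{Aut}(B)\times\text{Aut}(\mathfrak{Lie}_2(B;A))$: it contains the identity, and if $(u_1,v_1),(u_2,v_2)\in\mathcal{P}$ then $v_1v_2([x,y])=v_1([u_2(x),u_2(y)])=[u_1u_2(x),u_1u_2(y)]$, so $(u_1u_2,v_1v_2)\in\mathcal{P}$; similarly $v_1^{-1}([x,y])=[u_1^{-1}(x),u_1^{-1}(y)]$ after substituting $x\mapsto u_1(x)$, $y\mapsto u_1(y)$, so $(u_1^{-1},v_1^{-1})\in\mathcal{P}$. Since $\Phi:\mathcal{P}\to\mathcal{D}$ is a bijection, we may transport the group structure: declare $(u_1,v_1)(u_2,v_2):=\Phi\bigl(\Phi^{-1}(u_1,v_1)\,\Phi^{-1}(u_2,v_2)\bigr)$. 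Because $\Phi^{-1}(u_i,v_i)=(u_i,\tilde v_i)$, this product equals $\Phi\bigl((u_1u_2,\tilde v_1\tilde v_2)\bigr)=(u_1u_2,(\tilde v_1\tilde v_2)_{|A})$, which is exactly formula~(\ref{JTH2equ25}). This makes $\Phi$ a group isomorphism by construction.

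The only mildly delicate step is confirming that $\tilde v_1\tilde v_2$ is again of the form $\widetilde{(\;\cdot\;)}$ attached to the pair $(u_1u_2,(\tilde v_1\tilde v_2)_{|A})$, i.e. that the assignment $(u,v)\mapsto\tilde v$ is compatible with composition; but this follows from the computation above showing $(u_1,\tilde v_1)(u_2,\tilde v_2)=(u_1u_2,\tilde v_1\tilde v_2)\in\mathcal{P}$ together with the fact that an element of $\mathcal{P}$ is determined by its image under $\Phi$. I expect no real obstacle here — the argument is essentially a bookkeeping exercise built around the direct sum decomposition $\mathfrak{Lie}_2(B;A)=\Lambda^2 B\oplus A$ and the observation that an element of $\mathcal{P}$ is rigidly determined by its degree-$1$ part together with the restriction of its degree-$2$ part to the $A$-summand.
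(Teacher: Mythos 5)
Your proof is correct and follows essentially the same route as the paper: the paper's entire proof is the one-line observation that $\Phi^{-1}(u,v)=(u,\tilde{v})$ for $(u,v)\in\mathcal{D}$, and your argument is exactly this inverse construction with the routine verifications (well-definedness, injectivity/surjectivity, closure of $\mathcal{P}$ under composition, transport of the product) spelled out.
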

\begin{proof}

The inverse of $\Phi$ is defined by $\Phi^{-1}(u,v)=(u,\tilde{v})$ for $(u,v)\in\mathcal{D}$.
\end{proof}

If $h\in\mathcal{L}$ we have $\left(\tau^{\mathfrak{a}}_0(h)_1, (\tau^{\mathfrak{a}}_0(h)_2)_{|A} \right)\in\mathcal{D}$. Hence we can see the $0$-th alternative Johnson homomorphism $\tau^{\mathfrak{a}}_0$ as taking values in $\mathcal{D}$. 

We can still improve the target of $\tau^{\mathfrak{a}}_0$. First, recall that if $h\in\mathcal{L}$, the induced map $h_*:H\rightarrow H$ is symplectic, see (\ref{JTHequ2}). Denote by $\hat{h}_*: H/A \rightarrow H/A$ the homomorphism induced by $h_*$. Hence  $\tau^{\mathfrak{a}}_0(h)_1=\iota_*\hat{h}_*\iota^{-1}_*:H'\rightarrow H'$. The symplectic condition on $h_*$ implies  that there is some information of $(\tau^{\mathfrak{a}}_0(h)_2)_{|A}$  which is already encoded in $\tau^{\mathfrak{a}}_0(h)_1$. More precisely, we have
\begin{equation}
\left(\tau^{\mathfrak{a}}_0(h)_2\right)_{|A}=\iota_*\left(\tau^{\mathfrak{a}}_0(h)_2\right)_{|A} + {h_*}_{|A}\in \text{Hom}(A, \Lambda^2B\oplus A)=\text{Hom}(A, \mathfrak{Lie}_2(B;A)),
\end{equation}
where $\iota_*: \Lambda^2B\oplus A \rightarrow \Lambda^2 B$ denotes the projection on $\Lambda^2 B$. For the moment we have that $\tau^{\mathfrak{a}}_0(h)$ is completely determined by the pair
$$\left(\tau^{\mathfrak{a}}_0(h)_1, \iota_*\left(\tau^{\mathfrak{a}}_0(h)_2\right)_{|A}\right)\in \text{Aut}(B)\times \text{Hom}(A, \Lambda^2 B),$$
because ${h_*}_{|A}$ and $\tau^{\mathfrak{a}}_0(h)_1=\iota_*\hat{h}_*\iota^{-1}_*$ determine each other by the symplectic condition on $h_*$. The set $\text{Aut}(B)\times \text{Hom}(A, \Lambda^2 B)$ inherits the group structure (\ref{JTH2equ25}) from $\mathcal{D}$, which can be described explicitly as follows. Let $h\in\text{Aut}(B)$. Using the identification $H'=B\cong A^*$, described in subsection \ref{sec5.1prel}, we obtain $h''\in\text{Aut}(A^*)$. Denote  by $h'$ the automorphism of $A$ such that $(h')^*=h''$. Let $\mu\in\text{Hom}(A,\Lambda^2 B)$. We consider the following actions of $\text{Aut}(B)$ on $\text{Hom}(A,\Lambda^2B)$
\begin{equation}\label{JTH2equa28}
\begin{split}
\text{right action: } & \ \ \ \mu\cdot h := \mu\circ h'\in \text{Hom}(A, \Lambda^2B),\\
\text{left action: }& \ \ \  h\cdot\mu := \Lambda^2 h\circ\mu\in \text{Hom}(A, \Lambda^2B).
\end{split}
\end{equation}
We have that the product in $\text{Aut}(B)\times \text{Hom}(A, \Lambda^2 B)$ inherited from (\ref{JTH2equ25}) is given by
\begin{equation}\label{JTH2equ27}
(h, \mu)(f, \nu) = (hf, h\cdot\nu + \mu\cdot f)
\end{equation}
for $h,f\in\text{Aut}(B)$ and $\mu,\nu\in\text{Hom}(A,\Lambda^2 B)$.

\begin{remark} There is a right split short exact sequence

\medskip

\begin{tikzcd}
    0\arrow{r} & \text{Hom}(A, \Lambda^2 B) \arrow{r}{j} & \text{Aut}(B)\times \text{Hom}(A, \Lambda^2 B) \arrow{r}{p} & \text{Aut}(B)\arrow{r}\arrow[bend left=20]{l}{s} & 1, 
\end{tikzcd}

\medskip

\noindent where $j(\mu)=(\text{Id}_B, \mu)$, $p(h,\mu)=h$ and $s(h)=(h,0)$ for $\mu\in\text{Hom}(A, \Lambda^2B)$ and $h\in\text{Aut}(B)$. Therefore, the group $\text{Aut}(B)\times \text{Hom}(A, \Lambda^2 B)$ is isomorphic to the semidirect product $\text{Aut}(B)\ltimes \text{Hom}(A, \Lambda^2 B)$, where the action of $\text{Aut}(B)$ on $\text{Hom}(A, \Lambda^2B)$ is given by $h*\mu=h\cdot\mu\cdot h^{-1}$ for $h\in\text{Aut}(B)$ and $\mu\in\text{Hom}(A, \Lambda^2 B)$. Here the $\cdot$ means the left and right actions defined in (\ref{JTH2equa28}). The explicit isomorphism 
$$\Theta: \text{Aut}(B)\times \text{Hom}(A, \Lambda^2 B)\longrightarrow \text{Aut}(B)\ltimes \text{Hom}(A, \Lambda^2 B), $$
is given by $\Theta(h,\mu)=(h, \mu\cdot h^{-1})$ for $h\in\text{Aut}(B)$ and $\mu\in\text{Hom}(A, \Lambda^2 B)$.
\end{remark}

From now on, by simplicity on the proofs, we consider the set $\text{Aut}(B)\times \text{Hom}(A, \Lambda^2 B)$ with the product given in~(\ref{JTH2equa28}).
This way, we  can see the $0$-th Johnson homomorphism~(\ref{JTH2equ20}) as the map
\begin{equation}\label{JTH2equ28}
\tau^{\mathfrak{a}}_0: \mathcal{L}\longrightarrow \text{Aut}(B)\times \text{Hom}(A, \Lambda^2 B), 
\end{equation}
which sends $h\in\mathcal{L}$ to the pair $\left(\tau^{\mathfrak{a}}_0(h)_1, \iota_*\left(\tau^{\mathfrak{a}}_0(h)_2\right)_{|A}\right)$. Moreover,  the homomorphism $\iota_*\left(\tau^{\mathfrak{a}}_0(h)_2\right)_{|A}$ takes $a\in A$ to $\iota_{\#}h_{\#}(\alpha)\Gamma_3\pi'$ where $\alpha\in\mathbb{A}$ is any lift of $a$, and we identify $\Gamma_2\pi'/\Gamma_3\pi'$ with $\Lambda^2H'=\Lambda^2B$. In this context we can show the homomorphism property of $\tau^{\mathfrak{a}}_0$.

\begin{proposition}\label{JTH2prop14}
The map $\tau^{\mathfrak{a}}_0: \mathcal{L}\rightarrow \text{\emph{Aut}}(B)\times \text{\emph{Hom}}(A, \Lambda^2 B)$ is a group homomorphism and its kernel is the second term $J^L_2\mathcal{M}$ of the Johnson-Levine filtration. In particular we have $\mathcal{I}^{\mathfrak{a}}=J^{\mathfrak{a}}_1\mathcal{M}=J^L_2\mathcal{M}$.
\end{proposition}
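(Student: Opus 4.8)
The plan is to verify the homomorphism property directly from the equivalent description (\ref{JTH2equ28}), and then identify the kernel by comparing with the definition (\ref{JTHequ20}) of $J^L_2\mathcal{M}$. For the homomorphism property, recall that $\tau^{\mathfrak{a}}_0(h)$ corresponds to the pair $\bigl(\tau^{\mathfrak{a}}_0(h)_1,\ \iota_*(\tau^{\mathfrak{a}}_0(h)_2)_{|A}\bigr)\in\text{Aut}(B)\times\text{Hom}(A,\Lambda^2B)$, where $\tau^{\mathfrak{a}}_0(h)_1=\iota_*\hat h_*\iota_*^{-1}$ and the second component sends $a\in A$ to $\iota_{\#}h_{\#}(\alpha)\,\Gamma_3\pi'$ for any lift $\alpha\in\mathbb{A}$ of $a$. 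The first component is clearly multiplicative since $h\mapsto h_*$ and $h\mapsto\hat h_*$ are homomorphisms. For the second component, I would take $h,f\in\mathcal{L}$, a class $a\in A$ with lift $\alpha\in\mathbb{A}$, compute $\iota_{\#}(hf)_{\#}(\alpha)=\iota_{\#}h_{\#}f_{\#}(\alpha)$ modulo $\Gamma_3\pi'$, and write $f_{\#}(\alpha)=\alpha\cdot c$ where $c\in K_2$ (using that $f\in\mathcal{L}$ preserves $A$ in homology, so $f_{\#}(\alpha)\alpha^{-1}\in K_2$, cf. Lemma \ref{JTHlemma2}). Applying $h_{\#}$ and then $\iota_{\#}$ and reducing modulo $\Gamma_3\pi'$, the term from $\alpha$ contributes $\iota_{\#}h_{\#}(\alpha)$ and the term from $c$ contributes $\Lambda^2(\iota_*\hat h_*\iota_*^{-1})$ applied to the class of $\iota_{\#}(c)$ in $\Gamma_2\pi'/\Gamma_3\pi'\cong\Lambda^2 B$; one must be careful that $\iota_{\#}(c)$ might lie outside $\Gamma_2\pi'$, but since $c\in K_2=\mathbb{A}\cdot\Gamma_2\pi$ and $\iota_{\#}(\mathbb{A})=1$, in fact $\iota_{\#}(c)\in\Gamma_2\pi'$. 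Matching this against the product rule (\ref{JTH2equ27})--(\ref{JTH2equa28}) (taking care that we are using the product (\ref{JTH2equa28}), per the convention stated just before (\ref{JTH2equ28})), gives $\tau^{\mathfrak{a}}_0(hf)=\tau^{\mathfrak{a}}_0(h)\tau^{\mathfrak{a}}_0(f)$.

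For the kernel, suppose $h\in\kernel(\tau^{\mathfrak{a}}_0)$. From the first component being trivial we get $\hat h_*=\text{Id}_{H/A}$; combined with the symplectic condition on $h_*$ this forces $h_*|_A=\text{Id}_A$, so $h\in\mathcal{I}^L$. From the second component being trivial we get $\iota_{\#}h_{\#}(\alpha)\in\Gamma_3\pi'$ for every $\alpha\in\mathbb{A}$, i.e. $\iota_{\#}h_{\#}(\mathbb{A})\subseteq\Gamma_3\pi'$, which is exactly the defining condition (\ref{JTHequ20}) for $h\in J^L_2\mathcal{M}$. Conversely, if $h\in J^L_2\mathcal{M}\subseteq\mathcal{I}^L$, then $h_*|_A=\text{Id}_A$ gives $\hat h_*=\text{Id}$ hence $\tau^{\mathfrak{a}}_0(h)_1=\text{Id}_B$, and $\iota_{\#}h_{\#}(\mathbb{A})\subseteq\Gamma_3\pi'$ gives the triviality of the second component. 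Thus $\kernel(\tau^{\mathfrak{a}}_0)=J^L_2\mathcal{M}$, and since on the other hand $\kernel(\tau^{\mathfrak{a}}_0)=J^{\mathfrak{a}}_1\mathcal{M}=\mathcal{I}^{\mathfrak{a}}$ by Definition \ref{definitiontau00}, we conclude $\mathcal{I}^{\mathfrak{a}}=J^{\mathfrak{a}}_1\mathcal{M}=J^L_2\mathcal{M}$.

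The main obstacle I expect is the bookkeeping in the homomorphism computation: one must track the $\Gamma_3\pi'$-reduction carefully through the composition $\iota_{\#}h_{\#}f_{\#}$, correctly split off the "linear" part (governed by $\hat h_*$) from the "quadratic" part (governed by $\Lambda^2$ of $\hat h_*$ acting on $\Gamma_2\pi'/\Gamma_3\pi'$), and match the resulting expression against precisely the twisted product (\ref{JTH2equa28}) rather than the semidirect-product normalization. A secondary subtlety is justifying the passage between the two descriptions of $\tau^{\mathfrak{a}}_0$ — that the pair $\bigl(\tau^{\mathfrak{a}}_0(h)_1,\iota_*(\tau^{\mathfrak{a}}_0(h)_2)_{|A}\bigr)$ genuinely determines $\tau^{\mathfrak{a}}_0(h)$ as an automorphism of the free Lie algebra $\mathfrak{Lie}(B;A)$ — but this has essentially been carried out in the discussion preceding the proposition (freeness in degrees $1$ and $2$, plus the symplectic relation recovering $h_*|_A$ from $\tau^{\mathfrak{a}}_0(h)_1$), so it can be invoked rather than redone.
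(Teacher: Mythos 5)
Your overall strategy (direct verification with lifts and reduction modulo $\Gamma_3\pi'$, then reading off the kernel from (\ref{JTHequ20})) is the same as the paper's, and your kernel argument is correct; but there is a genuine error in the homomorphism computation. You write $f_{\#}(\alpha)=\alpha\cdot c$ with $c\in K_2$ and claim that the term coming from $c$ contributes $\Lambda^2\bigl(\iota_*\hat h_*\iota_*^{-1}\bigr)$ applied to the class of $\iota_{\#}(c)$ in $\Gamma_2\pi'/\Gamma_3\pi'\cong\Lambda^2B$. That step fails, because $c$ has a nontrivial $\mathbb{A}$-component: writing $c=\delta z$ with $\delta\in\mathbb{A}$ and $z\in\Gamma_2\pi$ (recall $K_2=\mathbb{A}\cdot\Gamma_2\pi$, see (\ref{JTHequ5})), one has $\iota_{\#}h_{\#}(c)\,\Gamma_3\pi'=\mu(\mathrm{ab}(\delta))+\Lambda^2\tau^{\mathfrak{a}}_0(h)_1\bigl(\iota_{\#}(z)\Gamma_3\pi'\bigr)$, where $\mu=\iota_*\left(\tau^{\mathfrak{a}}_0(h)_2\right)_{|A}$, whereas your formula only produces the second summand, since $\iota_{\#}(c)\Gamma_3\pi'=\iota_{\#}(z)\Gamma_3\pi'$. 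The point is that $\iota_{\#}$ kills $\delta$ but does not kill $\iota_{\#}h_{\#}(\delta)$ modulo $\Gamma_3\pi'$ — that is exactly what $\mu$ measures (test case: $c=\delta\in\mathbb{A}$ with $\mathrm{ab}(\delta)\neq 0$ and $h$ with $\mu\neq 0$). Here $\mathrm{ab}(\delta)=f_*(a)-a$, so your computation yields $\kappa(a)=\mu(a)+\Lambda^2\tau^{\mathfrak{a}}_0(h)_1(\nu(a))$, i.e. $\kappa=\mu+\tau^{\mathfrak{a}}_0(h)_1\cdot\nu$, which does \emph{not} match the product (\ref{JTH2equ27}); the correct identity is $\kappa=\tau^{\mathfrak{a}}_0(h)_1\cdot\nu+\mu\cdot\tau^{\mathfrak{a}}_0(f)_1$, and the discrepancy $\mu(f_*(a)-a)$ is precisely the contribution you dropped. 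So, as written, the "matching against the product rule" would not go through.

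The fix is what the paper does: decompose $f_{\#}(\alpha)=\beta y$ with $\beta\in\mathbb{A}$ and $y\in\Gamma_2\pi$, so that $\mathrm{ab}(\beta)=f_*(a)$; then the $\mathbb{A}$-part contributes $\mu(\mathrm{ab}(\beta))=\mu(f_*(a))=(\mu\cdot\tau^{\mathfrak{a}}_0(f)_1)(a)$ and the $\Gamma_2\pi$-part contributes $\Lambda^2\tau^{\mathfrak{a}}_0(h)_1(\nu(a))=(\tau^{\mathfrak{a}}_0(h)_1\cdot\nu)(a)$, which matches (\ref{JTH2equ27}). Equivalently, keep your decomposition but split $c$ into its $\mathbb{A}$- and $\Gamma_2\pi$-parts and keep the $\mu$-contribution of the former. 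The remaining parts of your proposal — multiplicativity of the first component, the identification of the kernel with $J^L_2\mathcal{M}$ via the symplectic condition and (\ref{JTHequ20}), and invoking the earlier discussion for the equivalence of the two descriptions of $\tau^{\mathfrak{a}}_0$ — are fine and agree with the paper.
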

\begin{proof}
Let $h,f\in \mathcal{L}$. Clearly we have $\tau^{\mathfrak{a}}_0(hf)_1 = \tau^{\mathfrak{a}}_0(h)_1\tau^{\mathfrak{a}}_0(f)_1$. Identify $\Gamma_2\pi'/\Gamma_3\pi'$ with $\Lambda^2 B$. Set $\mu = \iota_*\left(\tau^{\mathfrak{a}}_0(h)_2\right)_{|A}$, $\nu = \iota_*\left(\tau^{\mathfrak{a}}_0(f)_2\right)_{|A}$ and $\kappa = \iota_*\left(\tau^{\mathfrak{a}}_0(hf)_2\right)_{|A}$. Let us see that 
$$\kappa = \tau^{\mathfrak{a}}_0(h)_1\cdot \nu + \mu\cdot  \tau^{\mathfrak{a}}_0(f)_1.$$

\noindent Let $a\in A$ and $\alpha\in\mathbb{A}$ with $\text{ab}(\alpha)=a$. By Lemma \ref{JTHlemma2} we can write $f_{\#}(\alpha)=\beta y$ with $\beta\in\mathbb{A}$ and $y\in\Gamma_2\pi$. We have
$$\text{ab}(\beta)=\text{ab}(\beta y)=\text{ab}(f_{\#}(\alpha))=f_*(\text{ab}(\alpha))=f_*(a).$$

\noindent Hence
\begin{equation*}
\begin{split}
\kappa & = \iota_{\#}(h_{\#}(f_{\#}(\alpha)))\Gamma_{3}\pi'\\
																 & = \iota_{\#}(h_{\#}(\beta))\Gamma_{3}\pi'+\iota_{\#}(h_{\#}(y))\Gamma_{3}\pi'\\
																 & = \mu(\text{ab}(\beta))+\iota_{\#}(h_{\#}(y))\Gamma_{3}\pi'\\
																 & = \mu(f_*(a)) + \Lambda^2h(\nu(a))\\
																 & = (\mu\cdot \tau^{\mathfrak{a}}_0(f)_1)(a) + (\tau_0(h)_1\cdot\nu)(a)\\
																 & = (\tau_0(h)_1\cdot\nu)(a) + (\mu\cdot \tau^{\mathfrak{a}}_0(f)_1)(a).
\end{split}
\end{equation*}

\medskip

\noindent Whence $\kappa = \tau^{\mathfrak{a}}_0(h)_1\cdot \nu + \mu\cdot  \tau^{\mathfrak{a}}_0(f)_1$. Thus $\tau^{\mathfrak{a}}_0: \mathcal{L}\rightarrow \text{Aut}(B)\times \text{Hom}(A, \Lambda^2 B)$ is a group homomorphism. Now, let $h\in\text{ker}(\tau^{\mathfrak{a}}_0)$, thus $\tau_0(h)_1=\text{Id}_{H'}$.  From the symplectic condition we have ${h_*}_{|A} = \text{Id}_A$, so $h\in \mathcal{I}^L$.  Let $\alpha\in\mathbb{A}$, hence
$$\iota_{\#}h_{\#}(\alpha)\Gamma_3\pi'=\iota_*\left(\tau^{\mathfrak{a}}_0(h)_2\right)(\text{ab}(\alpha))=\Gamma_3\pi',$$

\noindent that is,  $\iota_{\#}h_{\#}(\alpha)\in\Gamma_3\pi'$ for all $\alpha\in\mathbb{A}$, so that  $h\in J_2^L\mathcal{M}$.
\end{proof}

The next proposition follows from the definition (\ref{JTH2equ28}) of $\tau^{\mathfrak{a}}_0$ and shows that for the elements of $\mathcal{I}^L$, the $0$-th alternative Johnson homomorphism determines the first Johnson-Levine homomorphism.

\begin{proposition}\label{propositionTLTA} Let $q: \text{\emph{Aut}}(B)\times \text{\emph{Hom}}(A, \Lambda^2 B)\rightarrow \text{\emph{Hom}}(A, \Lambda^2 B)$ denote the cartesian projection (which is not a group homomorphism). Then, the diagram 
\begin{equation*}
\xymatrix{  \mathcal{I}^L\ar[r]^{\subset}\ar[d]_{\tau^{L}_1} & \mathcal{L} \ar[d]^{\tau^{\mathfrak{a}}_{0}} \\
						\text{\emph{Hom}}(A, \Lambda^2 B) &  \text{\emph{Aut}}(B)\times \text{\emph{Hom}}(A, \Lambda^2 B)\ar[l]^{q\ \ \ \ \ }}
\end{equation*}
is commutative.
\end{proposition}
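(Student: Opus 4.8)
The plan is to simply unwind both definitions and observe that they compute the same thing. Recall from Definition~\ref{definitiontau00} (in the form (\ref{JTH2equ28})) that $\tau^{\mathfrak{a}}_0(h)$ is the pair $\left(\tau^{\mathfrak{a}}_0(h)_1,\ \iota_*\left(\tau^{\mathfrak{a}}_0(h)_2\right)_{|A}\right)$, and that for $h\in\mathcal{L}$ the second coordinate is the homomorphism $A\to\Lambda^2B$ sending $a=\text{ab}(\alpha)$ (with $\alpha\in\mathbb{A}$ any lift) to $\iota_{\#}h_{\#}(\alpha)\Gamma_3\pi'$, where we identify $\Gamma_2\pi'/\Gamma_3\pi'\cong\Lambda^2H'=\Lambda^2B$. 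On the other hand, the first Johnson--Levine homomorphism $\tau^L_1$ is defined on $\mathcal{I}^L=J^L_1\mathcal{M}$ and sends $h$ to the map $a\in A\mapsto \iota_{\#}h_{\#}(\alpha)\Gamma_2\pi'$; but since $h\in\mathcal{I}^L$ we have $h_*|_A=\text{Id}_A$, so $\iota_{\#}h_{\#}(\alpha)\in\Gamma_2\pi'$ already, and after the identification $H'\otimes\mathfrak{Lie}_1(H')\cong\ldots$ the target is $\Lambda^2 B$ with the class taken modulo $\Gamma_3\pi'$ — this is exactly the convention recorded after (\ref{JTH2equ19})--(\ref{JTH2equ20}) and used in the definition of $\tau^L_m$ for general $m$ (which always lands in $\Gamma_{m+1}\pi'/\Gamma_{m+2}\pi'$).

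Concretely, I would argue as follows. Let $h\in\mathcal{I}^L\subseteq\mathcal{L}$; note $\mathcal{I}^L\subseteq\mathcal{L}$ so $\tau^{\mathfrak{a}}_0(h)$ is defined. First observe that $\mathcal{I}^L$ lies in the domain of $\tau^L_1$ by definition. Then take $a\in A$ and a lift $\alpha\in\mathbb{A}$ with $\text{ab}(\alpha)=a$. Since $h\in\mathcal{I}^L$, $h_*(a)=a$, hence $\text{ab}(h_{\#}(\alpha))=a=\text{ab}(\alpha)$, so $h_{\#}(\alpha)\alpha^{-1}\in K_2$ and in fact (applying $\iota_{\#}$ and using $\iota_*(a)=0$ in $B$, since $a\in A=\ker\iota_*$) we get $\iota_{\#}h_{\#}(\alpha)\in\Gamma_2\pi'$. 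Therefore
\begin{equation*}
q\bigl(\tau^{\mathfrak{a}}_0(h)\bigr)(a)=\iota_*\bigl(\tau^{\mathfrak{a}}_0(h)_2\bigr)_{|A}(a)=\iota_{\#}h_{\#}(\alpha)\,\Gamma_3\pi'=\tau^L_1(h)(a),
\end{equation*}
the last equality being precisely the definition of $\tau^L_1$ together with the identification $\Gamma_2\pi'/\Gamma_3\pi'\cong\Lambda^2 B$. Since $a\in A$ was arbitrary this gives $q\circ\tau^{\mathfrak{a}}_0=\tau^L_1$ on $\mathcal{I}^L$, i.e. commutativity of the diagram.

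The only point that requires a little care — and the one I expect to be the main (though mild) obstacle — is matching the two target identifications: $\tau^L_1$ a priori takes values in $\text{Hom}(A,\Gamma_2\pi'/\Gamma_3\pi')$ re-expressed via $H'\otimes\mathfrak{Lie}_1(H')$, while $q\circ\tau^{\mathfrak{a}}_0$ takes values in $\text{Hom}(A,\Lambda^2 B)$, and one must check these are the \emph{same} identification (both being $\Gamma_2\pi'/\Gamma_3\pi'\cong\mathfrak{Lie}_2(H')\cong\Lambda^2 H'=\Lambda^2 B$, coherently with the identification $H/A\cong A^*$ used on the source side). Once that bookkeeping is in place, there is no computation to do: the statement is essentially a tautology given (\ref{JTH2equ28}) and the definition of $\tau^L_1$. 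I would therefore keep the proof to a couple of sentences, as the excerpt already announces (``follows from the definition (\ref{JTH2equ28}) of $\tau^{\mathfrak{a}}_0$'').
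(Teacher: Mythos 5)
Your proposal is correct and follows exactly the route the paper takes: the paper offers no separate argument, stating that the commutativity "follows from the definition (\ref{JTH2equ28}) of $\tau^{\mathfrak{a}}_0$", and your element-wise unwinding ($q(\tau^{\mathfrak{a}}_0(h))(a)=\iota_{\#}h_{\#}(\alpha)\Gamma_3\pi'=\tau^L_1(h)(a)$, with $\iota_{\#}h_{\#}(\alpha)\in\Gamma_2\pi'$ because $h_*|_A=\mathrm{Id}_A$) is precisely that verification, including the observation that both sides use the same identification $\Gamma_2\pi'/\Gamma_3\pi'\cong\Lambda^2 B$. Only a cosmetic slip: the intermediate target of $\tau^L_1$ should be $H'\otimes\mathfrak{Lie}_{2}(H')$, not $H'\otimes\mathfrak{Lie}_{1}(H')$.
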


We can do yet another refinement of the target of $\tau^{\mathfrak{a}}_{0}$. Considering the Definition \ref{definitiontau00} we have that $\tau^{\mathfrak{a}}_0(h)_3(\Omega')=\Omega'$ for $h\in\mathcal{L}$, where $\Omega'\in\mathfrak{Lie}_3(B;A)$ is determined by the intersection form (\ref{wthbasis2}).

Notice that a pair $(h,\kappa)\in \text{Aut}(B)\times \text{Hom}(A, \mathfrak{Lie}_2(B;A))$ uniquely determines  a morphism of Lie algebras $(h,\kappa):\mathfrak{Lie}(B;A)\rightarrow\mathfrak{Lie}(B;A)$.

\begin{lemma}\label{lemmaJTH2omega} Let $h\in\text{\emph{Aut}}(B)$ and let $h':A\rightarrow A\subseteq \mathfrak{Lie_2}(B;A)$ be  the automorphism of $A$ determined by $h$. Then $(h,h')(\Omega')=\Omega'$.
\end{lemma}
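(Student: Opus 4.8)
The plan is to verify directly that the Lie algebra morphism $(h,h')\colon\mathfrak{Lie}(B;A)\to\mathfrak{Lie}(B;A)$ fixes the distinguished degree-$3$ element $\Omega'$. By Proposition~\ref{propwithbs} (or rather the computation in its proof), once we pick the symplectic basis $\{a_i,b_i\}$ of $H$ induced by the system of meridians and parallels $\{\alpha_i,\beta_i\}$, and identify $\iota_*(b_i)$ with $b_i$, we have the explicit formula
\begin{equation*}
\Omega'=\sum_{i=1}^g [a_i,b_i]\in\mathfrak{Lie}_3(B;A).
\end{equation*}
Since $(h,h')$ is a morphism of Lie algebras that acts by $h$ on $B=\mathfrak{Lie}_1(B;A)$ and by $h'$ on the summand $A\subseteq\mathfrak{Lie}_2(B;A)$, we get $(h,h')(\Omega')=\sum_i [h'(a_i),h(b_i)]$, so the statement amounts to the identity $\sum_i[h'(a_i),h(b_i)]=\sum_i[a_i,b_i]$ in $\Lambda^2 B\otimes\cdots$ — more precisely in the degree-$3$ part of the free Lie algebra, which is spanned by brackets $[a,b]$ with $a\in A$, $b\in B$ together with $\mathfrak{Lie}_3(B)$; the relevant terms here all lie in the $[A,B]$-summand, which is canonically $A\otimes B$.

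**The key identity.**
First I would recall precisely how $h'$ is obtained from $h$. Using the identifications $B\cong A^*$ (from $H/A\xrightarrow{\ \iota_*\ }B$ and $x+A\mapsto\omega(x,\cdot)$) and $A\cong B^*$ (from $H_2(V,\partial V)\cong A$ and $a\mapsto\omega'(\cdot,a)$), the automorphism $h\in\mathrm{Aut}(B)$ induces $h''=({}^t h)^{-1}$ on $A^*$, no — rather $h$ induces some $h''\in\mathrm{Aut}(A^*)$ and $h'\in\mathrm{Aut}(A)$ is defined by $(h')^*=h''$; the content of subsection~\ref{sec5.1prel} is exactly that $h'$ is the automorphism of $A$ dual, via the perfect pairing $\omega\colon (H/A)\otimes A\to\mathbb{Z}$, to the automorphism $h$ of $B\cong H/A$. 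Concretely this means $\omega(h(b),h'(a))=\omega(b,a)$ for all $b\in H/A\cong B$ and $a\in A$ (the natural pairing is preserved). Now the degree-$3$ element $\Omega'$ is precisely the image of the identity element of $\mathrm{Hom}(A,B^{**})=A^*\otimes B$ under the natural inclusion $A^*\otimes B\cong B\otimes A^*\cong (H/A)\otimes A^*\hookrightarrow \mathfrak{Lie}_3(B;A)$; equivalently $\Omega'=\sum_i b_i\otimes a_i^*$ rewritten as $\sum_i [a_i,b_i]$ once one uses $B\cong A^*$. The point is that $(h,h')$ acts on this canonical element by $h\otimes (h')^{**}$, and $(h')^{**}$ corresponds under $A\cong B^*$ dualized twice to $h''{}^*=\big((h')^*\big)^*$, which unwinds to $({}^t h)^{-1}$ acting on $A^*$; hence $h\otimes ({}^t h)^{-1}$ fixes the canonical element of $B\otimes A^*\cong\mathrm{End}(B)^\vee$ (trace-type pairing). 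That is the whole computation.

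**Carrying it out.**
Concretely, in coordinates: write $h(b_j)=\sum_i h_{ij}b_i$ for the matrix $(h_{ij})$ of $h$ in the basis $\{b_i\}$ of $B$. The defining relation $\omega(h(b),h'(a))=\omega(b,a)$ together with $\omega(b_i,a_j)=\delta_{ij}$ forces $h'(a_j)=\sum_i (h^{-1})_{ji}\,a_i = \sum_i ({}^t h^{-1})_{ij}a_i$, i.e.\ $h'$ has matrix ${}^t(h^{-1})={}^t h^{-1}$. Then
\begin{align*}
(h,h')(\Omega')
&=\sum_{k}[\,h'(a_k),\,h(b_k)\,]
=\sum_{k}\Big[\sum_i ({}^t h^{-1})_{ik}a_i,\ \sum_j h_{jk}b_j\Big]\\
&=\sum_{i,j}\Big(\sum_k ({}^t h^{-1})_{ik}\,h_{jk}\Big)[a_i,b_j]
=\sum_{i,j}\big(h^{-1}h\big)_{ji}\,[a_i,b_j]
=\sum_{i,j}\delta_{ij}[a_i,b_j]=\Omega',
\end{align*}
using $\sum_k ({}^t h^{-1})_{ik}h_{jk}=\sum_k (h^{-1})_{ki}h_{jk}=(h^{-1}h)_{ji}=\delta_{ij}$. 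This proves the lemma. The only subtlety — and the step I would be most careful about — is getting the transpose/inverse bookkeeping right in the passage from $h$ to $h'$, i.e.\ pinning down from subsection~\ref{sec5.1prel} that $h'$ is characterized by $\omega(h(x),h'(a))=\omega(x,a)$ rather than by some other variance; once that is fixed the computation is forced. An alternative, basis-free phrasing of the same argument — observing that $\Omega'$ is the canonical element of $\mathrm{Hom}(A,[B,B]\oplus A)$ coming from $\mathrm{id}_A$ transported along $A\cong B^*$, hence manifestly natural in $(B,A)$ under the pairing-preserving pair $(h,h')$ — can be substituted if one prefers to avoid indices, but the coordinate computation above is the most transparent route and I would include it.
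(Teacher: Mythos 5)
Your proof is correct and follows essentially the same route as the paper: express $\Omega'=\sum_i[a_i,b_i]$ in the symplectic basis, encode the fact that $h'$ is dual to $h$ under the pairing as the matrix relation ${}^t\!P\,R=\mathrm{Id}_g$ (your $P={}^t h^{-1}$ is exactly this), and expand the brackets so the coefficients collapse to $\delta_{ij}$. The only blemish is the harmless mislabeling $(h^{-1}h)_{ji}$ for what is literally $(hh^{-1})_{ji}$ in your final sum, which does not affect the argument.
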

\begin{proof}
Consider the bases of $H$ and $B$ as in Proposition \ref{propwithbs} and identify $\iota_*(b_i)$ with $b_i$. Hence $\Omega'$ is given as in Equality (\ref{JTH2equ6}). If $R=(\epsilon_{kj})$ is the matrix of $h$ in the basis $\{\iota_*(b_i)\}$ and $P=(\lambda_{ij})$ is the the matrix of $h'$ in the basis $\{a_i\}$, then $P^TR=\text{Id}_g$. Thus
\begin{equation*}
\begin{split}
(h,h')(\Omega') & = \sum_{j=1}^g [h'(a_j), h(b_j)]
 = \sum_{j=1}^g \left[ \sum_{i=1}^g\lambda_{ij}a_i, \sum_{k=1}^g\epsilon_{kj}b_k\right]
  = \sum_{i=1}^g\sum_{k=1}^g\sum_{j=1}^g\lambda_{ij}\epsilon_{kj}[a_i,b_k]\\
 & = \sum_{j=1}^g [a_j, b_j]=\Omega'.
\end{split}
\end{equation*}
\end{proof}

Let $(h,\mu)\in \text{Aut}(B)\times \text{Hom}(A, \Lambda^2 B)$. Let $(h_i)_{i\geq 1}$ be the associated automorphism of~$\mathfrak{Lie}(B;A)$. Explicitly we have $h_1=h$ and $h_2= \mu+ h'$ where $h'\in\text{Aut}(A)$ is determined by $h$.

\begin{lemma}\label{lemmasympcond} The condition $h_3(\Omega')=\Omega'$ holds if and only if $(h,\mu)(\Omega')=0$. 
\end{lemma}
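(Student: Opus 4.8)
The plan is to compute $h_3(\Omega')$ explicitly in the symplectic basis and to recognise the resulting ``error term'' as $(h,\mu)(\Omega')$. First, recall from Equality (\ref{JTH2equ6}) that, after identifying $\iota_*(b_i)$ with $b_i$ as in Proposition \ref{propwithbs}, one has $\Omega'=\sum_{i=1}^g[a_i,b_i]\in\mathfrak{Lie}_3(B;A)$, where $\{a_i,b_i\}$ is the symplectic basis. Since $\phi:=(h_i)_{i\geq 1}$ is a graded Lie algebra endomorphism of $\mathfrak{Lie}(B;A)$ with $h_1=h$ on degree $1$ and $h_2=\mu+h'$ on degree $2$, and since $\mathfrak{Lie}_3(B;A)$ is spanned by brackets of a degree-$2$ element with a degree-$1$ element, I would apply $\phi$ termwise:
\[
h_3(\Omega')=\sum_{i=1}^g\bigl[h_2(a_i),\,h_1(b_i)\bigr]=\sum_{i=1}^g\bigl[h'(a_i),\,h(b_i)\bigr]+\sum_{i=1}^g\bigl[\mu(a_i),\,h(b_i)\bigr].
\]

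Next I would identify the two summands. The pair $(h,h')$, with $h'\colon A\to A\subseteq\mathfrak{Lie}_2(B;A)$ the automorphism of $A$ determined by $h$, is exactly the setting of Lemma \ref{lemmaJTH2omega}; the induced morphism of $\mathfrak{Lie}(B;A)$ sends $a_i\mapsto h'(a_i)$ and $b_i\mapsto h(b_i)$, so
\[
\sum_{i=1}^g\bigl[h'(a_i),\,h(b_i)\bigr]=(h,h')(\Omega')=\Omega'.
\]
On the other hand, the morphism $(h,\mu)\colon\mathfrak{Lie}(B;A)\to\mathfrak{Lie}(B;A)$ determined by the pair $(h,\mu)\in\text{Aut}(B)\times\text{Hom}(A,\Lambda^2 B)\subseteq\text{Aut}(B)\times\text{Hom}(A,\mathfrak{Lie}_2(B;A))$ sends $a_i\mapsto\mu(a_i)$ and $b_i\mapsto h(b_i)$, so
\[
\sum_{i=1}^g\bigl[\mu(a_i),\,h(b_i)\bigr]=(h,\mu)(\Omega').
\]
Combining the three displays gives $h_3(\Omega')=\Omega'+(h,\mu)(\Omega')$, from which the equivalence $h_3(\Omega')=\Omega'\iff(h,\mu)(\Omega')=0$ is immediate.

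There is essentially no serious obstacle here: the argument is a short computation in the free Lie algebra. The only points deserving care, which I would state carefully rather than compute, are (a) that a Lie algebra morphism of $\mathfrak{Lie}(B;A)$ determined by a pair $(u,\kappa)$ (with $u$ acting on the degree-$1$ generators $B$ and $\kappa$ on the degree-$2$ generators $A$) necessarily sends $\sum_i[a_i,b_i]$ to $\sum_i[\kappa(a_i),u(b_i)]$ — this is just the definition of such a morphism together with the freeness of $\mathfrak{Lie}(B;A)$ on $B$ in degree $1$ and $A$ in degree $2$; and (b) that the decomposition $h_2=\mu+h'$, recorded just before the statement, is what legitimises splitting the first display into its $\mu$-part and its $h'$-part. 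With these in hand the proof is complete.
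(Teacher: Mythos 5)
Your proof is correct and follows essentially the same route as the paper: expand $h_3(\Omega')=\sum_j[h_2(a_j),h_1(b_j)]$, split via $h_2=\mu+h'$, and identify the two pieces as $(h,\mu)(\Omega')$ and $(h,h')(\Omega')=\Omega'$ by Lemma \ref{lemmaJTH2omega}. Nothing to add.
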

\begin{proof}
We use bases as in Lemma \ref{lemmaJTH2omega}. We have
\begin{equation*}
\begin{split}
h_3(\Omega') = \sum_{j=1}^g[h_2(a_j), h_1(b_j)]  & =  \sum_{j=1}^g[\mu(a_j), h(b_j)] + \sum_{j=1}^g[h'(a_j), h(b_j)]\\
& = (h,\mu)(\Omega') + (h,h')(\Omega')= (h,\mu)(\Omega') + \Omega'.
\end{split}
\end{equation*}
Last equality comes from Lemma \ref{lemmaJTH2omega}. Whence the desired result.
\end{proof}

Notice that a pair $(h,\mu)\in \text{Aut}(B)\times \text{Hom}(A, \Lambda^2 B)$, determines an  element $\mu_h\in B\otimes \Lambda^2 B$ through  the identification 
$$\text{Hom}(A,\Lambda^2B)\cong A^*\otimes\Lambda^2 B\cong B\otimes \Lambda^2 B\xrightarrow{\ h\otimes \text{Id}_{\Lambda^2 B}\ } B\otimes \Lambda^2 B,$$
Thus $\mu_h:=(h\otimes \text{Id}_{\Lambda^2 B})(\mu)$. Set 
\begin{equation}\mathcal{G}:=\left\{(h,\mu)\in \text{Aut}(B)\times \text{Hom}(A, \Lambda^2 B) \ \ | \ \  \Xi_3(\mu_h)=0 \right\},
\end{equation}
where $\Xi_3:B\otimes \mathfrak{Lie}_2(B)\rightarrow \mathfrak{Lie}_3(B)$ is the Lie bracket. Using bases as in Lemma \ref{lemmaJTH2omega} we have  
\begin{equation}\label{groupGwithbasis}
\mathcal{G}=\left\{(h,\mu)\in \text{Aut}(B)\times \text{Hom}(A, \Lambda^2 B) \ \ | \ \  \Xi_3\Big(\sum_{j=1}^{g}h(b_j)\otimes\mu(a_j)\Big)=0 \right\}.
\end{equation}

\begin{proposition}
The set $\mathcal{G}$ is a subgroup of $\text{\emph{Aut}}(B)\times \text{\emph{Hom}}(A, \Lambda^2 B)$.
\end{proposition}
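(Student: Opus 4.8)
The plan is to verify closure directly using the explicit description of the group law on $\text{Aut}(B)\times\text{Hom}(A,\Lambda^2B)$ recorded in (\ref{JTH2equ27}), together with the characterization of $\mathcal{G}$ via the symplectic condition. First I would reinterpret membership in $\mathcal{G}$ in the cleanest available form. By Lemma \ref{lemmasympcond}, a pair $(h,\mu)$ lies in $\mathcal{G}$ exactly when the associated Lie algebra morphism $(h_i)_{i\geq 1}$ of $\mathfrak{Lie}(B;A)$ satisfies $h_3(\Omega')=\Omega'$; here $h_1=h$, $h_2=\mu+h'$ with $h'\in\text{Aut}(A)$ the automorphism determined by $h$, and $h_3$ is the induced degree-$3$ part. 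So I would phrase the proof as: $\mathcal{G}$ is precisely the set of pairs whose associated Lie algebra endomorphism fixes $\Omega'\in\mathfrak{Lie}_3(B;A)$.

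Next I would check the three subgroup axioms. For the identity, $(\text{Id}_B,0)$ induces the identity automorphism of $\mathfrak{Lie}(B;A)$, which obviously fixes $\Omega'$. For products: given $(h,\mu),(f,\nu)\in\mathcal{G}$, the product $(hf,h\cdot\nu+\mu\cdot f)$ from (\ref{JTH2equ27}) corresponds to the composite of the two Lie algebra morphisms — this is exactly why the product in $\mathcal{D}$ (hence in $\text{Aut}(B)\times\text{Hom}(A,\Lambda^2B)$) was defined by composition in Lemma preceding (\ref{JTH2equ25}), so the degree-$3$ part of the composite is $h_3\circ f_3$. Then $(h_3\circ f_3)(\Omega')=h_3(f_3(\Omega'))=h_3(\Omega')=\Omega'$, so the product lies in $\mathcal{G}$. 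For inverses: the map $(h,\mu)\mapsto$ its associated Lie algebra morphism is a group homomorphism from $\text{Aut}(B)\times\text{Hom}(A,\Lambda^2B)$ into $\text{Aut}(\mathfrak{Lie}(B;A))$ (each $h_i$ is invertible since $h_1,h_2$ are isomorphisms and a graded Lie algebra map on a free Lie algebra that is invertible in degrees $1$ and $2$ is invertible); if $(h,\mu)\in\mathcal{G}$ then its associated morphism $\phi$ satisfies $\phi_3(\Omega')=\Omega'$, hence $\phi_3^{-1}(\Omega')=\Omega'$, and $\phi^{-1}$ is the Lie algebra morphism associated to $(h,\mu)^{-1}$, so $(h,\mu)^{-1}\in\mathcal{G}$.

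Alternatively, if one prefers to stay at the level of the explicit formula (\ref{groupGwithbasis}), I would compute $\Xi_3\big(\sum_j (hf)(b_j)\otimes(h\cdot\nu+\mu\cdot f)(a_j)\big)$ directly, expanding using the definitions (\ref{JTH2equa28}) of the left and right actions, and reduce it via a change of summation index (using that $\{f(b_j)\}_j$ and $\{f'(a_j)\}_j$ are again dual bases up to the pairing, as in the matrix identity $P^TR=\text{Id}_g$ from Lemma \ref{lemmaJTH2omega}) to a sum of the two bracket-vanishing expressions coming from $(h,\mu)\in\mathcal{G}$ and $(f,\nu)\in\mathcal{G}$. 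The main obstacle — really the only nontrivial point — is bookkeeping: making sure the left/right action conventions and the $A^*\cong B$ identification are applied consistently so that the composite's degree-$3$ part is honestly $h_3\circ f_3$ (equivalently, that the basis manipulation in the explicit computation is legitimate). Once the translation ``$(h,\mu)\in\mathcal{G}\iff$ associated Lie morphism fixes $\Omega'$'' is in place, and one notes that the assignment to Lie morphisms respects composition and inversion, closure is immediate; so I would lead with that translation and keep the computational route as a remark.
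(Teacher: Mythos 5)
Your proof is correct, but it leads with a different argument than the one the paper writes out. The paper's displayed proof is the second route you relegate to a remark: a direct check in the symplectic basis, computing $\Xi_3\big(\sum_j hf(b_j)\otimes(h\cdot\nu+\mu\cdot f)(a_j)\big)$ and $\Xi_3\big(\sum_j h^{-1}(b_j)\otimes(-h^{-1}\cdot\mu\cdot h^{-1})(a_j)\big)$ and reducing both to $\sum_j[h(b_j),\mu(a_j)]=0$ via the same basis manipulation as in Lemma \ref{lemmaJTH2omega} (the paper only mentions in passing that the result ``can be deduced from Lemma \ref{lemmasympcond}''). Your primary route is exactly that alluded-to conceptual argument: identify $\mathcal{G}$, via Lemma \ref{lemmasympcond}, with the pairs whose associated automorphism of $\mathfrak{Lie}(B;A)$ fixes $\Omega'$ in degree $3$, and observe that the group law (\ref{JTH2equ27}) is, by construction (transport through $\mathcal{D}\cong\mathcal{P}$), composition of the associated automorphisms, so that fixing $\Omega'$ is preserved under products and inverses. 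What your route buys is a basis-free, one-line closure argument; what it costs is precisely the point you flag as bookkeeping: you must actually verify that the pair-to-automorphism map is multiplicative, i.e.\ that the $A$-components of the degree-$2$ parts compose correctly, $(hf)'=h'\circ f'$. This does hold, because $h'$ is in effect the contragredient of $h$ under the $A\cong B^*$ pairing (this is the content of the relation $P^TR=\mathrm{Id}_g$ in the proof of Lemma \ref{lemmaJTH2omega}), and the contragredient is a group homomorphism $\mathrm{Aut}(B)\to\mathrm{Aut}(A)$; note that if one reads the definition $(h')^*=h''$ too literally with $(uv)^*=v^*u^*$ one would get an anti-homomorphism, so this one line of verification is genuinely needed and should be included rather than waved at. With that line added, your argument is complete and arguably cleaner than the paper's computation; the paper's route, by contrast, is self-contained and never needs to know that $u\mapsto u'$ respects composition, since the inverse formula $(h,\mu)^{-1}=(h^{-1},-h^{-1}\cdot\mu\cdot h^{-1})$ is checked directly.
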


\begin{proof}
The result can be deduced from Lemma \ref{lemmasympcond} or from the description of $\mathcal{G}$ given in (\ref{groupGwithbasis}) as follows. Let $(h,\mu), (f,\nu)\in \mathcal{G}$. Let us see that $(h,\mu)(f,\nu)=(hf, h\cdot\nu + \mu\cdot f)$ and $(h,\mu)^{-1}=(h^{-1}, -h^{-1}\cdot \mu \cdot h^{-1} )$  belong to $\mathcal{G}$. We have
\begin{equation*}
\begin{split}
\Xi_3\Big(\sum_{j=1}^ghf(b_j)\otimes (h\cdot\nu + \mu\cdot f)(a_j)\Big) & = \sum_{j=1}^g\Big[hf(b_j), \Lambda^2 h(\nu(a_j)) + \mu f'(a_j)\Big]\\
& = \mathfrak{Lie}_3(h)\Big(\sum_{j=1}^g[f(b_j), \nu(a_j)]\Big) \\
& \ \ \ \ \ \ \ + \sum_{j=1}^g [h(f(b_j)), \mu(f'(a_j))]\\
 & = \sum_{j=1}^g [h(b_j), \mu(a_j)]\\
 & = 0, 
\end{split}
\end{equation*}
where $\mathfrak{Lie}_3(h):\mathfrak{Lie}_3(B)\rightarrow\mathfrak{Lie}_3(B)$ is the isomorphism induced by $h$. The equality  $\sum_{j=1}^g [h(f(b_j)), \mu(f'(a_j))] = \sum_{j=1}^g [h(b_j), \mu(a_j)]$ is deduced in a similar way as we did in the proof of Lemma \ref{lemmaJTH2omega}. Therefore $(h,\mu)(f,\nu)\in\mathcal{G}$. On the other hand
\begin{equation*}
\begin{split}
\Xi_3\Big(\sum_{j=1}^g h^{-1}(b_j)\otimes (-h^{-1}\cdot\mu\cdot h^{-1})(a_j)\Big) & = \sum_{j=1}^g\Big[h^{-1}(b_j), -\Lambda^2 h^{-1}(\mu(h^{-1})'(a_j))\Big]\\
 & = -\mathfrak{Lie}_3(h^{-1})\Big(\sum_{j=1}^g [h (h^{-1}(b_j)),\mu (h^{-1})'(a_j)] \Big)\\
 & = -\mathfrak{Lie}_3(h^{-1})\Big(\sum_{j=1}^g [h (b_j),\mu (a_j)] \Big)\\
 & =0.
\end{split}
\end{equation*}
Hence $(h,\mu)^{-1}\in\mathcal{G}$.
\end{proof}

\begin{lemma} The $0$-th alternative Johnson homomorphism defined in (\ref{JTH2equ28}) takes its  values in $\mathcal{G}$.
\end{lemma}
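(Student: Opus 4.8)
The plan is to verify the condition $\Xi_3(\mu_h)=0$ defining $\mathcal{G}$ directly for $(h,\mu)=\tau_0^{\mathfrak{a}}(h)$ by translating it, via Lemma \ref{lemmasympcond}, into the statement $\tau_0^{\mathfrak{a}}(h)_3(\Omega')=\Omega'$, which is exactly what Definition \ref{definitiontau00} gives. Concretely, recall that for $h\in\mathcal{L}$ we set $\mu=\iota_*(\tau_0^{\mathfrak{a}}(h)_2)_{|A}\in\text{Hom}(A,\Lambda^2B)$ and $\tau_0^{\mathfrak{a}}(h)_1\in\text{Aut}(B)$, and that the pair $(\tau_0^{\mathfrak{a}}(h)_1,\mu)$ determines the degree-$\leq 3$ part of the Lie algebra automorphism $\tau_0^{\mathfrak{a}}(h)$. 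First I would observe that the associated automorphism $(h_i)_{i\geq 1}$ coming from the pair $(\tau_0^{\mathfrak{a}}(h)_1,\mu)\in\text{Aut}(B)\times\text{Hom}(A,\Lambda^2 B)$ has $h_1=\tau_0^{\mathfrak{a}}(h)_1$ and $h_2=\mu+h'$ (with $h'\in\text{Aut}(A)$ determined by $h$), and that this $h_2$ agrees with $\tau_0^{\mathfrak{a}}(h)_2$ under the decomposition $\mathfrak{Lie}_2(B;A)=\Lambda^2 B\oplus A$: indeed the $A$-component of $\tau_0^{\mathfrak{a}}(h)_2$ on $A$ is ${h_*}_{|A}=h'$ (by the symplectic condition, as used in the discussion preceding \eqref{JTH2equ28}) and the $\Lambda^2 B$-component is $\mu$ by definition.

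Next I would invoke Definition \ref{definitiontau00}: since $\tau_0^{\mathfrak{a}}(h)$ is a genuine automorphism of the graded Lie algebra $\mathfrak{Lie}(B;A)\cong\text{Gr}(K_\bullet)$ induced by $h_{\#}$, and since $\Omega'\in\mathfrak{Lie}_3(B;A)$ is the canonical element determined by the intersection form of the handlebody (which is preserved by $h\in\mathcal{L}$ since $h$ extends over $\partial$-structure in the relevant homological sense — more simply, $\Omega'$ is the image under the canonical identifications of a topologically intrinsic class), we get $\tau_0^{\mathfrak{a}}(h)_3(\Omega')=\Omega'$. This is precisely the statement quoted just before this lemma in the paragraph introducing the refinement of the target. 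Then Lemma \ref{lemmasympcond}, applied with $(h,\mu)=(\tau_0^{\mathfrak{a}}(h)_1,\iota_*(\tau_0^{\mathfrak{a}}(h)_2)_{|A})$, says that $\tau_0^{\mathfrak{a}}(h)_3(\Omega')=\Omega'$ is equivalent to $(h,\mu)(\Omega')=0$, and unwinding the definition of $\mu_h$ and of $\Xi_3$ via the basis formula \eqref{groupGwithbasis}, this is exactly $\Xi_3(\mu_h)=0$. Hence $\tau_0^{\mathfrak{a}}(h)=(\tau_0^{\mathfrak{a}}(h)_1,\mu)\in\mathcal{G}$, completing the proof.

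The only genuinely non-formal point — the "main obstacle" — is justifying cleanly that $\tau_0^{\mathfrak{a}}(h)_3(\Omega')=\Omega'$, i.e.\ that the element $\Omega'$, which is defined through the Poincaré--Lefschetz duality identification \eqref{wthbasis2} and the isomorphisms $B\cong A^*$, $A\cong B^*$, is fixed by the degree-$3$ part of $\tau_0^{\mathfrak{a}}(h)$. The conceptual reason is that $\Omega'$ corresponds, under the isomorphism $\text{Gr}_3(K_\bullet)\cong\mathfrak{Lie}_3(B;A)$, to a class built from the homotopy class $[\partial\Sigma]$ of the boundary curve inside $\pi$ (compare the role of $\prod_i[\beta_i^{-1},\alpha_i]$ in the proof of Theorem \ref{JTH2thm1}), and every $h\in\mathcal{M}$ fixes $[\partial\Sigma]$; alternatively, one checks it in the symplectic basis $\{a_i,b_i\}$ exactly as in Lemma \ref{lemmaJTH2omega}, using that on the degree-$1$ and degree-$2$ pieces $\tau_0^{\mathfrak{a}}(h)$ acts by $(h_*$ on $B$, and $h_*$ on $A$ plus a $\Lambda^2B$-correction$)$, and the $\Lambda^2 B$-correction term is precisely $(h,\mu)(\Omega')$ which therefore must vanish. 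Either way this reduces to already-established facts, so no new calculation beyond bookkeeping is needed; I would present the boundary-curve argument as it is the cleanest and most robust.
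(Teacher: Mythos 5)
Your proof is correct, but it takes a different route from the paper's. The paper proves the lemma by a direct computation: it writes $\Xi_3\bigl(\sum_j \tau^{\mathfrak{a}}_0(h)_1(b_j)\otimes\mu(a_j)\bigr)$ out in the symplectic basis, recognizes the sum of brackets as the class of $\iota_{\#}h_{\#}\bigl(\prod_j[\beta_j^{-1},\alpha_j]\bigr)$ in $\Gamma_3\pi'/\Gamma_4\pi'$, and concludes it vanishes because $h_{\#}$ fixes the boundary word and $\iota_{\#}$ kills the meridians $\alpha_j$; Lemma \ref{lemmasympcond} is never invoked there. You instead identify the automorphism of $\mathfrak{Lie}(B;A)$ associated with the pair $\bigl(\tau^{\mathfrak{a}}_0(h)_1,\mu\bigr)$ with $\tau^{\mathfrak{a}}_0(h)$ in degrees $1$ and $2$ (hence in degree $3$, since the Lie algebra is generated in degrees $1$ and $2$), use the invariance $\tau^{\mathfrak{a}}_0(h)_3(\Omega')=\Omega'$, and then apply Lemma \ref{lemmasympcond}, noting $(h,\mu)(\Omega')=-\Xi_3(\mu_h)$. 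This is the "conceptual" route the paper itself sets up but does not follow, and your justification of the key invariance via $[\partial\Sigma]^{-1}=\prod_i[\beta_i^{-1},\alpha_i]\in K_3$ mapping to $\Omega'$ in $K_3/K_4\cong\mathfrak{Lie}_3(B;A)$ is sound and non-circular (the paper only asserts this invariance without proof). What your approach buys is reuse of Lemma \ref{lemmasympcond} and a cleaner structural statement; what the paper's buys is self-containedness and a strictly weaker input (it only needs $\iota_{\#}h_{\#}([\partial\Sigma])\in\Gamma_4\pi'$, not the identification of $\Omega'$ with the boundary class). One caveat: your "alternative" verification in the symplectic basis, where the $\Lambda^2B$-correction "therefore must vanish", is circular as phrased — it presupposes the very invariance being checked — so you are right to make the boundary-curve argument the primary one.
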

\begin{proof} Let $h\in\mathcal{L}$. Set $\mu = \iota_*\left(\tau^{\mathfrak{a}}_0(h)_2\right)_{|A}$. Hence
\begin{equation*}\begin{split}
\Xi_3\Big(\sum_{j=1}^g\tau^{\mathfrak{a}}_0(h)_1(b_j)\otimes \mu(a_j)\Big) & = -\sum_{j=1}^{g}\Xi_3\Big(\iota_*h_*(-b_j)\otimes \iota_{\#}h_{\#}(\alpha_j)\Gamma_3\pi'\Big)\\
& = -\sum_{j=1}^g\Xi_3\Big(\iota_{\#}h_{\#}(\beta^{-1}_j)\Gamma_2\pi'\otimes\iota_{\#}h_{\#}(\alpha_j)\Gamma_3\pi'\Big)\\
& =\left(\iota_{\#}h_{\#}\Big(\prod_{j=1}^g[\beta^{-1}_j,\alpha_j]\Big)\Gamma_4\pi'\right)^{-1}\\
& = \Gamma_4\pi'\\
& =0\in\mathfrak{Lie}_3(B).
\end{split}
\end{equation*}
\end{proof}

To sum up, we can write 
\begin{equation}\label{JTH2equ30}
\tau^{\mathfrak{a}}_0: \mathcal{L}\longrightarrow \mathcal{G}.
\end{equation}

\begin{theorem} The $0$-th alternative Johnson homomorphism $\tau^{\mathfrak{a}}_0: \mathcal{L}\rightarrow \mathcal{G}$ is surjective.
\end{theorem}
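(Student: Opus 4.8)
The plan is to exhibit, for each $(h,\mu)\in\mathcal{G}$, an explicit element of $\mathcal{L}$ mapping to it under $\tau^{\mathfrak{a}}_0$. Since $\tau^{\mathfrak{a}}_0$ is a group homomorphism onto $\mathcal{G}$ once surjectivity is established, and since $\mathcal{G}$ is generated by the image $\mathcal{H}$ of the splitting $s\colon\text{Aut}(B)\to\mathcal{G}$ together with the subgroup $\ker(p)\cap\mathcal{G}=\{(\text{Id}_B,\mu)\mid \Xi_3(\mu_{\text{Id}})=0\}=\{(\text{Id}_B,\mu)\mid \Xi_3(\sum_j b_j\otimes\mu(a_j))=0\}$, it suffices to hit these two pieces separately. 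So the proof splits into two parts: (a) every automorphism of $B=H'$ is realized, i.e. the composite $\mathcal{L}\xrightarrow{\tau^{\mathfrak{a}}_0}\mathcal{G}\xrightarrow{p}\text{Aut}(B)$ is surjective; and (b) every $\mu\in\text{Hom}(A,\Lambda^2 B)$ with $\Xi_3(\sum_{j}b_j\otimes\mu(a_j))=0$ is realized by an element of $\mathcal{I}^L$ (necessarily, since $p$ kills it), i.e. the first Johnson--Levine homomorphism $\tau^L_1\colon\mathcal{I}^L\to D_1(H')\subseteq\text{Hom}(A,\Lambda^2 B)$ is surjective onto the kernel of $\Xi_3$; here I would invoke Proposition \ref{propositionTLTA} to identify $q\circ\tau^{\mathfrak{a}}_0|_{\mathcal{I}^L}$ with $\tau^L_1$.

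For part (a): the handlebody group $\mathcal{H}\subseteq\mathcal{M}$ surjects onto the mapping class group of $V$, which in turn surjects onto $\text{Aut}(\pi')=\text{Aut}(F_g)$ up to the boundary condition, and in particular onto $\text{Aut}(H')=\text{Aut}(B)=\text{GL}(g,\mathbb{Z})$; concretely, elementary automorphisms $b_i\mapsto b_i b_j$ and $b_i\mapsto b_i^{-1}$ together with permutations of handles are all realized by homeomorphisms of $V$ restricting to $\Sigma$, and these generate $\text{GL}(g,\mathbb{Z})$. Every such $h\in\mathcal{H}$ lies in $\mathcal{L}$ (since $h_{\#}(\mathbb{A})\subseteq\mathbb{A}$ forces $h_*(A)\subseteq A$), so $\tau^{\mathfrak{a}}_0(h)_1$ runs over all of $\text{Aut}(B)$. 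I would also note that for such $h$ one has $\tau^{\mathfrak{a}}_0(h)=s(\tau^{\mathfrak{a}}_0(h)_1)$ exactly when $h$ can be chosen with $\iota_{\#}h_{\#}(\mathbb{A})\subseteq\Gamma_3\pi'$, which holds since $h_{\#}(\mathbb{A})\subseteq\mathbb{A}$ gives $\iota_{\#}h_{\#}(\mathbb{A})=1$; hence the splitting image is entirely inside $\tau^{\mathfrak{a}}_0(\mathcal{L})$.

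For part (b): by Proposition \ref{propositionTLTA} and $\ker(\tau^{\mathfrak{a}}_0)=J^L_2\mathcal{M}$, it is enough to show $\tau^L_1\colon\mathcal{I}^L\to D_1(H')$ is surjective. I would use the fact (Proposition \ref{JTHprop3}(v)) that $\mathcal{I}^L=\mathcal{I}\cdot(\mathcal{H}\cap\mathcal{I}^L)$, so that $\tau^L_1(\mathcal{I}^L)$ contains $\tau^L_1(\mathcal{I})$; and $\tau^L_1|_{\mathcal{I}}$ factors through the first Johnson homomorphism $\tau_1\colon\mathcal{I}\to D_1(H)$ followed by the map $D_1(H)\to D_1(H')$ induced by $\iota_*$ (this is the $m=1$ case of the compatibility underlying Proposition \ref{JTH2prop11}, or directly from the definitions: $\tau^L_1(h)(a)=\iota_{\#}h_{\#}(\alpha)\Gamma_3\pi'=\iota_*(\tau_1(h)(a))$). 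Since $\tau_1\colon\mathcal{I}\to D_1(H)$ is surjective by Johnson's classical theorem, it remains to check that the induced map $D_1(H)\to D_1(H')$ is onto $\ker\Xi_3$; this is a purely algebraic statement about the short exact sequence $0\to A\to H\to B\to 0$ and the Lie-bracket kernels, which I would verify by a direct computation with the symplectic basis (lift a given $\mu\in\ker\Xi_3$ along $H\twoheadrightarrow B$ and check the lift lies in $D_1(H)$, adjusting by elements of $A\otimes\mathfrak{Lie}_2(H)$ killed by $\iota_*$ if necessary). Combining (a) and (b), every element of $\mathcal{G}$ is a product of something in $\tau^{\mathfrak{a}}_0(\mathcal{H}\cap\mathcal{L})$ and something in $\tau^{\mathfrak{a}}_0(\mathcal{I}^L)$, so $\tau^{\mathfrak{a}}_0$ is surjective.

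\textbf{Main obstacle.} The delicate point is part (b): showing that the algebraic map $D_1(H)\to D_1(H')$ surjects onto $\ker\Xi_3$, equivalently that $\mathcal{G}$ has been cut out by exactly the right condition and no more — one must make sure that the symplectic constraint $\Xi_3(\mu_h)=0$ is not just necessary but also sufficient for realizability, which amounts to the bracket-kernel calculation over $0\to A\to H\to B\to 0$ behaving well; this is where I would be most careful with signs and with the identifications $B\cong A^*$, $A\cong B^*$ fixed in subsection \ref{sec5.1prel}.
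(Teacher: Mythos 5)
Your proposal is correct and follows essentially the same route as the paper: both reduce the statement to (i) realizing every element of $\text{Aut}(B)$ in the degree-one component (you via the handlebody group, the paper via its description of $\sigma(\mathcal{L})$) and (ii) the surjectivity of $\tau^L_1$, obtained exactly as in the paper from Johnson's surjectivity of $\tau_1$, the surjectivity of $\iota_*\colon D_1(H)\to D_1(H')$, and Proposition \ref{propositionTLTA}, after which the two pieces are combined using the group law of $\mathcal{G}$. The only cosmetic difference is that your handlebody-group lift has vanishing $\text{Hom}(A,\Lambda^2 B)$-component, so the correction term $h^{-1}\cdot\mu$ lies in $D_1(H')$ automatically by the group structure of $\mathcal{G}$, whereas the paper takes an arbitrary lift $(f,\nu)$ with prescribed first component and checks $f^{-1}\cdot(\mu-\nu)\in D_1(H')$ by an explicit bracket computation.
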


\begin{proof}
Notice that the diagram
\begin{equation*}
\xymatrix{  \mathcal{I}\ar[r]^{\subset}\ar[d]_{\tau_1} & \mathcal{I}^L \ar[d]^{\tau_{1}^L} \\
						D_1(H)\ar[r]^{\iota_*} & D_{1}(H')}
\end{equation*}
is commutative, this can be shown by writing $\tau_1$ and $\tau^L_1$ by using a symplectic basis as we did for $\tau^L_{m+1}$ in Equation (\ref{JTH2equ201}), see \cite[Section 4]{MR2265877} for more details. The map $\iota_*:D_1(H)\rightarrow D_1(H')$ is induced by $\iota_*:H\rightarrow H'$. It is easy to show that $\iota_*:D_1(H)\rightarrow D_1(H')$ is surjective and it is well known that the first Johnson homomorphism $\tau_1$ is surjective \cite[Theorem 1]{MR579103}. Hence $\tau^{L}_1$ is surjective. Using the symplectic basis $\{a_i,b_i\}$ to identify $\text{Sp}(H)$ with $\text{Sp}(2g,\mathbb{Z})$ we have that the image of  $\mathcal{L}$ under the symplectic representation  (\ref{JTHequ2}) is
\begin{equation}\label{JTH2thm21equ1}
\sigma(\mathcal{L})= \left\{\left( \begin{smallmatrix} P &Q\\ 
0& (P^T)^{-1}\end{smallmatrix} \right) \  \bigg\rvert \ \ P^{-1}Q \text{\ is symmetric}  \right\}\subseteq\text{Sp}(2g,\mathbb{Z}).
\end{equation}
Let  $(f,\mu)\in\mathcal{G}$. From (\ref{JTH2thm21equ1}), it follows that there is $h\in\mathcal{L}$ such that $\tau^{\mathfrak{a}}_0(h)_1 = f$. Let $\nu= \iota_*\left(\tau^{\mathfrak{a}}_0(h)_2\right)_{|A}\in\text{Hom}(A,\Lambda^2 B)$. Hence $\tau^{\mathfrak{a}}_0(h)=(f,\nu)\in\mathcal{G}$.

\noindent Consider the element $$\mu'=f^{-1}\cdot (\mu-\nu)\in\text{Hom}(A, \Lambda^2 H').$$
 Recall that  $B=H'$. Let us see that $\mu'\in D_1(H')$. Indeed, if $\Xi: H'\otimes \mathfrak{Lie}_2(H')\rightarrow \mathfrak{Lie}_3(H')$ denotes the Lie bracket, we have
\begin{equation*}
\begin{split}
\Xi(\mu') & = -\Xi\Big(\sum_{j=1}^{g}b_j\otimes (f^{-1}\cdot (\mu-\nu))(a_j)\Big)\\
& = -\sum_{j=1}^g [b_j, \Lambda^2 f^{-1}\mu(a_j)] + \sum_{j=1}^g[b_j, \Lambda^2 f^{-1}\nu(a_j)]\\
& -\sum_{j=1}^g [f^{-1}(f(b_j)), \Lambda^2 f^{-1}\mu(a_j)] + \sum_{j=1}^g[f^{-1}(f(b_j)), \Lambda^2 f^{-1}\nu(a_j)]\\
& = -\mathfrak{Lie}_3(f^{-1})\Big(\sum_{j=1}[f(b_j), \mu(a_j)]\Big) + \mathfrak{Lie}_3(f^{-1})\Big(\sum_{j=1}^g[f(b_j),\nu(a_j)]\Big)\\
 & = 0.
\end{split}
\end{equation*} 
Whence $\mu'\in D_1(H')$. By the surjectivity of $\tau^L_1$ and Proposition \ref{propositionTLTA}, there exists $g\in\mathcal{I}^{L}$ such that 
$\tau^{\mathfrak{a}}_0(g)=(\text{Id}_{H'},\mu')$. Therefore
\begin{equation*}
\begin{split}
\tau^{\mathfrak{a}}_0(hg) & = \tau^{\mathfrak{a}}_0(h)\tau^{\mathfrak{a}}_0(g)
 = (f,\nu)(\text{Id}_{H'},\mu')
 = (f,\nu)(\text{Id}_{H'}, f^{-1}\cdot(\mu-\nu))\\
 & = (f, f\cdot (f^{-1}\cdot(\mu-\nu)) + \nu)= (f,\mu).
\end{split}
\end{equation*}
Hence we have the surjectivity of $\tau^{\mathfrak{a}}_0:\mathcal{L}\rightarrow\mathcal{G}$.
\end{proof}

\begin{corollary}\label{corotau0} We have the following  short exact sequence
$$1\longrightarrow \mathcal{I}^{\mathfrak{a}}\xrightarrow{\ \subset\ } \mathcal{L}\xrightarrow{ \ \tau_0^{\mathfrak{a}}\  } \mathcal{G}\longrightarrow 1.$$
\end{corollary}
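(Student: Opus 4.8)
The plan is to assemble the short exact sequence from the three facts already established about $\tau^{\mathfrak{a}}_0$ in this subsection. First, recall that Definition \ref{definitiontau00}, together with Proposition \ref{JTH2prop14}, shows that $\tau^{\mathfrak{a}}_0:\mathcal{L}\to\mathcal{G}$ is a group homomorphism whose kernel is $J^L_2\mathcal{M}$, and that $\mathcal{I}^{\mathfrak{a}}=J^{\mathfrak{a}}_1\mathcal{M}=J^L_2\mathcal{M}$. (Strictly speaking Proposition \ref{JTH2prop14} states that the kernel of $\tau^{\mathfrak{a}}_0$ as a map to $\text{Aut}(B)\times\text{Hom}(A,\Lambda^2 B)$ is $J^L_2\mathcal{M}$; since $\mathcal{G}$ is a subgroup of $\text{Aut}(B)\times\text{Hom}(A,\Lambda^2 B)$ and $\tau^{\mathfrak{a}}_0$ factors through $\mathcal{G}$ by (\ref{JTH2equ30}), the kernel computed in $\mathcal{G}$ is the same.) Second, the immediately preceding theorem asserts that $\tau^{\mathfrak{a}}_0:\mathcal{L}\to\mathcal{G}$ is surjective. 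These two statements are exactly exactness at $\mathcal{L}$ and surjectivity at $\mathcal{G}$, which is all the corollary claims.

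Concretely, I would write: by the preceding theorem $\tau^{\mathfrak{a}}_0:\mathcal{L}\to\mathcal{G}$ is a surjective group homomorphism, so exactness holds at $\mathcal{G}$; the inclusion $\mathcal{I}^{\mathfrak{a}}\hookrightarrow\mathcal{L}$ is injective, so exactness holds at $\mathcal{I}^{\mathfrak{a}}$; and by Proposition \ref{JTH2prop14} the kernel of $\tau^{\mathfrak{a}}_0$ equals $J^L_2\mathcal{M}=\mathcal{I}^{\mathfrak{a}}$, which is exactness at $\mathcal{L}$. Hence the sequence
$$1\longrightarrow \mathcal{I}^{\mathfrak{a}}\xrightarrow{\ \subset\ } \mathcal{L}\xrightarrow{\ \tau_0^{\mathfrak{a}}\ } \mathcal{G}\longrightarrow 1$$
is exact. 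There is genuinely nothing more to prove; the corollary is a repackaging of the two results proved just above it, and I would keep the proof to one or two sentences, citing Proposition \ref{JTH2prop14} for the kernel and the surjectivity theorem for the cokernel.

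Since this is a corollary of results already in hand, there is no real obstacle — the only thing to be careful about is the bookkeeping point that $\mathcal{I}^{\mathfrak{a}}$ is normal in $\mathcal{L}$ (needed for the quotient $\mathcal{L}/\mathcal{I}^{\mathfrak{a}}\cong\mathcal{G}$ to make sense as a group isomorphism), but this is already recorded in the excerpt (the group $\mathcal{I}^{\mathfrak{a}}$ is stated to be normal in $\mathcal{L}$, and in any case it is a kernel of a homomorphism, hence automatically normal). So the proof I would give is simply:

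\begin{proof}
By Proposition \ref{JTH2prop14} the map $\tau^{\mathfrak{a}}_0:\mathcal{L}\to\mathcal{G}$ is a group homomorphism with kernel $J^L_2\mathcal{M}=\mathcal{I}^{\mathfrak{a}}$, and by the previous theorem it is surjective. Since the inclusion $\mathcal{I}^{\mathfrak{a}}\hookrightarrow\mathcal{L}$ is injective and its image is $\ker(\tau^{\mathfrak{a}}_0)$, the sequence is exact at each term.
\end{proof}
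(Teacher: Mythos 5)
Your proof is correct and matches the paper's intent exactly: the corollary is stated without a separate argument because it follows immediately from the surjectivity theorem just above it together with Proposition \ref{JTH2prop14}, which identifies $\ker(\tau^{\mathfrak{a}}_0)=J^L_2\mathcal{M}=\mathcal{I}^{\mathfrak{a}}$. Your remark that the kernel is unchanged when the target is restricted to the subgroup $\mathcal{G}$ is the only bookkeeping point, and you handle it correctly.
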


\subsection{Diagrammatic versions of the Johnson-type homomorphisms}\label{subsection5.4} In subsection \ref{sub5.2} we have seen that for $m\geq 1$, the $m$-th  Johnson homomorphism, the $m$-th  Johnson-Levine homomorphism and the  $m$-th  alternative Johnson homomorphism take values in the abelian groups $D_m(H)$, $D_m(H')=D_m(B)$ and $D_m(B;A)$, respectively. These spaces were defined as
\begin{equation*}
\begin{split}
D_m(H) & = \text{ker}\big(H\otimes\mathfrak{Lie}_{m+1}(H)\xrightarrow{\ [\ , \ ]\ }\mathfrak{Lie}_{m+2}(H)\big),\\
D_m(H') & = \text{ker}\big(H'\otimes\mathfrak{Lie}_{m+1}(H')\xrightarrow{\ [\ ,\ ]\ }\mathfrak{Lie}_{m+2}(H')\big) \ \text{ and } \\
D_m(B;A) & = \text{ker}\big((A\otimes\mathfrak{Lie}_{m+1}(B;A))\oplus (B\otimes\mathfrak{Lie}_{m+2}(B;A))\xrightarrow{\ [\ ,\ ]\ }\mathfrak{Lie}_{m+3}(B;A)\big).
\end{split}
\end{equation*}

The rational versions $D_m(H)\otimes\mathbb{Q}$, $D_m(H')\otimes\mathbb{Q}$ and $D_m(B;A)\otimes\mathbb{Q}$  can be interpreted as subspaces of the spaces of connected tree-like Jacobi diagrams $\mathcal{A}^{t,c}(H)$, $\mathcal{A}^{t,c}(H')$ and $\mathcal{A}^{t,c}(B\oplus A)$, respectively. See Example \ref{ejemplo2JD} for the definitions.  Recall that these spaces are graded by the internal degree.  Notice that as spaces $\mathcal{A}^{t,c}(H) = \mathcal{A}^{t,c}(B\oplus A)$ but we would like to give a special role to $A$ in the latter space, which will be reflected in a different grading of the space  $\mathcal{A}^{t,c}(B\oplus A)$. Let us start by recalling this interpretation.

For a connected tree-like Jacobi diagram $T$ in $\mathcal{A}^{t,c}(H)= \mathcal{A}^{t,c}(B\oplus A)$ or in $\mathcal{A}^{t,c}(H')$, set
\begin{equation}\label{JTH2equ31}
\eta(T)=\sum_v \text{color}(v)\otimes (T \text{ rooted at } v),
\end{equation}
where the sum ranges over the set of legs (univalent vertices) of $T$ and we interpret a rooted tree as a Lie commutator.

\begin{example}\label{examplet0}
Consider the tree
\begin{center}
\includegraphics[scale=0.88]{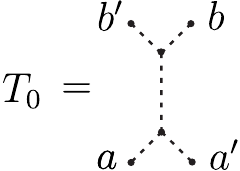}		
\end{center}

\noindent where  $a,a'\in A$ and $b,b'\in B$. Hence,  
\medskip
\begin{center}
\includegraphics[scale=0.85]{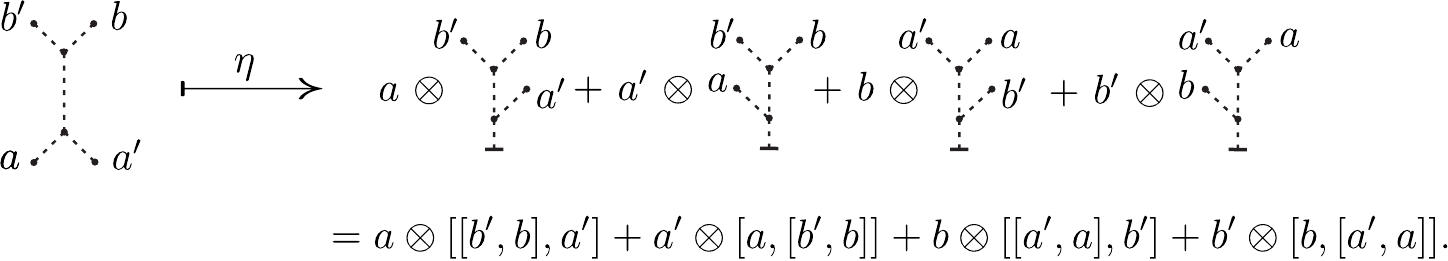}		
\end{center}
\medskip
We have that $\eta(T_0)\in H\otimes\mathfrak{Lie}_3(H)$ and $\eta(T_0)\in \big(A\otimes\mathfrak{Lie}_4(B;A)\big)\oplus\big(B\otimes\mathfrak{Lie}_5(B;A)\big)$. Moreover, by  the Jacobi identity, if we apply the Lie bracket $\Xi$ to $\eta(T_0)$ we obtain $\Xi\eta(T_0)=0$. Therefore $\eta(T_0)\in D_2(H)$ and $\eta(T_0)\in D_3(B;A)$. 
\end{example}

Denote by $\mathcal{A}^{t,c}_m(H)$ the subspace of $\mathcal{A}^{t,c}(H)$ generated by diagrams of internal degree $m$. So if $T\in \mathcal{A}^{t,c}_m(H)$, then $T$ has $m+2$ legs and therefore by rooting $T$ at one of its legs we obtain a rooted tree with $m+1$ leafs. To sum up, $\eta(T)\in H\otimes\mathfrak{Lie}_{m+1}(H)$. Moreover if we apply the Lie bracket $\Xi:H\otimes\mathfrak{Lie}_{m+1}(H)\rightarrow\mathfrak{Lie}_{m+2}(H)$ to $\eta(T)$, we obtain $\Xi\eta(T)=0$.  This way $\eta(T)\in D_m(H)$, see \cite[Lemma 3.1]{MR1943338} for the proof in the general case. The following result is well known.

\begin{theorem}\label{JTH2thm23}
For $m\geq 1$ the map
\begin{equation}
\eta: \mathcal{A}^{t,c}_m(H)\longrightarrow D_m(H)\otimes \mathbb{Q},
\end{equation}
defined as in Equality (\ref{JTH2equ31}), is an isomorphism of $\mathbb{Q}$-vector spaces.
\end{theorem}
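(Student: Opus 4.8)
The plan is to establish that $\eta$ is a well-defined linear map with values in $D_m(H)\otimes\mathbb{Q}$, then exhibit a two-sided inverse, or equivalently show injectivity and surjectivity separately. The well-definedness and the fact that $\eta(T)\in D_m(H)$ have already been discussed before the statement (using that $T\in\mathcal{A}^{t,c}_m(H)$ has $m+2$ legs, that rooting produces an element of $\mathfrak{Lie}_{m+1}(H)$, that the AS and IHX relations are respected by the Lie-commutator interpretation, and that $\Xi\eta(T)=0$ follows from the Jacobi identity applied leg by leg), so I would cite \cite[Lemma 3.1]{MR1943338} for this and concentrate on bijectivity.

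First I would construct a candidate inverse. Given an element $\xi=\sum_i x_i\otimes\ell_i\in D_m(H)\otimes\mathbb{Q}$ with $x_i\in H$ and $\ell_i\in\mathfrak{Lie}_{m+1}(H)$, send each generator $x\otimes\ell$ to the tree-like Jacobi diagram obtained by representing $\ell$ as an iterated bracket, drawing the corresponding rooted binary tree, and attaching the root leg with color $x$; extend linearly. One must check this is well-defined modulo the relations defining $\mathfrak{Lie}_{m+1}(H)$ — antisymmetry of the bracket corresponds to AS at the trivalent vertex nearest the root, and the Jacobi relation corresponds to IHX — so the map $\mathcal{A}^{t,c}_m(H)\leftarrow H\otimes\mathfrak{Lie}_{m+1}(H)\otimes\mathbb{Q}$ descends. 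Composing $\eta$ with this map in one order gives, on a diagram $T$ with legs $v$, the sum over $v$ of "$T$ re-rooted at $v$", viewed back in $\mathcal{A}^{t,c}_m$; the identity that this equals $(m+2)\cdot T$ modulo AS/IHX (equivalently, that re-rooting at any leg gives back the same unrooted tree) shows that $\frac{1}{m+2}$ times this composite is the identity on $\mathcal{A}^{t,c}_m(H)$, giving injectivity of $\eta$ and surjectivity of the candidate. For the other composite one restricts to $D_m(H)\otimes\mathbb{Q}$: starting from $\xi\in D_m(H)\otimes\mathbb{Q}$, turning it into a combination of rooted trees and then summing over all re-rootings returns $\xi$ up to the same normalization, precisely because the kernel condition $\Xi\xi=0$ is exactly what is needed for the "forget the root, then re-root everywhere" operation to be multiplication by a scalar on $D_m$; this is the content of the standard identification and again I would invoke \cite{MR1943338} or \cite{MR1881401} rather than redo it.

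The main obstacle, and the step I would treat most carefully, is the surjectivity onto $D_m(H)\otimes\mathbb{Q}$ — equivalently, showing that the "symmetrization/re-rooting" endomorphism of $H\otimes\mathfrak{Lie}_{m+1}(H)\otimes\mathbb{Q}$ that sends $x\otimes\ell$ to the sum over all legs of the associated tree re-rooted at that leg restricts to an automorphism of $D_m(H)\otimes\mathbb{Q}$. Over $\mathbb{Q}$ this is where the denominators matter: the relevant operator is (up to the scalar $m+2$) a projection onto the kernel of the bracket, and one needs the classical fact that $H\otimes\mathfrak{Lie}_{m+1}(H)\otimes\mathbb{Q}\cong \mathfrak{Lie}_{m+2}(H)\otimes\mathbb{Q}\oplus D_m(H)\otimes\mathbb{Q}$ with the re-rooting map acting as $0$ on the first summand and as a nonzero scalar on the second. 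I would either cite this splitting directly or deduce it from the $\mathfrak{sl}_2$- (here $\mathrm{Sp}$-) representation theory that forces the bracket $H\otimes\mathfrak{Lie}_{m+1}(H)\to\mathfrak{Lie}_{m+2}(H)$ to be split surjective over $\mathbb{Q}$. Everything else — linearity, compatibility with AS/IHX, the degree bookkeeping ($m$ trivalent vertices $\Leftrightarrow$ $m+2$ legs $\Leftrightarrow$ bracket length $m+1$) — is routine and I would state it briefly without grinding through it.
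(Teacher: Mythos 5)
The paper gives no argument for this theorem at all: it treats the statement as well known and simply refers to \cite[Corollary 3.2]{MR1943338} and \cite[Theorem 1]{MR1983089}. Your proposal therefore does more than the paper in one respect and coincides with it in the other. The part you actually carry out is correct: the ``attach the root'' map $\beta\colon H\otimes\mathfrak{Lie}_{m+1}(H)\otimes\mathbb{Q}\to\mathcal{A}^{t,c}_m(H)$ is well defined (bilinearity, antisymmetry and Jacobi go to multilinearity, AS and IHX), and $\beta\circ\eta=(m+2)\,\mathrm{Id}$ because rooting a tree at any of its $m+2$ legs and then forgetting the root returns the same unrooted tree; since $m+2\neq 0$ this gives injectivity of $\eta$ over $\mathbb{Q}$, which is a genuine (if standard) argument the paper does not write down. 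For surjectivity you correctly identify the crux, but be aware that your proposed shortcut does not prove it: what you need is that the idempotent $\frac{1}{m+2}\eta\beta$ restricts to the identity on $D_m(H)\otimes\mathbb{Q}$, equivalently that $\ker\beta\cap D_m(H)\otimes\mathbb{Q}=0$ (the bracket is injective on $\ker\beta$), and this is \emph{equivalent} to the surjectivity being proved. It does not follow from the bracket $H\otimes\mathfrak{Lie}_{m+1}(H)\to\mathfrak{Lie}_{m+2}(H)$ being split surjective over $\mathbb{Q}$ (over a field every surjection splits, so this carries no information about where $\ker\beta$ sits), nor from $\mathrm{Sp}$- or $\mathrm{GL}$-equivariance alone, since the relevant irreducible constituents occur with multiplicities and equivariance does not force the splitting to be compatible with $\beta$. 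So if that step is meant to be carried by the citation to \cite{MR1943338} (or \cite{MR1983089}), you are in exactly the same position as the paper and the proposal is fine; if the representation-theoretic remark is meant as a proof of it, that is the one genuine gap.
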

\noindent We refer to \cite[Corollary 3.2]{MR1943338} or \cite[Theorem 1]{MR1983089} for a proof of Theorem \ref{JTH2thm23}. 

In particular we have an isomorphism of graded $\mathbb{Q}$-vector spaces
\begin{equation}\label{JTH2cor24}
\eta:\bigoplus_{m\geq 1}\mathcal{A}^{t,c}_m(H)\longrightarrow \bigoplus_{m\geq 1} D_m(H)\otimes\mathbb{Q}.
\end{equation}
 
The same statements hold replacing $H$ by $H'$. We define  a degree for connected tree-like Jacobi diagrams, which we call \emph{alternative degree} and denote by $\mathfrak{a}$-deg, such that if $T\in \mathcal{A}^{t,c}(B\oplus A)$ is such that $\mathfrak{a}$-deg$(T)=m$ then $\eta(T)\in D_m(B;A)\otimes \mathbb{Q}$. In Example \ref{examplet0}, $\eta(T_0)\in D_3(B;A)$, so we want $\mathfrak{a}$-deg$(T_0)=3$. 

\begin{definition} Let $T$ be a connected tree-like Jacobi diagram with legs colored by $B\oplus A$. The \emph{alternative degree} of $T$, denoted $\mathfrak{a}$-deg$(T)$, is defined as 
$$\mathfrak{a}\text{-deg}(T)=2\#\{A\text{-colored legs of } T\} + \#\{B\text{-colored legs of } T\}-3.$$
Here $\#S$ denotes the cardinal of the set $S.$
\end{definition}

In Table  \ref{table1} we show some examples of tree-like Jacobi diagrams organized by their internal degree  in the columns and by the alternative degree in the rows. The legs colored by $+$ (respectively by $-$) in the diagrams represent legs colored by elements of $B$ (respectively of $A$). Notice that a strut diagram $D$ whose both legs are colored by elements of $B$ is such that $\mathfrak{a}$-deg$(D)=-1$.

 \begin{table}
\begin{center}
\includegraphics[scale=0.75]{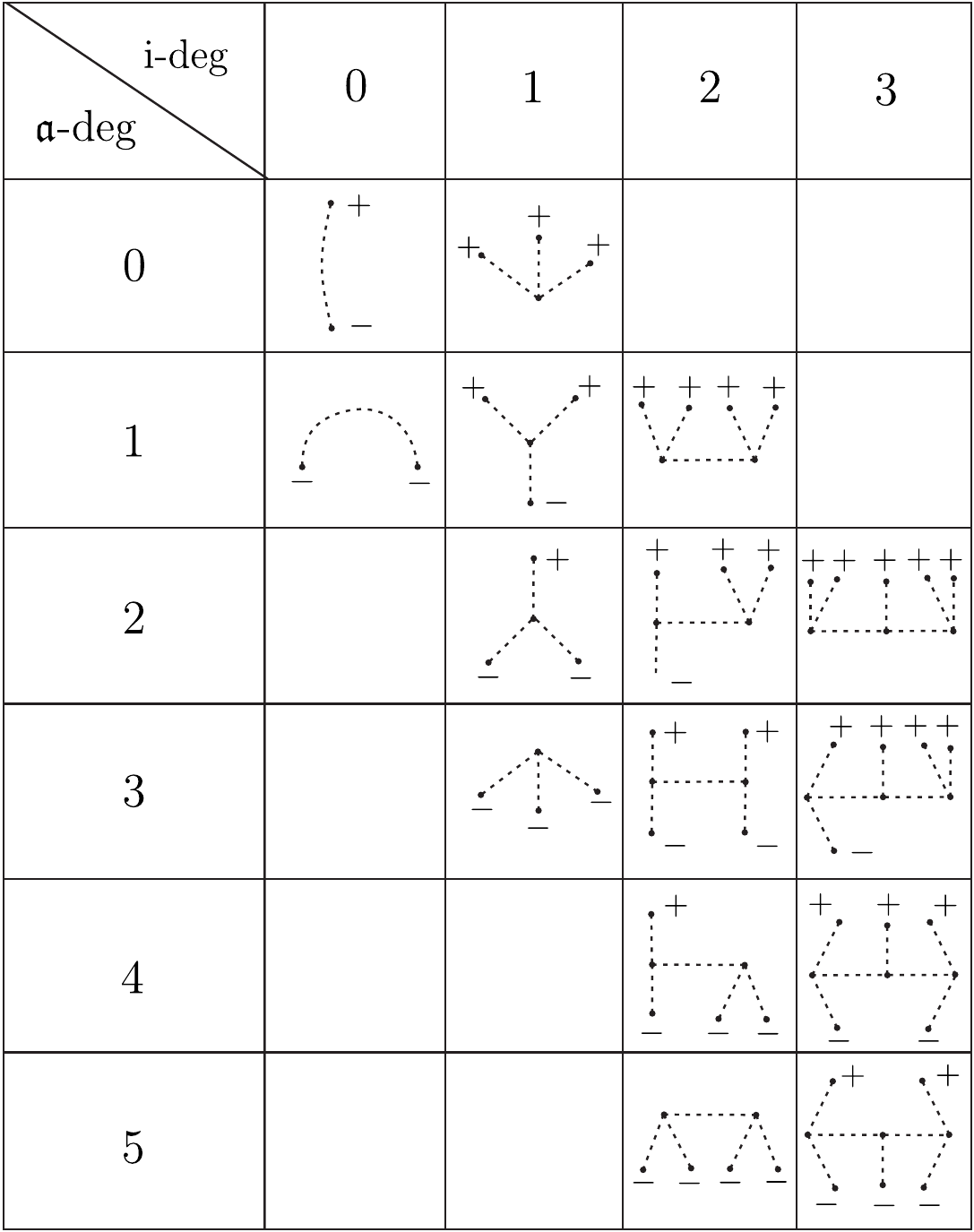}		
\end{center}
\caption{}
\label{table1}
\end{table}

For $m\geq 1$, let $\mathcal{T}^{\mathfrak{a}}_m(B\oplus A)$ denote the subspace of $\mathcal{A}^{t,c}(B\oplus A)$ generated by diagrams of alternative degree $m$. 
\begin{proposition}\label{JTH2prop26}
For $m\geq 1$ the map $\eta$ defined in (\ref{JTH2equ31}) induces an isomorphism 
\begin{equation}\label{JTH2equ33}
\eta: \mathcal{T}^{\mathfrak{a}}_m(B\oplus A)\longrightarrow D_m(B;A)\otimes \mathbb{Q}
\end{equation}
of $\mathbb{Q}$-vector spaces.
\end{proposition}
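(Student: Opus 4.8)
The strategy is to reduce the statement to Theorem \ref{JTH2thm23} (the analogous isomorphism for $\mathfrak{Lie}(H)$) by carefully tracking how the alternative grading on $\mathcal{A}^{t,c}(B\oplus A)$ matches the decomposition of $D_m(B;A)$ coming from $\mathfrak{Lie}(B;A)=\mathfrak{Lie}(B;A)$ being free on $B$ in degree $1$ and $A$ in degree $2$. First I would check that $\eta$ is well-defined on $\mathcal{T}^{\mathfrak{a}}_m(B\oplus A)$ and lands in $D_m(B;A)\otimes\mathbb{Q}$: given a connected tree-like $T$ with $p$ legs colored in $A$ and $q$ legs colored in $B$, rooting $T$ at an $A$-colored leg yields a Lie monomial in $\mathfrak{Lie}(B;A)$ whose degree is $2(p-1)+q = \mathfrak{a}\text{-deg}(T)+1 = m+1$, so that term lies in $A\otimes\mathfrak{Lie}_{m+1}(B;A)$; rooting at a $B$-colored leg yields $2p+(q-1)=m+2$, landing in $B\otimes\mathfrak{Lie}_{m+2}(B;A)$. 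Thus $\eta(T)$ lands in the correct summand of $(A\otimes\mathfrak{Lie}_{m+1}(B;A))\oplus(B\otimes\mathfrak{Lie}_{m+2}(B;A))$, and the standard Jacobi-identity computation (as in \cite[Lemma 3.1]{MR1943338}, already invoked in Example \ref{examplet0}) shows $\Xi_m\eta(T)=0$, i.e. $\eta(T)\in D_m(B;A)$.

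Next, for injectivity and surjectivity I would compare with the ``ungraded'' picture. As remarked before the definition of the alternative degree, one has $\mathcal{A}^{t,c}(H)=\mathcal{A}^{t,c}(B\oplus A)$ as vector spaces (a free basis of $H$ being the union of a basis of the summand mapping to $B$ and a basis of $A$), and the alternative degree simply re-grades this common space: a diagram with $p$ $A$-colored legs and $q$ $B$-colored legs has internal degree $p+q-2$ and alternative degree $2p+q-3$. The key observation is that $\mathfrak{Lie}(B;A)$ is \emph{also} canonically the free Lie algebra on the abelian group $B\oplus A$, but with the grading rescaled so that $A$ sits in degree $2$; concretely, $\mathfrak{Lie}_k(B;A)=\bigoplus_{2p+q=k}(\text{part of }\mathfrak{Lie}(B\oplus A)\text{ with exactly }p\ A\text{-letters and }q\ B\text{-letters})$. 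Under this identification, $D_m(B;A)$ decomposes along the number of $A$-colored legs appearing in the root, and likewise $\mathcal{T}^{\mathfrak{a}}_m(B\oplus A)$ decomposes along the number of $A$-colored legs of the diagram; and $\eta$ respects this extra bookkeeping. Hence the single map $\eta:\bigoplus_m \mathcal{A}^{t,c}_m(H)\to\bigoplus_m D_m(H)\otimes\mathbb{Q}$ from (\ref{JTH2cor24}), which is an isomorphism of graded spaces by Theorem \ref{JTH2thm23} applied to $H$, restricts to an isomorphism on each piece cut out by fixing both the number of $A$-legs and the number of $B$-legs. Summing the relevant pieces — precisely those with $2p+q-3=m$ — yields that $\eta:\mathcal{T}^{\mathfrak{a}}_m(B\oplus A)\to D_m(B;A)\otimes\mathbb{Q}$ is an isomorphism.

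I expect the main obstacle to be bookkeeping rather than genuine difficulty: one must be scrupulous that the two notions of ``degree'' ($\mathfrak{a}$-deg of diagrams versus the grading of $\mathfrak{Lie}(B;A)$) are pushed through $\eta$ consistently, since rooting a tree at an $A$-leg versus a $B$-leg produces monomials in different $\mathfrak{Lie}$-degrees, and the image of $\eta(T)$ genuinely splits across the two summands of $D_m(B;A)$. A clean way to organize this is to introduce the finer bigrading of $\mathcal{A}^{t,c}(B\oplus A)$ by the pair $(\#A\text{-legs},\#B\text{-legs})$, observe $\eta$ is bihomogeneous for the corresponding bigrading on $D_\bullet(B;A)$, invoke Theorem \ref{JTH2thm23} to get the isomorphism bidegree by bidegree, and finally take the direct sum over the line $\{2p+q-3=m\}$. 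One should also note the (harmless) boundary case flagged in the text: struts with both legs in $B$ have $\mathfrak{a}$-deg $-1$ and are simply excluded from all $\mathcal{T}^{\mathfrak{a}}_m$ with $m\geq 1$, so they do not interfere.
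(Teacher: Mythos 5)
Your overall route is essentially the paper's: identify $\mathcal{A}^{t,c}(B\oplus A)$ with $\mathcal{A}^{t,c}(H)$, track the finer bookkeeping of $A$- versus $B$-colored legs, and reduce the claim to the isomorphism of Theorem \ref{JTH2thm23} via (\ref{JTH2cor24}). The bigrading by $(\#A\text{-legs},\#B\text{-legs})$ is a clean way to organize this, and for every bidegree $(p,q)$ with $p+q\geq 3$ (internal degree $\geq 1$) your reduction does work, since in each such bidegree the bracket maps defining $D_m(B;A)$ and $D_{p+q-2}(H)$ coincide and $\eta$ is bihomogeneous.

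There is, however, a genuine gap at $m=1$, and it is exactly the case the paper treats separately. The line $2p+q-3=1$ contains the bidegree $(2,0)$: struts whose two legs are both colored by elements of $A$. These have internal degree $0$, so they lie outside $\bigoplus_{n\geq 1}\mathcal{A}^{t,c}_n(H)$, and their images, which span $(A\otimes A)\cap D_1(B;A)\subseteq A\otimes\mathfrak{Lie}_2(B;A)$ (the symmetric tensors), lie outside $\bigoplus_{n\geq 1}D_n(H)\otimes\mathbb{Q}$: in the $\mathfrak{Lie}(H)$-grading $A$ sits in degree $1$, so $A\otimes A$ feeds into the bracket $H\otimes\mathfrak{Lie}_1(H)\to\mathfrak{Lie}_2(H)$, which is not in the range of Theorem \ref{JTH2thm23}. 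Hence your step ``the isomorphism (\ref{JTH2cor24}) restricts to an isomorphism on each bidegree piece'' fails for $(2,0)$ — that piece is simply absent from both sides of (\ref{JTH2cor24}). For the same reason the appeal to \cite[Lemma 3.1]{MR1943338} to get $\Xi_1\eta(T)=0$ does not apply to such a strut (no trivalent vertex); one must use the antisymmetry relation directly, since $\eta$ of an $(a_i,a_j)$-strut is $a_i\otimes a_j+a_j\otimes a_i$. The boundary case you flagged — struts with both legs in $B$, of $\mathfrak{a}$-degree $-1$ — is harmless but irrelevant; the problematic one is the $A$-$A$ strut, of $\mathfrak{a}$-degree $1$. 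The fix is the short computation the paper carries out: $\eta$ maps the span of $A$-$A$ struts onto the symmetric part of $A\otimes A$ (with $(1/2)\eta$ of the $(a_i,a_i)$-strut giving $a_i\otimes a_i$), injectively; combined with your bidegree argument in internal degree $\geq 1$, this completes both injectivity and surjectivity for $m=1$.
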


\begin{proof}
Let $T$ be a  $(B\oplus A)$-colored connected tree-like Jacobi diagram with $\mathfrak{a}$-deg$(T)=m$.  Let $v$ be a leg of $T$ and denote by $T_v$ the Lie commutator obtained from $T$ rooted at $v$. If $v$ is colored by an element of $B$, then $\text{deg}(T_v) = (\mathfrak{a}\text{-deg}(T) + 3) - 1 = m+2$. Hence 
$$\text{color}(v)\otimes T_v \in B\otimes \mathfrak{Lie}_{m+2}(B;A).$$ 
On the other hand, if $v$ is colored by an element of $A$ then $\text{deg}(T_v) = (\mathfrak{a}\text{-deg}(T) + 3) - 2 = m+1$. Therefore 
$$\text{color}(v)\otimes T_v \in A\otimes \mathfrak{Lie}_{m+1}(B;A).$$ 
To sum up 
$$\eta(T)\in (A\otimes\mathfrak{Lie}_{m+1}(B;A))\oplus (B\otimes\mathfrak{Lie}_{m+2}(B;A)).$$
The argument \cite[Lemma 3.1]{MR1943338} used in the proof of  Theorem \ref{JTH2thm23} to show that $\Xi\eta(T)=0$ is still valid. The only caveat is when $\mathfrak{a}$-deg$(T)=1$ and i-deg$(T)=0$. In this case $T$ is a strut whose both legs are colored by elements of $A$, say $a_i$, $a_j$. Then $\eta(T) = a_i\otimes a_j + a_j \otimes a_i$, so $\Xi\eta(T)= 0$ by the antisymmetry relation. This way for $m\geq 1$ we have $\eta\big(\mathcal{T}^{\mathfrak{a}}_m(B\oplus A)\big)\subseteq D_m(B;A)\otimes \mathbb{Q}$ and by  (\ref{JTH2cor24}), the map $\eta|_{\mathcal{T}^{\mathfrak{a}}_m(B\oplus A)}$ is injective (it is again  necessary to consider the case $m = 1$ separately). For the surjectivity, first consider the case $m=1$. The elements in $(A\otimes A)\cap D_1(B;A)$ are linear combinations of elements of the form $a_i\otimes a_i$ and $a_i\otimes a_j + a_j\otimes a_i$. Now if $T$ is the strut whose both legs are colored by $a_i$, then $(1/2)\eta(T)=a_i\otimes a_i $ and if $T$ is the strut whose legs are colored by $a_i$ and $a_j$, then $\eta(T)=a_i\otimes a_j + a_j\otimes a_i$. 

Let $m\geq 1$ and consider $y\in D_m(B;A)$. In the case $m=1$, by the previous paragraph, we can suppose that there are no elements of $(A\otimes A)\cap D_1(B;A)$ appearing in $y$. This way we can see $y\in\bigoplus_{m\geq 1} D_m(H)\otimes \mathbb{Q}$.  By (\ref{JTH2cor24}), there exists $T\in\bigoplus_{m\geq1}\mathcal{A}_m^{t,c}(H)$ such that $\eta(T)=y$. Consider the decomposition of $T$ by the alternative degree  $T = \sum T_i$ with $T_i\in\mathcal{T}^{\mathfrak{a}}_i(B\oplus A)$. Thus $\eta(T)=\sum \eta(T_i)= y$, but for $i\not= m$ we know  that $\eta(T_i)\not\in D_m(B;A)\otimes\mathbb{Q}$. Hence  $\eta(T_i)=0$ for $i\not=m$. By the injectivity of $\eta$, we obtain $T_i=0$ for $i\not=m$. Therefore $T=T_m\in\mathcal{T}^{\mathfrak{a}}_m(B\oplus A)$ and $\eta(T)=y$.
\end{proof}

Theorem \ref{JTH2thm23} and Proposition \ref{JTH2prop26} allow  to define diagrammatic versions of the Johnson-type homomorphisms.

\begin{definition}\label{defdiag} Let $m\geq 1$.  The \emph{diagrammatic version} of the $m$-th alternative Johnson homomorphism is defined as the composition
\begin{equation}
 J^{\mathfrak{a}}_m\mathcal{M} \xrightarrow{\ \tau^{\mathfrak{a}}_m \ } D_m(B;A)\otimes \mathbb{Q} \xrightarrow{\ \eta^{-1} \ } \mathcal{T}^{\mathfrak{a}}_m(B\oplus A).
\end{equation} 
Similarly, the \emph{diagrammatic versions}  of the $m$-th Johnson homomorphism and of the $m$-th Johnson-Levine homomorphism are defined as the compositions
\begin{equation}
J_m\mathcal{M} \xrightarrow{\ \tau_m \ } D_m(H)\otimes \mathbb{Q} \xrightarrow{\ \eta^{-1} \ } \mathcal{A}^{t,c}_m(H)
\end{equation}
and
\begin{equation}
 J^L_m\mathcal{M} \xrightarrow{\ \tau^L_m \ } D_m(H')\otimes \mathbb{Q} \xrightarrow{\ \eta^{-1} \ } \mathcal{A}^{t,c}_m(H'),
\end{equation}
respectively.
\end{definition}

\begin{example}\label{JTH2ejemplo5} In Example \ref{JTH2example1}  we calculated $\tau^{\mathfrak{a}}_1(t_{\alpha_i})= - a_i\otimes a_i$, for the Dehn twist $t_{\alpha_i}$ from Example \ref{ejemploJTH1}. Therefore 

\medskip

\begin{center}
\includegraphics[scale=1]{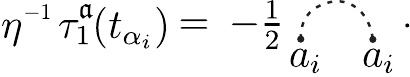}		
\end{center}
\end{example}

\begin{example}\label{JTH2ejemplo6}  In Example \ref{JTH2example2}  we calculated
 $$\tau^{\mathfrak{a}}_1(t_{\alpha_{kl}})= - (a_k\otimes a_k) - (a_l\otimes a_l) - (a_k\otimes a_l) - (a_l\otimes a_k),$$
 for the Dehn twist $t_{\alpha_{kl}}$ from Example \ref{ejemploJTH1}. Hence 
 
\medskip

\begin{center}
\includegraphics[scale=1]{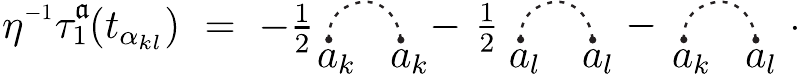}		
\end{center}
\end{example}

Comparing these results with the low degree values of the LMO functor on the cobordisms $c(t_{\alpha_i})$ and $c(t_{\alpha_{kl}})$   computed in Examples \ref{ejemploLMO6}, \ref{ejemploLMO7} and \ref{ejemploLMO8}, we can see that, for these examples, the diagrammatic version of the first alternative Johnson homomorphism appears in the LMO functor with an opposite sign.  This is a more general fact which we develop in  next section.

\section{Alternative Johnson homomorphisms and the LMO functor}\label{section6} In this section we establish the relation between the LMO functor and the alternative Johnson homomorphisms. From Proposition \ref{JTHproposition5}, we know that for $m\geq 2$, $J^{\mathfrak{a}}_m\mathcal{M}\subseteq \mathcal{I}$. Hence, we can use some known results involving the Torelli group. Therefore we carry this out in two stages separately. First we establish this relation for the alternative Johnson homomorphism $\tau^{\mathfrak{a}}_1$. Then we consider $\tau^{\mathfrak{a}}_m$ for $m\geq 2$. First of all let us start by defining the filtration on cobordisms induced by the alternative degree.

\subsection{The filtration on  Lagrangian cobordisms induced by the alternative degree}

Recall from subsection \ref{subsectionLMOfunctor} that $\mathcal{C}=\mathcal{C}_{g,1}$ denotes the monoid of homology cobordisms of $\Sigma=\Sigma_{g,1}$. If $(M,m)\in\mathcal{C}$ then, by Stallings' theorem \cite[Theorem 3.4]{MR0175956}, the maps $m_{\pm,*}:\pi/\Gamma_k\pi\rightarrow \pi_1(M,*)/\Gamma_k\pi_1(M,*)$ are isomorphisms for $k\geq 2$. We can then define the nilpotent version of the Dehn-Nielsen-Baer representation (\ref{JTHequ16}) for the monoid of homology cobordisms as the monoid homomorphism
\begin{equation}\label{JTHLMOequ1}
\rho_k:\mathcal{C}\longrightarrow \text{Aut}(\pi/\Gamma_{k+1}\pi),
\end{equation}
that sends $(M,m)\in \mathcal{C}$ to the automorphism $m_{-,*}^{-1}\circ m_{+,*}$. Consider the following submonoids of $\mathcal{C}$. 

The monoid $\mathcal{IC}$ of \emph{homology cylinders} of $\Sigma$ is defined as $\mathcal{IC}=\text{ker}(\rho_1)$. The monoid $\mathcal{LC}$ of \emph{Lagrangian homology cobordisms}  of $\Sigma$ is defined as
\begin{equation}\label{JTHLMOequ2}
\mathcal{LC}=\{(M,m)\in\mathcal{C}\ \ |\ \ \rho_1(M)(A)\subseteq A\}=\{(M,m)\in\mathcal{C}\ \ |\ \ m_{+,*}(A)\subseteq m_{-,*}(A)\},
\end{equation}
and the monoid $\mathcal{IC}^L$ of \emph{strongly Lagrangian homology cobordisms} of $\Sigma$ is defined as
\begin{equation}\label{JTHLMOequ3}
\mathcal{IC}^L =\{(M,m)\in\mathcal{LC}\ \ |\ \ \rho_1(M)|_A=\text{Id}_A\}=\{(M,m)\in\mathcal{LC}\ \ |\ \ m_{+,*}|_A = m_{-,*}|_A\}.
\end{equation}

The monoids $\mathcal{LC}$, $\mathcal{IC}^L$ and $\mathcal{IC}$ are characterized in terms of the linking matrix as follows.

\begin{lemma}\label{JTHLMOlemma1} Let $M\in\mathcal{C}_{g,1}$ and let $(B,\gamma)$ be its bottom-top tangle presentation. Then
\begin{enumerate}
\item[\emph{(i)}] $M$ belongs to $\mathcal{LC}_{g,1}$ if and only if $B$ is a homology cube and $\text{\emph{Lk}}(M)=\left( \begin{smallmatrix} 0 & \Lambda \\ {\Lambda}^{T} & \Delta \end{smallmatrix} \right)$, 

\medskip

\item[\emph{(ii)}] $M$ belongs to $\mathcal{IC}^L_{g,1}$ if and only if $B$ is a  homology cube and   $\text{\emph{Lk}}(M)=\left( \begin{smallmatrix} 0 & {\text{\emph{Id}}_g}\\ {\text{\emph{Id}}_g} & {\Delta} \end{smallmatrix} \right)$,

\medskip

\item[\emph{(iii)}] $M$ belongs to $\mathcal{IC}_{g,1}$ if and only if $B$ is a homology cube and   $\text{\emph{Lk}}(M)=\left( \begin{smallmatrix} 0 & {\text{\emph{Id}}_g}\\ {\text{\emph{Id}}_g} & 0 \end{smallmatrix} \right)$,
\end{enumerate}
where  $\Lambda$ and $\Delta$ are $g\times g$ matrices and $\Delta$ is symmetric.
\end{lemma}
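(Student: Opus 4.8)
The statement is a characterization of the three monoids $\mathcal{LC}_{g,1}$, $\mathcal{IC}^L_{g,1}$, $\mathcal{IC}_{g,1}$ in terms of the shape of the linking matrix of the bottom-top tangle presentation. The starting point is that for any Lagrangian homology cobordism $M$ the manifold $B$ in its bottom-top tangle presentation is a homology cube (this follows from the Mayer--Vietoris argument recalled after Lemma~2.12's citation, \cite[Lemma~2.12]{MR2403806}), so the ``homology cube'' clause is automatic once we know $M$ is Lagrangian, and conversely a bottom-top tangle presentation in a homology cube always yields a cobordism with $B$ a homology cube. Thus the real content is to translate the homological conditions defining $\mathcal{LC}$, $\mathcal{IC}^L$, $\mathcal{IC}$ (given in \eqref{JTHLMOequ2}, \eqref{JTHLMOequ3} and $\mathcal{IC}=\ker(\rho_1)$) into conditions on $\mathrm{Lk}(M)$.

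\textbf{Key steps.} First I would set up the dictionary between $H_1(M;\mathbb{Z})$ and the bottom-top tangle presentation $(B,\gamma)$. Recall that $\mathrm{Lk}(M)$ is by definition the linking matrix of the link $\hat\gamma$ in $B$ obtained by closing up the top and bottom components of $\gamma$. Writing this matrix in block form with respect to the bottom components (indexed by the $\alpha_i$) and the top components (indexed by the $\beta_i$), I would compute the three blocks: the bottom-bottom block, the mixed block, and the top-top block. The crucial homological observation is that the curves $m_-(\alpha_i)$ bound in $M$ the cocores of the attached $2$-handles, hence the bottom components of $\hat\gamma$ are pairwise unlinked and have zero self-linking in $B$ — this gives the zero upper-left block. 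Next, the mixed block records the intersection/linking pairing between $m_-(\alpha_i)$ and $m_+(\beta_j)$ in $M$; since $\{\alpha_i,\beta_j\}$ is a symplectic basis with $\omega(\alpha_i,\beta_j)=\delta_{ij}$ on $\Sigma$, a Mayer--Vietoris computation of $H_1(M;\mathbb{Z})$ shows this block is $\Lambda$ where $\Lambda$ is precisely the matrix of $m_{+,*}$ restricted to $A$ expressed in the basis coming from $A = m_{-,*}(A)$ — in particular $m_{+,*}(A)\subseteq m_{-,*}(A)$ holds if and only if this block makes sense as an integer matrix, and $\Lambda = \mathrm{Id}_g$ exactly when $m_{+,*}|_A = m_{-,*}|_A$, i.e. $M\in\mathcal{IC}^L$. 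The top-top block $\Delta$ is symmetric because it is a linking matrix of a framed link, and is otherwise unconstrained (it records framings and linkings of the $\beta$-components). Assembling these three computations gives (i); specializing $\Lambda = \mathrm{Id}_g$ gives (ii); and (iii) follows since $\mathcal{IC} = \{M\in\mathcal{IC}^L : \rho_1(M) = \mathrm{Id}\}$, and the condition $\rho_1(M)(\beta_i) \equiv \beta_i$ modulo $\Gamma_2\pi$ translates into the vanishing of the top-top block $\Delta$ (the self-framing and mutual linking data of the top components encode the image of the $\beta_i$ in $H_1$). Conversely, given a bottom-top tangle presentation in a homology cube with a linking matrix of the prescribed block form, one reconstructs $M$ by surgery and reads off from Mayer--Vietoris that the corresponding homological condition holds.

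\textbf{Main obstacle.} The hard part will be the careful identification of the mixed block of $\mathrm{Lk}(M)$ with the matrix of $m_{+,*}|_A$ (equivalently, showing that $m_{+,*}(A)\subseteq m_{-,*}(A)$ in $H_1(M)$ is equivalent to this block having the stated form), and more delicately, pinning down exactly which entries of $\mathrm{Lk}(M)$ control the condition $\rho_1(M)|_A = \mathrm{Id}_A$ versus $\rho_1(M) = \mathrm{Id}$. This requires being precise about orientations and about how the closure operation $\gamma \mapsto \hat\gamma$ interacts with the handle attachments in passing from $(M,m)$ to $(B,\gamma)$; a sign or transpose error here would swap $\Lambda$ with $\Lambda^T$. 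I would handle this by working through the genus-one and genus-two cases explicitly first (using the examples of $c(t_{\alpha_i})$, $c(t_{\alpha_{kl}})$ already computed in Section~3), then invoking naturality of the Mayer--Vietoris sequence for the decomposition of $M$ along the attaching regions of the $2$-handles to get the general statement. The symmetry of $\Delta$ and the structural parts are routine once this dictionary is fixed.
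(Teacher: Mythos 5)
Your overall plan — express $H_1(M;\mathbb{Z})$ in the basis $\{m_{-,*}(a_i)\}\cup\{m_{+,*}(b_j)\}$ and identify the blocks of $\mathrm{Lk}(M)$ with the matrices of $m_{+,*}(a_i)$ and of $m_{+,*}(b_i)-m_{-,*}(b_i)$ in that basis — is exactly the route taken in the proofs the paper points to (it does not reprove the lemma, but cites \cite[Lemma 3.7]{vera1} and \cite[Lemma 2.12]{MR2403806}). However, your ``crucial homological observation'' is wrong, and it is precisely where the content of part (i) lies. You claim the zero block is the one indexed by the bottom components and that it vanishes because the curves $m_-(\alpha_i)$ bound the (co)cores of the attached handles. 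That argument uses no hypothesis on $M$ at all — every attaching curve bounds the core of its own $2$-handle in $B$ — so it would force the same zero block for \emph{every} homology cobordism, which is false: the vanishing of a diagonal block is exactly equivalent to the Lagrangian condition $m_{+,*}(A)\subseteq m_{-,*}(A)$ of (\ref{JTHLMOequ2}) and cannot be automatic. Moreover you have the blocks backwards. With the paper's ordering ($\left\lfloor g\right\rceil^+$ first, then $\left\lfloor g\right\rceil^-$), the zero block in $\left(\begin{smallmatrix} 0 & \Lambda \\ \Lambda^T & \Delta\end{smallmatrix}\right)$ is the $(+,+)$ block, i.e.\ the linkings among the closures of the \emph{top} components (dual to the handles attached along the $m_+(\beta_i)$), while $\Delta$ is the $(-,-)$ block attached to the bottom ($\alpha$-) handles. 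This is visible inside the paper: $c(t_{\alpha_i})\in\mathcal{IC}^{\mathfrak{a}}\subseteq\mathcal{IC}^L\subseteq\mathcal{LC}$, and by Lemma \ref{lemmasplitstruty} together with Example \ref{ejemploLMO6} and Proposition \ref{JTHLMOcor2}, $\mathrm{Lk}(c(t_{\alpha_i}))$ has a nonzero diagonal entry in the $(-,-)$ block; so the bottom components are certainly not ``pairwise unlinked with zero self-linking'' for Lagrangian cobordisms.

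The correct dictionary, which your outline should be rebuilt around, is: once $B$ is a homology cube, $\{m_{-,*}(a_i)\}\cup\{m_{+,*}(b_j)\}$ is a basis of $H_1(M;\mathbb{Z})$, and writing $m_{+,*}(a_i)=\sum_j \Lambda_{ji}\, m_{-,*}(a_j)+\sum_j \mu_{ji}\, m_{+,*}(b_j)$, the matrix $\mu$ is the $(+,+)$ block of $\mathrm{Lk}(M)$ and $\Lambda$ the mixed block; hence $M$ is Lagrangian iff $\mu=0$ (this is (i)), $M\in\mathcal{IC}^L$ iff moreover $\Lambda=\mathrm{Id}_g$ (this is (ii)), and $M\in\mathcal{IC}$ iff in addition $\Delta=0$, since $\Delta$ records the $A$-component of $m_{+,*}(b_i)-m_{-,*}(b_i)$ — not, as you write, data about the $\beta$-components' images sitting in the $(+,+)$ block. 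Relatedly, your remark that $m_{+,*}(A)\subseteq m_{-,*}(A)$ holds ``iff the mixed block makes sense as an integer matrix'' is not meaningful: all linking numbers are integers once $B$ is a homology cube; the condition is the vanishing of $\mu$, after which $\Lambda$ becomes the matrix of $m_{+,*}|_A$ in the basis $m_{-,*}(a_j)$. With the blocks correctly identified (and the symmetry of $\mathrm{Lk}$ giving $\Delta=\Delta^T$ and the transposed off-diagonal blocks), the rest of your outline — specialization for (ii) and (iii), and the converse via the same basis computation — is the standard argument of the cited lemmas.
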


We refer to \cite[Lemma 3.7]{vera1} or \cite[Lemma 2.12]{MR2403806} for a proof.

\begin{definition}
The monoid $\mathcal{IC}^{\mathfrak{a}}$ of \emph{alternative homology cylinders} of $\Sigma$ is defined as
$$\mathcal{IC}^{\mathfrak{a}} = \{(M,m)\in\mathcal{IC}^L\ \ |\ \ \forall\alpha\in\mathbb{A}:\ \iota_{\#}\rho_2(M)(\alpha\Gamma_3\pi)= 1\in \pi'/\Gamma_3\pi'\}.$$
Here $\iota_{\#}:\pi/\Gamma_3\pi\rightarrow \pi'/\Gamma_3\pi'$ is induced by $\iota_{\#}:\pi\rightarrow \pi'$.
\end{definition}
Notice that the given definition of $\mathcal{IC}^{\mathfrak{a}}$ is motivated by the definition of $J^L_2\mathcal{M}$. There is an equivalent definition motivated by the definition of $\mathcal{I}^{\mathfrak{a}}$, see Proposition \ref{JTH2prop14}.
\begin{example} If $c:\mathcal{M}\rightarrow \mathcal{C}$ is the mapping cylinder monoid homomorphism, then $c(\mathcal{L})\subseteq \mathcal{LC}$, $c(\mathcal{I})\subseteq \mathcal{IC}$, $c(\mathcal{I}^L)\subseteq \mathcal{IC}^L$ and $c(\mathcal{I}^{\mathfrak{a}})\subseteq \mathcal{IC}^{\mathfrak{a}}$.
\end{example}

Recall that for $M$ a Lagrangian cobordism,  $\widetilde{Z}^{Y,t}(M)$ denotes the reduction of the value $\widetilde{Z}(M)$ modulo  struts and looped diagrams.

\begin{definition} The \emph{alternative tree filtration} $\{\mathcal{F}^{\mathfrak{a}}_m\mathcal{C}\}_{m\geq 1}$ of $\mathcal{C}$ is defined by
$$\mathcal{F}^{\mathfrak{a}}_m\mathcal{C}=\{(M,m)\in\mathcal{IC}^{\mathfrak{a}}\ \ |\ \ \widetilde{Z}^{Y,t}(M)=\varnothing + (\text{terms of } \mathfrak{a}\text{-deg}\geq m)\}.$$
\end{definition}

Let $\mathcal{T}^{Y,\mathfrak{a}}_m(\left\lfloor g\right\rceil^+\sqcup \left\lfloor g\right\rceil^-)$ denote the subspace of $\mathcal{A}^{Y,t}(\left\lfloor g\right\rceil^+\sqcup \left\lfloor g\right\rceil^-)$ generated by diagrams of $\mathfrak{a}$-\emph{deg} $= m$.
\begin{theorem}\label{JTHLMOprop1}
For $m\geq 1$, the set $\mathcal{F}^{\mathfrak{a}}_m\mathcal{C}$ is a submonoid of $\mathcal{C}$. Consider the map
$$\widetilde{Z}^{Y,\mathfrak{a}}_m : \mathcal{F}^{\mathfrak{a}}_m\mathcal{C}\longrightarrow \mathcal{T}^{Y,\mathfrak{a}}_m(\left\lfloor g\right\rceil^+\sqcup \left\lfloor g\right\rceil^-),$$
where $\widetilde{Z}^{Y,\mathfrak{a}}_m(M)$ is defined as the sum of the terms of $\mathfrak{a}$-\emph{deg} $= m$ in $\widetilde{Z}^{Y,t}(M)$ for $M\in \mathcal{F}^{\mathfrak{a}}_m\mathcal{C}$.  Then $\widetilde{Z}^{Y,\mathfrak{a}}_m$ is a monoid homomorphism.
\end{theorem}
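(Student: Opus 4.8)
The statement has two parts: that $\mathcal{F}^{\mathfrak{a}}_m\mathcal{C}$ is a submonoid, and that $\widetilde{Z}^{Y,\mathfrak{a}}_m$ is a monoid homomorphism. The common tool for both is the behaviour of the LMO functor under composition of cobordisms, combined with the additivity of the alternative degree under the gluing (contraction) operation in ${}^{ts}\!\!\mathcal{A}$. First I would record the key grading fact: the composition law $D\circ E = \langle D_{|j^+\mapsto j^*}, E_{|j^-\mapsto j^*}\rangle$ glues a $g^+$-colored leg of $D$ to a $g^-$-colored leg of $E$, and since the colors $j^+$ and $j^-$ both correspond to Lagrangian classes (the curves $m_+(\beta_j)$ and $m_-(\alpha_j)$, which after the identifications carry the role of $A$-legs), gluing two legs that each count as a $B$-leg (weight $1$) would produce... — wait, the relevant bookkeeping is that in $\widetilde{Z}^{Y,t}$ the top legs and bottom legs are being glued, and one must check that the $\mathfrak{a}$-degree of a connected diagram obtained by merging a tree-component of $D$ with tree-components of $E$ along such gluings is the sum of the $\mathfrak{a}$-degrees minus the correction coming from the two legs removed at each gluing. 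So the first real step is: prove an additivity lemma of the form $\mathfrak{a}\text{-deg}(\text{glued diagram}) = \sum \mathfrak{a}\text{-deg}(\text{pieces}) + (\text{number of gluings})\cdot c$ for the appropriate constant $c$, using that each gluing replaces two univalent vertices by one edge through a new trivalent vertex on each side — actually by a single edge — and tracking how $|T_A|$, $|T_B|$ and the constant $-3$ change.

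\textbf{Key steps.} (1) Establish the additivity/monotonicity of $\mathfrak{a}$-deg under the composition operation of ${}^{ts}\!\!\mathcal{A}$: if $D \in \mathcal{T}^{Y,\mathfrak{a}}_{\ge p}$ and $E \in \mathcal{T}^{Y,\mathfrak{a}}_{\ge q}$ (meaning the $Y$-part tree-reductions have all terms of $\mathfrak{a}$-deg $\ge p$, resp. $\ge q$) then $D\circ E$ has $Y$-tree-part with all terms of $\mathfrak{a}$-deg $\ge p+q$; moreover the degree-exactly-$(p+q)$ part of $D\circ E$ is obtained by pairing the degree-exactly-$p$ part of $D$ with the degree-exactly-$q$ part of $E$ along single-leg gluings only (higher gluings raise the degree). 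The crucial numerical input is that each color $j^+$ contributes like a $B$-leg and each color $j^-$ contributes like an $A$-leg, or some normalization making one gluing contribute exactly the right amount — I would pin this down by direct inspection of the definitions of $\mathfrak{a}$-deg and of the composition in ${}^{ts}\!\!\mathcal{A}$. (2) Use Lemma \ref{lemmasplitstruty} ($\widetilde{Z}$ is group-like, $\widetilde{Z} = \widetilde{Z}^s \sqcup \widetilde{Z}^Y$) together with functoriality $\widetilde{Z}(M\circ M') = \widetilde{Z}(M)\circ \widetilde{Z}(M')$ to compute $\widetilde{Z}^{Y,t}(M\circ M')$ from $\widetilde{Z}^{Y,t}(M)$ and $\widetilde{Z}^{Y,t}(M')$: the connected tree-components of the product are gotten by plugging trees of $M'$ into trees of $M$ (and vice versa), with the strut parts contributing only to re-indexing/identity-type gluings; one must check that the strut parts $\widetilde{Z}^s$ do not create new low-$\mathfrak{a}$-degree tree terms — this follows because for Lagrangian cobordisms $\widetilde{Z}^s = [\mathrm{Lk}(M)/2]$ and for $M\in\mathcal{IC}^{\mathfrak{a}}\subseteq\mathcal{IC}^L$ the linking matrix is $\left(\begin{smallmatrix} 0 & \mathrm{Id}\\ \mathrm{Id} & \Delta\end{smallmatrix}\right)$ by Lemma \ref{JTHLMOlemma1}, so the struts are exactly the identity-pairing struts plus $(-)(-)$-struts, which when composed only perform the colour identification and do not lower $\mathfrak{a}$-degree. (3) Conclude: if $M, M' \in \mathcal{F}^{\mathfrak{a}}_m\mathcal{C}$, then $\widetilde{Z}^{Y,t}(M\circ M') = \varnothing + (\text{terms of } \mathfrak{a}\text{-deg} \ge m)$ by step (1)–(2) (each new tree is either a piece of one factor alone, already of $\mathfrak{a}$-deg $\ge m$, or a nontrivial gluing, of $\mathfrak{a}$-deg $\ge 2m \ge m$); also $M\circ M' \in \mathcal{IC}^{\mathfrak{a}}$ since this monoid is defined by a $\rho_2$-condition stable under composition. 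Hence $\mathcal{F}^{\mathfrak{a}}_m\mathcal{C}$ is a submonoid. For the homomorphism property, $\widetilde{Z}^{Y,\mathfrak{a}}_m(M\circ M')$ picks out the $\mathfrak{a}$-deg $=m$ part, and by the same analysis the only contributions of exact degree $m$ are the degree-$m$ parts of $M$ and of $M'$ separately (gluings give degree $\ge 2m$), so $\widetilde{Z}^{Y,\mathfrak{a}}_m(M\circ M') = \widetilde{Z}^{Y,\mathfrak{a}}_m(M) + \widetilde{Z}^{Y,\mathfrak{a}}_m(M')$ — note the target $\mathcal{T}^{Y,\mathfrak{a}}_m$ is a vector space with addition, matching the disjoint-union product truncated to connected degree-$m$ diagrams, which is additive because $\widetilde{Z}$ is group-like so its $Y$-part is primitive in the relevant range.

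\textbf{Main obstacle.} The delicate point is step (1): making precise exactly how the alternative degree behaves under the ${}^{ts}\!\!\mathcal{A}$-composition, including the role of the $+$/$-$ colors as $B$- versus $A$-type legs, and verifying that a single gluing is "$\mathfrak{a}$-degree neutral" in the right sense while multiple gluings strictly raise it — this is where the constant $-3$ in the definition of $\mathfrak{a}$-deg and the Lagrangian structure of the linking matrix have to conspire correctly. I also need to be careful that the strut part, although it has $\mathfrak{a}$-deg $=-1$ for $BB$-struts, never contributes a $BB$-strut here (again by Lemma \ref{JTHLMOlemma1}), so that composing with struts genuinely only reindexes colours; and that in forming $\widetilde{Z}^{Y,t}$ of a composite, the passage $\widetilde{Z}\mapsto \widetilde{Z}^{Y,t}$ commutes appropriately with $\circ$ up to the contributions we have controlled. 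Once the grading lemma is in place, everything else is bookkeeping on the group-likeness of $\widetilde{Z}$ and the stability of $\mathcal{IC}^{\mathfrak{a}}$ under composition.
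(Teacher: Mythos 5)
Your proposal follows essentially the same route as the paper's proof: functoriality of $\widetilde{Z}$, the splitting $\widetilde{Z}=\widetilde{Z}^s\sqcup\widetilde{Z}^Y$ of Lemma \ref{lemmasplitstruty} together with the linking-matrix description of Lemma \ref{JTHLMOlemma1}, and a bookkeeping of the alternative degree under the pairing in ${}^{ts}\!\!\mathcal{A}$, with the stability of $\mathcal{IC}^{\mathfrak{a}}$ under composition handling the rest. The grading lemma you leave to check does work out, and in a clean form: a contraction always joins a $j^+$-leg ($B$-type) to a $j^-$-leg ($A$-type), so it lowers $2|T_A|+|T_B|$ by exactly $3$; hence for any loop-free connected diagram built from $k$ components by $k-1$ contractions the $\mathfrak{a}$-degree is exactly the sum of the $\mathfrak{a}$-degrees of the pieces, where an identity strut counts $0$, a $\Delta$-strut (both legs $-$) counts $1$, and a $BB$-strut would count $-1$ but is excluded by the zero block of the linking matrix.

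The one point where your accounting is off, and which is precisely where the paper spends most of its effort, is your claim that the strut parts "only perform the colour identification" and that every nontrivial gluing has $\mathfrak{a}$-deg $\geq 2m$. The diagonal $\Delta$-struts do more than recolour: a $\Delta$-strut of $N$ has both legs glued to $+$-legs on the $M$ side, so it can merge two distinct components of a disconnected diagram of $\widetilde{Z}^t(M)$ (a $Y$-tree plus struts, say) into a new connected tree that is neither "a piece of one factor alone" nor a tree-to-tree gluing between the two factors; such terms have $\mathfrak{a}$-deg $\geq m+1$, not $\geq 2m$. This does not break your conclusion — by the additivity above these terms still have degree strictly greater than $m$, so the exact degree-$m$ part of $\widetilde{Z}^{Y,t}(M\circ N)$ is still $D_M+D_N$ — but a complete write-up must include this case (it is the content of the paper's discussion around the struts coming from $\Delta$), rather than dismissing the strut contribution as mere re-indexing.
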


\begin{proof}
 Let $M,N\in\mathcal{F}^{\mathfrak{a}}_m\mathcal{C}$ and write $\widetilde{Z}^{Y,t}(M)=\varnothing + D_M+ (\mathfrak{a}\text{-deg}>m)$ and $\widetilde{Z}^{Y,t}(N)=\varnothing + D_N + (\mathfrak{a}\text{-deg}>m)$, where $D_M$ and $D_N$ are linear combinations of connected Jacobi diagrams in $\mathcal{A}^{Y,t}(\left\lfloor g\right\rceil^+\sqcup \left\lfloor g\right\rceil^-)$ of $\mathfrak{a}$-deg $=m$. We have to show that $M\circ N \in\mathcal{F}^{\mathfrak{a}}_m\mathcal{C}$ and that
\begin{equation}\label{JTHLMOequ4}
\widetilde{Z}^{Y,t}(M\circ N)=\varnothing + (D_M + D_N) + (\mathfrak{a}\text{-deg} > m).
\end{equation}

Suppose that
$\text{Lk}(M)=\left( \begin{smallmatrix} 0 & {\text{Id}_g}\\ {\text{Id}_g} & \Lambda \end{smallmatrix} \right)$ and $\text{Lk}(N)=\left( \begin{smallmatrix} 0 & {\text{Id}_g}\\ {\text{Id}_g} & \Delta \end{smallmatrix} \right)$, where $\Lambda=(m_{ij})$ and $\Delta=(n_{ij})$ are symmetric $g\times g$ matrices. We have
\begin{equation}
\widetilde{Z}(M\circ N)= \widetilde{Z}(M)\circ \widetilde{Z}(N)=\left\langle \widetilde{Z}(M)_{|j^+\mapsto j^*}, \ \widetilde{Z}(N)_{|j^-\mapsto j^*}\right\rangle_{\left\lfloor g\right\rceil^*}.
\end{equation}
By Lemma \ref{lemmasplitstruty} we can write
\begin{equation}\label{JTHLMOequ6}
\centerline{\includegraphics[scale=0.87]{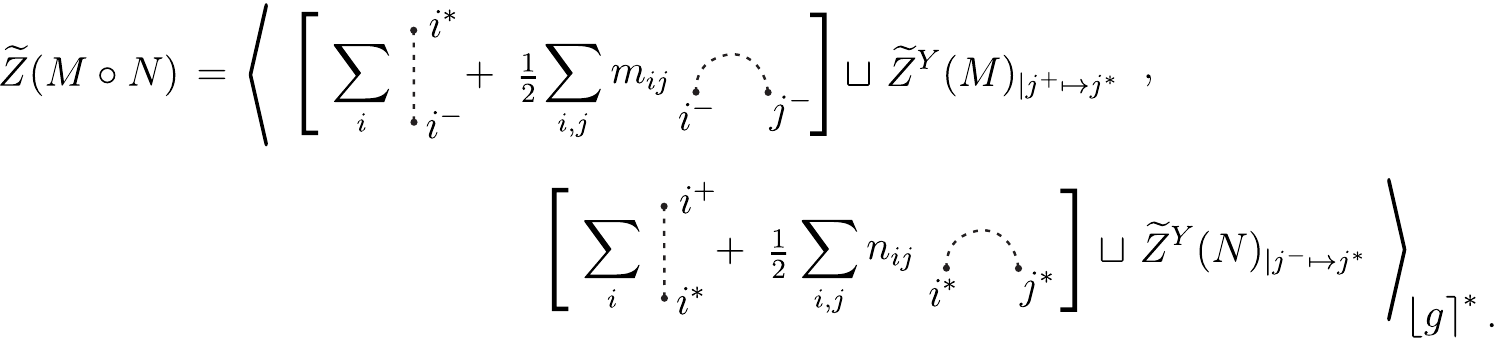}}
\end{equation}
Recall that the square brackets denote an exponential. Let us write
$$
\widetilde{Z}^{Y,t}(M\circ N) = \varnothing + E + (\mathfrak{a}\text{-deg}> m),
$$
where $E$ is a linear combination of connected Jacobi diagrams in $\mathcal{A}^{Y,t}(\left\lfloor g\right\rceil^+\cup \left\lfloor g\right\rceil^-)$ with $\mathfrak{a}$-deg $\leq m$. We want to show that  $E$  only has diagrams with $\mathfrak{a}$-deg $= m$ and moreover that $E = D_M + D_N$. Since $\widetilde{Z}^{Y,t}(M\circ N)$  is obtained by considering the reduction of $\widetilde{Z}(M\circ N)$ modulo struts and looped diagrams, we need to carefully analyse  the pairing (\ref{JTHLMOequ6}).
\begin{itemize}[leftmargin=*]
\item It is possible for $\widetilde{Z}^Y(M)$ or $\widetilde{Z}^Y(N)$ to have diagrams with loops. A diagram in $\widetilde{Z}(M\circ N)$ coming from the pairing of a diagram with loops, in $\widetilde{Z}^Y(M) $ or in $\widetilde{Z}^Y(N)$, with any other diagram will still have loops. Hence the diagrams with loops in $\widetilde{Z}^Y(M)$ or in $\widetilde{Z}^Y(N)$ do no contribute any term to $\widetilde{Z}^{Y,t}(M\circ N)$.

\item The diagrams  of  type

\centerline{\includegraphics[scale=0.87]{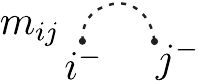}}

\medskip

\noindent do not contribute any connected term to $\widetilde{Z}^{Y,t}(M\circ N)$. Therefore, the diagrams  (which are no struts)  with the lowest alternative degree contributed by the diagrams
\begin{equation}\label{JTHLMOequ7}
\centerline{\includegraphics[scale=0.87]{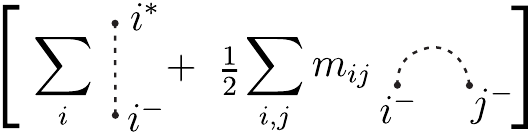}}
\end{equation}

\noindent after the pairing (\ref{JTHLMOequ6}) are exactly the diagrams appearing in $D_N$.

\item The diagrams of  type
\begin{equation}\label{JTHLMOequ8}
\centerline{\includegraphics[scale=0.87]{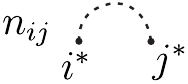}}
\end{equation}
\noindent can contribute  looped diagrams to $\widetilde{Z}^Y(M\circ N)$ when we consider their pairing with \emph{connected} diagrams in $\widetilde{Z}^{Y,t}(M)$, so at the end they do not appear in $\widetilde{Z}^{Y,t}(M\circ N)$. Or they can also contribute connected diagrams to $\widetilde{Z}^{Y}(M\circ N)$ after their pairing with \emph{disconnected} diagrams $T$ of $\widetilde{Z}^t(M)$, where at least one of the connected components of $T$ has at least one trivalent vertex. In Figure \ref{figuraJTHLMO4} we illustrate this situation with three examples of  such a $T$.

\begin{figure}[ht!]
\centering
\includegraphics[scale=0.87]{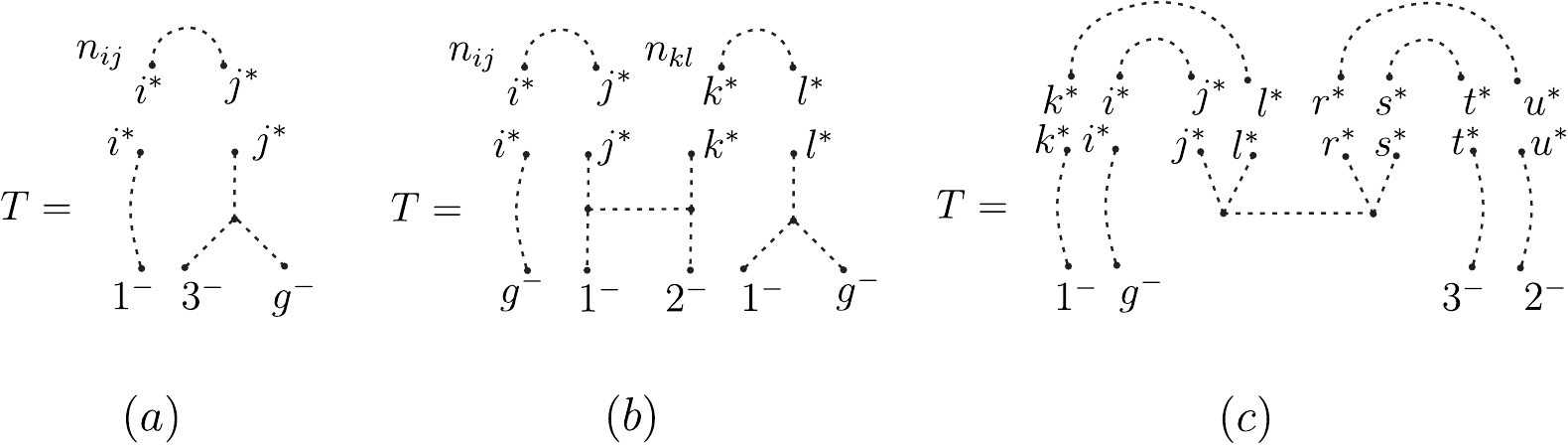}
\caption{}
\label{figuraJTHLMO4}
\end{figure}

\noindent Let us see that in this case, the obtained connected tree-like Jacobi diagrams are of $\mathfrak{a}$-deg $> m$. Let $T$ be a disconnected diagram in $\widetilde{Z}^t(M)$. Then the connected components of $T$ can be struts or diagrams with at least one trivalent vertex. If there are diagrams in $T$  whose all legs are colored by elements of  $\left\lfloor g\right\rceil^-$, then it is not possible to obtain a connected diagram from $T$ after the pairing (\ref{JTHLMOequ6}), hence we can suppose that there is not this type of diagrams in $T$. Also note that if all the legs of all the connected components of $T$ are colored by elements of  $\left\lfloor g\right\rceil^+$, then after the pairing (\ref{JTHLMOequ6}) with the struts of type (\ref{JTHLMOequ8}),  we will obtain looped diagrams. This way we can suppose that there is at least one connected component of $T$ which has one leg colored by an element of $\left\lfloor g\right\rceil^-$, and moreover that all the struts appearing in $T$ have one leg colored by $\left\lfloor g\right\rceil^+$ and the other by $\left\lfloor g\right\rceil^-$. Let $T_1$ be a connected component of $T$ with at least one trivalent vertex, then $\mathfrak{a}$-deg$(T_1)\geq m$. Now, $T_1$ has legs colored by $\left\lfloor g\right\rceil^+$ and when we do the pairing with the struts of the type (\ref{JTHLMOequ8}), we connect such legs either with struts which have legs colored by $\left\lfloor g\right\rceil^-$ or with other trees appearing in $T$. In either case, we strictly increase the alternative degree of $T_1$. To sum up, the connected tree-like Jacobi diagrams which can  appear in this case are of $\mathfrak{a}$-deg $> m$. 

Therefore, the diagrams  (which are not struts)  with the lowest alternative degree contributed by the diagrams
\begin{equation}\label{JTHLMOequ9}
\centerline{\includegraphics[scale=0.87]{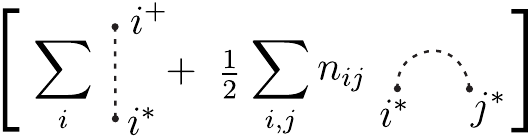}}
\end{equation}

\noindent after the pairing (\ref{JTHLMOequ6}), are exactly the diagrams appearing in $D_M$. 

\item Let $S$ be a diagram appearing in $\widetilde{Z}^{t}(N)$ and $T$ be a diagram appearing in $\widetilde{Z}^{t}(M)$. If all the legs of every connected component of $S$ are colored by $\left\lfloor g\right\rceil^+$, then $S$ does not intervene in the pairing (\ref{JTHLMOequ6}), except with the empty diagram. Similarly when all the legs of every connected component of $T$ are colored by $\left\lfloor g\right\rceil^-$. Hence we can suppose that there is at least a connected component  of $S$ (respectively in $T$) with at least one trivalent vertex and with at least one $\left\lfloor g\right\rceil^-$-colored leg (respectively one $\left\lfloor g\right\rceil^+$-colored leg).  As in the previous case, the connected diagrams without loops obtained from the pairing of $S$ and $T$ strictly increase the alternative degree. The alternative degree only remains stable when we do the pairing with diagrams coming from 

\medskip

\centerline{\includegraphics[scale=0.87]{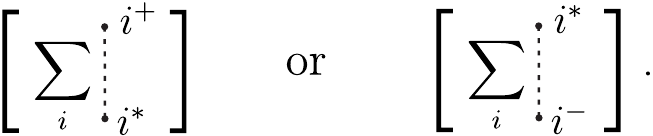}}

\medskip

In conclusion, the lower alternative degree terms appearing in $\widetilde{Z}^{Y,t}(M\circ N)$ are exactly $D_M + D_N$, that is, $E=D_M+ D_N$.
\end{itemize}
\end{proof}

\subsection{First alternative Johnson homomorphism and the LMO functor}

From Definition \ref{defdiag}, the diagrammatic version of the first alternative Johnson homomorphism of an element in $\mathcal{I}^{\mathfrak{a}}$ is given by a linear combination of the diagrams shown in Figure \ref{figuraJTHLMO10}. Recall that by a $-$ (respectively by a $+$) we mean that the color of the leg belongs to $A$ (respectively to $B$).

\begin{figure}[ht!]
\centering
\includegraphics[scale=0.85]{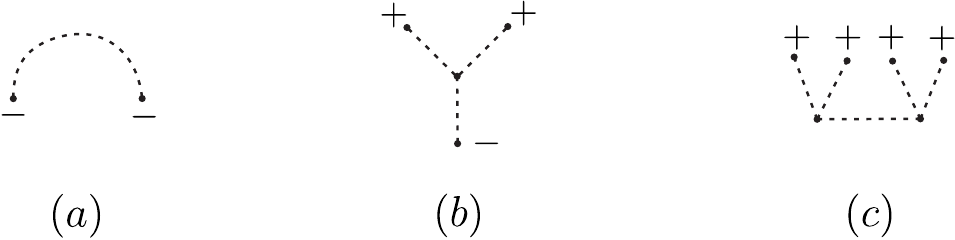}
\caption{Tree-like Jacobi diagrams of $\mathfrak{a}$-deg $= 1$.}
\label{figuraJTHLMO10}
\end{figure}
\noindent Besides, the diagrammatic version of the first Johnson homomorphism of an element in $\mathcal{I}$ is given by a linear combination of the diagrams shown in Figure \ref{figuraJTHLMO11} and the diagrammatic version of the second Johnson-Levine homomorphism is given by a linear combination of diagrams of type $(c)$ in Figure \ref{figuraJTHLMO10}.

\begin{figure}[ht!]
\centering
\includegraphics[scale=0.85]{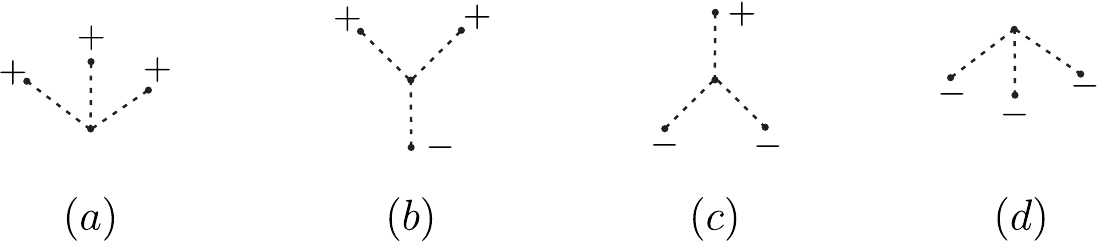}
\caption{Tree-like Jacobi diagrams of $i$-deg $= 1$.}
\label{figuraJTHLMO11}
\end{figure}

Let us start by identifying the elements in $\mathcal{I}^{\mathfrak{a}}$ whose first alternative Johnson homomorphism only contains diagrams of the type $(a)$ in Figure \ref{figuraJTHLMO10}. Let $\mathcal{N}$ be the subgroup of $\mathcal{I}^{\mathfrak{a}}$ generated by the Dehn twists $t_{\alpha_i}$ and $t_{\alpha_{kl}}$ with $1\leq i\leq g$ and $1\leq k < l \leq g$. Here $\alpha_i$ denotes the $i$-th meridional curve as in Figure \ref{figuraLMO2artalt}; and $\alpha_{kl}$ is as shown in Figure~\ref{figuraJTHLMO8artalt}~$(a)$.

Using the symplectic basis $\{a_i,b_i\}$ to identify $\text{Sp}(H)$ with $\text{Sp}(2g,\mathbb{Z})$ we  have that the image of $\mathcal{N}$ under the symplectic representation (\ref{JTHequ2}) is
\begin{equation}\label{JTHLMOequ10}
\sigma(\mathcal{N})= \left\{\left( \begin{smallmatrix} \text{Id}_g &\Delta\\ 0& \text{Id}_g\end{smallmatrix} \right) \  \bigg\rvert \ \ \Delta \text{\ is symmetric}  \right\}\subseteq\text{Sp}(2g,\mathbb{Z}).
\end{equation}
Equality (\ref{JTHLMOequ10}) is precisely \cite[Lemma 6.3]{MR2265877}. It also follows from the computations made  in Examples \ref{JTH2example1} and \ref{JTH2example2}. Notice that $\mathcal{N}$ is contained in the handlebody group $\mathcal{H}$ defined in (\ref{handlebodygroup}).

\begin{lemma}\label{lemmaJTHLMO2} We have the equality
$\mathcal{I}^{\mathfrak{a}}=\mathcal{N}\cdot(\mathcal{I}\cap\mathcal{I}^{\mathfrak{a}}).$
\end{lemma}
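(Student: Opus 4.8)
The plan is to prove the equality $\mathcal{I}^{\mathfrak{a}}=\mathcal{N}\cdot(\mathcal{I}\cap\mathcal{I}^{\mathfrak{a}})$ by a standard "pick off the symplectic part, then absorb the rest into the Torelli group" argument. The inclusion $\mathcal{N}\cdot(\mathcal{I}\cap\mathcal{I}^{\mathfrak{a}})\subseteq\mathcal{I}^{\mathfrak{a}}$ is immediate: $\mathcal{N}\subseteq\mathcal{I}^{\mathfrak{a}}$ by definition (each $t_{\alpha_i}$ and $t_{\alpha_{kl}}$ was checked to lie in $\mathcal{I}^{\mathfrak{a}}$, e.g.\ in Examples \ref{ejemploJTH1} and \ref{JTH2example1}--\ref{JTH2example2}), and $\mathcal{I}\cap\mathcal{I}^{\mathfrak{a}}\subseteq\mathcal{I}^{\mathfrak{a}}$ trivially, so the product of the two subsets lies in the subgroup $\mathcal{I}^{\mathfrak{a}}$. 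The content is the reverse inclusion.

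\textbf{Reverse inclusion.} Let $h\in\mathcal{I}^{\mathfrak{a}}=J_1^{\mathfrak{a}}\mathcal{M}=J_2^L\mathcal{M}$ (the last equalities by Proposition \ref{JTH2prop14}). First I would compute $\sigma(h)\in\text{Sp}(H)$. Since $h\in\mathcal{I}^{\mathfrak{a}}\subseteq\mathcal{I}^L$, we have $h_*|_A=\text{Id}_A$; combined with the symplectic condition and $h_*$ preserving $A$, the matrix of $h_*$ in the symplectic basis $\{a_i,b_i\}$ has the block form $\left(\begin{smallmatrix}\text{Id}_g&\Delta\\0&\text{Id}_g\end{smallmatrix}\right)$ with $\Delta$ symmetric — this is exactly Remark \ref{rksak}, using $\sigma(\mathcal{I}^{\mathfrak{a}})=\sigma(\mathcal{I}^L)$. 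By Equality (\ref{JTHLMOequ10}), $\sigma(\mathcal{N})$ is precisely this set of matrices, so there exists $n\in\mathcal{N}$ with $\sigma(n)=\sigma(h)$. Then $n^{-1}h\in\ker\sigma=\mathcal{I}$. It remains to check $n^{-1}h\in\mathcal{I}^{\mathfrak{a}}$: since $\mathcal{I}^{\mathfrak{a}}$ is a subgroup of $\mathcal{L}$ (it is $J_2^L\mathcal{M}$, a subgroup by Proposition \ref{JTHprop3}(i)), and both $n,h\in\mathcal{I}^{\mathfrak{a}}$, we get $n^{-1}h\in\mathcal{I}^{\mathfrak{a}}$. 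Hence $n^{-1}h\in\mathcal{I}\cap\mathcal{I}^{\mathfrak{a}}$ and $h=n\cdot(n^{-1}h)\in\mathcal{N}\cdot(\mathcal{I}\cap\mathcal{I}^{\mathfrak{a}})$, as desired.

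\textbf{Main obstacle.} The only non-formal input is the identification $\sigma(\mathcal{I}^{\mathfrak{a}})=\sigma(\mathcal{N})=\left\{\left(\begin{smallmatrix}\text{Id}_g&\Delta\\0&\text{Id}_g\end{smallmatrix}\right)\mid\Delta\text{ symmetric}\right\}$; one must verify that the symplectic image of $\mathcal{I}^{\mathfrak{a}}$ is exactly this abelian subgroup (neither smaller nor larger), which is the reference to \cite[Lemma 6.3]{MR2265877} and Equation (\ref{JTHLMOequ10}). Concretely, the inclusion $\sigma(\mathcal{I}^{\mathfrak{a}})\subseteq\sigma(\mathcal{I}^L)$ and the explicit description of $\sigma(\mathcal{I}^L)$ give the upper bound, while the computations in Examples \ref{JTH2example1}--\ref{JTH2example2} (whose $\sigma$-values realize all symmetric $\Delta$, the $t_{\alpha_i}$ giving the diagonal entries and the $t_{\alpha_{kl}}$ the off-diagonal ones) give the lower bound. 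Everything else — that $\mathcal{N}\subseteq\mathcal{I}^{\mathfrak{a}}$, that $\mathcal{I}^{\mathfrak{a}}$ is a group, that $\ker\sigma\cap\mathcal{I}^{\mathfrak{a}}=\mathcal{I}\cap\mathcal{I}^{\mathfrak{a}}$ — is bookkeeping using results already established in the excerpt.
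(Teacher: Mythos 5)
Your proof is correct and follows essentially the same route as the paper: read off that $\sigma(h)$ lies in the abelian group of matrices $\left(\begin{smallmatrix}\text{Id}_g&\Delta\\0&\text{Id}_g\end{smallmatrix}\right)$ with $\Delta$ symmetric, use (\ref{JTHLMOequ10}) to find an element of $\mathcal{N}$ realizing the same (equivalently, the inverse) symplectic action, and conclude that the resulting quotient lies in $\mathcal{I}\cap\mathcal{I}^{\mathfrak{a}}$ since $\mathcal{I}^{\mathfrak{a}}$ is a group. The only cosmetic difference is that the paper composes with $f$ satisfying $\sigma(f)=\left(\begin{smallmatrix}\text{Id}_g&-\Delta\\0&\text{Id}_g\end{smallmatrix}\right)$ rather than taking $n^{-1}h$, which is the same argument.
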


\begin{proof}
Let $h\in \mathcal{I}^{\mathfrak{a}}$. In the symplectic basis $\{a_i, b_i\}$, the matrix of $\sigma(h)=h_*$ is  $\left( \begin{smallmatrix} \text{Id}_g &\Delta\\ 0& \text{Id}_g\end{smallmatrix} \right)$ with $\Delta$  a symmetric matrix. From (\ref{JTHLMOequ10}) there exists $f\in\mathcal{N}$ such that the matrix of $\sigma(f)=f_*$ is  $\left( \begin{smallmatrix} \text{Id}_g &-\Delta\\ 0& \text{Id}_g\end{smallmatrix} \right)$. Therefore $f\circ h\in\mathcal{I}$. Let $\psi = f\circ h$, we also have $\psi=f\circ h\in \mathcal{I}^{\mathfrak{a}}$. Thus $h=f^{-1}\circ \psi\in\mathcal{N}\cdot (\mathcal{I}\cap \mathcal{I}^{\mathfrak{a}})$.
\end{proof}

We have already computed in Examples \ref{JTH2example1} and \ref{JTH2example2} the first alternative Johnson homomorphism for the generators of $\mathcal{N}$. These computations imply the following.

\begin{proposition}\label{JTHLMOprop2} For $h\in\mathcal{N}$ the first alternative Johnson homomorphism $\tau^{\mathfrak{a}}_1(h)$ can be computed from the action of $h$ in homology and reciprocally. More precisely, if the matrix of $\sigma(h)=h_*:H\rightarrow H$ in the symplectic basis $\{a_i, b_i\}$ is $\left( \begin{smallmatrix} \text{\emph{Id}}_g &\Delta\\ 0& \text{\emph{Id}}_g\end{smallmatrix} \right)$ with $\Delta=(n_{ij})$ a symmetric matrix, then
$$ \tau^{\mathfrak{a}}_1(h) = \sum_{1\leq i,j\leq g} n_{ij} a_i\otimes a_j.$$
\end{proposition}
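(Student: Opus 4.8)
The statement is a purely algebraic computation: it says that for $h\in\mathcal{N}$ the first alternative Johnson homomorphism $\tau^{\mathfrak{a}}_1(h)$ lies entirely in the summand $A\otimes A$ of $D_1(B;A)$ and is given by the matrix $\Delta$ of $\sigma(h)$. The plan is to reduce this to the generators of $\mathcal{N}$ via the homomorphism property, exactly as the sentence preceding the proposition suggests. First I would recall from Example~\ref{JTH2example1} and Example~\ref{JTH2example2} the values $\tau^{\mathfrak{a}}_1(t_{\alpha_i})=-a_i\otimes a_i$ and $\tau^{\mathfrak{a}}_1(t_{\alpha_{kl}})=-(a_k\otimes a_k)-(a_l\otimes a_l)-(a_k\otimes a_l)-(a_l\otimes a_k)$, together with the matrices of $\sigma(t_{\alpha_i})$ and $\sigma(t_{\alpha_{kl}})$ in the symplectic basis $\{a_i,b_i\}$, which are the elementary symmetric unipotent matrices $\left(\begin{smallmatrix}\mathrm{Id}_g & \Delta\\ 0 & \mathrm{Id}_g\end{smallmatrix}\right)$ with $\Delta=-E_{ii}$ and $\Delta=-(E_{kk}+E_{ll}+E_{kl}+E_{lk})$ respectively (this is the content of Equality~(\ref{JTHLMOequ10}) read on generators). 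Thus on each generator $g_0$ of $\mathcal{N}$ one has the exact relation $\tau^{\mathfrak{a}}_1(g_0)=\sum_{i,j}n_{ij}\,a_i\otimes a_j$ where $\left(\begin{smallmatrix}\mathrm{Id}_g & (n_{ij})\\ 0 & \mathrm{Id}_g\end{smallmatrix}\right)$ is the matrix of $\sigma(g_0)$.

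Next I would verify that the assignment $h\mapsto\sum_{i,j}n_{ij}\,a_i\otimes a_j$, viewed as a map from $\sigma(\mathcal{N})\cong\{\left(\begin{smallmatrix}\mathrm{Id}_g & \Delta\\ 0 & \mathrm{Id}_g\end{smallmatrix}\right):\Delta\text{ symmetric}\}$ to $A\otimes_{\mathrm{sym}}A$, is itself a group homomorphism. This is immediate: the group $\sigma(\mathcal{N})$ is abelian and the product of two such unipotent matrices has upper-right block $\Delta_1+\Delta_2$, so $\Delta\mapsto\sum_{i,j}(\Delta)_{ij}a_i\otimes a_j$ is visibly additive, hence a homomorphism $\sigma(\mathcal{N})\to A\otimes A$. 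Since $\tau^{\mathfrak{a}}_1:\mathcal{I}^{\mathfrak{a}}\to D_1(\mathfrak{Lie}(B;A))$ is a homomorphism (cited as \cite[Proposition 6.2]{MR3828784}) and it agrees on the generating set $\{t_{\alpha_i},t_{\alpha_{kl}}\}$ of $\mathcal{N}$ with the composite $\mathcal{N}\xrightarrow{\sigma}\sigma(\mathcal{N})\to A\otimes A$, the two homomorphisms $\mathcal{N}\to D_1(\mathfrak{Lie}(B;A))$ coincide on all of $\mathcal{N}$. In particular $\tau^{\mathfrak{a}}_1(h)$ depends only on $\sigma(h)$ and equals $\sum_{i,j}n_{ij}a_i\otimes a_j$, which gives the displayed formula and the ``reciprocally'' clause (the correspondence $\sigma(h)\leftrightarrow\tau^{\mathfrak{a}}_1(h)$ is a bijection onto its image because $\Delta\mapsto\sum n_{ij}a_i\otimes a_j$ is injective).

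The one point that requires a line of justification rather than a bald assertion is that the generators really do suffice, i.e.\ that $\sigma|_{\mathcal{N}}:\mathcal{N}\to\sigma(\mathcal{N})$ together with the displayed formula is well-defined on $\mathcal{N}$ and not merely on the free group on $\{t_{\alpha_i},t_{\alpha_{kl}}\}$; but this is exactly what the homomorphism argument above delivers, since we are comparing two genuine homomorphisms out of $\mathcal{N}$ that agree on a generating set. I expect the main (and only mild) obstacle to be bookkeeping: matching the sign conventions in Examples~\ref{JTH2example1}--\ref{JTH2example2} with the sign in Equality~(\ref{JTHLMOequ10}), and checking that the identification $D_1(B;A)\supseteq A\otimes A$ used to record $\tau^{\mathfrak{a}}_1(t_{\alpha_i})=-a_i\otimes a_i$ is compatible with the identification $\mathfrak{Lie}_2(B;A)=\Lambda^2B\oplus A$ under which $\sigma(h)|_A$ is encoded; once those conventions are pinned down the computation is routine. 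I would end by noting that this also re-proves $\mathcal{N}\subseteq\mathcal{H}$ is consistent with $\tau^{\mathfrak{a}}_1(\mathcal{N})\subseteq A\otimes A$, i.e.\ no diagrams of type $(b)$ or $(c)$ in Figure~\ref{figuraJTHLMO10} occur.
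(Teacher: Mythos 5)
Your proposal is correct and follows essentially the same route as the paper: verify the formula on the generators $t_{\alpha_i}$, $t_{\alpha_{kl}}$ via Examples \ref{JTH2example1} and \ref{JTH2example2}, and then use the homomorphism property of $\tau^{\mathfrak{a}}_1$ together with the additivity of the upper-right block under multiplication of matrices $\left(\begin{smallmatrix}\mathrm{Id}_g & \Delta\\ 0 & \mathrm{Id}_g\end{smallmatrix}\right)$ to extend to all of $\mathcal{N}$. Your extra step of packaging the right-hand side as a homomorphism $\sigma(\mathcal{N})\to A\otimes A$ and comparing two homomorphisms on a generating set is just a more explicit phrasing of the paper's argument, so no gap remains.
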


\begin{proof}
By Examples \ref{JTH2example1} and \ref{JTH2example2}, the result holds for the generators $t_{\alpha_i}$ and $t_{\alpha_{kl}}$ of $\mathcal{N}$. The general result follows from the homomorphism property of $\tau^{\mathfrak{}a}_1$ and the equality
$$\left(\begin{smallmatrix} \text{Id}_g &\Delta\\ 0& \text{Id}_g\end{smallmatrix} \right)\left(\begin{smallmatrix} \text{Id}_g &\Delta'\\ 0& \text{Id}_g\end{smallmatrix} \right) = \left(\begin{smallmatrix} \text{Id}_g &\Delta + \Delta'\\ 0& \text{Id}_g\end{smallmatrix} \right), $$
for all matrices $\Delta$ and $\Delta'$ of size $g\times g$.
\end{proof}

\begin{proposition}\label{JTHLMOcor2} For $h\in\mathcal{N}$ we have
\begin{center}
\includegraphics[scale=0.74]{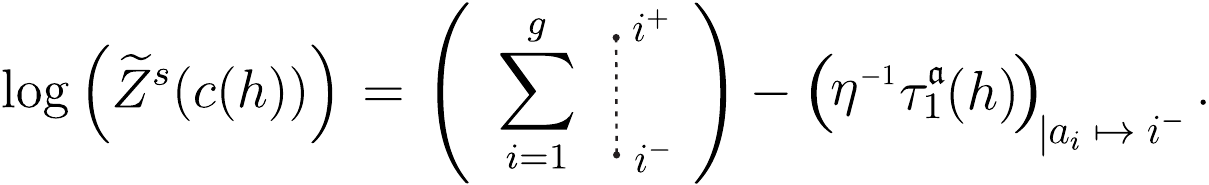}	
\end{center}
\end{proposition}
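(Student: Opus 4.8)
The plan is to compute the tree-reduction of the LMO functor $\widetilde{Z}^{Y,\mathfrak{a}}_1(c(h))$ for $h\in\mathcal{N}$ and to compare it, up to sign, with the diagrammatic version of $\tau^{\mathfrak{a}}_1(h)$ computed in Proposition~\ref{JTHLMOprop2}. Since $\mathcal{N}$ is generated by the Dehn twists $t_{\alpha_i}$ and $t_{\alpha_{kl}}$, and since $\widetilde{Z}^{Y,\mathfrak{a}}_1$ is a monoid homomorphism on $\mathcal{F}^{\mathfrak{a}}_1\mathcal{C}$ by Theorem~\ref{JTHLMOprop1} while $\tau^{\mathfrak{a}}_1$ is a group homomorphism, it suffices to check the claimed equality on these generators and then invoke multiplicativity. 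First I would record that $c(t_{\alpha_i})$ and $c(t_{\alpha_{kl}})$ are special Lagrangian cobordisms lying in $\mathcal{F}^{\mathfrak{a}}_1\mathcal{C}$ (they are in $\mathcal{I}^{\mathfrak{a}}\subseteq\mathcal{IC}^{\mathfrak{a}}$, and their $\widetilde{Z}^{Y,t}$ starts in $\mathfrak{a}$-deg $1$ by Examples~\ref{ejemploLMO6}, \ref{ejemploLMO7}, \ref{ejemploLMO8}), so that $\widetilde{Z}^{Y,\mathfrak{a}}_1$ is defined on them.

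Next I would extract the degree-one tree part. For $c(t_{\alpha_i})$, Example~\ref{ejemploLMO6} shows there are no terms of i-deg $=1$ in $\widetilde{Z}^{Y,t}(c(t_{\alpha_i}))$, but a strut of the form $\frac{1}{2}\,i^-\!-\!i^-$ is present in $\widetilde{Z}^s$; under the $\mathfrak{a}$-grading such a strut (both ends colored in $A$) has $\mathfrak{a}$-deg $=1$. Hence $\widetilde{Z}^{Y,\mathfrak{a}}_1(c(t_{\alpha_i}))$ is this strut diagram. Comparing with $\tau^{\mathfrak{a}}_1(t_{\alpha_i})=-a_i\otimes a_i$ and its diagrammatic version from Example~\ref{JTH2ejemplo5}, one reads off that they agree up to the sign $-1$. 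For $c(t_{\alpha_{kl}})$, Example~\ref{ejemploLMO8} gives the low-degree terms, and the $\mathfrak{a}$-deg $=1$ part consists of the struts with ends colored by $a_k,a_l$; matching against $\tau^{\mathfrak{a}}_1(t_{\alpha_{kl}})$ from Example~\ref{JTH2example2} and its diagrammatic form in Example~\ref{JTH2ejemplo6} again shows agreement up to $-1$. The general formula of Proposition~\ref{JTHLMOprop2}, namely $\tau^{\mathfrak{a}}_1(h)=\sum n_{ij}\,a_i\otimes a_j$, then gives (after $\eta^{-1}$) a linear combination of the struts $a_i\!-\!a_j$; on the LMO side, by multiplicativity of $\widetilde{Z}^{Y,\mathfrak{a}}_1$ and Lemma~\ref{lemmasplitstruty}, the degree-one part of $\widetilde{Z}^{Y,\mathfrak{a}}_1(c(h))$ is $\bigl[\tfrac{\Lambda}{2}\bigr]$ restricted to struts with both ends in $A$, where $\Lambda$ is the relevant block of $\mathrm{Lk}(c(h))$; identifying $\Lambda$ with $\Delta=(n_{ij})$ gives exactly $-\sum n_{ij}\,a_i\!-\!a_j$ after accounting for the sign.

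The presentation I would give is therefore: (1) reduce to generators via the two homomorphism properties; (2) quote the explicit LMO computations of Examples~\ref{ejemploLMO6} and~\ref{ejemploLMO8} and the Johnson-homomorphism computations of Examples~\ref{JTH2example1}, \ref{JTH2example2}, \ref{JTH2ejemplo5}, \ref{JTH2ejemplo6}; (3) observe that the matching holds with an overall sign $-1$ on each generator; (4) conclude by linearity. The main obstacle is bookkeeping rather than conceptual: one must be careful that the struts in $\widetilde{Z}^s(c(h))$ with both ends colored by $A$ are precisely the ones that survive into $\mathfrak{a}$-deg $=1$ (struts with a $B$-colored end have $\mathfrak{a}$-deg $\leq 0$, cf.\ Table~\ref{table1}) and that no i-deg $\geq 1$ tree contributes in $\mathfrak{a}$-deg $1$ for these particular cobordisms — this is exactly what Examples~\ref{ejemploLMO6}--\ref{ejemploLMO8} were set up to guarantee. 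Matching the signs and the normalization $[\Lambda/2]$ of Lemma~\ref{lemmasplitstruty} against the factor appearing in $\tau^{\mathfrak{a}}_1$ is the one point where care is needed, but it is purely computational. Hence the conclusion is the pictorial identity stated in Proposition~\ref{JTHLMOcor2}.
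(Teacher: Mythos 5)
Your overall strategy---verify the identity on the generators $t_{\alpha_i}$, $t_{\alpha_{kl}}$ via the explicit computations in Examples \ref{ejemploLMO6}--\ref{ejemploLMO8} and \ref{JTH2ejemplo5}--\ref{JTH2ejemplo6}, then propagate to all of $\mathcal{N}$ by multiplicativity---is the same as the paper's, and the generator step is fine. The gap is in the multiplicative step. The identity to be proved lives entirely in the \emph{strut} part of $\widetilde{Z}(c(h))$: by definition $\widetilde{Z}^{Y,\mathfrak{a}}_1$ is extracted from $\widetilde{Z}^{Y,t}$, the reduction modulo struts, and in fact $\widetilde{Z}^{Y,\mathfrak{a}}_1(c(h))=0$ for every $h\in\mathcal{N}$ (this is Lemma \ref{JTHLMOlemma6.14}). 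So it is not correct to say that $\widetilde{Z}^{Y,\mathfrak{a}}_1(c(t_{\alpha_i}))$ ``is'' the $A$--$A$ strut, and, more importantly, the homomorphism property of Theorem \ref{JTHLMOprop1} that you invoke gives no control whatsoever over the $A$--$A$ struts, which are the whole content of the statement for elements of $\mathcal{N}$.

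What is actually needed is multiplicativity of the strut part, i.e.\ the behaviour of the linking matrix under composition of cobordisms: for $M,N\in\mathcal{IC}^L$ with $\text{Lk}(M)=\left(\begin{smallmatrix}0&\text{Id}_g\\ \text{Id}_g&\Delta\end{smallmatrix}\right)$ and $\text{Lk}(N)=\left(\begin{smallmatrix}0&\text{Id}_g\\ \text{Id}_g&\Delta'\end{smallmatrix}\right)$ one has $\text{Lk}(M\circ N)=\left(\begin{smallmatrix}0&\text{Id}_g\\ \text{Id}_g&\Delta+\Delta'\end{smallmatrix}\right)$, which is \cite[Lemma 4.5]{MR2403806} (Equation (\ref{equcomplk}) in the paper). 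Combined with $\widetilde{Z}^s(M)=\bigl[\text{Lk}(M)/2\bigr]$ from Lemma \ref{lemmasplitstruty} and with the additivity of the symmetric block of $\sigma(h)$ underlying Proposition \ref{JTHLMOprop2}, this is precisely what lets you pass from the generators to a general $h\in\mathcal{N}$ and identify the block $\Lambda$ of $\text{Lk}(c(h))$ with the matrix $\Delta=(n_{ij})$ of $\sigma(h)$. In your write-up that identification is asserted (``identifying $\Lambda$ with $\Delta$'') but never justified, and the tool you cite cannot justify it, since Theorem \ref{JTHLMOprop1} only sees diagrams with at least one trivalent vertex. Once you replace that step by the linking-matrix composition formula, the rest of your argument (the example computations, the factor $1/2$ in $\eta^{-1}$ versus the normalization $[\text{Lk}/2]$, and the overall sign) goes through as in the paper.
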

\begin{proof}
From Examples \ref{ejemploLMO6}, \ref{ejemploLMO7}, \ref{ejemploLMO8} and Examples \ref{JTH2ejemplo5} and \ref{JTH2ejemplo6}, we already have the result for the generators of $\mathcal{N}$. By Lemma \ref{lemmasplitstruty}, we know that the strut part in the LMO functor is encoded in the linking matrix. Thus we need to know how the strut part, or equivalently  the linking matrix, behaves with respect to the composition of cobordisms, this was done in \cite[Lemma 4.5]{MR2403806}. For $M,N\in\mathcal{IC}^L$ with $\text{Lk}(M)=\left( \begin{smallmatrix} 0 &\text{Id}_g\\ \text{Id}_g& \Delta\end{smallmatrix} \right)$ and $\text{Lk}(N)=\left( \begin{smallmatrix} 0 &\text{Id}_g\\ \text{Id}_g& \Delta'\end{smallmatrix} \right)$ we have
\begin{equation}\label{equcomplk}
\text{Lk}(M\circ N)=\left( \begin{smallmatrix} 0 &\text{Id}_g\\ \text{Id}_g& \Delta + \Delta'\end{smallmatrix} \right).
\end{equation}
Whence we have the desired result from the homomorphism property of $\tau^{\mathfrak{a}}_1$ and $\eta^{-1}$. (Equality (\ref{equcomplk}) can also be deduced from the description of the composition of cobordisms in terms of their bottom-top tangle presentations).
\end{proof}

At this point, we understand the first alternative Johnson homomorphism for the elements of $\mathcal{N}$ and moreover we know that the diagrammatic version only contains diagrams of  type $(a)$ in Figure \ref{figuraJTHLMO10}.  By Lemma \ref{lemmaJTHLMO2}, in order to understand $\tau_1^{\mathfrak{a}}$ for all $\mathcal{I}^{\mathfrak{a}}$ we need to understand it for the elements of $\mathcal{I}\cap \mathcal{I}^{\mathfrak{a}}$. Recall that  $B=H'$.

Consider the projection
\begin{equation}
p: (A\otimes \mathfrak{Lie}_2(B;A))\oplus (B\otimes \mathfrak{Lie}_3(B;A))\longrightarrow (A\otimes \Lambda^2 B)\oplus \left(B\otimes \frac{\mathfrak{Lie}_3(B;A)}{\mathfrak{Lie}_3(B)}\right),
\end{equation}

\noindent defined as $\text{Id}_A\otimes \iota_*$ on the first direct summand and by sending $b\otimes z \in B\otimes\mathfrak{Lie}_3(B;A)$ to $b\otimes (z \mathfrak{Lie}_3(B))$. 

\noindent Let $\psi\in\mathcal{I}\cap \mathcal{I}^{\mathfrak{a}}$. Then, $\tau_1^{\mathfrak{a}}(\psi)\in(A\otimes \mathfrak{Lie}_2(B;A))\oplus (B\otimes \mathfrak{Lie}_3(B;A))$. Thus, we can apply $p$ to $\tau_1^{\mathfrak{a}}(\psi)$. Diagrammatically, when we apply $p$ we kill the diagrams in $\eta^{-1}(\tau^{\mathfrak{a}}_1(\psi))$ of type $(c)$ in Figure \ref{figuraJTHLMO10}. Besides, by  Proposition  \ref{JTH2prop11}, we have that $\iota_*(\tau^{\mathfrak{a}}_1(\psi))= \tau_2^L(\psi)$. Here the map $\iota_* $ is the map appearing in Proposition \ref{JTH2prop11}. Diagrammatically we are killing all the diagrams in $\eta^{-1}(\tau^{\mathfrak{a}}_1(\psi))$ with at least one leg colored by an element of $A$. This way, we have
\begin{equation}\label{JTHLMOequ12}
\tau_1^{\mathfrak{a}}(\psi) = p(\tau_1^{\mathfrak{a}}(\psi)) + \iota_*(\tau^{\mathfrak{a}}_1(\psi)) = p(\tau_1^{\mathfrak{a}}(\psi)) + \tau_2^{L}(\psi).
\end{equation}

On the other hand we  can  also consider the first Johnson homomorphism $\tau_1(\psi)$ of $\psi$. Hence,  the diagrammatic version  of  $\tau_1(\psi)$ is a linear combination of the diagrams shown in Figure~\ref{figuraJTHLMO11}. We want to compare $\tau_1(\psi)$ and $\tau^{\mathfrak{a}}_1(\psi)$. Thus, we need to kill the diagrams of type  $(a)$, $(c)$ and $(d)$ in Figure \ref{figuraJTHLMO11} from $\eta^{-1}\tau_1(\psi)$. For this, consider the projection
\begin{equation}
q: (B\otimes \Lambda^2 H)\oplus (A\otimes \Lambda^2H)\longrightarrow \left(B\otimes \frac{\Lambda^2H}{\langle \Lambda^2A +  \Lambda^2B\rangle}\right)\oplus \left(A\otimes \Lambda^2 H'\right),
\end{equation} 
which in the first direct summand is given by the $\text{Id}_B$ tensored with the projection, and in the second direct summand is given by $\text{Id}_A\otimes \Lambda^2\iota_*$, where $\Lambda^2\iota_*:\Lambda^2H\rightarrow \Lambda^2 H'$  is induced by  $\iota_*:H\rightarrow H'$.

\begin{lemma}\label{JTHLMOlemma6.10} Via the canonical isomorphism 
$$\frac{\mathfrak{Lie}_3(B;A)}{\mathfrak{Lie}_3(B)}\cong A\otimes B\cong \frac{\Lambda^2 H}{\langle \Lambda^2A + \Lambda^2B\rangle},$$
we have  $p(\tau^{\mathfrak{a}}_1(\psi)) = q(\tau_1(\psi))$ for every $\psi\in\mathcal{I}\cap\mathcal{I}^{\mathfrak{a}}$.
\end{lemma}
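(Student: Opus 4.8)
The plan is to compute both sides directly in the symplectic basis $\{\alpha_i,\beta_i\}$ of $\pi$ and the induced symplectic basis $\{a_i,b_i\}$ of $H$, using the explicit formulas (\ref{JTH2equ14}) for $\tau_1^{\mathfrak{a}}$ and (\ref{JTH2equ19}) for $\tau_1$, and then check that applying $p$ and $q$ respectively produces the same element of $A\otimes B$ after the stated identifications. Fix $\psi\in\mathcal{I}\cap\mathcal{I}^{\mathfrak{a}}$. Since $\psi\in\mathcal{I}$ we have $\psi_\#(x)x^{-1}\in\Gamma_2\pi$ for all $x\in\pi$, so for each generator $\beta_i$ we may write $\psi_\#(\beta_i)\beta_i^{-1}=c_i$ with $c_i\in\Gamma_2\pi$, and similarly $\psi_\#(\alpha_i)\alpha_i^{-1}=d_i\in\Gamma_2\pi$; since $\psi\in\mathcal{I}^{\mathfrak{a}}$ we moreover have $d_i\in K_3=\Gamma_3\pi\cdot[\pi,\mathbb{A}]$, because $\alpha_i\in\mathbb{A}\subseteq K_2$ and hence $\psi_\#(\alpha_i)\alpha_i^{-1}\in K_3$. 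These are exactly the conditions recorded in (\ref{JTHequ15}) and the surrounding computation in the proof of Theorem \ref{JTH2thm1}.

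First I would write out $\tau_1(\psi)$: by (\ref{JTH2equ19}) and the identification $H\cong H^*$ via $\omega$, one gets $\tau_1(\psi)=\sum_i a_i\otimes\overline{c_i}-\sum_i b_i\otimes\overline{d_i}$, where $\overline{c_i},\overline{d_i}\in\Gamma_2\pi/\Gamma_3\pi\cong\Lambda^2 H$ denote the classes of $c_i,d_i$. Applying $q$: on the $\sum_i b_i\otimes\overline{c_i}$ part we project $\Lambda^2H$ to $\Lambda^2H/\langle\Lambda^2A+\Lambda^2B\rangle\cong A\otimes B$; on the $\sum_i a_i\otimes\overline{d_i}$ part we apply $\Lambda^2\iota_*$, but $d_i\in[\pi,\mathbb{A}]$ forces $\overline{d_i}\in\Lambda^2 H$ to lie in the image of $H\otimes A$, hence $\Lambda^2\iota_*(\overline{d_i})$ lands in $\iota_*(H)\otimes\iota_*(A)=H'\otimes 0=0$ — wait, more carefully $\Lambda^2\iota_*(H\wedge A)=\iota_*(H)\wedge\iota_*(A)=0$ since $\iota_*(A)=0$; so the second summand dies entirely and $q(\tau_1(\psi))=\sum_i a_i\otimes[\overline{c_i}]$ in $A\otimes\Lambda^2 H'\oplus B\otimes(A\otimes B)$, with only the $A\otimes B$-component surviving. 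Next I would write out $\tau_1^{\mathfrak{a}}(\psi)$ via (\ref{JTH2equ14}): $\tau_1^{\mathfrak{a}}(\psi)=\sum_i a_i\otimes(c_i K_3)-\sum_i b_i\otimes(d_i K_4)$, an element of $(A\otimes\mathfrak{Lie}_2(B;A))\oplus(B\otimes\mathfrak{Lie}_3(B;A))$. Applying $p$ kills the $\mathfrak{Lie}_3(B)$-part of the second summand, so $p(\tau_1^{\mathfrak{a}}(\psi))=\sum_i a_i\otimes\iota_*(c_i K_3)-\sum_i b_i\otimes(d_i K_4\bmod\mathfrak{Lie}_3(B))$; the isomorphism $\mathfrak{Lie}_3(B;A)/\mathfrak{Lie}_3(B)\cong A\otimes B$ identifies $\mathfrak{Lie}_3(B;A)$ modulo the free part as coming from $[\mathbb{A},\pi]/(\cdots)$, which matches the $\Lambda^2 H/\langle\Lambda^2 A+\Lambda^2 B\rangle$ picture by the canonical isomorphism in the statement. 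The heart of the matter is then to verify that, under $K_2/K_3\cong A$ (the map $j$ of (\ref{JTH2equ3})) and $\Lambda^2B=\Gamma_2\pi'/\Gamma_3\pi'$, the image $\iota_*(c_iK_3)\in\Lambda^2 B$ is the same as the class $[\overline{c_i}]$ of $c_i$ in $\Lambda^2H/\langle\Lambda^2A+\Lambda^2B\rangle$ under the canonical isomorphism $\Lambda^2 H/\langle\Lambda^2A+\Lambda^2B\rangle\cong A\otimes B$, and likewise for the $b_i$-terms.

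The bookkeeping obstacle — and the step I expect to be the main difficulty — is tracking the several canonical isomorphisms simultaneously and making sure the two projections $p$ and $q$ land in genuinely the same space with matching conventions: the decomposition $\Lambda^2 H\cong\Lambda^2 A\oplus(A\otimes B)\oplus\Lambda^2 B$ (using $H=A\oplus B$ as abelian groups via the chosen basis), the identification of $A\otimes B$ both with $\mathfrak{Lie}_3(B;A)/\mathfrak{Lie}_3(B)$ via $j$ on the $A$-slot and $\iota_*$ on the $B$-slot, and the compatibility of the quotient map $\pi\to\pi'$ with all of this (which is Lemma \ref{JTHlemma3} and the discussion preceding Proposition \ref{JTH2prop11}). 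Concretely: for $c_i\in\Gamma_2\pi$, write $\overline{c_i}=\sum_{j<k}(\text{stuff})\,a_j\wedge a_k+\sum_{j,k}\lambda_{jk}\,a_j\wedge b_k+\sum_{j<k}(\text{stuff})\,b_j\wedge b_k\in\Lambda^2 H$; the class in $\Lambda^2H/\langle\Lambda^2A+\Lambda^2B\rangle\cong A\otimes B$ is $\sum_{j,k}\lambda_{jk}\,a_j\otimes b_k$, while $\iota_*(c_iK_3)\in\Lambda^2B=\Lambda^2H'$ — but we want the $A\otimes B$ component of $b_i\otimes(\cdot)$, so actually the relevant comparison is between the $b_i\otimes(d_iK_4)$ term after quotienting by $\mathfrak{Lie}_3(B)$ and the $b_i\otimes\overline{d_i}$ term of $\tau_1(\psi)$ after quotienting $\Lambda^2H$ by $\langle\Lambda^2A+\Lambda^2B\rangle$, both of which I claim equal $b_i\otimes(\text{$A\otimes B$-part of }d_i)$ — this works because $d_i\in K_3$ so its image in $\mathfrak{Lie}_3(B;A)$ and its image in $\Lambda^2 H=\Gamma_2\pi/\Gamma_3\pi$ (via $K_3\subseteq\Gamma_2\pi$) are compatible under the grading maps, and the free Lie algebra $\mathfrak{Lie}(B;A)$ in degree $3$ decomposes as $\mathfrak{Lie}_3(B)\oplus(A\otimes B)$ matching $\Lambda^2 H/\Lambda^2 B$ modulo the $\Lambda^2 A$ part which $d_i\in[\pi,\mathbb{A}]$ doesn't see. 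Once these identifications are nailed down, the equality $p(\tau_1^{\mathfrak{a}}(\psi))=q(\tau_1(\psi))$ is a term-by-term match, and no genuinely new idea is needed beyond the commutative-diagram compatibilities already established in Lemma \ref{JTHlemma3} and Proposition \ref{JTH2prop11}.
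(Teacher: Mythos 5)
Your overall strategy is the same as the paper's: write $\tau_1(\psi)$ and $\tau_1^{\mathfrak{a}}(\psi)$ in the symplectic basis via (\ref{JTH2equ14}) and (\ref{JTH2equ19}), apply $q$ and $p$, observe that the $\beta_i$-terms match on the nose, and reduce everything to a comparison of the $\alpha_i$-terms. However, your intermediate computation of $q(\tau_1(\psi))$ is wrong. By the definition of $q$, the map $\Lambda^2\iota_*$ is applied to the summand $A\otimes\Lambda^2H$, i.e.\ to the terms $a_i\otimes\overline{c_i}$ with $c_i=\psi_\#(\beta_i)\beta_i^{-1}$, while the projection modulo $\langle\Lambda^2A+\Lambda^2B\rangle$ is applied to $B\otimes\Lambda^2H$, i.e.\ to the terms $b_i\otimes\overline{d_i}$ with $d_i=\psi_\#(\alpha_i)\alpha_i^{-1}$; you apply them the other way around and conclude that ``the second summand dies entirely'' and $q(\tau_1(\psi))=\sum_i a_i\otimes[\overline{c_i}]$. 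That is false: the correct value is $q(\tau_1(\psi))=\sum_i a_i\otimes\Lambda^2\iota_*(\overline{c_i})-\sum_i b_i\otimes\bigl(\overline{d_i}\bmod\Lambda^2A+\Lambda^2B\bigr)$, neither summand vanishes in general, and the $b_i\otimes\overline{d_i}$ terms are exactly where the nontrivial content of the lemma sits (they must be matched against $b_i\otimes(d_iK_4\bmod\mathfrak{Lie}_3(B))$ on the other side). You partially self-correct in the last paragraph, but the erroneous computation is never retracted, so the argument as written is internally inconsistent. Also, $d_i\in[\pi,\mathbb{A}]$ is not literally true; from $\psi\in\mathcal{I}^{\mathfrak{a}}$ you only get $d_i\in K_3=\Gamma_3\pi\cdot[\pi,\mathbb{A}]$.

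The genuine gap is in the decisive step: you assert that $(d_iK_4)\bmod\mathfrak{Lie}_3(B)$ and $(d_i\Gamma_3\pi)\bmod(\Lambda^2A+\Lambda^2B)$ give the same element of $A\otimes B$ because the two images are ``compatible under the grading maps,'' but no map between the two associated graded objects is exhibited (the filtrations $\Gamma_\bullet$ and $K_\bullet$ interleave with a degree shift, so this compatibility is precisely what has to be proved). The paper closes this by invoking Lemma \ref{JTHlemma3}: write $d_i=y_in_i$ with $y_i\in\Gamma_3\pi$ and $n_i\in\mathbb{A}$; since $d_i\in\Gamma_2\pi$ one gets $n_i\in\Gamma_2\pi\cap\mathbb{A}=[\pi,\mathbb{A}]$ (the Hopf-formula identity from subsection \ref{sec5.1prel}). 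Then $y_i$ dies in $\Gamma_2\pi/\Gamma_3\pi$ and contributes an element of $\mathfrak{Lie}_3(B)$ in $K_3/K_4$, so both reductions equal the $A\otimes B$-part of $n_i$, which one reads off in the symplectic basis. Your proposal would be complete if you either carried out this decomposition (which you cite Lemma \ref{JTHlemma3} for, but only in the context of compatibility with $\iota_\#$) or checked the two homomorphisms $K_3\to A\otimes B$ on the generators $\Gamma_3\pi$ and $[\pi,\mathbb{A}]$ of $K_3$; as it stands, the key equality is asserted rather than proven.
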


\begin{proof}
Let $\psi\in \mathcal{I}\cap\mathcal{I}^{\mathfrak{a}}$. From equation (\ref{JTH2equ14}) we can write
\begin{equation}\label{JTHLMOequ14}
\tau^{\mathfrak{a}}_1(\psi)  = \sum_{i=1}^g a_i\otimes \left(\psi_{\#}(\beta_i)\beta^{-1}_iK_{3}\right) - \sum_{i=1}^g b_i\otimes \left(\psi_{\#}(\alpha_i)\alpha^{-1}_iK_{4}\right),
\end{equation}
besides
\begin{equation}\label{JTHLMOequ15}
\tau_1(\psi) = \sum_{i=1}^g a_i\otimes \left(\psi_{\#}(\beta_i)\beta^{-1}_i\Gamma_{3}\pi\right) - \sum_{i=1}^g b_i\otimes \left(\psi_{\#}(\alpha_i)\alpha^{-1}_i\Gamma_{3}\pi\right).
\end{equation}
By applying $p$ to (\ref{JTHLMOequ14}) and $q$ to (\ref{JTHLMOequ15}) we obtain 
\begin{equation*}\label{JTHLMOequ16}
p\tau^{\mathfrak{a}}_1(\psi)  = \sum_{i=1}^g a_i\otimes \left(\iota_{\#}(\psi_{\#}(\beta_i)\beta^{-1}_i)\Gamma_{3}\pi'\right) - \sum_{i=1}^g b_i\otimes \left(\left(\psi_{\#}(\alpha_i)\alpha^{-1}_iK_{4}\right) \text{ mod } \mathfrak{Lie}_3(B)\right),
\end{equation*}
and
\begin{equation*}\label{JTHLMOequ17}
q\tau_1(\psi) = \sum_{i=1}^g a_i\otimes \left(\iota_{\#}(\psi_{\#}(\beta_i)\beta^{-1}_i)\Gamma_{3}\pi'\right) - \sum_{i=1}^g b_i\otimes \left(\left(\psi_{\#}(\alpha_i)\alpha^{-1}_i\Gamma_{3}\pi\right)\text{ mod }(\Lambda^2A+\Lambda^2 B)\right).
\end{equation*}
Thus we need to show that $$(\psi_{\#}(\alpha_i)\alpha^{-1}_i K_4) \text{ mod } \mathfrak{Lie}_3(B)$$ and $$(\psi_{\#}(\alpha_i)\alpha^{-1}_i \Gamma_3\pi) \text{ mod } (\Lambda^2A+\Lambda^2 B)$$ define the same element in $A\otimes B$. By Lemma \ref{JTHlemma3}, we can write $\psi_{\#}(\alpha_i)\alpha^{-1}_i=y_in_i$ with $y_i\in\Gamma_3\pi\subseteq K_3$ and $n_i\in\mathbb{A}$. Since $\psi_{\#}(\alpha_i)\alpha^{-1}_i\in \Gamma_2\pi$, then $n_i\in \Gamma_2\pi\cap \mathbb{A}= [\pi, \mathbb{A}]\subseteq K_3$. 
Therefore
\begin{equation*}
\psi_{\#}(\alpha_i)\alpha^{-1}_i\Gamma_3\pi = n_i\Gamma_3\pi = \sum_{k<l}\lambda^i_{kl}(a_k\wedge a_l) + \sum_{k<l}\epsilon^i_{kl}(b_k\wedge b_l) + \sum_{k,l}\delta^i_{kl}(a_k\wedge b_l) \in \Lambda^2 H,
\end{equation*}
where $\lambda^i_{kl},\epsilon^i_{kl},\delta^i_{kl}\in\mathbb{Z}$.  Thus
$$(\psi_{\#}(\alpha_i)\alpha^{-1}_i \Gamma_3\pi) \text{ mod } (\Lambda^2A+\Lambda^2 B) = \sum_{k,l}\delta^i_{kl}(a_k\otimes b_l) \in A\otimes B.$$
On the other hand $\psi_{\#}(\alpha_i)\alpha^{-1}_iK_4 = y_iK_4 + n_iK_4$, but $y_i\in \Gamma_3\pi$, hence  $y_iK_4\in\mathfrak{Lie}_3(B)$. Therefore we also have
\begin{equation*}
\begin{split}
(\psi_{\#}(\alpha_i)\alpha^{-1}_i K_4) \text{ mod } \mathfrak{Lie}_3(B) & = (y_iK_4) \text{ mod } \mathfrak{Lie}_3(B) +  (n_iK_4) \text{ mod } \mathfrak{Lie}_3(B)\\
& = \sum_{k,l}\delta^i_{kl}(a_k\otimes b_l) \in A\otimes B.
\end{split}
\end{equation*}
\end{proof}

\para{The first alternative Johnson homomorphism and the LMO functor} In order to relate $\tau^{\mathfrak{a}}_1$ with the LMO functor we need particular cases of the two theorems that say how   the Johnson homomorphisms and the Johnson-Levine homomorphisms are related to the LMO functor. For $h\in\mathcal{L}$ denote by $\widetilde{Z}^{Y,t,+}(c(h))$ the element in $\mathcal{A}^{Y,t}(\left\lfloor g\right\rceil^+)$ obtained from $\widetilde{Z}^{Y}(c(h))$ by sending all terms with loops or with $i^{-}$-colored legs to $0$. 

\begin{theorem}\cite[Corollary 5.11]{MR2903772}\label{JTHLMOthmCHM}  Let $m\geq 1$. If $h\in J_m\mathcal{M}$ then
\begin{equation*}\label{JTHLMOequ18}
\widetilde{Z}^{Y,t}(c(h))=\varnothing - \left(\eta^{-1}\tau_m(c(h))\right)_{|a_j\mapsto j^-, b_j\mapsto j^+} + (\text{\emph{i-deg}}>m).
\end{equation*}
\end{theorem}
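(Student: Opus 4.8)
The plan is to adapt the strategy already used by Cheptea--Habiro--Massuyeau in the proof of the corresponding statement for the Johnson homomorphisms, reducing the statement to a computation of the tree-reduction of the LMO functor on the mapping cylinder $c(h)$ and comparing it with the diagrammatic version of $\tau_m$. First I would recall that for $h\in J_m\mathcal{M}$ the mapping cylinder $c(h)$ is a homology cylinder in $\mathcal{IC}_{g,1}$, and by Lemma~\ref{lemmasplitstruty} its LMO value splits as $\widetilde{Z}(c(h))=\widetilde{Z}^s(c(h))\sqcup\widetilde{Z}^Y(c(h))$ with strut part determined by the linking matrix; since $h\in\mathcal{I}$, Lemma~\ref{JTHLMOlemma1}(iii) gives $\mathrm{Lk}(c(h))=\left(\begin{smallmatrix}0&\mathrm{Id}_g\\ \mathrm{Id}_g&0\end{smallmatrix}\right)$, so the strut part is the identity morphism and plays no further role once we pass to $\widetilde{Z}^{Y,t}$. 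Thus the content of the theorem is entirely about the leading (lowest internal degree) term of $\widetilde{Z}^{Y,t}(c(h))$.

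The key step is to identify that leading term with $-\eta^{-1}\tau_m(c(h))$. Here I would invoke the fact, due to Massuyeau \cite{MR2903772}, that the LMO functor induces a symplectic expansion of $\pi$: concretely, the tree-reduction of the Kontsevich integral (hence of the LMO functor, via Lemma~\ref{lemmaslc}) provides a group-like expansion $\pi\to\widehat{\mathcal{A}^{t,c}}(H)$ whose behaviour on the mapping class group records the Johnson homomorphisms. The composition formula for the LMO functor together with the functoriality of $\widetilde{Z}$ then shows that, modulo terms of internal degree $>m$ and modulo looped diagrams, $\widetilde{Z}^{Y,t}(c(h))$ is $\varnothing$ plus the degree $m$ part of $\log$ of this expansion applied to $h$, which is exactly $-\tau_m(h)$ re-expressed diagrammatically via $\eta^{-1}$ with the substitution $a_j\mapsto j^-$, $b_j\mapsto j^+$ that matches the colours $m_-(\alpha_j)$, $m_+(\beta_j)$ attached to the top and bottom surfaces. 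The sign and the vanishing of all intermediate-degree tree terms below degree $m$ come from $h\in J_m\mathcal{M}$: the expansion of $h_\#(x)x^{-1}$ starts in degree $m+1$ in $\pi$, so no connected tree of internal degree $<m$ can appear. I would organize this by first treating the auxiliary invariant $\widetilde{Z}^{Y,t,+}$ supported only on $\lfloor g\rceil^+$-colours (which captures the expansion on the top surface) and then deducing the statement for the full $\widetilde{Z}^{Y,t}$.

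The main obstacle I expect is the bookkeeping of the normalization built into the LMO functor versus the bare Kontsevich integral: one must be careful that the Aarhus integration and the normalization used to achieve functoriality do not alter the leading tree term, which requires knowing that these operations only affect looped diagrams or strictly increase internal degree on the relevant range. This is essentially the technical heart of \cite{MR2903772}, so I would quote the precise statement there (the symplectic-expansion property and its compatibility with composition) rather than reprove it, and restrict attention to extracting from it the degree $m$ tree part. A secondary, more routine point is checking the colour dictionary and the orientation conventions so that the sign is genuinely $-1$ and not $+1$; this I would verify against the low-degree computations in Examples~\ref{ejemploLMO6}, \ref{ejemploLMO7} and \ref{ejemploLMO8}, which already exhibit the opposite-sign phenomenon for the generators of $\mathcal{N}$ and for $c(t_{\alpha_{kl}})$.
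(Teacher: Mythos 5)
The paper does not prove this statement itself---it quotes it from \cite{MR2903772} (Corollary 5.11)---but your sketch follows essentially the same route as the cited proof and as the paper's own argument for the alternative analogue (Theorem \ref{JTHLMOthmvera2}): split off the strut part via the linking matrix, then identify the leading tree term through the symplectic expansion induced by the LMO functor, i.e.\ the identity $\eta^{-1}\varrho^{\theta'} = -\log \widetilde{Z}^{Y,t}\circ c$ together with the fact that the degree-$m$ part of $\varrho^{\theta'}$ on $J_m\mathcal{M}$ is $\tau_m$. Your proposal is therefore correct in approach, with the technical heart (that the Aarhus integration and the normalizations of the LMO functor do not disturb the leading tree term) deferred, as you acknowledge, to the precise statements of \cite{MR2903772}.
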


\begin{theorem}\cite[Theorem 5.4]{vera1}\label{JTHLMOthmvera}  Let $m\geq 1$. If $h\in J^L_m\mathcal{M}$ then
\begin{equation*}\label{JTHLMOequ19}
\widetilde{Z}^{Y,t,+}(c(h))=\varnothing - \left(\eta^{-1}\tau^L_m(c(h))\right)_{|b_j\mapsto j^+} + (\text{\emph{i-deg}}>m).
\end{equation*}
\end{theorem}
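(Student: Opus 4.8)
The plan is to identify the $+$-part of the tree-reduction of the LMO functor with a group-like expansion of the handlebody group $\pi'=\pi_1(V,\iota(*))$ and to extract $\tau^L_m(h)$ as its leading term. The crucial input is the theorem of Massuyeau \cite{MR2903772} that the tree-reduction of $\widetilde{Z}$ induces a \emph{symplectic expansion} $\theta$ of $\pi$, that is, a group-like expansion whose degree-$1$ part is the abelianization $\pi\to H$ and which sends the boundary class to the exponential of the symplectic element $\Omega\in\mathfrak{Lie}_2(H)$. Since the bottom surface of the mapping cylinder $c(h)$ is trivial, the $-$-colored legs of $\widetilde{Z}(c(h))$ record the bottom meridians $m_-(\alpha_i)=\alpha_i$, whose classes generate $\mathbb{A}=\ker(\iota_\#)$; discarding them (passing to $\widetilde{Z}^{Y,t,+}$) therefore corresponds topologically to capping off the bottom with a handlebody, i.e. to pushing everything into $\pi'$ along $\iota_\#$. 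As $\pi'$ is free on the images of the $\beta_i$, the expansion $\theta$ descends to an honest Magnus-type expansion $\theta'$ of $\pi'$ whose primitive part is valued in the completed free Lie algebra on $H'=B$.

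First I would make precise the identity ``$\widetilde{Z}^{Y,t,+}\leftrightarrow\theta'$''. Using Lemma \ref{lemmaslc} to compute $\widetilde{Z}(c(h))$ from the Kontsevich integral of the tangle presentation, together with the functoriality of $\widetilde{Z}$ and its compatibility with the handlebody cobordism, I would show that $\widetilde{Z}^{Y,t,+}(c(h))$ equals, up to the strut normalization governed by Lemma \ref{lemmasplitstruty}, the $\eta$-image of the logarithm of the total Johnson map of $c(h)$ computed through $\theta'$. The symplectic duality $\omega(\alpha_i,\beta_j)=\delta_{ij}$ between the top parallels $\beta_j$ (the $+$-colors, under $b_j\mapsto j^+$) and the meridians $\alpha_i$ --- built into the symplectic expansion --- is exactly what turns this handlebody total Johnson map into the family $\{\iota_\# h_\#(\alpha_i)\}$ appearing in the Johnson--Levine homomorphism.

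Next I would extract the leading term. For $h\in J^L_m\mathcal{M}$ the defining condition $\iota_\# h_\#(\mathbb{A})\subseteq\Gamma_{m+1}\pi'$ forces $\theta'(\iota_\# h_\#(\alpha_i))$ to agree with the trivial expansion below degree $m+1$; hence the lowest nonvanishing tree terms of $\widetilde{Z}^{Y,t,+}(c(h))$ occur in internal degree $m$, with all lower terms collapsing to $\varnothing$. Comparing the degree-$(m+1)$ part of $\theta'(\iota_\# h_\#(\alpha_i))$ with the symplectic-basis formula (\ref{JTH2equ201}) for $\tau^L_m$ identifies this leading tree with $\left(\eta^{-1}\tau^L_m(h)\right)_{|b_j\mapsto j^+}$, the global minus sign arising from the antipode in passing from the group-like expansion to its primitive part. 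This is the analogue, on the handlebody side, of Theorem \ref{JTHLMOthmCHM}, and for $h$ additionally lying in $J_m\mathcal{M}$ it is consistent with the relation $\tau^L_m=\iota_*\tau_m$ recorded in the surjectivity argument of subsection \ref{subsection5.3}.

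The hard part will be establishing the precise dictionary between the $+,t$-reduction and the handlebody expansion $\theta'$ with the correct normalization: one must verify that capping by the handlebody cobordism is compatible with $\widetilde{Z}$, so that discarding the $-$-legs implements $\iota_\#$ cleanly, and one must track the identifications $A^*\cong H'$ and $\Gamma_2\pi'/\Gamma_3\pi'\cong\Lambda^2 B$ through the above duality without sign errors. A secondary technical point, as in Proposition \ref{JTH2prop26}, is the lowest-degree behaviour (the $i$-deg $=0$ struts), which must be treated separately to confirm that no spurious strut contributions survive in $\widetilde{Z}^{Y,t,+}$.
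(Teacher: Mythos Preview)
The paper does not prove this statement: Theorem \ref{JTHLMOthmvera} is quoted verbatim from \cite[Theorem 5.4]{vera1} and used as a black box in the proof of Theorem \ref{JTHLMOthm6.15}. There is therefore no ``paper's own proof'' to compare against here.

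That said, your outline is broadly in the right spirit and parallels the strategy behind Theorem \ref{JTHLMOthmCHM}: one builds an expansion from the LMO functor and reads off the leading term. A few cautions. First, the passage ``discarding the $-$-legs corresponds to capping off the bottom with a handlebody'' is the heart of the matter and is not automatic; in the companion paper this is made precise by composing with the handlebody cobordism $V_g\in\mathcal{LC}ob(g,0)$ and using the functoriality of $\widetilde{Z}$, not by an informal appeal to $\iota_\#$. Second, your claim that the minus sign ``arises from the antipode'' is not the actual mechanism: in the analogous result (Theorem \ref{JTHLMOthmCHM}) the sign comes from the convention in the definition of $\varrho^{\theta'}$ versus $\log\widetilde{Z}^{Y,t}$, cf.\ diagram (\ref{JTHLMOequ27}). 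Third, there is no separate strut issue here since $\widetilde{Z}^{Y,t,+}$ already kills struts by definition, so your last paragraph is addressing a non-problem. If you want a self-contained argument, the cleanest route is to mimic \cite[Section 5]{MR2903772} with $\pi$ replaced by $\pi'$ and the symplectic expansion replaced by its push-forward along $\iota_\#$, then invoke the handlebody-capping compatibility of $\widetilde{Z}$.
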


\begin{remark} We state Theorems \ref{JTHLMOthmCHM} and \ref{JTHLMOthmvera}  in the context of the mapping class group, but the original versions are stated in the context of homology cobordisms.
\end{remark}

\begin{lemma}\label{JTHLMOlemma6.14} Let $h\in\mathcal{N}\subseteq J^{\mathfrak{a}}_1\mathcal{M}$. Then 
$$\widetilde{Z}^{Y,t}(c(h))= \varnothing + (\mathfrak{a}\text{\emph{-deg}} >1),$$
equivalently, $\widetilde{Z}^{Y,\mathfrak{a}}_1(c(h)) = 0$.
\end{lemma}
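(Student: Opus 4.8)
The statement to prove is Lemma \ref{JTHLMOlemma6.14}: for $h\in\mathcal{N}$ we have $\widetilde{Z}^{Y,t}(c(h))=\varnothing+(\mathfrak{a}\text{-deg}>1)$, i.e.\ $\widetilde{Z}^{Y,\mathfrak{a}}_1(c(h))=0$. The plan is to reduce to the generators $t_{\alpha_i}$ and $t_{\alpha_{kl}}$ of $\mathcal{N}$ and then promote the statement to all of $\mathcal{N}$ via the monoid homomorphism property of $\widetilde{Z}^{Y,\mathfrak{a}}_1$ established in Theorem \ref{JTHLMOprop1}.

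First I would check the base cases. By Examples \ref{ejemploLMO6}, \ref{ejemploLMO7} and \ref{ejemploLMO8}, the computations of $\widetilde{Z}^{Y,t}(c(t_{\alpha_i}))$, $\widetilde{Z}^{Y,t}(c(t_{\alpha_{12}}))$ and $\widetilde{Z}^{Y,t}(c(t_{\alpha_{kl}}))$ show that there are no terms of internal degree $1$; in fact every tree-like diagram occurring in these values with a trivalent vertex has all its legs colored by elements of $B$ (the $i^+$-colored legs), so each such diagram has $\mathfrak{a}$-deg at least $2$ — indeed a connected tree with at least one trivalent vertex has at least $3$ legs, and if they are all $B$-colored its $\mathfrak{a}$-degree is $\#\{B\text{-legs}\}-3\geq 0$, but one checks from the explicit diagrams that the lowest $\mathfrak{a}$-degree actually occurring is $>1$. (Example \ref{ejemploLMO6} already tells us there is no i-deg $=1$ term for $t_{\alpha_i}$; the explicit diagrams in Examples \ref{ejemploLMO7} and \ref{ejemploLMO8} are all $+$-colored and of $\mathfrak{a}$-deg $\geq 2$.) Hence $\widetilde{Z}^{Y,\mathfrak{a}}_1(c(t_{\alpha_i}))=0$ and $\widetilde{Z}^{Y,\mathfrak{a}}_1(c(t_{\alpha_{kl}}))=0$, so the generators lie in $\mathcal{F}^{\mathfrak{a}}_2\mathcal{C}$.

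Next I would observe that $c:\mathcal{M}\to\mathcal{C}$ is a monoid homomorphism and that $\mathcal{N}\subseteq\mathcal{I}^{\mathfrak{a}}$ maps into $\mathcal{IC}^{\mathfrak{a}}$, indeed into $\mathcal{F}^{\mathfrak{a}}_1\mathcal{C}$. Since each generator of $\mathcal{N}$ maps into $\mathcal{F}^{\mathfrak{a}}_2\mathcal{C}$, which by Theorem \ref{JTHLMOprop1} is a submonoid of $\mathcal{C}$, and $\mathcal{N}$ is generated as a \emph{group} by these twists, I also need that inverses of elements of $\mathcal{F}^{\mathfrak{a}}_2\mathcal{C}\cap c(\mathcal{M})$ land back in $\mathcal{F}^{\mathfrak{a}}_2\mathcal{C}$: this follows because $\widetilde{Z}^{Y,\mathfrak{a}}_1:\mathcal{F}^{\mathfrak{a}}_1\mathcal{C}\to\mathcal{T}^{Y,\mathfrak{a}}_1$ is a monoid homomorphism into an abelian group, so $\widetilde{Z}^{Y,\mathfrak{a}}_1(c(h)^{-1})=-\widetilde{Z}^{Y,\mathfrak{a}}_1(c(h))=0$ when $\widetilde{Z}^{Y,\mathfrak{a}}_1(c(h))=0$, and $c(h)^{-1}=c(h^{-1})$ still lies in $\mathcal{IC}^{\mathfrak{a}}$ (equivalently $h^{-1}\in\mathcal{N}\subseteq\mathcal{I}^{\mathfrak{a}}$). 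Therefore $c(\mathcal{N})\subseteq\mathcal{F}^{\mathfrak{a}}_2\mathcal{C}$ and, since $\widetilde{Z}^{Y,\mathfrak{a}}_1$ is a homomorphism vanishing on generators, $\widetilde{Z}^{Y,\mathfrak{a}}_1(c(h))=0$ for all $h\in\mathcal{N}$. This says precisely $\widetilde{Z}^{Y,t}(c(h))=\varnothing+(\mathfrak{a}\text{-deg}>1)$.

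The only delicate point is the bookkeeping in the base cases: one must confirm from the explicit low-degree computations in Examples \ref{ejemploLMO6}--\ref{ejemploLMO8} that \emph{no} diagram with a trivalent vertex of $\mathfrak{a}$-degree exactly $1$ appears — equivalently that the trivalent-vertex part starts in $\mathfrak{a}$-degree $\geq 2$ — and that the first $\mathfrak{a}$-degree one part, namely diagrams of type $(a)$ in Figure \ref{figuraJTHLMO10}, is genuinely absent among the \emph{connected trivalent} diagrams (it can only occur among struts, which are excluded from $\widetilde{Z}^{Y,t}$). Granting those explicit computations, the rest is a formal consequence of the monoid-homomorphism property of $\widetilde{Z}^{Y,\mathfrak{a}}_1$ proved in Theorem \ref{JTHLMOprop1} together with Proposition \ref{JTHLMOcor2}, which already records the strut part behaviour.
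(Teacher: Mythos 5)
Your overall strategy---verify the statement on the generators $t_{\alpha_i}$, $t_{\alpha_{kl}}$ and propagate to all of $\mathcal{N}$ via the monoid homomorphism $\widetilde{Z}^{Y,\mathfrak{a}}_1$ of Theorem \ref{JTHLMOprop1} (your abelian-target argument for inverses is a reasonable way to make the passage from monoid to group explicit)---is the same as the paper's. The gap is in the base case. A connected tree of $\mathfrak{a}$-deg $=1$ with at least one trivalent vertex has either i-deg $=1$ (one $A$-colored leg and two $B$-colored legs, type $(b)$ of Figure \ref{figuraJTHLMO10}) or i-deg $=2$ (four $B$-colored legs, type $(c)$). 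The explicit computations of Examples \ref{ejemploLMO6}, \ref{ejemploLMO7} and \ref{ejemploLMO8} are low-degree computations that only settle the i-deg $=1$ part of $\widetilde{Z}^{Y,t}(c(h))$ for the generators; they say nothing about i-deg $=2$, so your claim that ``one checks from the explicit diagrams that the lowest $\mathfrak{a}$-degree actually occurring is $>1$'' is unsupported precisely where it is needed, namely for the type $(c)$ diagrams. Your intermediate inference is also false as stated: a tree all of whose legs are $B$-colored has $\mathfrak{a}$-deg $=\#\{\text{legs}\}-3$, which is $0$ for a $Y$-graph and $1$ for an $H$-graph, not $\geq 2$.

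The paper closes exactly this case by a structural argument you never invoke: $\mathcal{N}$ is contained in the handlebody group $\mathcal{H}$, so each $c(h)$ is a special Lagrangian cobordism, and by \cite[Corollary 5.4]{MR2403806} diagrams with no $\left\lfloor g\right\rceil^-$-colored legs do not occur in $\widetilde{Z}^{Y,t}(c(h))$; this kills the type $(c)$ terms (and also the $\mathfrak{a}$-deg $=0$ all-$+$ $Y$-graphs). The latter point matters for your inverse argument as well: to apply $\widetilde{Z}^{Y,\mathfrak{a}}_1$ you need $c(h^{\pm 1})\in\mathcal{F}^{\mathfrak{a}}_1\mathcal{C}$, which requires the absence of $\mathfrak{a}$-deg $=0$ terms in $\widetilde{Z}^{Y,t}$, not merely membership in $\mathcal{IC}^{\mathfrak{a}}$, which is all you check. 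With the handlebody-group/special-Lagrangian input added, your argument goes through and coincides with the paper's proof.
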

\begin{proof}
The diagrams with $\mathfrak{a}$-deg $=1$ have i-deg between $0$ and $2$ included. In Examples \ref{ejemploLMO6}, \ref{ejemploLMO7} and \ref{ejemploLMO8} we have seen  that for the generators  $h\in\mathcal{N}$ there are no diagrams  of $\mathfrak{a}$-deg $=1$ and  i-deg $=1$ in $\widetilde{Z}^{Y,t}(c(h))$. Now, the diagrams of $\mathfrak{a}$-deg $=1$ and i-deg $=2$ are of  type $(c)$ in Figure \ref{figuraJTHLMO10}. Since $\mathcal{N}$ is included in the handlebody group, this kind of diagrams do not appear in $\widetilde{Z}^{Y,t}$, see \cite[Corollary 5.4]{MR2403806}. Therefore we have the stated result for the generators of $\mathcal{N}$ and the general statement follows by Theorem \ref{JTHLMOprop1}.
\end{proof}

\begin{theorem}\label{JTHLMOthm6.15} Let $f\in J^{\mathfrak{a}}_1\mathcal{M}$. Then
\begin{center}
\includegraphics[scale=0.74]{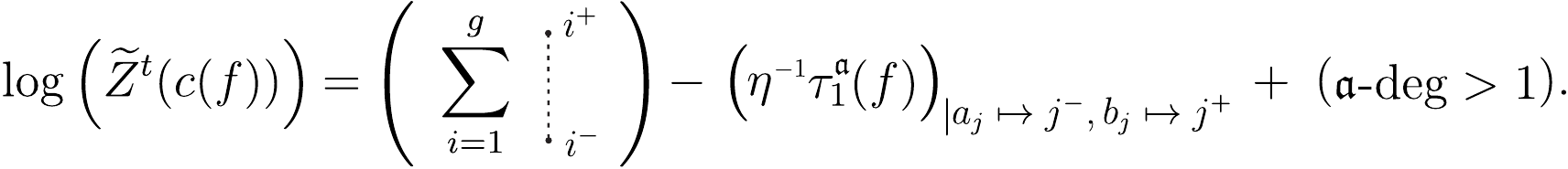}	
\end{center}

\noindent In particular $\widetilde{Z}^{Y,\mathfrak{a}}_1(c(f)) = -\left(\eta^{-1}\tau^{\mathfrak{a}}_1(f)\right)^Y_{|a_j\mapsto j^-, b_j\mapsto j^+}$, where $\left(\eta^{-1}\tau^{\mathfrak{a}}_1(f)\right)^Y$ is the reduction of $\eta^{-1}\tau^{\mathfrak{a}}_1(f)$ modulo struts. 
\end{theorem}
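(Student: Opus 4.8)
The plan is to decompose an arbitrary $f \in J^{\mathfrak{a}}_1\mathcal{M} = \mathcal{I}^{\mathfrak{a}}$ using Lemma \ref{lemmaJTHLMO2}, writing $f = h \circ \psi$ with $h \in \mathcal{N}$ and $\psi \in \mathcal{I} \cap \mathcal{I}^{\mathfrak{a}}$, and to prove the formula separately on the two factors, then combine via the homomorphism properties of $\widetilde{Z}^{Y,\mathfrak{a}}_1$ (Theorem \ref{JTHLMOprop1}), of $\tau^{\mathfrak{a}}_1$, and of $\eta^{-1}$. For the factor $h \in \mathcal{N}$, Lemma \ref{JTHLMOlemma6.14} gives $\widetilde{Z}^{Y,\mathfrak{a}}_1(c(h)) = 0$, while Proposition \ref{JTHLMOprop2} shows $\tau^{\mathfrak{a}}_1(h)$ consists only of diagrams of type $(a)$ in Figure \ref{figuraJTHLMO10}, i.e.\ struts colored by $A\oplus A$; so the claimed formula on $\mathcal{N}$ is exactly Proposition \ref{JTHLMOcor2}, which records that $\tau^{\mathfrak{a}}_1(h)$ is read off from the strut (linking-matrix) part of the LMO functor. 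Thus on $\mathcal{N}$ the $Y$-part of $\widetilde{Z}$ contributes nothing of $\mathfrak{a}$-deg $1$ and the strut part accounts for the whole of $\tau^{\mathfrak{a}}_1$.

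For the factor $\psi \in \mathcal{I} \cap \mathcal{I}^{\mathfrak{a}}$, since $\mathcal{I}\cap\mathcal{I}^{\mathfrak{a}} \subseteq \mathcal{I} = J_1\mathcal{M}$ and $\psi \in \mathcal{I}^{\mathfrak{a}} \subseteq \mathcal{I}^L = J_1^L\mathcal{M}$ by Theorem C, we can feed $\psi$ simultaneously into Theorem \ref{JTHLMOthmCHM} (with $m=1$) and Theorem \ref{JTHLMOthmvera} (with $m=2$, using $\mathcal{I}^{\mathfrak{a}} = J_2^L\mathcal{M}$). The first tells us $\widetilde{Z}^{Y,t}(c(\psi)) = \varnothing - (\eta^{-1}\tau_1(\psi))_{|a_j\mapsto j^-, b_j\mapsto j^+} + (\text{i-deg}>1)$, and the second controls the part with only $+$-colored legs by $\tau_2^L(\psi)$. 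The key identity relating these two readings to $\tau^{\mathfrak{a}}_1(\psi)$ is Equation (\ref{JTHLMOequ12}): $\tau^{\mathfrak{a}}_1(\psi) = p(\tau^{\mathfrak{a}}_1(\psi)) + \tau_2^L(\psi)$, together with Lemma \ref{JTHLMOlemma6.10} which identifies $p(\tau^{\mathfrak{a}}_1(\psi))$ with $q(\tau_1(\psi))$ via the canonical isomorphism $\mathfrak{Lie}_3(B;A)/\mathfrak{Lie}_3(B) \cong A\otimes B \cong \Lambda^2 H/\langle \Lambda^2 A + \Lambda^2 B\rangle$. Concretely I would compare coefficients in the three color-types of diagrams of $\mathfrak{a}$-deg $1$: diagrams of type $(a)$ (both legs $A$) vanish on $\mathcal{I}$ since $\psi_*=\mathrm{Id}_H$; diagrams of type $(b)$ (two $A$-legs and one $B$-leg, i.e.\ the $A\otimes B$ summand) are read from $\widetilde{Z}^{Y,t}(c(\psi))$ by projecting the i-deg $1$ tree-part, which by Lemma \ref{JTHLMOlemma6.10} matches $q(\tau_1(\psi))$ and hence $p(\tau^{\mathfrak{a}}_1(\psi))$; and diagrams of type $(c)$ (all $B$-legs) are read by $\widetilde{Z}^{Y,t,+}(c(\psi))$, which by Theorem \ref{JTHLMOthmvera} equals $-\eta^{-1}\tau_2^L(\psi)$ up to higher degree. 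Summing these, with the sign $-1$ from both Theorems \ref{JTHLMOthmCHM} and \ref{JTHLMOthmvera}, yields $\widetilde{Z}^{Y,\mathfrak{a}}_1(c(\psi)) = -(\eta^{-1}\tau^{\mathfrak{a}}_1(\psi))^Y_{|a_j\mapsto j^-, b_j\mapsto j^+}$ plus possibly the struts part needed to complete the statement.

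Finally I would reassemble: using functoriality $\widetilde{Z}(c(h)\circ c(\psi)) = \widetilde{Z}(c(h))\circ\widetilde{Z}(c(\psi))$ and the fact that composition in ${}^{ts}\!\mathcal{A}$ respects the $\mathfrak{a}$-degree filtration in the lowest degree (this is the content of the computation in Theorem \ref{JTHLMOprop1}), one gets $\widetilde{Z}^{Y,\mathfrak{a}}_1(c(f)) = \widetilde{Z}^{Y,\mathfrak{a}}_1(c(h)) + \widetilde{Z}^{Y,\mathfrak{a}}_1(c(\psi)) = 0 + \widetilde{Z}^{Y,\mathfrak{a}}_1(c(\psi))$; and similarly the strut part adds up, while $\tau^{\mathfrak{a}}_1(f) = \tau^{\mathfrak{a}}_1(h) + \tau^{\mathfrak{a}}_1(\psi)$ by the homomorphism property, the $\mathcal{N}$-contribution being entirely struts. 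This gives the full displayed formula in the statement, and the last sentence ($\widetilde{Z}^{Y,\mathfrak{a}}_1(c(f)) = -(\eta^{-1}\tau^{\mathfrak{a}}_1(f))^Y_{|a_j\mapsto j^-, b_j\mapsto j^+}$) follows by discarding struts on both sides.

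The main obstacle I anticipate is the careful bookkeeping in the $\psi$-part: one must verify that when restricting $\widetilde{Z}^{Y,t}(c(\psi))$ to its $\mathfrak{a}$-deg $1$ summand, the passage between the internal-degree grading used by Theorem \ref{JTHLMOthmCHM} (diagrams of i-deg $1$, which have three legs) and the alternative-degree grading genuinely distributes the coefficients into the type-$(b)$ and type-$(c)$ pieces correctly — that is, that no i-deg $1$ tree with three $A$-colored legs (which would have $\mathfrak{a}$-deg $=2\cdot 3 - 3 = 3$, not $1$) or with mixed legs contributes spuriously, and that the strut contributions of $\mathfrak{a}$-deg $1$ (struts with two $A$-legs, coming from the $\Delta$ block of the linking matrix) are correctly identified with the type-$(a)$ part of $\tau^{\mathfrak{a}}_1$. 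Matching signs and the color-substitution conventions $a_j\mapsto j^-$, $b_j\mapsto j^+$ across Theorems \ref{JTHLMOthmCHM}, \ref{JTHLMOthmvera}, Lemma \ref{JTHLMOlemma6.10}, and Equation (\ref{JTHLMOequ12}) is where the proof is most error-prone, but it is essentially a finite check once the decomposition $f = h\circ\psi$ is in place.
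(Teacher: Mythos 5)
Your proposal is correct and takes essentially the same route as the paper's proof: the decomposition $f=h\psi$ from Lemma \ref{lemmaJTHLMO2}, the vanishing $\widetilde{Z}^{Y,\mathfrak{a}}_1(c(h))=0$ from Lemma \ref{JTHLMOlemma6.14} combined with Proposition \ref{JTHLMOcor2} for the strut part, the treatment of $\psi\in\mathcal{I}\cap\mathcal{I}^{\mathfrak{a}}$ via Equation (\ref{JTHLMOequ12}), Lemma \ref{JTHLMOlemma6.10} and Theorems \ref{JTHLMOthmCHM} and \ref{JTHLMOthmvera}, and the reassembly through the homomorphism properties of Theorem \ref{JTHLMOprop1}, $\tau^{\mathfrak{a}}_1$ and $\eta^{-1}$. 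The degree and color bookkeeping you flag is precisely what the paper carries out when projecting the i-deg $1$ and i-deg $2$ contributions onto the $\mathfrak{a}$-deg $=1$ parts, so no additional idea is missing.
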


\begin{proof} Let $f\in J^{\mathfrak{a}}_1\mathcal{M}$. By Lemma \ref{lemmaJTHLMO2} we can write $f=h\psi$ with $h\in\mathcal{N}$ and $\psi\in \mathcal{I}\cap\mathcal{I}^\mathfrak{a}$. From (\ref{JTHLMOequ12}) and Lemma \ref{JTHLMOlemma6.10} we have
$$\tau^{\mathfrak{a}}_1(f) = \tau^{\mathfrak{a}}_1(h) + \tau^{\mathfrak{a}}_1(\psi) =    \tau^{\mathfrak{a}}_1(h) + p( \tau^{\mathfrak{a}}_1(\psi)) + \tau^L_2(\psi) =  \tau^{\mathfrak{a}}_1(h) + q( \tau_1(\psi)) + \tau^L_2(\psi) .$$
Therefore 
\begin{equation}\label{JTHLMOthmequ06.15}
\eta^{-1}\tau^{\mathfrak{a}}_1(f) = \eta^{-1}\tau^{\mathfrak{a}}_1(h) + \eta^{-1}q(\tau_1(\psi)) + \eta^{-1}\tau^L_2(\psi).
\end{equation}
Notice that $\eta^{-1}\tau^{\mathfrak{a}}_1(h)$ corresponds to the diagrams of i-deg $= 0$, $\eta^{-1}p( \tau^{\mathfrak{a}}_1(\psi))$ corresponds to the diagrams of i-deg $= 1$ and $\eta^{-1}\tau^L_2(\psi)$ corresponds to the diagrams of i-deg $= 2$ appearing in $\eta^{-1}\tau^{\mathfrak{a}}_1(f)$. By  Lemma \ref{lemmasplitstruty} and Proposition \ref{JTHLMOcor2} we have 
\begin{center}
\includegraphics[scale=0.74]{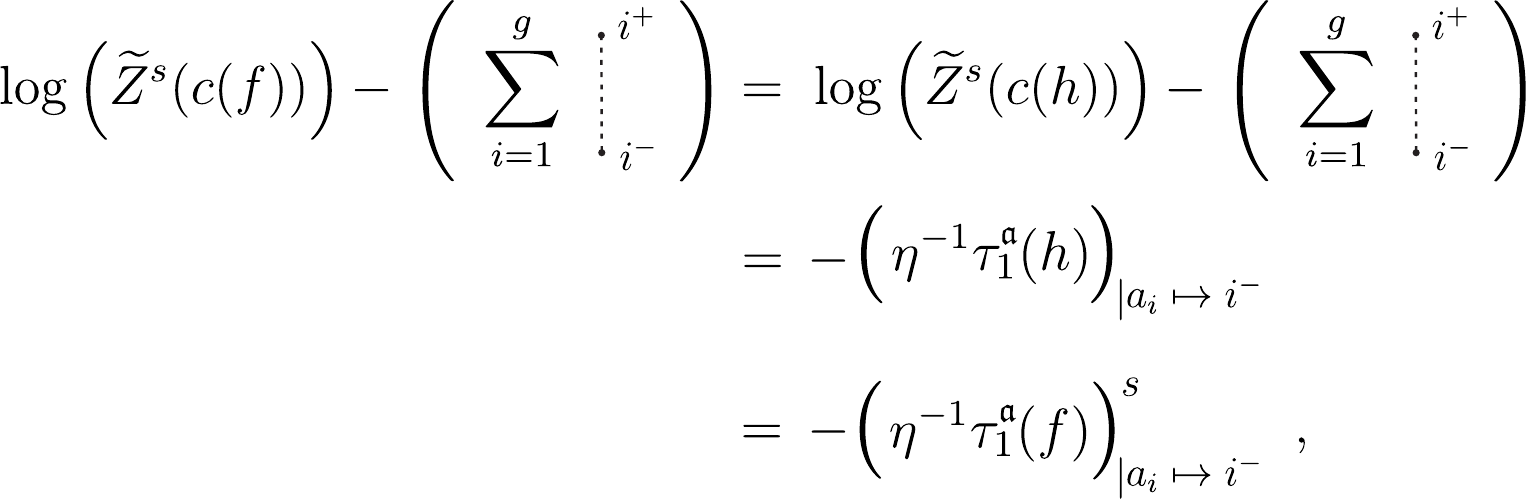}	
\end{center}
\noindent where $\left(\eta^{-1}\tau^{\mathfrak{a}}_1(f)\right)^s$ is the reduction of $\eta^{-1}\tau^{\mathfrak{a}}_1(f)$ modulo diagrams with i-deg $\geq 1$. 

On the other hand, by Theorem \ref{JTHLMOthmCHM}
\begin{equation}\label{JTHLMOequ1thm6.15}
\begin{split}
-\left(\eta^{-1}q(\tau_1(\psi))\right)_{|a_j\mapsto j^-, b_j\mapsto j^+} & = -\left(\left(\eta^{-1}\tau_1(\psi)\right)_{|a_j\mapsto j^-, b_j\mapsto j^+}\right)_{\mathfrak{a}\text{-deg} = 1}\\
& = \left(\widetilde{Z}^{Y,t}(c(\psi))\right)_{\mathfrak{a}\text{-deg} = 1, \text{i-deg} =1},
\end{split}
\end{equation}
and by Theorem \ref{JTHLMOthmvera}
\begin{equation}\label{JTHLMOequ2thm6.15}
-\left(\eta^{-1}\tau^L_2(\psi)\right)_{b_j\mapsto j^+} = \left(\widetilde{Z}^{Y,t,+}(c(\psi))\right)_{|\text{i-deg} = 2} = \left(\widetilde{Z}^{Y,t}(c(\psi))\right)_{|\mathfrak{a}\text{-deg} = 1, \text{i-deg} = 2}.
\end{equation}

\noindent Therefore

\begin{equation*}
\begin{split}
\widetilde{Z}^{Y,\mathfrak{a}}_1(c(f)) & = \widetilde{Z}^{Y,\mathfrak{a}}_1(c(h)) + \widetilde{Z}^{Y,\mathfrak{a}}_1(c(\psi))\\
& = \left(\widetilde{Z}^{Y,t}(c(\psi))\right)_{\mathfrak{a}\text{-deg} = 1, \text{i-deg} =1}+  \left(\widetilde{Z}^{Y,t}(c(\psi))\right)_{|\mathfrak{a}\text{-deg} = 1, \text{i-deg} = 2}\\
& = - \left(\eta^{-1}q(\tau_1(\psi))\right)_{|a_j\mapsto j^-, b_j\mapsto j^+}  -  \left(\eta^{-1}\tau^L_2(\psi)\right)_{b_j\mapsto j^+} \\
& = - \left(\eta^{-1}\big(p\tau^{\mathfrak{a}}_1(\psi) + \tau^L_2(\psi)\big)\right)_{|a_j\mapsto j^-, b_j\mapsto j^+}\\
& = - \left(\eta^{-1}\tau^{\mathfrak{a}}_1(\psi)\right)_{|a_j\mapsto j^-, b_j\mapsto j^+}\\
& = - \left(\eta^{-1}\tau^{\mathfrak{a}}_1(f)\right)^Y_{|a_j\mapsto j^-, b_j\mapsto j^+}
\end{split}
\end{equation*}

\noindent In the first equality we use Theorem \ref{JTHLMOprop1}, in the second we use Lemma \ref{JTHLMOlemma6.14}, in the third we use (\ref{JTHLMOequ1thm6.15}) and (\ref{JTHLMOequ2thm6.15}), in the fourth we use Lemma \ref{JTHLMOlemma6.10} and the homomorphism property of $\eta^{-1}$, and in the fifth  we use (\ref{JTHLMOequ12}). Finally in the sixth equality we use~(\ref{JTHLMOthmequ06.15}).
\end{proof}

\begin{remark} Theorems \ref{JTHLMOthmCHM} and \ref{JTHLMOthmvera} are valid in the context of homology cobordisms. This suggests that the first alternative Johnson homomorphism could be defined on $\mathcal{IC}^{\mathfrak{a}}$ and that the statement of  Theorem \ref{JTHLMOthm6.15} could be generalized to $\mathcal{IC}^{\mathfrak{a}}$. It is very likely possible to read the $0$-th alternative Johnson homomorphism $\tau^{\mathfrak{a}}_0(h)$ for $h\in\mathcal{L}$ in the $\mathfrak{a}$-deg $=0$  part of $\widetilde{Z}^t(c(h))$.
\end{remark}

\subsection{Higher alternative Johnson homomorphisms and the LMO functor}\label{section6.3} The aim of this subsection is to prove an analogue of Theorem \ref{JTHLMOthm6.15} for $\tau^{\mathfrak{a}}_m$ with $m\geq 2$. That is, we want to prove the following.

\begin{theorem}\label{JTHLMOthmvera2} Let $m\geq 2$. If $f\in J^{\mathfrak{a}}_m\mathcal{M}$, then
$$\widetilde{Z}^{Y,t}(c(f)) = \varnothing -  \left(\eta^{-1}\tau^{\mathfrak{a}}_m(f)\right)_{|a_j\mapsto j^-, b_j\mapsto j^+} + (\mathfrak{a}\text{\emph{-deg}} > m).$$

\noindent Or equivalently $\widetilde{Z}^{Y,\mathfrak{a}}_m(c(f))= - \left(\eta^{-1}\tau^{\mathfrak{a}}_m(f)\right)_{|a_j\mapsto j^-, b_j\mapsto j^+}$.
\end{theorem}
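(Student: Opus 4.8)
The plan is to reduce the statement to the fact, established by Massuyeau, that the LMO functor provides a \emph{symplectic expansion} of $\pi$, and then to upgrade this to an \emph{alternative symplectic expansion}, that is, an expansion compatible with the $N$-series $(K_m)_{m\geq 1}$ rather than the lower central series. Recall from Proposition \ref{JTHproposition5} that for $m\geq 2$ we have $J^{\mathfrak{a}}_m\mathcal{M}\subseteq J_1\mathcal{M}=\mathcal{I}$, so that $c(f)$ is a homology cylinder and all the machinery attached to the Torelli group (in particular the total Johnson map and the results of \cite{MR2403806,MR2903772}) applies. The first step is to recall the definition of the group-like element $\widetilde{Z}^{Y,t}(c(f))$ and how, via the symmetrization map $\chi$ and the constructions in Section \ref{section2}, it encodes a map $\pi\to$ (a completion of) $\mathfrak{Lie}(H)$; by Massuyeau's theorem this map is a symplectic expansion $\theta$ of $\pi$. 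The second step is to observe that, because $f$ fixes $\partial\Sigma$ and the handlebody structure is built into the choice of meridians/parallels used in the bottom-top tangle presentation, the expansion $\theta$ actually satisfies $\theta(\mathbb{A})\subseteq$ the degree-$\geq 2$ part with respect to the alternative grading, i.e. $\theta$ is an alternative symplectic expansion in the sense adapted from \cite[Section 10]{MR3828784}. This is what makes the reduction modulo looped diagrams, graded by $\mathfrak{a}$-deg rather than $i$-deg, compute $\tau^{\mathfrak{a}}_m$.

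The third step is the computation proper. Using the alternative expansion $\theta$ and the defining formula for $\tau^{\mathfrak{a}}_m$ in (\ref{JTH2equ14}) — namely $\tau^{\mathfrak{a}}_m(f)=\sum_i a_i\otimes(f_{\#}(\beta_i)\beta_i^{-1}K_{m+2})-\sum_i b_i\otimes(f_{\#}(\alpha_i)\alpha_i^{-1}K_{m+3})$ — one expresses the lowest nonvanishing alternative-degree part of $\theta\circ f_{\#}-\theta$ in terms of $\tau^{\mathfrak{a}}_m(f)$, exactly as in the lower-central-series case treated in \cite[Theorem 5.4]{vera1} and \cite[Corollary 5.11]{MR2903772}. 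The point is that $\widetilde{Z}(c(f))$ records $\theta\circ f_{\#}$ against $\theta$ on the top/bottom surfaces, and its tree part, once reorganized by $\mathfrak{a}$-deg, is governed by the degree-$m$ term of this difference; rooting the resulting connected trees at their legs and applying $\eta^{-1}$ (Proposition \ref{JTH2prop26}) produces precisely $-\left(\eta^{-1}\tau^{\mathfrak{a}}_m(f)\right)_{|a_j\mapsto j^-,b_j\mapsto j^+}$, with the sign inherited from the normalization of $\widetilde{Z}$ (the same sign that appears, e.g., in Theorem \ref{JTHLMOthmCHM}). The filtration statement $\widetilde{Z}^{Y,t}(c(f))=\varnothing+(\mathfrak{a}\text{-deg}>m)$ below degree $m$, together with the already-established fact that $\widetilde{Z}^{Y,\mathfrak{a}}_m$ is a monoid homomorphism (Theorem \ref{JTHLMOprop1}), lets one conclude that $c(f)\in\mathcal{F}^{\mathfrak{a}}_m\mathcal{C}$ and that the two formulations are equivalent.

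The main obstacle, and where most of the work will go, is the second step: proving that the expansion furnished by the LMO functor is \emph{alternative} symplectic, not merely symplectic. Massuyeau's result \cite{MR2903772} is phrased purely in terms of the lower central series of $\pi$ and the intersection form $\Omega\in\mathfrak{Lie}_2(H)$; one needs to check that the same expansion behaves correctly with respect to the subgroup $\mathbb{A}=\ker(\iota_{\#})$ and the element $\Omega'\in\mathfrak{Lie}_3(B;A)$. Concretely this amounts to verifying that the meridian loops $\alpha_i$, which generate $\mathbb{A}$, are sent by $\theta$ into the part of the completed algebra that is of alternative degree $\geq 2$, which in turn should follow from the way the bottom-top tangle presentation treats the $\alpha_i$'s (they bound disks in the handlebody $V$, hence their Kontsevich integral contributions start in the appropriate degree) combined with the grouplikeness and the specific normalization of $\widetilde{Z}$. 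I expect this to require a careful bookkeeping argument rather than a new idea, paralleling but refining the grading estimates (\ref{JTHequ12}) and Lemma \ref{JTHlemma3}. Once this is in hand, the remaining steps are a direct transcription of the arguments in \cite{MR2403806,vera1,MR2903772} with $i$-deg replaced throughout by $\mathfrak{a}$-deg.
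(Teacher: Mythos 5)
Your proposal is correct and follows essentially the same route as the paper: the crucial new ingredient is exactly the one you isolate, namely that the expansion $\theta^{\widetilde{Z}}$ furnished by the LMO functor is an \emph{alternative} symplectic expansion relative to $\mathbb{A}$, which the paper verifies on the meridians (generating $\mathbb{A}$ as a normal subgroup) via the bottom-knot cobordisms $N_i$ and the low-degree computation of $\widetilde{Z}(N_i)\ \mathrm{mod}\ \mathcal{H}(r)$, precisely the mechanism you describe. The only difference is one of bookkeeping in your third step: rather than re-transcribing the tree-level arguments of \cite{MR2403806,vera1,MR2903772} with $\mathfrak{a}$-deg in place of i-deg, the paper compares the $I$-adic and $K$-adic completions of $\mathbb{Q}[\pi]$ (so that $\varrho^{\theta}=\varrho^{\theta'}$ on $J^{\mathfrak{a}}_2\mathcal{M}\subseteq\mathcal{I}$) and then simply concatenates the Habiro--Massuyeau identification of $\tau^{\mathfrak{a}}_m$ with the degree-$m$ part of $\varrho^{\theta}$ and Massuyeau's identity $\eta^{-1}\varrho^{\theta'}=-\log\widetilde{Z}^{Y,t}$, which is the precise form of your informal statement that $\widetilde{Z}(c(f))$ ``records $\theta\circ f_{\#}$ against $\theta$''.
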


An immediate consequence of the above theorem is the following.

\begin{corollary} For $f\in J^{\mathfrak{a}}_m\mathcal{M}$ the value
$$\widetilde{Z}^{Y,\mathfrak{a}}_m(c(f))_{j^+\mapsto b_j, j^-\mapsto a_j}\in \mathcal{T}^{\mathfrak{a}}_m(B\oplus A)$$
is independent of the choice of a Drinfeld associator. 
\end{corollary}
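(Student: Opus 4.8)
The plan is to deduce this corollary directly from Theorem \ref{JTHLMOthmvera2} together with the fact that the alternative Johnson homomorphism $\tau^{\mathfrak{a}}_m$ is defined purely in terms of the action of $\mathcal{M}$ on $\pi$, which involves no choice of associator or of any analytic data. First I would recall that by Theorem \ref{JTHLMOthmvera2}, for $f\in J^{\mathfrak{a}}_m\mathcal{M}$ with $m\geq 2$ we have the identity
$$
\widetilde{Z}^{Y,\mathfrak{a}}_m(c(f)) = -\left(\eta^{-1}\tau^{\mathfrak{a}}_m(f)\right)_{|a_j\mapsto j^-,\, b_j\mapsto j^+},
$$
an equality in $\mathcal{T}^{Y,\mathfrak{a}}_m(\left\lfloor g\right\rceil^+\sqcup\left\lfloor g\right\rceil^-)$. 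Relabelling the colors back via $j^+\mapsto b_j$, $j^-\mapsto a_j$ (which is exactly the inverse of the substitution appearing on the right-hand side, since these substitutions are mutually inverse bijections on the colors) yields
$$
\widetilde{Z}^{Y,\mathfrak{a}}_m(c(f))_{|j^+\mapsto b_j,\, j^-\mapsto a_j} = -\,\eta^{-1}\tau^{\mathfrak{a}}_m(f)\ \in\ \mathcal{T}^{\mathfrak{a}}_m(B\oplus A).
$$

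Next I would observe that the entire right-hand side of this last equation is manifestly independent of the choice of Drinfeld associator used in the construction of the Kontsevich integral and hence of the LMO functor. Indeed, the map $\eta^{-1}$ of Proposition \ref{JTH2prop26} is a purely algebraic isomorphism $D_m(B;A)\otimes\mathbb{Q}\xrightarrow{\ \sim\ }\mathcal{T}^{\mathfrak{a}}_m(B\oplus A)$ not depending on any such choice, and the alternative Johnson homomorphism $\tau^{\mathfrak{a}}_m\colon J^{\mathfrak{a}}_m\mathcal{M}\to D_m(B;A)$ is defined in subsection \ref{sub5.2} solely through the formula $\tau^{\mathfrak{a}}_m(h)_1(xK_2)=h_\#(x)x^{-1}K_{m+2}$ and $\tau^{\mathfrak{a}}_m(h)_2(a)=h_\#(y)y^{-1}K_{m+3}$, which only records the induced automorphism $h_\#$ of $\pi$ modulo the $N$-series $(K_m)_{m\geq 1}$. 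In particular $\tau^{\mathfrak{a}}_m(f)$, and therefore $-\eta^{-1}\tau^{\mathfrak{a}}_m(f)$, depends only on the mapping class $f$ (equivalently on the homology cobordism $c(f)$) and not on the associator. Combining the two displays gives that $\widetilde{Z}^{Y,\mathfrak{a}}_m(c(f))_{|j^+\mapsto b_j,\, j^-\mapsto a_j}$ equals the associator-independent element $-\eta^{-1}\tau^{\mathfrak{a}}_m(f)$, which is the assertion.

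There is essentially no obstacle here: the corollary is a formal consequence of Theorem \ref{JTHLMOthmvera2}, and the only point deserving a sentence of care is that while $\widetilde{Z}$ itself genuinely depends on the associator (as recalled in the sketch of the construction of the LMO functor), its value $\widetilde{Z}^{Y,\mathfrak{a}}_m(c(f))$ in the lowest alternative degree, after the identification with $\mathcal{T}^{\mathfrak{a}}_m(B\oplus A)$, is pinned down topologically. I would also note that the same remark applies to the choice of the system of meridians and parallels, since that choice is likewise absorbed into the definition of the bottom-top tangle presentation and cancels against the corresponding normalizations built into $\tau^{\mathfrak{a}}_m$; but as the statement only claims independence of the Drinfeld associator, I would keep the proof to the one-line deduction above.
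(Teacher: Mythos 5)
Your argument is correct and is exactly the deduction the paper intends: the corollary is stated as an immediate consequence of Theorem \ref{JTHLMOthmvera2}, since after undoing the color substitution the value equals $-\eta^{-1}\tau^{\mathfrak{a}}_m(f)$, which is defined purely via the action on $\pi$ modulo the $N$-series and the algebraic isomorphism $\eta$, hence involves no associator. Your added remarks (mutual inverseness of the substitutions, the caveat about the choice of meridians and parallels) are accurate but not needed beyond what the paper leaves implicit.
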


One of the key points in the proof of Theorem \ref{JTHLMOthmvera2} is to show that the LMO functor defines  an \emph{alternative symplectic expansion} of $\pi$. We use several results and definitions from  \cite{MR2903772} and \cite{MR3828784}. 

\para{Alternative symplectic expansions}  Let $H_{\mathbb{Q}}$ be the $\mathbb{Q}$-module $H_1(\Sigma,\mathbb{Q})= H\otimes \mathbb{Q}$. Denote by $T(H_{\mathbb{Q}})$ the free associative  $\mathbb{Q}$-algebra generated by $H_{\mathbb{Q}}$ in degree $1$, that is, $T(H_{\mathbb{Q}})$ is the tensor algebra of $H_{\mathbb{Q}}$ and let $\widehat{T}(H_{\mathbb{Q}})$ denote its degree completion. Let $A_{\mathbb{Q}}= A\otimes \mathbb{Q}$ and $B_{\mathbb{Q}}=B\otimes\mathbb{Q}$. Let $T(B_\mathbb{Q};A_{\mathbb{Q}})$ be the free associative $\mathbb{Q}$-algebra generated by $B_{\mathbb{Q}}$ in degree $1$ and $A_{\mathbb{Q}}$ in degree $2$. We call the induced degree in $T(B_\mathbb{Q},A_{\mathbb{Q}})$ the \emph{alternative degree}. Hence
$$T(B_\mathbb{Q};A_{\mathbb{Q}}) = \mathbb{Q} \oplus  B_{\mathbb{Q}} \oplus (A_{\mathbb{Q}} \oplus (B_{\mathbb{Q}}\otimes B_{\mathbb{Q}})) \oplus ((A_{\mathbb{Q}}\otimes B_{\mathbb{Q}})\oplus  (B_{\mathbb{Q}}\otimes A_{\mathbb{Q}}) \oplus (B_{\mathbb{Q}}\otimes B_{\mathbb{Q}}\otimes B_{\mathbb{Q}})) \oplus\cdots$$ 
Denote by $\widehat{T}(B_{\mathbb{Q}};A_{\mathbb{Q}})$ the completion of $T(B_\mathbb{Q};A_{\mathbb{Q}})$ with respect to the alternative degree. Notice that $\widehat{T}(H_{\mathbb{Q}})$ and $\widehat{T}(B_{\mathbb{Q}};A_{\mathbb{Q}})$ are complete Hopf algebras.

\begin{definition}\label{defexpansion} An \emph{expansion} of $\pi$ is a multiplicative map $\theta: \pi\rightarrow \widehat{T}(H_{\mathbb{Q}})$ such that $\theta(x) = 1 + \{x\} + (\text{deg} > 1)$ for all $x\in\pi$. Here $\{x\}$ denotes $x\Gamma_2\pi\otimes 1 \in H_{\mathbb{Q}}$. Moreover, we say that an expansion $\theta$ is \emph{group-like} if it takes values in the group of group-like elements of $\widehat{T}(H_{\mathbb{Q}})$. 
\end{definition}

\begin{definition}\label{defaltexpansion} An \emph{alternative expansion} of $\pi$ relative to $\mathbb{A}$ is a multiplicative map $\theta: \pi\rightarrow \widehat{T}(B_{\mathbb{Q}}, A_{\mathbb{Q}})$ which takes values in the group of group-like elements of $\widehat{T}(B_{\mathbb{Q}}; A_{\mathbb{Q}})$ and such that:
\begin{itemize}
\item $\theta(x) = 1 + \{x\} + (\mathfrak{a}\text{-deg} > 1)$ for all $x\in\pi$ and
\item $\theta(\alpha) = 1 + \{\alpha\} + (\mathfrak{a}\text{-deg} > 2)$ for all $\alpha\in\mathbb{A}$.
\end{itemize}
Here $\{x\}$ denotes $x K_2\otimes 1\in B_{\mathbb{Q}}$ for $x\in\pi$, and $\{\alpha\}$ denotes $\alpha K_3\otimes 1\in (A_{\mathbb{Q}} \oplus (B_{\mathbb{Q}}\otimes B_{\mathbb{Q}}))$ for $\alpha\in\mathbb{A}$.
\end{definition}

\begin{remark} Definition \ref{defaltexpansion} implies that for $i\geq 1$ and $x\in K_i$, $\theta(x)= 1 + (xK_{i+1})\otimes 1 + (\mathfrak{a}\text{-deg} > i)$. Hence an alternative expansion of $\pi$ relative to $\mathbb{A}$ is an \emph{expansion of the $N$-series $(K_i)_{i\geq 1}$} in the sense of \cite[Section 12]{MR3828784}.
\end{remark}

Notice that $T(H_{\mathbb{Q}})= T(B_\mathbb{Q};A_{\mathbb{Q}})$ as $\mathbb{Q}$-algebras as soon as we have chosen a section of $\iota_*:H\rightarrow H'$ with isotropic image and identified $B=H'$ to the image of this section. Thus if $\theta$ is an alternative expansion of $\pi$ relative to $\mathbb{A}$, then, in particular, $\theta$ is a group-like expansion of $\pi$. We denote this group-like expansion by $\theta'$.

\begin{example}\label{examplealtexpansion} Consider the free basis $\{\alpha_i,\beta_i\}$ of $\pi$ defined by the system of meridians and parallels of Figure \ref{figuraLMO1artalt} and let $\{a_i,b_i\}$ be the induced symplectic basis of $H$. We identify $H'=B$ with the subgroup of $H$ generated by $\{b_1,\ldots,b_g\}$. Define $\theta:\pi\rightarrow \widehat{T}(B;A)$ by $\theta(\alpha_i)= \text{exp}(a_i)$ and $\theta(\beta_i)=\text{exp}(b_i)$. Then $\theta$ is an alternative expansion of~$\pi$ relative to $\mathbb{A}$.
\end{example}

Recall that the intersection form $\omega'$ of the handlebody, see (\ref{wthbasis2}), determines an element $\Omega'\in\mathfrak{Lie}_3(B;A)$. The element $\Omega'$ was described explicitly in  (\ref{JTH2equ6}).

\begin{definition}\cite[Definition 2.15]{MR2903772} An expansion $\theta$ of $\pi$  is said to be \emph{symplectic} if it is group-like and $\theta([\partial\Sigma])= \text{exp}(-\Omega')$.
\end{definition}

\begin{definition} An alternative expansion $\theta$ of $\pi$  relative to $\mathbb{A}$ is said to be \emph{symplectic} if  $\theta([\partial\Sigma])= \text{exp}(-\Omega')$.
\end{definition}

In \cite[Lemma 2.16]{MR2903772} Massuyeau shows that symplectic expansions exist by ``deforming'' the expansion of Example \ref{examplealtexpansion}. It can be verified that the constructed symplectic expansion in that lemma is actually an alternative symplectic expansion of $\pi$ relative to~$\mathbb{A}$. We will see that the LMO functor also gives an example of an alternative symplectic expansion of $\pi$ relative to $\mathbb{A}$.

\para{Completions of the group algebra $\mathbb{Q}[\pi]$} Let $\mathbb{Q}[\pi]$ be the group $\mathbb{Q}$-algebra of $\pi$. We have considered two $N$-series of $\pi$; the lower central series $\Gamma_+ = (\Gamma_m\pi)_{m\geq 1}$ and the $N$-series $K_+ = (K_m)_{m\geq 1}$ defined in (\ref{JTHequ11}). Each one of these defines a \emph{filtration} of $\mathbb{Q}[\pi]$, that is, a decreasing sequence $\mathbb{Q}[\pi]=F_0\supseteq F_1\supseteq F_2 \supseteq\cdots$ of additive subgroups of $\mathbb{Q}[\pi]$ indexed by non-negative integers such that $F_mF_n\subseteq F_{m+n}$ for $m,n\geq 0$. Let $I$ be the augmentation ideal of $\mathbb{Q}[\pi]$. For $m\geq 1$ it is well known that
$$ I^m = \big\langle (x_1-1)\cdots (x_p-1)\ |\ p\geq 1, x_i\in\Gamma_{m_i}\pi \text{ and } m_1+\cdots+m_p\geq m\big\rangle,$$
where the angle brackets stand for the generated subspace of $\mathbb{Q}[\pi]$. For $m\geq 1$, set
$$R_m = \big\langle (x_1-1)\cdots (x_p-1)\ |\ p\geq 1, x_i\in K_{m_i} \text{ and } m_1+\cdots+m_p\geq m\big\rangle.$$
This way we have the filtrations $\{I^m\}_{m\geq 0}$ and $\{R_m\}_{m\geq 0}$ of $\mathbb{Q}[\pi]$ where we set $I^0=R_0=\mathbb{Q}[\pi]$. These filtrations   define inverse systems $\{\mathbb{Q}[\pi]/I^m\}_m$ and $\{\mathbb{Q}[\pi]/R_m\}_m$. 

Consider the $I$-adic and $R$-adic completions of $\mathbb{Q}[\pi]$, that is, the inverse limits
\begin{equation*}
\widehat{\mathbb{Q}[\Gamma_+]} = \varprojlim_m (\mathbb{Q}[\pi]/I^m)   \text{\ \ \ \ \ and \ \ \ \ \ } \widehat{\mathbb{Q}[K_+]} = \varprojlim_m (\mathbb{Q}[\pi]/R_m).
\end{equation*}
Notice that $\widehat{\mathbb{Q}[\Gamma_+]}$ and $\widehat{\mathbb{Q}[K_+]}$ are filtered complete Hopf algebras, with filtrations $\{\widehat{I}^m\}_{m\geq 0}$ and $\{\widehat{R}_m\}_{m\geq 0}$ defined by

\begin{equation*}
\widehat{I}^m = \varprojlim_l (I^m/I^l)   \text{\ \ \ \ \ and \ \ \ \ \ } \widehat{R}_m = \varprojlim_l (R_m/R_l),
\end{equation*}
for $m\geq 0$. From now on, let $\theta:\pi\rightarrow \widehat{T}(B_{\mathbb{Q}};A_{\mathbb{Q}})$ be an alternative expansion of $\pi$ relative to $\mathbb{A}$ and denote by $\theta':\pi\rightarrow \widehat{T}(H_{\mathbb{Q}})$ the associated group-like expansion of $\pi$. The Quillen's description \cite{MR0231919} of the associated graded of the filtered ring $\mathbb{Q}[\pi]$ with respect to  $\{I^m\}_m$, can be generalized \cite[Theorem 11.2]{MR3828784} to describe the associated graded of $\mathbb{Q}[\pi]$ with respect to  $\{R_m\}_m$. Moreover, we have the following.

\begin{proposition}\cite[Proposition 12.2]{MR3828784}\cite[Proposition 2.10]{MR2903772}\label{JTHLMO6.3prop1} The maps $\theta$ and $\theta'$ extend uniquely to complete Hopf algebra isomorphisms 
\begin{equation*}
\hat{\theta}:  \widehat{\mathbb{Q}[K_+]}\longrightarrow \widehat{T}(B_{\mathbb{Q}};A_{\mathbb{Q}})   \text{\ \ \ \ \ and \ \ \ \ \ } \hat{\theta}':  \widehat{\mathbb{Q}[\Gamma_+]}\longrightarrow \widehat{T}(H_{\mathbb{Q}}),
\end{equation*}
which are the identity at the graded level.
\end{proposition}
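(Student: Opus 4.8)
The statement to prove is Proposition \ref{JTHLMO6.3prop1}: the alternative expansion $\theta$ and its associated group-like expansion $\theta'$ extend uniquely to complete Hopf algebra isomorphisms $\hat\theta:\widehat{\mathbb{Q}[K_+]}\to\widehat T(B_{\mathbb{Q}};A_{\mathbb{Q}})$ and $\hat\theta':\widehat{\mathbb{Q}[\Gamma_+]}\to\widehat T(H_{\mathbb{Q}})$ which are the identity at the graded level. The plan is to deduce both statements from the two cited references (\cite[Proposition 12.2]{MR3828784} and \cite[Proposition 2.10]{MR2903772}), after verifying that their hypotheses are met by the present $\theta$, $\theta'$. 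The case of $\theta'$ is literally the classical statement about symplectic (group-like) expansions of $\pi$ and the $I$-adic filtration, so it is already covered by \cite[Proposition 2.10]{MR2903772}; the only thing to check is that the $\theta'$ produced from $\theta$ (as described just before Proposition \ref{JTHLMO6.3prop1}, via a choice of isotropic section of $\iota_*:H\to H'$ identifying $T(H_{\mathbb{Q}})=T(B_{\mathbb{Q}};A_{\mathbb{Q}})$ as $\mathbb{Q}$-algebras) is indeed a group-like expansion of $\pi$ in the sense of Definition \ref{defexpansion}. This is immediate: $\theta$ is group-like-valued by Definition \ref{defaltexpansion}, the identification of algebras is a filtered map, and the degree-$1$ leading term condition for $\theta$ gives the degree-$1$ leading term condition for $\theta'$.

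For $\hat\theta$ the approach is to invoke the general framework of \cite[Section 11--12]{MR3828784}. First I would recall that $(K_m)_{m\ge 1}$ is an $N$-series of $\pi$ (established around Equality (\ref{JTHequ11})) whose associated graded Lie algebra is free, namely $\mathrm{Gr}(K_\bullet)\cong\mathfrak{Lie}(B;A)$ (Equation (\ref{JTH2equ5})). Next, by the Remark following Definition \ref{defaltexpansion}, an alternative expansion of $\pi$ relative to $\mathbb{A}$ is precisely an \emph{expansion of the $N$-series $(K_i)_{i\ge 1}$} in the sense of \cite[Section 12]{MR3828784}: it is multiplicative, group-like-valued, and for $x\in K_i$ it satisfies $\theta(x)=1+(xK_{i+1})\otimes 1+(\mathfrak{a}\text{-deg}>i)$. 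Then I would apply \cite[Proposition 12.2]{MR3828784}, which asserts exactly that such an expansion extends uniquely to a complete Hopf algebra isomorphism from the $R$-adic completion $\widehat{\mathbb{Q}[K_+]}$ onto $\widehat T(B_{\mathbb{Q}};A_{\mathbb{Q}})$ inducing the identity on the associated graded. The "identity at the graded level" clause uses the Quillen-type description of $\mathrm{gr}(\mathbb{Q}[\pi])$ with respect to $\{R_m\}_m$ recalled in the excerpt (\cite[Theorem 11.2]{MR3828784}): the associated graded of $\widehat{\mathbb{Q}[K_+]}$ is the universal enveloping algebra of $\mathrm{Gr}(K_\bullet)\cong\mathfrak{Lie}(B;A)$, which is $T(B_{\mathbb{Q}};A_{\mathbb{Q}})$ by freeness, and $\hat\theta$ induces this canonical identification. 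Uniqueness of the extension is the standard filtered-completion argument: an expansion is determined on $\pi$, $\pi$ topologically generates $\widehat{\mathbb{Q}[K_+]}$, and a continuous algebra map out of a dense subset has at most one continuous extension.

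The genuinely non-routine point — and the one I would flag as the main obstacle — is making sure the bookkeeping of filtrations is correct: the filtration $\{R_m\}_m$ of $\mathbb{Q}[\pi]$ generated by the $K_i$'s must be the one for which $\hat\theta$ is filtered and induces the identity on the graded. Concretely, one has to check that $\theta(R_m)\subseteq\widehat T_{\ge m}(B_{\mathbb{Q}};A_{\mathbb{Q}})$ (using that $\theta(x)-1$ has leading alternative degree $\ge i$ for $x\in K_i$, which is exactly the content of the Remark after Definition \ref{defaltexpansion}) and, conversely, that the induced map $R_m/R_{m+1}\to \widehat T_m/\widehat T_{\ge m+1}$ is an isomorphism — this last is where \cite[Theorem 11.2]{MR3828784} does the real work, identifying $R_m/R_{m+1}$ with the degree-$m$ part of $U(\mathrm{Gr}(K_\bullet))=T(B_{\mathbb{Q}};A_{\mathbb{Q}})$. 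Once these two inclusions/iso statements are in place, $\hat\theta$ is a filtered bijection whose associated graded is an isomorphism, hence $\hat\theta$ itself is an isomorphism of complete Hopf algebras; the Hopf structure compatibility follows because $\theta$ is group-like-valued and multiplicative, so $\hat\theta$ respects products and the (topological) coproduct on both sides. Since all of this is precisely the content of the quoted propositions, the proof reduces to citing them after the compatibility-of-filtrations verification, which I would present as a short lemma or an inline remark rather than reproving from scratch.
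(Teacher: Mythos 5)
Your proposal is correct and matches the paper's treatment: the paper offers no independent proof of this proposition, simply citing \cite[Proposition 12.2]{MR3828784} for $\hat{\theta}$ and \cite[Proposition 2.10]{MR2903772} for $\hat{\theta}'$, exactly as you do, with the hypothesis check for $\hat{\theta}$ already supplied by the remark after Definition \ref{defaltexpansion} identifying an alternative expansion with an expansion of the $N$-series $(K_i)_{i\geq 1}$. Your additional filtration bookkeeping is a reasonable elaboration of what those cited results contain, but it is not something the paper reproves.
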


Since $\Gamma_m\pi\subseteq K_m\subseteq \Gamma_{\left\lceil m/2\right\rceil}\pi$ for $m\geq 1$, see (\ref{JTHequ11}), then $I^m\subseteq R_m$ and $R_m\subseteq I^{\left\lceil m/2\right\rceil}$. Hence, the identity automorphism of $\pi$ induces a morphism of inverse systems $$\{u_m:\mathbb{Q}[\pi]/I^m\longrightarrow \mathbb{Q}[\pi]/R_m\}$$
which induces an isomorphism 
$$U:\widehat{\mathbb{Q}[\Gamma_+]}\longrightarrow \widehat{\mathbb{Q}[K_+]}$$
of complete Hopf algebras. The  following two results are straightforward.

\begin{proposition}\label{JTHLMO6.3prop2}
The diagram
\begin{equation*}
\xymatrix{  \widehat{\mathbb{Q}[\Gamma_+]}\ar[r]^{U}_{\cong}\ar[d]_{\hat{\theta}'}^{\cong} & \widehat{\mathbb{Q}[K_+]} \ar[d]^{\hat{\theta}}_{\cong} \\
						\widehat{T}(H)\ar@{=}[r] & \widehat{T}(B;A)}
\end{equation*}
is commutative.
\end{proposition}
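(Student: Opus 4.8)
\textbf{Proof plan for Proposition \ref{JTHLMO6.3prop2}.}

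The plan is to unwind all four maps in the square down to their definitions on the group $\pi$ and check the identity there; since every arrow is a morphism of complete (filtered) Hopf algebras and $\pi$ topologically generates $\widehat{\mathbb{Q}[\Gamma_+]}$ (and $\widehat{\mathbb{Q}[K_+]}$), it suffices to verify commutativity on the image of an arbitrary $x\in\pi$. First I would recall that $\hat\theta'$ is, by Proposition \ref{JTHLMO6.3prop1}, the unique extension of the group-like expansion $\theta':\pi\to\widehat T(H_{\mathbb Q})$, so $\hat\theta'(x)=\theta'(x)$; similarly $\hat\theta(x)=\theta(x)\in\widehat T(B_{\mathbb Q};A_{\mathbb Q})$. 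The map $U$ is induced by the identity automorphism of $\pi$, hence $U(x)=x$ when $x$ is viewed inside $\widehat{\mathbb{Q}[K_+]}$ via the canonical map $\mathbb{Q}[\pi]\to\widehat{\mathbb{Q}[K_+]}$. The bottom identification $\widehat T(H_{\mathbb Q})=\widehat T(B_{\mathbb Q};A_{\mathbb Q})$ is the one fixed just before Definition \ref{defaltexpansion}: having chosen a section of $\iota_*:H\to H'$ with isotropic image and identified $B=H'$ with that image, the two tensor algebras coincide as $\mathbb Q$-algebras (they differ only in the grading, the alternative degree versus the total degree, and the completions agree because each filtration refines the other up to a factor $2$, exactly as in $\Gamma_m\pi\subseteq K_m\subseteq\Gamma_{\lceil m/2\rceil}\pi$). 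Under precisely this identification, the expansion $\theta$ of $\pi$ relative to $\mathbb A$ \emph{is} the associated group-like expansion $\theta'$ — this is the very definition of $\theta'$ recalled in the paragraph preceding Proposition \ref{JTHLMO6.3prop1}. Thus chasing $x\in\pi$ clockwise gives $\hat\theta(U(x))=\hat\theta(x)=\theta(x)$, while chasing it counterclockwise through the bottom identification gives $\theta'(x)$, and $\theta(x)=\theta'(x)$ under the identification. Hence the two composites agree on $\pi$.

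To turn this into a clean argument I would organize it as follows. Step 1: observe that $\mathbb{Q}[\pi]$ is dense in $\widehat{\mathbb{Q}[\Gamma_+]}$ and that all four maps are continuous morphisms of complete filtered Hopf algebras, so it is enough to prove $\hat\theta\circ U = \hat\theta'$ (read through the identification) as maps $\mathbb{Q}[\pi]\to\widehat T(B_{\mathbb Q};A_{\mathbb Q})$, and in fact enough to check this on group elements $x\in\pi$ since they generate $\mathbb{Q}[\pi]$ as an algebra and both composites are algebra morphisms. Step 2: identify $U(x)=x$ and $\hat\theta(x)=\theta(x)$, $\hat\theta'(x)=\theta'(x)$ using the uniqueness clause of Proposition \ref{JTHLMO6.3prop1}. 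Step 3: invoke the definition of $\theta'$ as the group-like expansion underlying $\theta$, i.e. the statement that under the algebra identification $\widehat T(H_{\mathbb Q})=\widehat T(B_{\mathbb Q};A_{\mathbb Q})$ one has $\theta'=\theta$; conclude. One should also remark that the square is a square of Hopf-algebra isomorphisms, so equivalently one may phrase Step 1 via the fact that a continuous algebra endomorphism of $\widehat{\mathbb{Q}[\Gamma_+]}$ that is the identity on $\pi$ is the identity.

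I do not expect a serious obstacle here; the statement is labelled ``straightforward'' precisely because all the content has been front-loaded into Proposition \ref{JTHLMO6.3prop1} and into the set-up of the algebra identification $\widehat T(H_{\mathbb Q})=\widehat T(B_{\mathbb Q};A_{\mathbb Q})$. The one point that must be handled with a little care — and which I would make explicit rather than gloss over — is that the bottom edge of the square is genuinely an \emph{equality} of $\mathbb Q$-algebras only after the choice of an isotropic section of $\iota_*$, and that the two filtrations (by $I$-adic powers versus by the $R_m$) induce the \emph{same} topology on $\mathbb Q[\pi]$ up to cofinal reindexing, so that the completions, and hence the continuity statements underlying Step 1, are unambiguous. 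Once that bookkeeping is in place the diagram chase on $x\in\pi$ closes immediately.
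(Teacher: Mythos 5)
Your proposal is correct and is exactly the argument the paper has in mind: the paper omits the proof entirely (declaring it straightforward), and the intended verification is precisely your diagram chase on group elements $x\in\pi$, using the uniqueness clause of Proposition \ref{JTHLMO6.3prop1}, the fact that $U$ is induced by the identity of $\pi$, and the definition of $\theta'$ as $\theta$ read through the identification $\widehat{T}(H_{\mathbb{Q}})=\widehat{T}(B_{\mathbb{Q}};A_{\mathbb{Q}})$, extended by density and continuity. Your explicit remarks on the isotropic section and on the cofinality of the two filtrations are exactly the bookkeeping the paper leaves implicit, so nothing is missing.
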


For a complete Hopf algebra $F$ denote by $\text{Aut}(F)$ the group of automorphisms of $F$.

\begin{corollary}
The diagram
\begin{equation*}
\xymatrixcolsep{5pc}\xymatrix{\text{\emph{Aut}}(\widehat{\mathbb{Q}[\Gamma_+]})\ar[r]^{U(\_\_) U^{-1}}\ar[d]_{\hat{\theta}'(\_\_)\hat{\theta}'^{-1}} & \text{\emph{Aut}}(\widehat{\mathbb{Q}[K_+]}) \ar[d]^{\hat{\theta}(\_\_)\hat{\theta}^{-1}} \\
						\text{\emph{Aut}}(\widehat{T}(H))\ar@{=}[r] & \text{\emph{Aut}}(\widehat{T}(B;A))}
\end{equation*}
is commutative.
\end{corollary}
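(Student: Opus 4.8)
The statement to prove is the Corollary asserting commutativity of the diagram
\begin{equation*}
\xymatrixcolsep{5pc}\xymatrix{\text{Aut}(\widehat{\mathbb{Q}[\Gamma_+]})\ar[r]^{U(\_\_) U^{-1}}\ar[d]_{\hat{\theta}'(\_\_)\hat{\theta}'^{-1}} & \text{Aut}(\widehat{\mathbb{Q}[K_+]}) \ar[d]^{\hat{\theta}(\_\_)\hat{\theta}^{-1}} \\
						\text{Aut}(\widehat{T}(H))\ar@{=}[r] & \text{Aut}(\widehat{T}(B;A))}
\end{equation*}
which is claimed to be "straightforward."

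\textbf{Plan.} The plan is to deduce this purely formally from the commutativity of the diagram in Proposition \ref{JTHLMO6.3prop2}. First I would observe that the four maps in the square of Proposition \ref{JTHLMO6.3prop2} --- namely $U$, $\hat\theta'$, $\hat\theta$, and the identity $\widehat T(H)=\widehat T(B;A)$ --- are all isomorphisms of complete Hopf algebras, and that Proposition \ref{JTHLMO6.3prop2} says precisely $\hat\theta\circ U = \operatorname{id}\circ\hat\theta' = \hat\theta'$ (under the identification $\widehat T(H)=\widehat T(B;A)$). The key elementary fact is that any isomorphism $\varphi\colon F\to G$ of complete Hopf algebras induces a group isomorphism $\operatorname{Aut}(F)\to\operatorname{Aut}(G)$ by conjugation, $\alpha\mapsto \varphi\alpha\varphi^{-1}$, and that this construction is functorial: if $\varphi\colon F\to G$ and $\psi\colon G\to F'$ are two such isomorphisms, then conjugation by $\psi\varphi$ equals conjugation by $\psi$ composed with conjugation by $\varphi$. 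So the whole claim reduces to chasing the identity $\hat\theta\circ U = \hat\theta'$.

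\textbf{Key steps.} Concretely, I would take an arbitrary $\alpha\in\operatorname{Aut}(\widehat{\mathbb{Q}[\Gamma_+]})$ and compute both composites of the square applied to $\alpha$. Going right then down gives
$$
\hat\theta\big(U\alpha U^{-1}\big)\hat\theta^{-1} = (\hat\theta U)\,\alpha\,(\hat\theta U)^{-1},
$$
using only $\hat\theta^{-1}=(\hat\theta U)U^{-1}$ rearranged. Going down then right (the bottom arrow being the identity) gives
$$
\hat\theta'\,\alpha\,\hat\theta'^{-1}.
$$
By Proposition \ref{JTHLMO6.3prop2}, $\hat\theta\circ U = \hat\theta'$ as maps $\widehat{\mathbb{Q}[\Gamma_+]}\to \widehat T(H)=\widehat T(B;A)$, hence the two composites agree. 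I would also record in one sentence that all the vertical and horizontal maps in the corollary's square are indeed group homomorphisms (conjugation by a fixed isomorphism is a group homomorphism on automorphism groups), so that the statement "the diagram is commutative" is meaningful in the category of groups; this is immediate.

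\textbf{Main obstacle.} There is essentially no obstacle: the content is entirely carried by Proposition \ref{JTHLMO6.3prop2}, and the corollary is a formal consequence of functoriality of the assignment $F\mapsto\operatorname{Aut}(F)$ with respect to Hopf algebra isomorphisms together with the bottom arrow being the identity. The only point requiring a word of care --- and the reason the paper should spell it out rather than leave it silent --- is making sure the identification $\widehat T(H)=\widehat T(B;A)$ used in Proposition \ref{JTHLMO6.3prop2} is the \emph{same} identification appearing as the bottom equality of the corollary's square (it is: both come from the chosen isotropic section of $\iota_*\colon H\to H'$ and the identification $B=H'$, as fixed just before Example \ref{examplealtexpansion}). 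Once that is noted, the proof is the two-line computation above.
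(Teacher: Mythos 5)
Your proof is correct and is exactly the argument the paper intends: the paper states this corollary without proof, calling it straightforward, and the intended justification is precisely your two-line conjugation computation reducing everything to the identity $\hat\theta\circ U=\hat\theta'$ from Proposition \ref{JTHLMO6.3prop2}. Nothing is missing.
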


Recall that we can restrict the Dehn-Nielsen representation (\ref{JTHequ1}) to the Lagrangian mapping class group $\mathcal{L}$. Now, if $h\in\mathcal{L}$, we have  $\rho(h)(\Gamma_m\pi)=h_{\#}(\Gamma_m\pi)\subseteq \Gamma_m\pi$ and $\rho(h)(K_m)=h_{\#}(K_m)\subseteq K_m$, see Lemma \ref{JTHlemma2}. This way, we obtain representations
$$\hat{\rho}': \mathcal{L}\longrightarrow \text{Aut}(\widehat{\mathbb{Q}[\Gamma_+]})$$
and
$$\hat{\rho}: \mathcal{L}\longrightarrow \text{Aut}(\widehat{\mathbb{Q}[K_+]}).$$

\noindent Notice that for $h\in\mathcal{L}$, the diagram 
\begin{equation}\label{JTHLMOequ6.19}
\xymatrix{  \widehat{\mathbb{Q}[\Gamma_+]}\ar[r]^{U}\ar[d]_{\hat{\rho}'(h)} & \widehat{\mathbb{Q}[K_+]} \ar[d]^{\hat{\rho}(h)} \\
						 \widehat{\mathbb{Q}[\Gamma_+]}\ar[r]^{U} & \widehat{\mathbb{Q}[K_+]}}
\end{equation}
is commutative. Besides, using Proposition \ref{JTHLMO6.3prop1}, we define
\begin{equation*}
\rho^{\hat{\theta}'}:\mathcal{L}\longrightarrow \text{Aut}(\widehat{T}(H_{\mathbb{Q}}))   \text{\ \ \ \ \ and \ \ \ \ \ } \rho^{\hat{\theta}}: \mathcal{L}\longrightarrow \text{Aut}(\widehat{T}(B_{\mathbb{Q}}; A_{\mathbb{Q}})),
\end{equation*}

\noindent by $\rho^{\hat{\theta'}}(h) = \hat{\theta'}\hat{\rho}'(h)\hat{\theta}'^{-1}$  and $\rho^{\hat{\theta}}(h) = \hat{\theta}\hat{\rho}(h)\hat{\theta}^{-1}$ for $h\in\mathcal{L}$.

\begin{proposition}\label{JTHLMO6.3prop6.26} The diagram 
\begin{equation}
\xymatrix{  \mathcal{L}\ar[r]^{\rho^{\hat{\theta}'}}\ar[d]_{\rho^{\hat{\theta}}} & \text{\emph{Aut}}\big(\widehat{T}(H_{\mathbb{Q}})\big) \ar@{=}[ld] \\
						 \text{\emph{Aut}}\big(\widehat{T}(B_{\mathbb{Q}}; A_{\mathbb{Q}})\big) & }
\end{equation}
is commutative.
\end{proposition}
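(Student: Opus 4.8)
The plan is to show that the two representations $\rho^{\hat\theta'}$ and $\rho^{\hat\theta}$ of $\mathcal{L}$ on $\operatorname{Aut}(\widehat{T}(H_{\mathbb{Q}}))=\operatorname{Aut}(\widehat{T}(B_{\mathbb{Q}};A_{\mathbb{Q}}))$ literally coincide, by chasing the relevant commutative squares. Recall the two identifications of complete Hopf algebras we already have: $\hat\theta'\colon\widehat{\mathbb{Q}[\Gamma_+]}\xrightarrow{\cong}\widehat{T}(H_{\mathbb{Q}})$ and $\hat\theta\colon\widehat{\mathbb{Q}[K_+]}\xrightarrow{\cong}\widehat{T}(B_{\mathbb{Q}};A_{\mathbb{Q}})$ (Proposition \ref{JTHLMO6.3prop1}), the isomorphism $U\colon\widehat{\mathbb{Q}[\Gamma_+]}\xrightarrow{\cong}\widehat{\mathbb{Q}[K_+]}$ induced by $\operatorname{Id}_\pi$, and the fact (Proposition \ref{JTHLMO6.3prop2}) that $\hat\theta\circ U=\hat\theta'$ under the equality $\widehat{T}(H_{\mathbb{Q}})=\widehat{T}(B_{\mathbb{Q}};A_{\mathbb{Q}})$ of underlying algebras.

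The argument is then a diagram chase. Fix $h\in\mathcal{L}$. By definition $\rho^{\hat\theta'}(h)=\hat\theta'\,\hat\rho'(h)\,\hat\theta'^{-1}$ and $\rho^{\hat\theta}(h)=\hat\theta\,\hat\rho(h)\,\hat\theta^{-1}$. Using $\hat\theta'=\hat\theta\circ U$ we rewrite
\begin{equation*}
\rho^{\hat\theta'}(h)=\hat\theta\,U\,\hat\rho'(h)\,U^{-1}\,\hat\theta^{-1}.
\end{equation*}
By the commutativity of the square \eqref{JTHLMOequ6.19}, which expresses exactly that $U$ intertwines $\hat\rho'(h)$ with $\hat\rho(h)$ because both arise from the single automorphism $h_{\#}$ of $\pi$ (which preserves both $N$-series by Lemma \ref{JTHlemma2} and induction via \eqref{JTHequ11}), we have $U\,\hat\rho'(h)\,U^{-1}=\hat\rho(h)$. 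Substituting,
\begin{equation*}
\rho^{\hat\theta'}(h)=\hat\theta\,\hat\rho(h)\,\hat\theta^{-1}=\rho^{\hat\theta}(h),
\end{equation*}
which is precisely the asserted commutativity of the triangle.

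The only genuinely load-bearing input is the commutativity of square \eqref{JTHLMOequ6.19}, and it is stated (correctly) as ``straightforward'' just above; I would justify it in one line by noting that $\hat\rho'(h)$ and $\hat\rho(h)$ are both induced by the ring endomorphism of $\mathbb{Q}[\pi]$ coming from $h_{\#}$, which is compatible with the natural projections $\mathbb{Q}[\pi]/I^m\to\mathbb{Q}[\pi]/R_m$ defining $U$ (since $h_{\#}(I^m)\subseteq I^m$ and $h_{\#}(R_m)\subseteq R_m$). There is no real obstacle here; the one point to be careful about is purely notational, namely keeping track of the identification $\widehat{T}(H_{\mathbb{Q}})=\widehat{T}(B_{\mathbb{Q}};A_{\mathbb{Q}})$ of underlying (non-graded) algebras that makes the equality $\hat\theta\circ U=\hat\theta'$ meaningful, which is exactly the content of Proposition \ref{JTHLMO6.3prop2} and its corollary. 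Hence the proof is a two-step chase and I would present it as such.
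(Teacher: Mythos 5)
Your proof is correct and follows essentially the same two-step chase as the paper's own argument: the paper likewise rewrites $\rho^{\hat{\theta}}(h)=\hat{\theta}\hat{\rho}(h)\hat{\theta}^{-1}=\hat{\theta}U\hat{\rho}'(h)U^{-1}\hat{\theta}^{-1}=\hat{\theta}'\hat{\rho}'(h)\hat{\theta}'^{-1}$, invoking the commutative square (\ref{JTHLMOequ6.19}) and Proposition \ref{JTHLMO6.3prop2} exactly as you do (only in the opposite direction). Your extra one-line justification of (\ref{JTHLMOequ6.19}) via the single ring endomorphism of $\mathbb{Q}[\pi]$ induced by $h_{\#}$ is a fine addition but not a different method.
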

\begin{proof} Let $h\in\mathcal{L}$, then
\begin{equation*}
\begin{split}
\rho^{\hat{\theta}}(h) & = \hat{\theta}\hat{\rho}(h)\hat{\theta}^{-1}\\
 & = \hat{\theta}U\hat{\rho}'(h)U^{-1}\hat{\theta}^{-1}\\
 & = \hat{\theta}' \hat{\rho}'(h)\hat{\theta}'^{-1}\\
 & = \rho^{\hat{\theta}'}(h).
\end{split}
\end{equation*}
In the second equality we use (\ref{JTHLMOequ6.19}), and in the third equality we use Proposition~\ref{JTHLMO6.3prop2}.
\end{proof}

From Proposition \ref{JTHprop3} we have $J^{\mathfrak{a}}_2\mathcal{M}\subseteq J_1\mathcal{M}=\mathcal{I}$. Hence, by considering the restriction of $\rho^{\hat{\theta}'}$ to $\mathcal{I}$ and the restriction of $\rho^{\hat{\theta}}$ to $J^{\mathfrak{a}}_2\mathcal{M}$ and using Proposition \ref{JTHLMO6.3prop6.26} we obtain the following. 

\begin{corollary}\label{JTHLMO6.3cor6.27}
The diagram 
\begin{equation*}
\xymatrix{J^{\mathfrak{a}}_2\mathcal{M}\ar[r]^{\subset}\ar[d]_{\rho^{\hat{\theta}}} & \mathcal{I} \ar[d]^{\rho^{\hat{\theta}'}} \\
\text{\emph{Aut}}\big(\widehat{T}(B_{\mathbb{Q}}, A_{\mathbb{Q}})\big)\ar@{=}[r] & \text{\emph{Aut}}\big(\widehat{T}(H_{\mathbb{Q}})\big)}
\end{equation*}
is commutative.
\end{corollary}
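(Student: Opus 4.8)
The statement to prove, Corollary \ref{JTHLMO6.3cor6.27}, asserts the commutativity of a diagram obtained by restricting the one in Proposition \ref{JTHLMO6.3prop6.26}. The plan is to deduce it formally from Proposition \ref{JTHLMO6.3prop6.26} together with an inclusion $J^{\mathfrak{a}}_2\mathcal{M}\subseteq\mathcal{I}$, so the proof is short. First I would recall why the restriction makes sense: by Proposition \ref{JTHproposition5}(i) (equivalently Proposition \ref{JTHprop3}, as quoted in the text just before the statement) we have $J^{\mathfrak{a}}_2\mathcal{M}\subseteq J_1\mathcal{M}=\mathcal{I}$, so the top horizontal inclusion in the diagram is legitimate. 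Next, both $\rho^{\hat\theta}$ and $\rho^{\hat\theta'}$ are defined on all of $\mathcal{L}$ (see the paragraph preceding Proposition \ref{JTHLMO6.3prop6.26}), and $\mathcal{I}\subseteq\mathcal{L}$, $J^{\mathfrak{a}}_2\mathcal{M}\subseteq\mathcal{L}$; thus the vertical arrows in the corollary's diagram are genuinely the restrictions of the corresponding maps on $\mathcal{L}$.

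The key step is then purely diagrammatic: for $h\in J^{\mathfrak{a}}_2\mathcal{M}$, viewed inside $\mathcal{I}\subseteq\mathcal{L}$, Proposition \ref{JTHLMO6.3prop6.26} gives $\rho^{\hat\theta}(h)=\rho^{\hat\theta'}(h)$ as automorphisms of $\widehat{T}(B_{\mathbb{Q}};A_{\mathbb{Q}})=\widehat{T}(H_{\mathbb{Q}})$ (the identification being the one from Proposition \ref{JTHLMO6.3prop2}, i.e.\ the choice of an isotropic section of $\iota_*$). Chasing the square in the corollary: starting from $h\in J^{\mathfrak{a}}_2\mathcal{M}$, going right-then-down yields $\rho^{\hat\theta'}(h)$ (using the inclusion $J^{\mathfrak{a}}_2\mathcal{M}\subseteq\mathcal{I}$ and that $\rho^{\hat\theta'}$ restricted to $\mathcal{I}$ is just $\rho^{\hat\theta'}|_{\mathcal{I}}$), while going down-then-right yields $\rho^{\hat\theta}(h)$ under the identity identification $\widehat{T}(B_{\mathbb{Q}};A_{\mathbb{Q}})=\widehat{T}(H_{\mathbb{Q}})$. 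These two elements agree by Proposition \ref{JTHLMO6.3prop6.26}, so the diagram commutes.

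In LaTeX the proof is essentially a one-paragraph argument:
\begin{proof}
By Proposition \ref{JTHproposition5} we have $J^{\mathfrak{a}}_2\mathcal{M}\subseteq J_1\mathcal{M}=\mathcal{I}\subseteq\mathcal{L}$, so the inclusion in the top row is well defined and the vertical maps are the restrictions to $J^{\mathfrak{a}}_2\mathcal{M}$, respectively to $\mathcal{I}$, of the homomorphisms $\rho^{\hat{\theta}}$ and $\rho^{\hat{\theta}'}$ defined on $\mathcal{L}$. Let $h\in J^{\mathfrak{a}}_2\mathcal{M}$. Viewing $h$ as an element of $\mathcal{L}$ and applying Proposition \ref{JTHLMO6.3prop6.26}, we get $\rho^{\hat{\theta}}(h)=\rho^{\hat{\theta}'}(h)$ under the identity identification $\widehat{T}(B_{\mathbb{Q}};A_{\mathbb{Q}})=\widehat{T}(H_{\mathbb{Q}})$ of Proposition \ref{JTHLMO6.3prop2}. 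Since the two composites in the square are precisely $h\mapsto\rho^{\hat{\theta}}(h)$ (down-then-right) and $h\mapsto\rho^{\hat{\theta}'}(h)$ (right-then-down, using $J^{\mathfrak{a}}_2\mathcal{M}\subseteq\mathcal{I}$), the diagram commutes.
\end{proof}

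There is no real obstacle here; the only point requiring a moment's care is bookkeeping the identification $\widehat{T}(B_{\mathbb{Q}};A_{\mathbb{Q}})=\widehat{T}(H_{\mathbb{Q}})$, which depends on the chosen isotropic section of $\iota_*\colon H\to H'$ but has already been fixed and used in Proposition \ref{JTHLMO6.3prop2} and Proposition \ref{JTHLMO6.3prop6.26}, so it is legitimate to invoke it verbatim. The substance of the corollary lies entirely in Proposition \ref{JTHLMO6.3prop6.26} (and hence in Proposition \ref{JTHLMO6.3prop1} and Quillen-type identifications); the corollary itself is a formal restriction statement, and I expect the author's proof to be exactly this two-sentence observation.
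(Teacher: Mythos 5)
Your proof is correct and is essentially the paper's own argument: the corollary is obtained by restricting Proposition \ref{JTHLMO6.3prop6.26} to $J^{\mathfrak{a}}_2\mathcal{M}\subseteq J_1\mathcal{M}=\mathcal{I}$ (the inclusion being Proposition \ref{JTHproposition5}(i), which you correctly cite where the paper loosely points to Proposition \ref{JTHprop3}). Nothing further is needed.
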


Notice that the maps $\rho^{\hat{\theta}'}$ and $\rho^{\hat{\theta}}$ can be defined on $\mathcal{M}$ and $\mathcal{L}$, respectively. Hence, Corollary \ref{JTHLMO6.3cor6.27} still holds when replacing $\mathcal{I}$ by $\mathcal{M}$  and $J_2^{\mathfrak{a}}\mathcal{M}$ by $\mathcal{L}$,   but we are interested in taking the logarithm on the images of  $\rho^{\hat{\theta}'}$ and $\rho^{\hat{\theta}}$ and we can do this only if we restrict to the Torelli-type groups $\mathcal{I}$ and $\mathcal{I}^{\mathfrak{a}}$, see \cite[Lemma 12.5]{MR3828784}. The following is an instance of one of the main results in \cite{MR3828784}.

\begin{theorem}\cite[Theorem 3.5]{MR2903772}\cite[Theorem 12.6, Remark 12.8]{MR3828784}\label{JTHLMOthm6.28} Consider the filtration-preserving maps
\begin{equation*}
\varrho^{\theta}:\mathcal{I}^{\mathfrak{a}}\longrightarrow \widehat{\text{\emph{Der}}}^+\big(\mathfrak{Lie}(B_{\mathbb{Q}}; A_{\mathbb{Q}})\big)   \text{\ \ \ \ \ and \ \ \ \ \ } \varrho^{\theta'}: \mathcal{I}\longrightarrow \widehat{\text{\emph{Der}}}^+\big(\mathfrak{Lie}(H_\mathbb{Q})\big),
\end{equation*}
defined by $\varrho^{\theta}(h) = \text{\emph{log}}\big(\rho^{\hat{\theta}}(h)\big)$ and $\varrho^{\theta'}(f) = \text{\emph{log}}\big(\rho^{\hat{\theta}'}(f)\big)$ for $h\in\mathcal{I}^{\mathfrak{a}}$ and $f\in\mathcal{I}$. Then, $\varrho^{\theta}$ determines all the alternative Johnson homomorphisms and $\varrho^{\theta'}$ determines all the Johnson homomorphisms. That is, for $m\geq 1$, $h\in J^{\mathfrak{a}}_m\mathcal{M}$ and $f\in J_m\mathcal{M}$, we have
\begin{equation}\label{JTHLMOthm6.28equ1}
\tau^{\mathfrak{a}}_m(h) = \big(\varrho^{\theta}(h)_{|B\oplus A}\big)_m \in D_m(\mathfrak{Lie}(B_{\mathbb{Q}}, A_{\mathbb{Q}})),
\end{equation}
and
\begin{equation}\label{JTHLMOthm6.28equ2}
\tau_m(f) = \big(\varrho^{\theta'}(f)_{|H}\big)_m \in \text{\emph{Hom}}(H,\mathfrak{Lie}_{m+1}(H_{\mathbb{Q}})),
\end{equation}
where $D_m(\mathfrak{Lie}(B_{\mathbb{Q}}, A_{\mathbb{Q}}))$ is defined by considering the rational version of Equation (\ref{JTH2equ7}). The subscripts $m$ in the right-hand side of the above equations denote the terms of degree $m$ in $\varrho^{\theta}(h)_{|B\oplus A}$ and $\varrho^{\theta'}(f)_{|H}$, respectively.
\end{theorem}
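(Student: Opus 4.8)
The statement is the specialization to our situation of the general results of \cite{MR2903772,MR3828784}, so the plan is to reconstruct their argument while keeping track of the alternative degree. First I would record the structural fact underlying the very definition of $\varrho^{\theta}$ and $\varrho^{\theta'}$: since $\mathcal{I}^{\mathfrak{a}}=\ker(\tau^{\mathfrak{a}}_0)$ acts trivially on $\mathrm{Gr}(K_{\bullet})\cong\mathfrak{Lie}(B;A)$ (and $\mathcal{I}$ acts trivially on $\mathrm{Gr}(\Gamma_{\bullet}\pi)\cong\mathfrak{Lie}(H)$), the automorphisms $\rho^{\hat{\theta}}(h)$ and $\rho^{\hat{\theta}'}(f)$ are filtered Hopf algebra automorphisms inducing the identity on the associated graded. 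Such automorphisms form a pro-unipotent group whose logarithm lands in the Lie algebra $\widehat{\mathrm{Der}}^+$ of positive-degree topological derivations; this is exactly \cite[Lemma 12.5]{MR3828784} and is what makes $\varrho^{\theta}(h)=\mathrm{log}\,\rho^{\hat{\theta}}(h)$ and $\varrho^{\theta'}(f)=\mathrm{log}\,\rho^{\hat{\theta}'}(f)$ well defined. Because $\rho^{\hat{\theta}}$ preserves the group of group-like elements, $\varrho^{\theta}(h)$ is determined by its action on the generators $B$ and $A$.

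The heart of the proof is a leading-term computation. By construction $\rho^{\hat{\theta}}(h)=\hat{\theta}\,\hat{\rho}(h)\,\hat{\theta}^{-1}$ satisfies $\rho^{\hat{\theta}}(h)(\theta(x))=\theta(h_{\#}(x))$ for every $x\in\pi$, since $\hat{\theta}$ extends $\theta$ and $\hat{\rho}(h)$ extends $h_{\#}$. Writing $\varrho^{\theta}(h)=\sum_{j\geq 1}D_j$ with $D_j$ raising the alternative degree by $j$, so that $\rho^{\hat{\theta}}(h)=\exp\big(\sum_{j}D_j\big)$, I would evaluate both sides on $\theta(x)$ and compare leading terms using Definition \ref{defaltexpansion}. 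For $h\in J^{\mathfrak{a}}_m\mathcal{M}$ the defining condition (\ref{JTHequ15}) gives $h_{\#}(x)x^{-1}\in K_{m+1}$, hence
$$\theta(h_{\#}(x))-\theta(x)=\big(\theta(h_{\#}(x)x^{-1})-1\big)\theta(x)=\{h_{\#}(x)x^{-1}\}+(\mathfrak{a}\text{-deg}>m+1),$$
while the left-hand side equals $\sum_{j}D_j(\{x\})+(\text{higher})$ with leading contribution $D_j(\{x\})$ in degree $1+j$. Matching degrees forces $D_j|_B=0$ for $j<m$ and
$$D_m(\{x\})=h_{\#}(x)x^{-1}K_{m+2}=\tau^{\mathfrak{a}}_m(h)_1(xK_2),$$
which also yields the filtration-preserving property $\varrho^{\theta}(J^{\mathfrak{a}}_m\mathcal{M})\subseteq\widehat{\mathrm{Der}}^{\geq m}$. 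Repeating the computation on $\theta(\alpha)$ for $\alpha\in\mathbb{A}$ a lift of $a\in A$, using $h_{\#}(\alpha)\alpha^{-1}\in K_{m+2}$, gives $D_m(\{\alpha\})=h_{\#}(\alpha)\alpha^{-1}K_{m+3}$, whose $A$-component is precisely $\tau^{\mathfrak{a}}_m(h)_2(a)$. Since a positive derivation of $\mathfrak{Lie}(B;A)$ is determined by its restriction to the generators $B\oplus A$, these two identities identify $\big(\varrho^{\theta}(h)_{|B\oplus A}\big)_m=D_m|_{B\oplus A}$ with $\tau^{\mathfrak{a}}_m(h)\in D_m(\mathfrak{Lie}(B_{\mathbb{Q}};A_{\mathbb{Q}}))$, which is (\ref{JTHLMOthm6.28equ1}).

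The Johnson case (\ref{JTHLMOthm6.28equ2}) is the classical statement of Massuyeau \cite[Theorem 3.5]{MR2903772}; I would either cite it directly or rerun the identical argument with $K_{\bullet}$ replaced by the lower central series $\Gamma_{\bullet}\pi$, the alternative degree replaced by the ordinary degree, and $\theta$ by the group-like expansion $\theta'$ (all generators now sitting in degree one, so that only the analogue of the $B$-computation is needed). The compatibility recorded in Proposition \ref{JTHLMO6.3prop6.26} and Corollary \ref{JTHLMO6.3cor6.27} guarantees that, on $J^{\mathfrak{a}}_2\mathcal{M}\subseteq\mathcal{I}$, the two derivations $\varrho^{\theta}(h)$ and $\varrho^{\theta'}(h)$ agree, so the two formulas are consistent.

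I expect the main obstacle to be the bookkeeping around the degree-two generators $A$, which has no counterpart in the classical Johnson setting. Concretely, one must check that the $A$-component of $D_m(\{\alpha\})$ is independent of the chosen lift $\alpha\in\mathbb{A}$ of $a$ and is not polluted by the $\Lambda^2B$-part of $\{\alpha\}$ in $\mathfrak{Lie}_2(B;A)=\Lambda^2B\oplus A$; this is where the well-definedness of $\tau^{\mathfrak{a}}_m(h)_2$ through the embedding $j:A\to K_2/K_3$ of (\ref{JTH2equ3}) and the derivation property of $D_m$ must be reconciled. A secondary technical point is to justify the convergence of $\mathrm{log}$ and the legitimacy of extracting leading terms degree by degree in the completed Hopf algebra $\widehat{T}(B_{\mathbb{Q}};A_{\mathbb{Q}})$, which follows from the filtered structure but should be stated carefully.
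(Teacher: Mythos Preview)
The paper does not prove this theorem: it is quoted verbatim from \cite[Theorem 3.5]{MR2903772} and \cite[Theorem 12.6, Remark 12.8]{MR3828784}, and no argument is given beyond the citation. Your reconstruction is the standard proof from those references and is essentially correct.

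Two small clarifications are worth making. First, the cleanest logical order is to establish the filtration-preserving property \emph{before} the leading-term computation: for $h\in J^{\mathfrak{a}}_m\mathcal{M}$ one has $h_{\#}(x)x^{-1}\in K_{i+m}$ for every $x\in K_i$ (this holds for all $i$, not just $i=1,2$, by the $N$-series property and the definition of $J^{\mathfrak{a}}_m$), so $\hat{\rho}(h)-\mathrm{Id}$ maps $\widehat{R}_i$ into $\widehat{R}_{i+m}$, hence $\rho^{\hat{\theta}}(h)-\mathrm{Id}$ raises alternative degree by at least $m$, and therefore so does $\varrho^{\theta}(h)$. With $D_j=0$ for $j<m$ already in hand, the quadratic and higher terms of $\exp$ contribute only in degree $\geq 1+2m>m+1$ (resp.\ $\geq 2+2m>m+2$), and your matching of leading terms goes through cleanly. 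Second, your anticipated obstacle about the $\Lambda^2B$-part of $\{\alpha\}$ does not arise: for $\alpha\in\mathbb{A}$ the class $\{\alpha\}=\alpha K_3$ lies in the image of $j:A\hookrightarrow K_2/K_3$ from (\ref{JTH2equ3}), i.e.\ purely in the $A$-summand of $\mathfrak{Lie}_2(B;A)=\Lambda^2B\oplus A$, so $D_m(\{\alpha\})=D_m(a)$ equals $\tau^{\mathfrak{a}}_m(h)_2(a)$ on the nose rather than only up to an ``$A$-component''.
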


\begin{remark} Notice that in the left-hand sides of (\ref{JTHLMOthm6.28equ1}) and (\ref{JTHLMOthm6.28equ2}) we are actually considering the rationalization of the Johnson-type homomorphims, but there is no loss of information by doing this. 
\end{remark}

Furthermore, if $\theta$ is symplectic, then the maps $\varrho^{\theta}$ and $\varrho^{\theta'}$ take values in the completions  $\widehat{\text{Der}}^{+,\omega}\big(\mathfrak{Lie}(B_{\mathbb{Q}}; A_{\mathbb{Q}})\big)$ and $\widehat{\text{Der}}^{+,\omega}\big(\mathfrak{Lie}(H_{\mathbb{Q}})\big)$ of positive symplectic derivations of $\mathfrak{Lie}(B_{\mathbb{Q}}; A_{\mathbb{Q}})$ and $\mathfrak{Lie}(H_{\mathbb{Q}})$, respectively. Now
$$\widehat{\text{Der}}^{+,\omega}\big(\mathfrak{Lie}(B_{\mathbb{Q}}; A_{\mathbb{Q}})\big)\cong \prod_ m D_m(B_{\mathbb{Q}}; A_{\mathbb{Q}})\cong \mathcal{T}^{\mathfrak{a}}(B\oplus A),$$
where the first isomorphism is given by Proposition \ref{JTH2prop1} and the second by Proposition~\ref{JTH2prop26}. Similarly
$$\widehat{\text{Der}}^{+,\omega}\big(\mathfrak{Lie}(H_{\mathbb{Q}})\big)\cong \prod_m D_m(H_{\mathbb{Q}}) \cong \mathcal{A}^{t,c}(H),$$
where the second isomorphism is given by Theorem \ref{JTH2thm23}. This way, with the hypothesis that $\theta$  is symplectic, Theorem \ref{JTHLMOthm6.28} can be restated by saying that for $m\geq 1$ the diagrams 
\begin{equation}\label{JTHLMOequ23}
\xymatrixrowsep{3pc}\xymatrix{J^{\mathfrak{a}}_m\mathcal{M}\ar[r]^{\subset}\ar[d]_{\eta^{-1}\tau^{\mathfrak{a}}_m} & \mathcal{I}^{\mathfrak{a}} \ar[d]^{\eta^{-1}\varrho^{\theta}} \\
\mathcal{T}^{\mathfrak{a}}_m(B\oplus A)\ar@{<<-}[r] & \mathcal{T}^{\mathfrak{a}}(B\oplus A)} \text{\ \ \ \ \ and\ \ \ \ \ \ \ \ \ \ } \xymatrixrowsep{3pc}\xymatrix{J_m\mathcal{M}\ar[r]^{\subset}\ar[d]_{\eta^{-1}\tau_m} & \mathcal{I} \ar[d]^{\eta^{-1}\varrho^{\theta'}} \\
\mathcal{A}^{t,c}_m(H)\ar@{<<-}[r] & \mathcal{A}^{t,c}(H)}
\end{equation} 
are commutative. In the same way, if $\theta$ is symplectic, Corollary \ref{JTHLMO6.3cor6.27} can be restated by saying that the diagram
\begin{equation}\label{JTHLMOequ24}
\xymatrixrowsep{3pc}\xymatrix{J^{\mathfrak{a}}_2\mathcal{M}\ar[r]^{\subset}\ar[d]_{\eta^{-1}\varrho^{\theta}} & \mathcal{I} \ar[d]^{\eta^{-1}\varrho^{\theta'}} \\
\mathcal{T}^{\mathfrak{a}}(B\oplus A)\ar@{=}[r] & \mathcal{A}^{t,c}(H)}
\end{equation}
is commutative.

\para{The LMO functor defines a symplectic alternative expansion} We want to apply diagrams (\ref{JTHLMOequ23}) and (\ref{JTHLMOequ24}) with a particular symplectic alternative expansion of $\pi$ relative to $\mathbb{A}$. In \cite[Section 5]{MR2903772}, G. Massuyeau constructed a symplectic expansion of $\pi$ from the LMO functor. It turns out that this expansion is  a symplectic alternative expansion of $\pi$ relative to $\mathbb{A}$. Let us recall this construction.

Fix two points $p,q\in\text{int}(\Sigma)$.  A \emph{bottom knot} in $\Sigma\times [-1,1]$ is the isotopy class (relative to the boundary) of a connected framed oriented tangle starting at $q\times \{-1\}$ and ending at $p\times \{-1\}$, see Figure \ref{figuraJTHLMO14artalt} $(a)$ for an example. Let $\mathcal{B}$ denote the set of bottom knots in $\Sigma\times[-1,1]$. There is a monoid structure in $\mathcal{B}$. If $K,L\in \mathcal{B}$, then $K\cdot L$ is the bottom knot obtained from $K$ and $L$ by joining $K$ and $L$ as shown in Figure \ref{figuraJTHLMO14artalt} $(b)$.
\begin{figure}[ht!] 
										\centering
                        \includegraphics[scale=0.7]{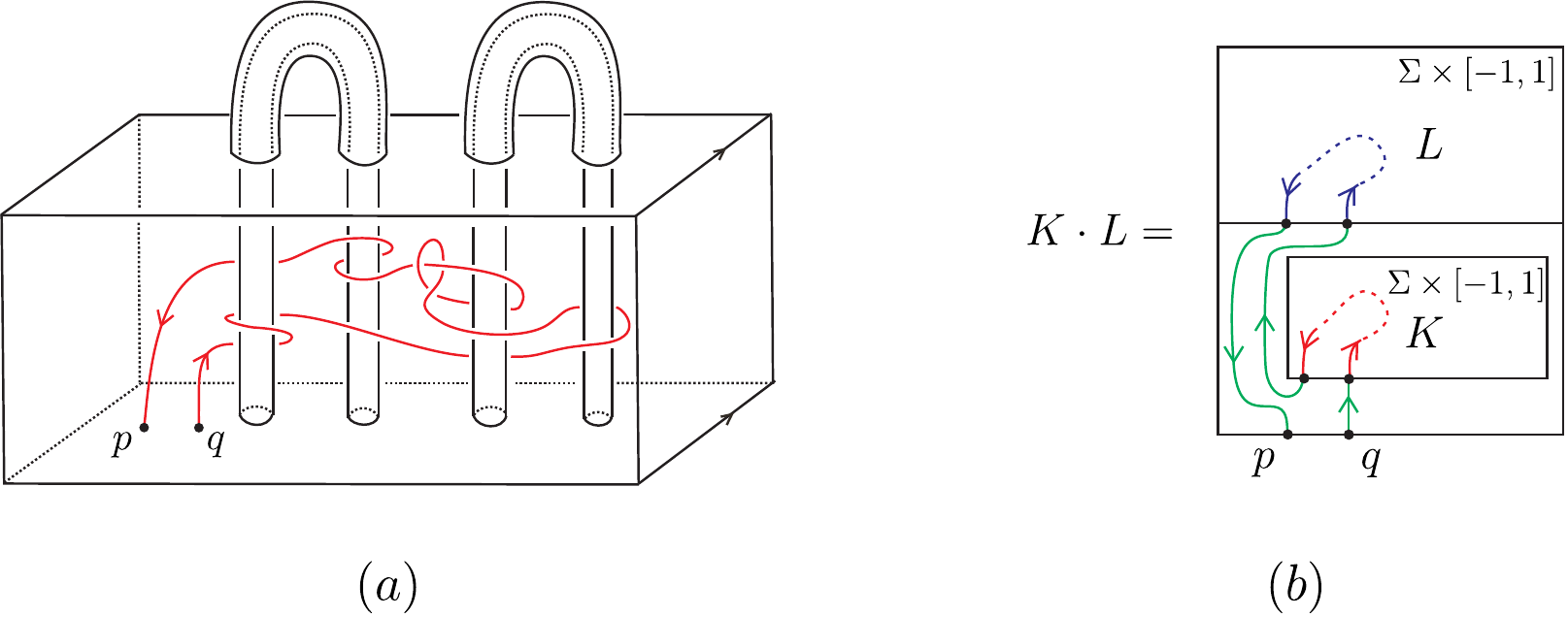}												\caption{$(a)$ Bottom knot in $\Sigma\times [-1,1]$ and  $(b)$ monoid structure in $\mathcal{B}$.}
\label{figuraJTHLMO14artalt}
\end{figure}

Two bottom knots $K,K'\in\mathcal{B}$ are said to be \emph{homotopic}, denoted $K\simeq K'$, if $K'$ can be obtained from $K$ by framing changes and a finite number of crossing changes. This relation is compatible with the monoid structure, that is, if $K,K',L,L'\in\mathcal{B}$ are such that $K\simeq K'$ and $L\simeq L'$, then $K\cdot L \simeq K'\cdot L'$. There is a canonical monoid morphism 
\begin{equation}
\ell: \mathcal{B}/_{\simeq}\longrightarrow  \pi,
\end{equation} 
which assigns to the homotopy class of $K\in\mathcal{B}$ the based loop in $\Sigma\times[-1,1]$ as shown in Figure \ref{figuraJTHLMO15artalt}. Then identify $\pi$ with $\pi_1(\Sigma\times[-1,1],*)$, see \cite[Lemma 5.3]{MR2903772}.
\begin{figure}[ht!] 
										\centering
                        \includegraphics[scale=0.7]{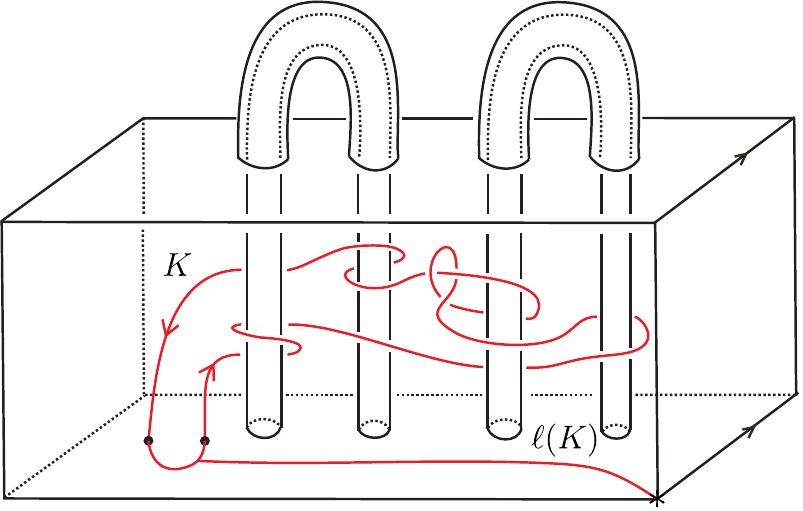}												\caption{The based loop $\ell(K)$.}
\label{figuraJTHLMO15artalt}
\end{figure}

Now, a bottom knot $K\in\mathcal{B}$ gives rise to an element in $\mathcal{LC}ob(g,g+1)$ by ``digging" along $K$, more precisely, let $(M_K, m)$ be the cobordism obtained from $\Sigma\times[-1,1]$ by removing a tubular neighborhood of $K$ and  define the parametrization $m$ on the first handle of the bottom surface of $M_K$ by using the framing of $K$ and as the identity elsewhere. We continue to denote the cobordism $(M_K,m)$ by $K$ and we endow its top and bottom with the right-handed non-associative words as in Convention \ref{convention1}.

 This way we have $K\in\mathcal{LC}ob_q(g,g+1)$ and we can apply the LMO functor to it,  to obtain $\widetilde{Z}(K)\in{}^{ts}\!\!\mathcal{A}(g,g+1)\subseteq \mathcal{A}(\left\lfloor g\right\rceil^+\sqcup \left\lfloor g+1\right\rceil^-)$.  Change the colors in $\widetilde{Z}(K)$ as follows
$$1^-\mapsto r \text{\ \ \ \ and\ \ \ \ } i^-\mapsto (i-1)^-, \ \ \ \forall i=2,\ldots,g+1;$$
so that, the variable $r$ refers to the bottom knot. Thus $\widetilde{Z}(K)\in\mathcal{A}(\left\lfloor g\right\rceil^+\sqcup \left\lfloor g\right\rceil^-\sqcup\{r\})$.

\begin{example}\label{ejemploJTHLMON} In Example \ref{ejemploLMO5} we considered a cobordism $N_i\in{}^s\!\mathcal{LC}ob(g,g+1)$, for $i=1,\ldots,g$, with bottom-top tangle presentation shown in Figure \ref{figuraLMO7artalt}. Analyzing carefully  Figure \ref{figuraLMO7artalt} we see that the cobordism $N_i$ corresponds to the bottom knot, also denoted  $N_i$, in $\Sigma\times[-1,1]$ such that $\ell(N_i)$ is the homotopy class of the meridian $\alpha_i$ in~$\Sigma$.
\end{example}

Recall the space $\mathcal{H}(r)$ defined in Example \ref{ejemploJDhomotopy}. Hence, $\widetilde{Z}(K) \text{ mod } \mathcal{H}(r)$ is an exponential series of tree-like Jacobi diagrams with at most one $r$-colored leg and which only depends on the homotopy class of $K$, see \cite[Lemma 5.5]{MR2903772}. Moreover, the series consisting of the terms without $r$-colored legs in $\widetilde{Z}(K) \text{ mod } \mathcal{H}(r)$ is exactly the identity morphism in ${}^{ts}\!\!\mathcal{A}(g,g)$. To sum up

\begin{center}
\includegraphics[scale=0.74]{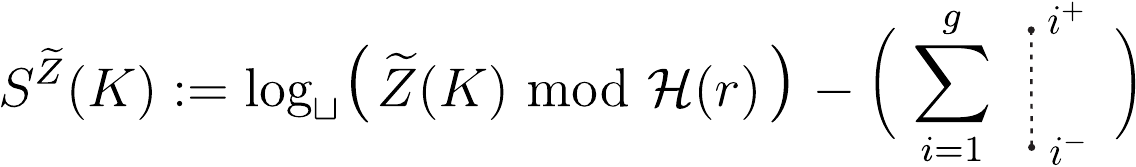}	
\end{center}

\noindent is a series of  connected tree-like Jacobi diagrams with legs colored by $\left\lfloor g\right\rceil^+\sqcup \left\lfloor g\right\rceil^-\sqcup\{r\}$  and with exactly one $r$-colored leg. Hence, by considering the $r$-colored leg as a root, we can see  $S^{\widetilde{Z}}(K)$ as an element in $\widehat{\mathfrak{Lie}}(B_{\mathbb{Q}}, A_{\mathbb{Q}})$ after the replacement of colors $i^+\mapsto b_i$ and $i^-\mapsto a_i$ for $i=1,\ldots,g$. The map
\begin{equation}
\theta^{\widetilde{Z}}:\pi\longrightarrow \widehat{T}(B_{\mathbb{Q}}; A_{\mathbb{Q}}),
\end{equation} 
defined by $\theta^{\widetilde{Z}}(\ell(K))=\text{exp}_{\otimes} \big(S^{\widetilde{Z}}(K)\big)$ is a symplectic expansion of $\pi$, see \cite[Proposition~5.6]{MR2903772}.

\begin{proposition} The symplectic expansion $\theta^{\widetilde{Z}}:\pi\rightarrow \widehat{T}(B_{\mathbb{Q}}; A_{\mathbb{Q}})$ satisfies 
$$\theta^{\widetilde{Z}}(\alpha) = 1 + \{\alpha\} + (\mathfrak{a}\text{\emph{-deg}} > 2)$$
for all $\alpha\in \mathbb{A}$. Therefore $\theta^{\widetilde{Z}}$ is a symplectic alternative expansion of $\pi$ relative to $\mathbb{A}$.
\end{proposition}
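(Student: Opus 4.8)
The plan is to show that the symplectic expansion $\theta^{\widetilde{Z}}$ coming from the LMO functor automatically satisfies the extra condition in Definition \ref{defaltexpansion}, namely that $\theta^{\widetilde{Z}}(\alpha) = 1 + \{\alpha\} + (\mathfrak{a}\text{-deg} > 2)$ for $\alpha\in\mathbb{A}$. Since we already know from \cite[Proposition 5.6]{MR2903772} that $\theta^{\widetilde{Z}}$ is a group-like (symplectic) expansion, it is multiplicative and takes values in the group-like elements of $\widehat{T}(B_{\mathbb{Q}};A_{\mathbb{Q}})$, and $\theta^{\widetilde{Z}}(x) = 1 + \{x\} + (\mathfrak{a}\text{-deg}>1)$ for all $x\in\pi$. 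So the only thing left to verify is the sharper vanishing statement on the subgroup $\mathbb{A}$. First I would recall that $\theta^{\widetilde{Z}}(\ell(K)) = \exp_{\otimes}(S^{\widetilde{Z}}(K))$, where $S^{\widetilde{Z}}(K)\in\widehat{\mathfrak{Lie}}(B_{\mathbb{Q}};A_{\mathbb{Q}})$ is the rooted-tree part of $\widetilde{Z}(K) \bmod \mathcal{H}(r)$, with the root being the $r$-colored leg and colors changed by $i^+\mapsto b_i$, $i^-\mapsto a_i$. Because the exponential of a Lie element only lowers, never raises, the bottom degree, it suffices to prove that for a bottom knot $K$ with $\ell(K)\in\mathbb{A}$, the lowest $\mathfrak{a}$-degree term of $S^{\widetilde{Z}}(K)$ equals $\{\ell(K)\}\in A_{\mathbb{Q}}\oplus(B_{\mathbb{Q}}\otimes B_{\mathbb{Q}})$, i.e. the linear part of $S^{\widetilde{Z}}(K)$ lies in $\mathfrak{a}$-degree $\geq 2$.

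The key computation is the $\mathfrak{a}$-degree $1$ part of $S^{\widetilde{Z}}(K)$. A connected tree-like Jacobi diagram, rooted at its $r$-colored leg, of $\mathfrak{a}$-deg $=1$ and hence total degree (after rooting, as a Lie word) $=1$, is either a strut with the root on one end and a single $i^+$-colored (i.e. $B$-colored) leg on the other — contributing a term in $B_{\mathbb{Q}}$ — or there are no other possibilities since an $i^-$-colored leg has weight $2$. So the $\mathfrak{a}$-deg $=1$ part of $S^{\widetilde{Z}}(K)$ is governed precisely by the $i$-deg $=0$, single-$B$-leg struts of $\widetilde{Z}(K)\bmod\mathcal{H}(r)$; by Lemma \ref{lemmasplitstruty} this strut data is encoded in the linking matrix, and its off-diagonal $r$-versus-$i^+$ coefficient is a linking number between the bottom knot $K$ (viewed via its surgery/digging presentation) and the $i$-th top curve. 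That linking number is exactly the $\beta_i$-coefficient of the class of $\ell(K)$ in $H = H_1(\Sigma)$, under the identification of the bottom-top presentation with the curves $m_\pm(\alpha_j),m_\pm(\beta_j)$ described just before subsection \ref{subsectionLMOfunctor}. Concretely, the $\mathfrak{a}$-deg $=1$ part of $S^{\widetilde{Z}}(K)$ is $\overline{\ell(K)}\in B_{\mathbb{Q}}$, where $\overline{\,\cdot\,}$ denotes the image of the homology class in $B = H/A$. Now if $\ell(K)\in\mathbb{A}$, then its homology class lies in $A=\ker(\iota_*)$, so $\overline{\ell(K)} = 0\in B$; hence the $\mathfrak{a}$-deg $=1$ part of $S^{\widetilde{Z}}(K)$ vanishes, and consequently $\theta^{\widetilde{Z}}(\ell(K)) = \exp_{\otimes}(S^{\widetilde{Z}}(K)) = 1 + (\text{the }\mathfrak{a}\text{-deg}=2 \text{ part of }S^{\widetilde{Z}}(K)) + (\mathfrak{a}\text{-deg}>2)$. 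Finally, since $\theta^{\widetilde{Z}}$ is an expansion of the $N$-series $(K_i)$ (this is the general remark after Definition \ref{defaltexpansion}, which holds for any alternative expansion, but here we only need the bottom-degree statement), the $\mathfrak{a}$-deg $=2$ part of $S^{\widetilde{Z}}(\ell(K))$ must be $\{\ell(K)\} = \ell(K)K_3\otimes 1\in A_{\mathbb{Q}}\oplus(B_{\mathbb{Q}}\otimes B_{\mathbb{Q}})$, because the linear term of a group-like expansion on $K_2/K_3$ is always the identity at the graded level — this is part of the graded-level statement in Proposition \ref{JTHLMO6.3prop1} applied to $\theta'=\theta^{\widetilde{Z}}$, or more elementarily it follows from the fact that $\theta^{\widetilde{Z}}$ is an expansion in the sense of Definition \ref{defexpansion} combined with the degree bookkeeping $\Gamma_2\pi\subseteq K_2$, $K_2/\Gamma_2\pi\cong A$.

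The step I expect to be the main obstacle is the bookkeeping in the second paragraph: cleanly relating the $\mathfrak{a}$-deg $=1$ part of $S^{\widetilde{Z}}(K)$ to the homology class of $\ell(K)$, and in particular to the quotient $B=H/A$ rather than to $H$ itself. This requires being careful about three things at once: (i) which leg colors $i^+,i^-$ correspond to which curves $m_+(\beta_i), m_-(\alpha_i)$ on the surface (this is fixed by the conventions at the end of subsection \ref{subsectionLMOfunctor}), (ii) the relation between the bottom-knot-digging construction $K\mapsto M_K$ and its surgery presentation, so that the linking matrix of $M_K$ can be read off, and (iii) the general fact (implicit in the proof of \cite[Proposition 5.6]{MR2903772}) that the degree-one part of $\theta^{\widetilde{Z}}(\ell(K))$ recovers the abelianization $\{\ell(K)\}\in H_{\mathbb{Q}}$ under the identification $T(H_{\mathbb{Q}})=T(B_{\mathbb{Q}};A_{\mathbb{Q}})$ — from which the vanishing of the $B$-component for $\ell(K)\in\mathbb{A}$ is immediate since $\{\ell(K)\}\in A_{\mathbb{Q}}$ has zero image in $B_{\mathbb{Q}}$. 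In fact this last observation makes the argument essentially a one-liner: $\theta^{\widetilde{Z}}$ being a symplectic (hence group-like) expansion of $\pi$, its degree-$1$ part on any $x\in\pi$ is $xK_2\otimes 1 = \overline{[x]}\in B_{\mathbb{Q}}$; for $x=\alpha\in\mathbb{A}$ this is $0$, so $\theta^{\widetilde{Z}}(\alpha) = 1 + (\mathfrak{a}\text{-deg}\geq 2)$, and the degree-$2$ term is forced to be $\{\alpha\}\in A_{\mathbb{Q}}\oplus(B_{\mathbb{Q}})^{\otimes 2}$ by the graded-level identity of Proposition \ref{JTHLMO6.3prop1} (equivalently Proposition \ref{JTHLMO6.3prop2}). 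I would present the proof in this streamlined form, citing \cite[Proposition 5.6]{MR2903772} for the fact that $\theta^{\widetilde{Z}}$ is an expansion and Proposition \ref{JTHLMO6.3prop1}/\cite[Proposition 12.2]{MR3828784} for the graded-level normalization, and only invoking the explicit linking-matrix computation as a sanity check or alternative route.
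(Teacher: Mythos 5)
Your handling of the $\mathfrak{a}$-degree $1$ part is correct: since $\theta^{\widetilde{Z}}$ is a group-like expansion, the linear term of $\theta^{\widetilde{Z}}(\alpha)$ is the homology class $[\alpha]\in A_{\mathbb{Q}}$, whose $B_{\mathbb{Q}}$-component vanishes for $\alpha\in\mathbb{A}$. But the proposition is not a formal consequence of this, and the step where you declare the $\mathfrak{a}$-degree $2$ term ``forced to be $\{\alpha\}$'' is where the argument breaks. Both of your justifications fail: Proposition \ref{JTHLMO6.3prop1} (i.e.\ \cite[Proposition 12.2]{MR3828784}) has as standing hypothesis that $\theta$ is an alternative expansion relative to $\mathbb{A}$ --- exactly the property being proved here --- so invoking its graded-level statement is circular; and being an expansion in the sense of Definition \ref{defexpansion} (even group-like and symplectic) does not constrain the $\Lambda^{2}B_{\mathbb{Q}}$-component of the quadratic term on elements of $\mathbb{A}$. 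For instance, the multiplicative map determined by $\alpha_{1}\mapsto\exp(a_{1}+[b_{1},b_{2}])$, $\alpha_{i}\mapsto\exp(a_{i})$ for $i\geq 2$, $\beta_{i}\mapsto\exp(b_{i})$ is a group-like expansion of $\pi$ for which $\theta(\alpha_{1})$ has $\mathfrak{a}$-degree $2$ part $a_{1}+[b_{1},b_{2}]\neq\{\alpha_{1}\}$; so the desired property is a genuine feature of the LMO expansion, not of expansions in general.

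What actually has to be shown is that, for $\ell(K)\in\mathbb{A}$, the i-deg $=1$ trees in $S^{\widetilde{Z}}(K)$ whose two non-root legs are both $+$-colored --- these are precisely the possible $\Lambda^{2}B_{\mathbb{Q}}$, i.e.\ $\mathfrak{a}$-degree $2$, contributions beyond $\{\alpha\}$ --- do not occur. The paper establishes this by computation: $\mathbb{A}$ is the normal closure of the meridians $\alpha_{i}$, multiplicativity (together with stability of the condition under products and conjugation) reduces the claim to the bottom knots $N_{i}$ with $\ell(N_{i})=\alpha_{i}$ (Example \ref{ejemploJTHLMON}), and Example \ref{ejemploLMO9} shows that every i-deg $=1$ term of $\widetilde{Z}(N_{i})\bmod\mathcal{H}(r)$ carries an $i^{-}$-colored leg, hence yields a commutator of $\mathfrak{a}$-degree $>2$. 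Your proposal contains no analogue of this input: the linking-matrix argument you relegate to a ``sanity check'' only controls the strut (i-deg $=0$) part, which is the part you had already handled, so the key step is missing.
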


\begin{proof}
By \cite[Proposition 5.6]{MR2903772} we know that for every $\alpha\in\mathbb{A}$, 
$$\theta^{\widetilde{Z}}(\alpha) = 1 + \{\alpha\} + (\text{deg} > 1).$$

First, notice that a tree of i-deg $> 1$ with one leg colored by $r$ (the root) and the other legs colored by elements of $B\oplus A$ gives rise to a Lie commutator in $\mathfrak{Lie}(B_{\mathbb{Q}}, A_{\mathbb{Q}})$ of $\mathfrak{a}$-deg $> 2$. Therefore, we only need to calculate the terms of i-deg $= 1$ in $S^{\widetilde{Z}}(K)$ for bottom knots $K$ such that $\ell(K)\in\mathbb{A}$.

Now $\mathbb{A}$ is the normal closure of the subgroup $\langle \alpha_i \ | \ i=1,\ldots,g \rangle$ of $\pi$ generated by the homotopy classes of the meridians and by Example \ref{ejemploJTHLMON}, the cobordism $N_i$ is such that $\ell(N_i)$ is the homotopy class of $\alpha_i$. Therefore, by the homomorphism property of $\widetilde{Z}_{\text{i-deg }=1}$,  it is enough to calculate the terms of i-deg $= 1$ in $\widetilde{Z}(N_i) \text{ mod } \mathcal{H}(r)$ and see whether they give rise to  Lie commutators in $\mathfrak{Lie}(B_{\mathbb{Q}}, A_{\mathbb{Q}})$ of $\mathfrak{a}$-deg $= 1$. In Example \ref{ejemploLMO9} we see that each of the terms with i-deg $=1$ in  $\widetilde{Z}(N_i) \text{ mod } \mathcal{H}(r)$ has one $r$-colored leg and  one $i^-$-colored leg, thus the associated commutator has $\mathfrak{a}$-deg $> 2$ which completes the proof.
\end{proof}

\para{Proof of Theorem \ref{JTHLMOthmvera2}} Let $\theta$ denote the symplectic alternative expansion of $\pi$ relative to $\mathbb{A}$ defined by the LMO functor and denote by $\theta'$ the associated symplectic expansion of $\pi$. In \cite[Theorem 5.13]{MR2903772} G. Massuyeau proved that the diagram
\begin{equation}\label{JTHLMOequ27}
\xymatrix{\mathcal{I}\ar[r]^-{c}\ar[d]_-{\eta^{-1}\varrho^{\theta'}} & \mathcal{LC}ob_q(g,g) \ar[ld]^-{-\text{log } \widetilde{Z}^{Y,t}} \\
\mathcal{A}^{t,c}(H) & }
\end{equation}
is commutative, where $c$ denotes the cylinder map and for $h\in\mathcal{I}$ we endow the top and bottom of $c(h)$ with the right-handed non-associative words as in Convention \ref{convention1}. In order to see $\text{log}\big( \widetilde{Z}(c(h))\big)$ as an element of $\mathcal{A}^{t,c}(H)$ we consider the change of colors $i^+\mapsto b_i$ and $i^-\mapsto a_i$ for $i=1,\ldots,g$. We obtain the desired result by putting together the commutative diagrams (\ref{JTHLMOequ23}), (\ref{JTHLMOequ24}) and (\ref{JTHLMOequ27}).\qed

\begin{remark} In fact \cite[Theorem 5.13]{MR2903772} is more general than the commutativity of diagram (\ref{JTHLMOequ27}); it  says that for every homology cylinder $M\in\mathcal{IC}$ we have $\eta^{-1}\varrho^{\theta'}(M) = - \text{log} \big(\widetilde{Z}^{Y,t}(M)\big)$. Besides, Theorem \ref{JTHLMOprop1} is proved in the setting of homology cobordisms. This suggests that our results   could be  generalized to the setting of homology cobordisms. More precisely, we expect that the alternative Johnson filtration and the alternative Johnson homomorphisms extend to homology cobordisms and that the diagrammatic version of such alternative Johnson homomorphisms can be read in the tree reduction of the LMO functor.
\end{remark}

\bibliographystyle{plain} 
\bibliography{bibliography_alt} 
\end{document}